\documentclass[11pt,letterpaper]{article}
\usepackage{sparse-ising}
\usepackage{psmacros}
\usepackage{ising-macros}

\title{Fast Mixing in Sparse Random Ising Models}
\author{Kuikui Liu\thanks{MIT. Email: \texttt{liukui@mit.edu}}
  \and Sidhanth Mohanty\thanks{MIT. Email: \texttt{sidm@mit.edu}. Supported by CSAIL.}
  \and Amit Rajaraman\thanks{MIT. Email: \texttt{amit\_r@mit.edu}. Supported by an Akamai Presidential Fellowship.}
  \and David X. Wu\thanks{UC Berkeley. Email: \texttt{david\_wu@berkeley.edu}. Supported by NSF Graduate Research Fellowship DGE-2146752.}}
\date{\today}

\begin{document}
\maketitle

\begin{abstract}
    Motivated by the community detection problem in Bayesian inference, as well as the recent explosion of interest in spin glasses from statistical physics, we study the classical Glauber dynamics for sampling from Ising models with sparse random interactions. It is now well-known that when the interaction matrix has spectral diameter less than $1$, Glauber dynamics mixes in $O(n\log n)$ steps. Unfortunately, such criteria fail dramatically for interactions supported on arguably the most well-studied sparse random graph: the Erd\H{o}s--R\'{e}nyi random graph $\ER(n,d/n)$. There is a scarcity of positive results in this setting due to the presence of almost linearly many outlier eigenvalues of unbounded magnitude.

    We prove that for the \emph{Viana--Bray spin glass}, where the interactions are supported on $\ER(n,d/n)$ and randomly assigned $\pm \beta$, Glauber dynamics mixes in $n^{1+o(1)}$ time with high probability as long as $\beta \leq O\parens*{1/\sqrt{d}}$, independent of $n$. We further extend our results to random graphs drawn according to the $2$-community stochastic block model, as well as when the interactions are given by a ``centered'' version of the adjacency matrix. The latter setting is particularly relevant for the inference problem in community detection.
    Indeed, we build on this result to demonstrate that Glauber dynamics succeeds at recovering communities in the stochastic block model in a companion paper \cite{LMRRW24}.

    The primary technical ingredient in our proof is showing that with high probability, a sparse random graph can be decomposed into two parts --- a \emph{bulk} which behaves like a graph with bounded maximum degree and a well-behaved spectrum, and a \emph{near-forest} with favorable pseudorandom properties. We then use this decomposition to design a localization procedure that interpolates to simpler Ising models supported only on the near-forest, and then execute a pathwise analysis to establish a modified log-Sobolev inequality.

\end{abstract}

\thispagestyle{empty}
\setcounter{page}{0}
\newpage

\tableofcontents

\thispagestyle{empty}
\setcounter{page}{0}

\newpage


\section{Introduction}
In this paper, we tackle the problem of algorithmically sampling from the Gibbs distribution of an \emph{Ising model}, the quintessential model in statistical mechanics. These are probability distributions of the form
\begin{align*}
    \mu_{\beta}(\sigma) = \mu_{\beta J,h}(\sigma) \propto \exp\parens*{\frac{\beta}{2}\sigma^{\top} J \sigma + \langle h,\sigma\rangle}, \qquad \forall \sigma \in \{\pm1\}^{n},
\end{align*}
where $J \in \R^{n \times n}$ is a symmetric \emph{interaction matrix}, $h \in \R^{n}$ is an \emph{external field} vector, and $\beta \geq 0$ is the \emph{inverse temperature}, which scales the interaction strengths. There is a vast literature devoted to their study due to their numerous intimate connections with physics, statistical inference, combinatorial optimization, and the study of constraint satisfaction problems in theoretical computer science; see e.g., \cite{EKPS00, WJ08, MM09, Tal11, MNS15, FV17} and references therein. We consider the following fundamental question.

\begin{question*}
Under what conditions on $\beta$, $J$, and $h$ is (approximately) sampling from $\mu_{\beta}$ computationally tractable?
\end{question*}

This question has a long and rich history, and there is now a good understanding in the worst-case setting. Sampling from $\mu_{\beta}$ is easy if at least one of the following conditions hold:
\begin{itemize}
    \item $J$ is entrywise nonnegative (and all entries of $h$ have the same sign), i.e., the model is \emph{ferromagnetic} \cite{JS93}.
    \item The interaction strengths are sufficiently weak, e.g., \emph{Dobrushin's condition} $\beta \max_{i} \sum_{j} \abs{J_{ij}} < 1$ is satisfied \cite{Dob68,BD97}.
\end{itemize}
Conversely, if one lets $J = -A_{\bG}$ where $A_{\bG}$ is the adjacency matrix of a $d$-regular graph $\bG$, then there is a precise critical threshold $\beta_{c}(d) \approx 1/d$ such that the sampling problem is $\NP$-hard when $\beta > \beta_{c}(d)$ via reduction from $\mathsf{MAXCUT}$ \cite{SS12}, which corresponds to the $\beta \to +\infty$ limit. Notably, nearly-linear time samplers exist when $\beta < \beta_{c}(d)$ \cite{CLV21, AJKPV21b}, and so the problem can exhibit sharp \emph{computational phase transitions}.

Motivated by the stochastic block model from the study of community detection, as well as the theory of spin glasses in statistical physics, we focus on the \emph{average-case} setting where $J$ is a random matrix and the goal is to successfully sample from $\mu_{\beta}$ with probability $1 - o_{n}(1)$ over the randomness of $J$. Furthermore, since our aim is to obtain fast samplers, we consider \emph{Glauber dynamics} (or the \emph{Gibbs sampler}), a natural Markov chain over $\{\pm1\}^{n}$ used ubiquitously both in theory and in practice. Its evolution is described as follows: in each step, we select a uniformly random coordinate $i \in [n]$, and resample its assignment $\sigma_{i}$ conditioned on all remaining coordinates $\sigma_{-i}$.

A recent line of work \cite{BB19, EKZ22, CE22, KLR22, BBD23, AJKPV24} has identified that having spectral diameter bounded as
\begin{align}\label{eq:spectral-condition}
    \text{(Spectral Condition)} \qquad \beta \cdot \parens*{\lambda_{\max}(J) - \lambda_{\min}(J)} < 1
\end{align}
is sufficient to ensure that Glauber dynamics mixes in $O(n\log n)$ steps. More generally, polynomial-time samplers exist if $\beta J$ only has $O(1)$ many ``outlier'' eigenvalues exceeding $1$; see \cite{KLR22} for a precise statement. These types of conditions are appealing since many classical random matrix ensembles (e.g., $\mathsf{GOE}(n)$, the uniformly random $d$-regular graph, etc.) have tame bulk spectra. The spectral condition \cref{eq:spectral-condition} is also sharp for rapid mixing of Glauber dynamics, as witnessed by the Curie--Weiss model where $J = \frac{1}{n}\mathbf{1}\mathbf{1}^{\top}$. Stronger evidence for hardness of sampling when \cref{eq:spectral-condition} fails was recently given \cite{Kun23}.

\subsection{Diluted spin glasses}

Unfortunately, existing spectral criteria like \cref{eq:spectral-condition} fail badly for interaction matrices supported on the edges of arguably the most well-studied sparse random graph: the Erd\H{o}s–R\'{e}nyi random graph $\bG \sim \ER(n,d/n)$ with bounded average degree $d \leq O(1)$. For instance, consider the setting where all entries of $J$ are supported on a graph $\bG$, and randomly assigned $\pm1$ (or $\mathcal{N}(0,1)$) independently. By varying the choice of $\bG$, this recovers several famous models of spin glasses in statistical physics, e.g., the \emph{Sherrington--Kirkpatrick model} when $\bG = K_{n}$, and the \emph{Edwards--Anderson model} when $\bG = \Z^{d}$. When $\bG \sim \ER(n,d/n)$, sometimes known as the \emph{Viana--Bray model} (see e.g., \cite{Mon98, MP01, MP03, GT04}), such matrices automatically have $\Omega(n^{1-o(1)})$ many eigenvalues of magnitude $\omega(1)$ with high probability due to an abundance of ``stars'' with unusually high degrees, despite the overall sparsity of $\bG$. Hence, a direct application of existing results would yield efficient samplers only when $\beta \leq o_{n}(1)$. For comparison, if $\bG$ is a uniformly random $d$-regular graph, then \cref{eq:spectral-condition} yields rapid mixing as long as $\beta \leq O\parens*{1/\sqrt{d}}$, independent of $n$.

In this work, we overcome the abundance of large eigenvalues and obtain almost-linear mixing up to $\beta \leq O\parens*{1/\sqrt{d}}$, thus resolving the discrepancy between $\ER(n,d/n)$ and the uniformly random $d$-regular graph. As above, our first main result concerns mixing of Glauber dynamics for a diluted spin glass given by randomly signing the edges of a sparse Erd\H{o}s--R\'{e}nyi graph.
\begin{theorem}
    \label{th:main}
    Fix a constant $d > 1$. There exists a universal constant $C > 0$ such that for all $\beta \leq C/\sqrt{d}$ the following is true. Let $A$ be the random matrix such that each off-diagonal entry is equal to $0$ with probability $1-d/n$, and is equally likely to be $\pm 1$ with the remaining probability. With high probability over the draw of $A$, Glauber dynamics run on $\mu_{\beta A}$ mixes in time $n^{1+o_n(1)}$.
\end{theorem}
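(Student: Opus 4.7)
The core obstacle is that $\ER(n,d/n)$ contains vertices of degree $\Theta(\log n / \log \log n)$ with high probability, which create localized star subgraphs producing outlier eigenvalues of magnitude $\omega(1)$ in the randomly-signed matrix $A$. The random sign cancellations do not remove these, so the spectral condition \cref{eq:spectral-condition} fails and one cannot directly apply the bounded-outlier extension of \cite{KLR22} either. My plan follows the sketch in the abstract: decompose the support graph $\bG$ into a \emph{bulk} $\bG_b$ of bounded maximum degree and a complementary \emph{near-forest} $\bG_f$ collecting the edges incident to high-degree vertices, and then build a localization scheme that reduces mixing of $\mu_{\beta A}$ to mixing of an Ising model supported only on $\bG_f$, which is analyzed by hand.

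For the bulk, I would peel away each vertex of degree exceeding some constant $K = O(d)$ together with its incident edges; the remainder $\bG_b$ has maximum degree at most $K$, and a trace-method or Bandeira--Van Handel type bound shows that the randomly-signed $A|_{\bG_b}$ has spectral radius $O(\sqrt d)$ with high probability. Hence for $\beta \leq C/\sqrt d$ with $C$ small, the truncated Ising model $\mu_{\beta A|_{\bG_b}}$ satisfies \cref{eq:spectral-condition} and so inherits an $O(n \log n)$ MLSI from the existing spectral-condition machinery. For the complementary graph $\bG_f$, standard structural results on $\ER(n,d/n)$ give that with high probability each connected component has size $O(\log n)$, contains at most one cycle, and has its high-degree hubs separated by long paths of degree-two vertices. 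An Ising model on such a graph with couplings of magnitude $O(1/\sqrt d)$ can be analyzed by a pathwise cavity-style recursion that contracts along each such path, yielding rapid mixing on the near-forest alone.

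The technical heart is gluing the two analyses together. I would define a continuous localization path $\mu_t$, in the spirit of the stochastic/tilted localization used in \cite{CE22, AJKPV24}, interpolating between $\mu_{\beta A}$ at $t=0$ and $\mu_{\beta A|_{\bG_f}}$ at $t=1$ by progressively turning on auxiliary Gaussian fields that decouple the bulk edges. Along this path, a chain-rule decomposition bounds the MLSI constant of $\mu_{\beta A}$ in terms of the endpoint MLSI at $t=1$ (from the forest analysis) plus an integrated contribution from $t \in (0,1)$ (controlled by the bulk spectral bound). The most delicate step, and the one I expect to be the main obstacle, is verifying that the spectral condition survives at every intermediate $\mu_t$ even after conditioning on partial localization data, since such conditioning can in principle re-introduce effective outlier eigenvalues through the conditional covariance; one must leverage the random-sign cancellations together with pseudorandomness properties of both $\bG_b$ and $\bG_f$ uniformly along the localization trajectory.
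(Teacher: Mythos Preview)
Your high-level strategy matches the paper's: decompose $\bG$ into a bounded-degree bulk and a near-forest, run stochastic localization to anneal away the bulk interactions, and prove an MLSI directly on the remaining near-forest. The gluing mechanism you describe (an integrated entropy-conservation bound along the path, plus an endpoint MLSI) is exactly \Cref{thm:sum-MLSI}.

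However, your proposed decomposition---peeling off vertices of degree exceeding $K=O(d)$ together with all incident edges---is too coarse, and this is where the argument breaks. With that decomposition, the near-forest $\bG_f$ consists of stars around high-degree hubs, and its \emph{boundary} $\partial H$ (the vertices shared with the bulk) consists of the leaves of those stars. The quantity you must control along the localization path is $\norm{M^{1/2}\,\Cov(\mu_{J_H,h'})\,M^{1/2}}$, where $M$ is supported on the bulk; since $M$ only touches $\partial H$, this reduces to bounding $\norm{\Cov(\mu_{J_H,h'})_{\partial H}}$. But for a star with hub degree $\Delta_v$ and couplings $\pm\beta$, the leaf--leaf covariances are $\pm\tanh^2(\beta)$, so the covariance restricted to leaves has operator norm $1+\Theta(\beta^2\Delta_v)$. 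With $\Delta_v$ as large as $\Theta(\log n/\log\log n)$ and $\beta=\Theta(1/\sqrt d)$, this is $\Theta(\log n)$, not $O(1)$. The entropy-conservation factor $\exp(-q)$ then becomes $n^{-\Theta(1)}$, destroying the $n^{1+o(1)}$ mixing bound.

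The paper's fix is that the decomposition must push $\partial H$ \emph{away} from the hubs: it excises not just high-degree vertices but the full ball $B_{\ell(v)}(v)$ around every vertex $v$ whose $\ell$-neighborhood is abnormally large for some $\ell$. This guarantees the pseudorandomness property (\Cref{def:pseudorandom-graph}) that $|B_r(u)\cap \partial H|\le D^r$ for every $u$, which is precisely what the trace-method bound (\Cref{lem:near-forest-cov}) needs to get $\norm{\Cov(\mu_{J_H,h'})_{\partial H}}=O(1)$. Proving that this larger excision is still a near-forest with diameter $O(\log n)$ and $\Delta=o(\log n)$ is itself nontrivial (\Cref{lem:random-graph-decomposition}). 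Your ``cavity-style recursion'' for the endpoint MLSI is also too vague; the paper handles it by a \emph{second} localization scheme on the near-forest with a Bethe-Hessian control matrix (\Cref{lem:pseudorandom-ferro-mixing}), and the $o(\log n)$ bound on $\Delta$ is what keeps the resulting $e^{O(\Delta)}$ factor at $n^{o(1)}$.
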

The bound on the constant $C$ we obtain is equal to $(3-\sqrt{8}) (1-o_d(1)) \approx 0.17-o_d(1)$. We expect that this bound can be improved using recent techniques for beating the spectral condition \eqref{eq:spectral-condition} for the Sherrington--Kirkpatrick model \cite{AKV24}. Based on replica-symmetry breaking predictions from statistical physics, e.g., \cite{GT04}, we further conjecture that Glauber dynamics mixes in polynomial time if $d \tanh^2(\beta)<1$, and takes exponential time to mix above this threshold.

Our results extend immediately to sparse random graphs drawn from the stochastic block model $\SBM(n, d, \lambda)$; see \cref{thm:main-mixing}. Roughly speaking, the key feature of such graphs which allows us to go beyond the spectral condition \cref{eq:spectral-condition} is \emph{sparsity} and \emph{localization} of the eigenvectors corresponding to large eigenvalues.

\begin{remark}
    We further believe that our methods extend to the regime where $d$ is slightly super-constant, in particular when $d < \log n$. In the setting where $d > \log n$, the spectral condition (\Cref{eq:spectral-condition}) can be used to bound the mixing time for $\beta = O\left(\frac{1}{\sqrt{d}}\right)$.
\end{remark}

\subsection{Centered graphs and community detection}
Besides the spin glass setting, we also consider the \emph{centered} setting, which is of particular relevance to community detection. Let us recall the ($2$-community) \emph{stochastic block model}, a heavily studied Bayesian model of community detection. We refer interested readers to \cite{Abb18} for further details on the history and importance of the model.
\begin{itemize}
    \item Let $\bsigma \in \{\pm1\}^{n}$ be a \emph{signal} vector drawn uniformly at random, which intuitively encodes the ground truth community assignments. In the language of Bayesian inference, the uniform distribution over $\{\pm1\}^{n}$ is the \emph{prior}.
    \item Given $\bsigma$, we draw a random graph $\bG$ by including an edge between $u,v \in [n]$ independently with probability $\frac{d + \lambda \sqrt{d}}{n}$ if $\bsigma(u) = \bsigma(v)$, and with probability $\frac{d - \lambda\sqrt{d}}{n}$ otherwise, where $d, \lambda \in \R$ are fixed constants.
\end{itemize}
The goal of statistical inference is to recover the true community assignments $\bsigma$ given access to the noisy observation $\bG$. The entirety of this generative process induces a distribution over graphs denoted $\SBM(n, d, \lambda)$, with the special case $\lambda = 0$ recovering the Erd\H{o}s--R\'{e}nyi graph $\ER(n,d/n)$. Note that having $\lambda \geq 0$ encourages the resulting random graph $\bG$ to have more intracommunity edges than intercommunity edges. The magnitude of $\lambda$ controls the \emph{signal-to-noise ratio (SNR)}, and is parameterized so that $\lambda^2 = 1$ corresponds to the famous \emph{Kesten--Stigum threshold} (see e.g., \cite{DKMZ11, MNS15}).

It is well-known that the inference problem can be reduced to the problem of sampling from the \emph{posterior distribution} over $\{\pm1\}^{n}$ given $\bG$, which for large $d$ is approximately given by the Ising Gibbs measure $\mu(\sigma) \propto \exp\parens*{\frac{\beta}{2} \sigma^{\top} \left( A_{\bG} - \frac{d}{n} \bone\bone^\top \right) \sigma}$, where $\beta = \beta(d,\lambda)$ is some explicit function monotone increasing in $\lambda$.\footnote{Technically, it is given by $\beta(d,\lambda,n) = \frac{1}{2}\ln\parens*{\frac{d+\lambda\sqrt{d}}{d-\lambda\sqrt{d}}} - \frac{1}{2}\ln\parens*{\frac{n-d+\lambda\sqrt{d}}{n-d-\lambda\sqrt{d}}}$, which tends to $\beta(d,\lambda) = \frac{1}{2} \ln\parens*{\frac{d+\lambda\sqrt{d}}{d-\lambda\sqrt{d}}} \approx \frac{\lambda}{\sqrt{d}}$ at an $O(1/n)$ rate as $n \to \infty$ for fixed $d, \lambda$.} A natural and longstanding open problem in the field is giving provable guarantees for recovering $\bsigma$ by simulating Glauber dynamics. One of the principal difficulties is that the regime $\lambda > 1$, where nontrivial recovery is information-theoretically possible, is also well within the regime in which worst-case mixing of Glauber dynamics is exponential in $n$. Hence, the vast majority of existing results in the literature give guarantees for algorithms not based on Markov chains at all (such as belief propagation and spectral algorithms, see, e.g., \cite{DKMZ11,Mas14,MNS14,AS15,BLM15,MNS15,MNS18,PY23} and references therein).

Towards understanding the performance of Glauber dynamics for inference, our second main contribution concerns sampling from the Ising model corresponding to the centered adjacency matrix $J = A_{\bG} - \E[A_{\bG}|\bsigma]$.\footnote{Note that (up to a diagonal shift), $\E[A_{\bG} | \bsigma] = \frac{d}{n} \bone\bone^{\top} + \frac{\lambda \sqrt{d}}{n} \bsigma\bsigma^{\top}$ and $\E[A_{\bG}] = \frac{d}{n}\bone\bone^{\top}$, so the former contains an uninformative eigenspace parallel to $\bone$, as well as the signal $\bsigma$.} In a companion paper \cite{LMRRW24},
we leverage this mixing result to prove that with high probability over $\bG$, a polynomial number of steps of Glauber dynamics for a rescaled version of the true posterior distribution achieves $\Omega(1)$ correlation with the ground truth $\bsigma$ (or $-\bsigma$) as long as $\lambda$ exceeds a certain (reasonable) constant greater than $1$.
\begin{theorem}
    \label{thm:main-centered}
    Fix constants $d > 1, \lambda \geq 0$. There exists a universal constant $C > 0$ such that for all $\beta \leq C/\sqrt{d}$ the following is true. Let $\ol{A}_{\bG} \coloneqq A_{\bG} - \E[A_{\bG}|\bsigma]$ be the centered adjacency matrix of $\bG \sim \SBM(n, d, \lambda)$. With high probability over the draw of $\bG$, Glauber dynamics run on $\mu_{\beta\ol{A}_{\bG}}$ mixes in time $n^{1+o_d(1)}$.
\end{theorem}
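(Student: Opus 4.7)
\emph{Reduction via gauge transformation.} The plan is to reduce \cref{thm:main-centered} to the SBM extension of \cref{th:main} (namely \cref{thm:main-mixing}) by recognizing the centering as a rank-two negative semidefinite additive correction to a signed SBM Ising model. Let $D \coloneqq \mathrm{diag}(\bsigma)$; the map $\sigma \mapsto D\sigma$ is an involution of $\{\pm 1\}^{n}$ that conjugates Glauber dynamics on $\mu_{\beta\ol{A}_{\bG}}$ to Glauber dynamics on $\mu_{\beta D\ol{A}_{\bG}D}$, so their mixing times coincide. Using $D^{2}=I$, $D\bsigma = \bone$, and $D\bone = \bsigma$, one computes (modulo a trivial diagonal shift of the Hamiltonian)
\begin{align*}
    D\ol{A}_{\bG}D \;=\; \widetilde{A} \;-\; \tfrac{d}{n}\bsigma\bsigma^{\top} \;-\; \tfrac{\lambda\sqrt{d}}{n}\bone\bone^{\top},
\end{align*}
where $\widetilde{A}_{uv} \coloneqq \bsigma(u)\bsigma(v)A_{\bG}(u,v)$ takes value $+1$ with probability $p_{\mathrm{in}}$ on same-community pairs and $-1$ with probability $p_{\mathrm{out}}$ on cross-community pairs, independently across pairs, and whose unsigned support is again distributed as $\SBM(n,d,\lambda)$.

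\emph{Main steps.} The signed matrix $\widetilde{A}$ falls precisely within the scope of \cref{thm:main-mixing}: the bulk-plus-near-forest decomposition of the underlying SBM graph together with the pathwise localization analysis yields $n^{1+o(1)}$ mixing for $\mu_{\beta\widetilde{A}}$ whenever $\beta \leq C/\sqrt{d}$, and, crucially, produces a modified log-Sobolev inequality that is robust to the addition of arbitrary external fields. What remains is to absorb the rank-two, negative semidefinite perturbation $\Delta \coloneqq -\tfrac{d}{n}\bsigma\bsigma^{\top} - \tfrac{\lambda\sqrt{d}}{n}\bone\bone^{\top}$, whose operator norm is bounded by $d+\lambda\sqrt{d} = O(d)$. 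For this I would invoke the outlier-eigenvalue / low-rank-perturbation framework of \cite{KLR22, AJKPV24}: once the base Ising model $\mu_{\beta\widetilde{A}}$ admits a field-robust MLSI, adding a bounded-rank, bounded-norm symmetric correction degrades the MLSI by a factor depending only on the rank and $\beta\|\Delta\|_{\mathrm{op}}$, both of which are $O(\sqrt{d})$ here. This yields the desired $n^{1+o(1)}$ mixing-time bound for $\mu_{\beta D\ol{A}_{\bG}D}$, and hence for $\mu_{\beta\ol{A}_{\bG}}$, with high probability.

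\emph{Main obstacle.} The delicate step is the absorption of $\Delta$. Because $\Delta \preceq 0$, the standard Hubbard--Stratonovich identity for $\exp(\tfrac{\beta}{2}\sigma^{\top}\Delta\sigma)$ would require purely imaginary Gaussian integration and so fails to write $\mu_{\beta D\ol{A}_{\bG}D}$ as a real mixture of external-field-tilted versions of $\mu_{\beta\widetilde{A}}$. Instead, one carries out a direct stochastic-localization / needle-decomposition argument in the two-dimensional subspace spanned by $\bone$ and $\bsigma$, invoking the field-robust MLSI from the first step along each path of the process. A secondary point is that $\bsigma$ is itself a random vector coupled to $\bG$, so the high-probability guarantee from \cref{thm:main-mixing} must hold uniformly over the joint randomness of $(\bG,\bsigma)$; but this should follow essentially for free, since conditional on $\bsigma$ the law of $\widetilde{A}$ is exactly that of a signed SBM draw, which is the case handled by \cref{thm:main-mixing}.
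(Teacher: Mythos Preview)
Your reduction has a genuine gap at the very first step: $\widetilde{A}$ is \emph{not} a randomly signed SBM in the sense of \cref{thm:main-mixing}. Conditionally on an edge $uv$ being present, its sign in $\widetilde{A}$ is deterministically $\bsigma(u)\bsigma(v)$, whereas \cref{thm:main-mixing} requires each edge to receive an independent uniform $\pm 1$ sign. Indeed, $\widetilde{A}=DA_{\bG}D$ is gauge-equivalent to the \emph{unsigned} adjacency matrix $A_{\bG}$, so it has the same spectrum; restricting to the bounded-degree bulk $S$, one has $\|\widetilde{A}_S\|=\|(A_{\bG})_S\|=\Theta(d)$, driven by the Perron--Frobenius eigenvector. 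The proof of \cref{thm:main-mixing} crucially uses $\|(A_{\bc})_S\|\le 2\sqrt{d}(1+\eps)$ (\cref{lem:bulk-specrad-signed}), which holds precisely because random signing kills that top eigenvalue. So the ``field-robust MLSI for $\mu_{\beta\widetilde{A}}$'' you want as input simply does not follow from \cref{thm:main-mixing}; for $\beta\approx 1/\sqrt{d}$ the bulk control matrix has norm $\Theta(\sqrt{d})$, and the entropy-conservation condition fails.

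The conceptual issue is that the centering term $\Delta$ is exactly what brings the bulk spectrum down from $\Theta(d)$ to $O(\sqrt{d})$; by peeling $\Delta$ off as a rank-two afterthought you are left with the hard part unsolved. The paper proceeds in the opposite direction: it keeps the centering attached to the bulk, so that \cref{lem:bulk-spectral-radius} gives $\|(\ol{A}_{\bG})_S\|\le 2\sqrt{d}(1+\eps)$ directly, and then localizes away this (nicely bounded) centered bulk. The price is that the annealed measure is no longer supported purely on the near-forest $H$: the centering contributes $O(d/n)$ interactions on all pairs incident to $V'(H)$. These small perturbations are handled by a block Dobrushin/spectral-independence argument over the connected components of $H$ (\cref{cor:dobrushin}, \cref{lem:parameter-dep}), which works because the components have total squared size $o(n)$. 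Your low-rank absorption step, even setting aside the sign issue with HS, would not substitute for this, since the obstruction is not a few outlier eigenvalues of a model that otherwise satisfies \cref{eq:spectral-condition}; it is that the base model $\widetilde{A}$ itself is outside the tractable regime.
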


\subsection{Related work}
We now survey some of the literature relevant to the study of Ising models and sampling via Markov chains.

\parhead{Sampling from spin systems on sparse random graphs.}
In the ferromagnetic case where $J = A_{\bG}$, Mossel \& Sly \cite{ms13} established a mixing time of $n^{1 + \Theta(1/\log\log n)}$ for Glauber dynamics in the regime $d \tanh(\beta) < 1$ with high probability over $\bG \sim \ER(n,d/n)$. This is sharp as the mixing time necessarily scales exponentially in $n$ when $d \tanh(\beta) > 1$. More recently, when $J$ is supported on $\bG \sim \ER(n,d/n)$ and has i.i.d. Gaussian entries, Efthymiou \& Zampetakis \cite{EZ24} established almost-quadratic mixing of Glauber dynamics when $\beta \leq \sqrt{2\pi}/d$, and conjectured that their regime of $\beta$ is optimal for rapid mixing. While \cref{th:main} as stated only considers random signs, we believe that an adaptation of our techniques refutes this conjecture. For related results on the hardcore gas model (i.e., random independent sets) on $\ER(n,d/n)$, see \cite{EF23} and references therein.

In a recent work, Bauerschmidt, Bodineau \& Dagallier \cite{BBD23} proved that the Kawasaki dynamics for the ferromagnetic Ising model with fixed magnetisation on a random $d$-regular graph mixes rapidly for $\beta < O\left(1/\sqrt{d}\right)$.

\parhead{Stochastic localization and sampling.}
A common strategy to analyze a Markov chain is to decompose the stationary distribution as a mixture of simpler distributions.
To this end, we employ the \emph{stochastic localization} process.
This idea was first introduced in the context of studying isoperimetry in convex bodies by Eldan \cite{Eld13}, and has since seen numerous applications.
The work of Eldan, Koehler \& Zeitouni \cite{EKZ22} introduced it to the study of Glauber dynamics for Ising models, and used it to prove that any Ising model can be expressed as a ``nice'' mixture of rank-1 Ising models, a fact that was key to establishing the spectral condition for rapid mixing of Glauber dynamics.
Later, Chen \& Eldan \cite{CE22} provided a different proof of the same result by giving a stochastic localization process to decompose an Ising model into a mixture of product distributions; see also \cite{BB19}, which proved a similar result using a similar decomposition.

A parallel line of work uses stochastic localization as an algorithmic technique for sampling from spin glass models, rather than as a tool for analyzing Markov chains.
This line began with the work of El Alaoui, Montanari \& Sellke \cite{EAMS22} on an algorithm to sample from a distribution close to the Sherrington--Kirkpatrick model (SK model) in Wasserstein distance, when $\beta < 1$, based on discretizing the stochastic localization process.
In contrast, the spectral condition implies that Glauber dynamics succeeds only when $\beta < 1/4$, albeit under the more stringent total variation distance.
Analogous results have also been obtained for using stochastic localization to sample from mixed $p$-spin Ising models \cite{EAMS23}, parallel sampling algorithms \cite{AHLVXY23}, posteriors of spiked matrix models \cite{MW23}, and spherical spin glasses \cite{HMP24}.
See \cite{Mon23} for a survey on recent developments, and connections to diffusion models in machine learning.


\section{Technical overview}\label{sec:overview}

We now describe how we prove \Cref{th:main} on the rapid mixing of Glauber dynamics for an Ising model on a randomly signed sparse \erdos--\renyi graph.
The proof of \Cref{thm:main-centered} for centered stochastic block model graphs, while more involved, follows similar inspiration.

\parhead{Modified log-Sobolev inequality.}
The rapid mixing of Glauber dynamics follows from a \emph{modified log-Sobolev inequality} (MLSI), which is a functional inequality relating the global behavior of any function to its ``local differences'' in the Markov chain; see, e.g., \cite[Corollary 2.8]{bt06} for the connection between MLSI and mixing times.
Concretely, for a function $f:\{\pm1\}^n\to\R_{> 0}$ and probability distribution $\mu$, the quantities of interest to us are its:
\begin{itemize}
    \item {\bf Local entropy.}
    The \emph{local entropy} of $f$ is
    $\calE_{\mu}\parens*{f, \log f} \coloneqq \E_{\bx,\by}\bracks*{ (f(\bx) - f(\by)) \cdot \log\frac{f(\bx)}{f(\by)} }$ where $\bx\sim\mu$ and $\by$ is sampled by taking a single step of the Glauber dynamics chain from $\bx$.
    \item {\bf Global entropy.} The \emph{global entropy} of $f$ is $\Ent_{\mu}[f]\coloneqq \E_{\mu} f\log f - \E_{\mu} f \log \E_{\mu} f$.
\end{itemize}
We say that Glauber dynamics for $\mu$ satisfies a \emph{modified log-Sobolev inequality} (MLSI) if for any function $f:\{\pm1\}^n\to\R_{>0}$:
\[
    \calE_{\mu}(f,\log f) \gtrsim \Ent_{\mu}[f],
\]
where the $\gtrsim$ hides a $\poly(n)$ factor.

The following is the functional inequality we prove, which also implies rapid mixing.
\begin{theorem}[Informal version of \Cref{thm:main-mixing}] \label{thm:MLSI-sparse-ising}
    Let $A$ be the adjacency matrix of a randomly signed graph $\bG\sim \ER(n,d/n)$.
    There exists a constant $\beta = \Omega(1 / \sqrt{d})$ such that $\mu = \mu_{\beta A}$ satisfies an MLSI.
\end{theorem}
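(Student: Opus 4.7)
My plan follows the blueprint signaled in the abstract: decompose the random graph $\bG \sim \ER(n,d/n)$ into a ``bulk'' subgraph $\bG_{\mathrm{bulk}}$ of bounded maximum degree (and hence spectral radius $O(\sqrt{d})$) plus a complementary ``near-forest'' $\bG_{\mathrm{for}}$ absorbing the high-degree stars that produce the outlier eigenvalues; run a stochastic localization whose tilt is driven only by the bulk portion of the interaction so that the terminal measure is effectively an Ising model supported on $\bG_{\mathrm{for}}$; then combine a pathwise entropy contraction with an endpoint MLSI on the near-forest to conclude.

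\textbf{Step 1: graph decomposition.} A greedy peeling --- repeatedly moving every edge incident to a vertex of current degree exceeding a threshold $D = D(d)$ into $\bG_{\mathrm{for}}$ --- yields, with high probability over $\bG \sim \ER(n,d/n)$, a bulk of maximum degree at most $D$ and a peeled piece of vanishing edge density. Branching-process and first-moment computations on $\ER(n,d/n)$ imply that $\bG_{\mathrm{for}}$ is locally tree-like with only $o(n)$ cycles, and a F\"uredi--Koml\'os-style trace argument on the signed adjacency matrix of $\bG_{\mathrm{bulk}}$ bounds its spectral radius by $O(\sqrt{d})$, thus fulfilling the pseudorandom requirements needed downstream.

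\textbf{Steps 2--3: localization and pathwise MLSI.} Next I would instantiate the Chen--Eldan / EKZ stochastic localization driven by $\beta A_{\bG_{\mathrm{bulk}}}$: the random external field evolves along a diffusion whose drift and noise are aligned with the bulk interaction so that as $t \to \infty$ the quadratic form $\tfrac{\beta}{2}\sigma^{\top} A_{\bG_{\mathrm{bulk}}} \sigma$ is ``absorbed'' into random tilts, leaving a residual Ising measure on $\bG_{\mathrm{for}}$ with a random external field. The standard entropic identity along the localization decomposes $\Ent_\mu[f]$ into an integral of localized entropies of $\mu_t$ plus the endpoint entropy; the spectral condition $\beta \lambda_{\max}(A_{\bG_{\mathrm{bulk}}}) < 1$, which holds for $\beta \le C/\sqrt{d}$, controls this path integrand in terms of $\calE_\mu(f,\log f)$. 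At the endpoint one needs an MLSI for Ising measures on $\bG_{\mathrm{for}}$ with arbitrary external fields; I would invoke that Ising measures on forests satisfy MLSI at arbitrary temperature (for instance via block factorization on trees with external fields), and then absorb the $o(n)$ extra cycles in $\bG_{\mathrm{for}}$ via a pseudorandomness-based comparison argument, paying at most an $n^{o(1)}$ factor in the MLSI constant.

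\textbf{Main obstacle.} I expect the most delicate step to be the endpoint MLSI on the near-forest: although $\bG_{\mathrm{for}}$ has bounded cyclomatic complexity per component, its unbounded maximum degree rules out naive tensorization, and the random field imposed by the localization is correlated across high-degree stars, so the analysis must be uniform over typical fields. A closely related hurdle is ensuring that the covariance of $\mu_t$ remains dominated by $A_{\bG_{\mathrm{bulk}}}$ throughout localization --- no new outlier eigenvalue of the effective interaction should re-emerge along the path --- which demands a martingale-based variance bound maintained for the full localization horizon, and is presumably where the restriction $\beta \lesssim 1/\sqrt{d}$ genuinely enters.
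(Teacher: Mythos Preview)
Your high-level plan matches the paper's: decompose $\bG$ into a bounded-degree bulk plus a near-forest, run stochastic localization driven by the bulk interaction, establish entropy conservation along the path via a covariance bound, and finish with an MLSI for the residual near-forest measure. You also correctly single out the two genuine technical hurdles. However, your proposed resolutions of those hurdles are too vague, and in one place subtly wrong.

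\textbf{The decomposition.} Greedy degree-peeling is not what the paper does and would not obviously suffice. The paper's construction is ball-based: for each vertex $v$ set $\ell(v)$ to be the least $\ell$ such that $|B_L(v)| \le (d(1+\eps))^L$ for all $L \ge \ell$, and take $H = \bigcup_v B_{\ell(v)}(v)$. The point is not merely that $H$ is a near-forest with few cycles, but that $H$ is \emph{$(\Delta,D)$-pseudorandom}: every radius-$r$ ball in $H$ has size at most $\Delta \cdot D^{r-1}$, with $\Delta = o(\log n)$ and $D = d(1+\eps)$, and the boundary $\partial H$ satisfies the even tighter bound $D^r$. This growth control is what every downstream bound feeds on; ``vanishing edge density'' and ``$o(n)$ cycles'' are not enough.

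\textbf{Entropy conservation along the path.} The quantity to control is $\|M^{1/2}\Cov(\mu_{J-tM,h})M^{1/2}\|$ where $M$ is supported on the bulk. The paper does \emph{not} bound this by a spectral argument on the bulk alone; since $M$ is supported on bulk rows and columns, the issue reduces to bounding $\|\Cov(\mu_{J_H,h})_{\partial H}\|$, the near-forest covariance restricted to the boundary. This is done by (i) a Hubbard--Stratonovich plus Brascamp--Lieb reduction, and (ii) an explicit trace-method bound on tree covariances exploiting the $(\Delta,D)$-pseudorandom ball-growth. The upshot is a constant bound on this restricted covariance, which is exactly why $\beta \lesssim 1/\sqrt{d}$ suffices. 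Your ``martingale-based variance bound'' does not name this mechanism.

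\textbf{Endpoint MLSI on the near-forest.} The sentence ``Ising measures on forests satisfy MLSI at arbitrary temperature via block factorization'' is the real gap. The MLSI constant for an Ising model on a tree is \emph{not} uniformly bounded; it degrades with the maximum degree, and here the near-forest has max degree $\Theta(\log n/\log\log n)$. The paper runs a \emph{second} stochastic localization on the near-forest, with a carefully engineered control matrix $C^2 = (1-s_0^2)\BH_H(-s_0)$ built from the Bethe Hessian, chosen precisely so that $\BH_H(s_0)^{-1}\cdot C^2$ has the high-degree contributions cancel and leaves an operator norm of order $1 + O(\Delta/D)$. This yields $\MLSI \ge n^{-1}e^{-c\Delta} = n^{-1-o(1)}$ using $\Delta = o(\log n)$. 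The single extra edge per component is then absorbed by Holley--Stroock, as you suggest; but the tree step itself is the heart of the matter and is not standard block factorization.
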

\begin{remark}
    In fact, we show that the polynomial factor hidden by $\gtrsim$ in the above is $n^{-1-o(1)}$.
    However, we do not stress this in the overview for the sake of simplicity.
\end{remark}

The general strategy to prove this inequality in our setting is similar to that of \cite{EKZ22,AJKPV21a,CE22}, which prove similar inequalities for Glauber dynamics on other Ising models, and can be summarized by the following.
\begin{displayquote}
    Decompose $\mu$ into a mixture $\rho$ over ``simpler'' distributions $\mu_z$ where $\mu = \E_{\bz\sim\rho} \mu_{\bz}$, and then establish the following chain of inequalities:
\end{displayquote}
\[
    \calE_{\mu}(f, \log f) \underset{\substack{\uparrow \\ \text{Conservation} \\ \text{of local} \\ \text{entropy}}}{\geq} \E_{\bz\sim\rho} \calE_{\mu_{\bz}}(f, \log f) \underset{\substack{\uparrow\\ \text{MLSI for} \\ \text{simple} \\ \text{distributions} }}{\gtrsim} \E_{\bz\sim\rho} \Ent_{\mu_{\bz}} [f]
    \underset{\substack{\uparrow\\\text{Conservation} \\ \text{of entropy}}}{\gtrsim} \Ent_{\mu} [f]\mper
\]
The first inequality is generic and does not use any properties of the structure of $\rho$; see, e.g., \cite[Page 19]{AJKPV21a}.
We reproduce the details here for completeness:
\begin{align*}
    \calE_{\mu}(f, \log f) &= \sum_{x\sim y} \frac{1}{n} \cdot \frac{\mu(x) \mu(y)}{\mu(x)+\mu(y)}\cdot\parens*{f(x)-f(y)}\cdot\log\frac{f(x)}{f(y)} \\
    &= \sum_{x\sim y} \frac{1}{n} \cdot \frac{\E_{\bz\sim\rho}\left[\mu_{\bz}(x)\right]\cdot \E_{\bz\sim\rho}\left[\mu_{\bz}(y)\right]}{\E_{\bz\sim\rho}\left[\mu_{\bz}(x)\right]+\E_{\bz\sim\rho}\left[\mu_{\bz}(y)\right]}\cdot\parens*{f(x)-f(y)}\cdot\log\frac{f(x)}{f(y)} \\
    &\ge \sum_{x\sim y} \frac{1}{n} \cdot \E_{\bz\sim\rho}\left[\frac{\mu_{\bz}(x) \mu_{\bz}(y)}{\mu_{\bz}(x)+\mu_{\bz}(y)}\right]\cdot\parens*{f(x)-f(y)}\cdot\log\frac{f(x)}{f(y)} \\
    &= \E_{\bz\sim\rho} \left[\calE_{\mu_{\bz}}(f, \log f)\right],
\end{align*}
where the inequality follows from concavity of the function $(a,b)\mapsto\frac{ab}{a+b}$ in the nonnegative quadrant.

The second inequality relies on the MLSI for the component measures, which we hope are easier to analyze. The third inequality necessarily relies on the properties of $\rho$.
We now describe our construction of a measure decomposition of $\mu$ when it is an Ising model with PSD interaction matrix $J$ and external field $h$, and then show how to establish the MLSI for the simple distributions, and conservation of entropy.\footnote{We can assume that $J\psdge 0$ without loss of generality, as adding an arbitrary diagonal matrix to $J$ does not change the Gibbs distribution.}

\parhead{Measure decomposition.}
Our measure decomposition is based on the \emph{Hubbard--Stratonovich transform}, an algebraic trick to express a Gibbs distribution arising from a quadratic Hamiltonian as a mixture of simpler distributions.
The measure decomposition takes the following form.
\begin{displayquote}
    Let $M$ be a PSD matrix that we shall call a \emph{control matrix}, let $\bx\sim\mu$, and let $\bz \coloneqq \bx + M^{-1/2}\bg$ for $\bg\sim\calN(0, I)$ independent of $\bx$.
    Let $\rho$ denote the distribution of $\bz$ and let $\mu_{\bz}$ denote the distribution of $\bx|\bz$.    
\end{displayquote}
We now perform a fairly mechanical calculation to determine the distribution of $\bx|\bz$, just to illustrate a favorable algebraic cancellation.
Let $P(x), P(z|x)$ and $P(x,z)$ be the joint densities of $\bx$, $\bz|\bx$ and $(\bx,\bz)$ respectively.
We have:
\begin{align*}
    \Pr[\bx = x|\bz = z] &\propto P(x, z) \\
    &= P(x)\cdot P(z|x) \\
    &\propto \exp\parens*{\frac{1}{2}x^{\top}Jx + \angles*{h,x}}\cdot\exp\parens*{-\frac{1}{2}(z-x)^{\top}M(z-x)} \\
    &\propto \exp\parens*{\frac{1}{2}x^{\top}(J-M)x + \angles*{h+Mz, x}}\mper
\end{align*}

\parhead{Designing the control matrix}
The upshot of the above is that the measures $\mu_{\bz}$ are Ising models with interaction matrix $J-M$ and some external field depending on $\bz$.
We would like to design $M$ such that
\begin{itemize}
    \item It is relatively easy to prove an MLSI for an Ising model with interaction matrix $J-M$.
    \item The mixture $\rho$ satisfies conservation of entropy.
\end{itemize}

Choosing $M = J$ makes the interactions disappear altogether, and makes $\mu_z$ a product distribution.
This is desirable as every product distribution satisfies an MLSI.
Indeed, this choice is made in the work of Chen and Eldan \cite[Corollary 51]{CE22} where they prove an MLSI for Glauber dynamics on Ising models when $J$ has spectral diameter at most $1$ (see \cref{eq:spectral-condition}).
They then use the bound on the spectral diameter to prove conservation of entropy for the resulting product decomposition of the Ising model.

Unfortunately, when $J$ is a matrix associated with a sparse \erdos--\renyi graph, the abnormally high-degree vertices cause a large number of outlier eigenvalues, which is a hurdle for their technique to establish conservation of entropy.
Nevertheless, studying their technique motivates our design of $M$.

\parhead{Conservation of entropy.}
Recall that \emph{conservation of entropy} for our decomposition refers to the following functional inequality.
For $f:\{\pm1\}^n\to\R_{>0}$: 
\[
    \E_{\bz\sim\rho} \Ent_{\mu_{\bz}}[f] \gtrsim \Ent_{\mu}[f]\mper
\]
The strategy for doing this, as carried out in \cite{EKZ22,CE22}, is to chart a continuous path between $\mu$ and $\mu_{\bz}$ and control the derivative of entropy along this path.
Concretely, there is a stochastic process $(\mu_t)_{0\le t\le 1}$ on the space of probability distributions called \emph{stochastic localization} with the following properties; see \cite[Proposition 39 and Lemma 40]{CE22}:\footnote{We refer the reader to \cite{EAM22} for a nice exposition on stochastic localization and why it indeed charts such a path.}
\begin{itemize}
    \item $\mu_0 = \mu$ and $\mu_1 = \mu_{\bz}$ for $\bz\sim\rho$.
    \item The function $F(t) = \E \Ent_{\mu_t}[f]$, where the expectation is taken over the randomness in the choice of $\mu_{t}$, satisfies the following lower bound on its derivative:
    \[
        \restr{\frac{\dif F}{\dif t}}{t} \ge -\sup_{h}\norm*{M^{1/2}\cdot\Cov\parens*{\mu_{J-tM,h}}\cdot M^{1/2}}_{\mathrm{op}} \cdot F(t)\mper \numberthis \label{eq:deriv-sl}
    \]
\end{itemize}
If we can show for any $h \in \R^{n}$ and $t\in[0,1]$ that $\norm*{M^{1/2}\cdot\Cov\parens*{\mu_{J-tM, h}}\cdot M^{1/2}}_{\mathrm{op}}\le q$ for some $q > 0$, then
\[
    F(1) \ge F(0)\cdot\exp\parens*{-q}\mcom
\]
which, in more familiar terms, is
\[
    \E_{\bz\sim\rho} \Ent_{\mu_{\bz}}[f] \ge \exp\parens*{-q} \Ent_{\mu}[f]\mper
\]
Our goal thus reduces to designing $M$ such that $\norm*{M^{1/2}\cdot\Cov\parens*{\mu_{J-tM,h}} \cdot M^{1/2}}_{\mathrm{op}}$ is small, and that it is easy to  prove an MLSI for Ising models with interaction matrix $J-M$.

Intuitively, the large spectral norm of $M$ itself can be blamed on the high-degree vertices.
To heuristically get a sense of where $\Cov\parens*{\mu_{J-tM,h}}$ might have large spectral norm, let us pretend $J$ is the interaction matrix of a tree, and consider the special case of $h=0$ and $t=0$. 
We remark that the final proof will \emph{uniformly} bound the covariance matrix for all external fields $h \in \R^n$ and times $t \in [0,1]$ with high probability over the graph, although we omit the details here for the sake of simplicity.

Under this pretense, via standard formulas for correlations in tree graphical models,
\[
    \Cov\parens*{\mu_{J,0}}_{i,j}\approx\Omega\parens*{\frac{1}{\sqrt{d}}}^{\mathrm{dist}(i,j)}\mper
\]
Let $n_{\ell}(i)$ denote the number of neighbors of vertex $i$ at distance exactly $\ell$ from $i$.
The entries of the $i$th row corresponding to the distance-$\ell$ neighbors of a vertex $i$ are at least $\parens*{1/d}^{\ell/2}$, and since the operator norm of a matrix is at least the $\ell_2$ norm of any row, we have:
\[
    \norm*{\Cov\parens*{\mu_{J,0}}}_{\mathrm{op}} \gtrsim \Omega\parens*{\sqrt{\frac{n_{\ell}(i)}{d^{\ell}}}}\mper
\]
This can be huge if vertex $i$ is abnormally high-degree, or more generally, if vertex $i$ has an abnormally large $\ell$-neighborhood.
In particular, if $n_{\ell}(i) > \parens*{Cd}^{\ell}$ for some appropriately chosen constant $C > 1$, that would reflect in a blow-up in the spectral norm of $\Cov\parens*{\mu_{J,0}}_{N_{\ell}(i)}$, the covariance matrix restricted to the $\ell$-neighborhood of $i$.
The idea then is to consider $M$ that is supported only on rows and columns away from such ``rogue'' vertices $i$.

In particular, we partition the graph into two pieces, one piece with vertices close to the rogue vertices, and another piece with vertices far away from rogue vertices, and design $M$ to be supported on the rows and columns from the second piece.

\parhead{Graph decomposition.}
Concretely, for a graph $G$:
\begin{itemize}
    \item For each vertex $v$, define $\ell(v)$ as the smallest value of $\ell$ such that for all $L\ge \ell$, the number of vertices at distance $\le L$ from $v$ is at most $(d(1+\eps))^L$.
    \item Set $H$ as $\bigcup_{v\in V} B_{\ell(v)}(v)$, where $B_{\ell(v)}(v)$ is the radius $\ell(v)$ ball around $v$.
\end{itemize}
The above gives a natural partition of the graph $G$ into $B \coloneqq G\setminus H$, which we call the \emph{bulk}, and $H$, which we call the \emph{near-forest}.
The point of this decomposition is that all the rogue vertices reside in the near-forest, and the vertices touched by edges in the bulk are ``tame''; see \Cref{fig:graph-decomposition} for an illustration.
\begin{figure}
    \centering
    \begin{tikzpicture}[
    vertex/.style={
        circle, 
        draw=black, 
        fill=black, 
        align=center, 
        minimum size=1mm,
        inner sep=0pt, 
        outer sep=0pt},
    center/.style={
        circle, 
        draw=black, 
        fill=black, 
        align=center, 
        minimum size=2mm,
        inner sep=0pt, 
        outer sep=0pt},
    outside vertex/.style={
        circle, 
        draw=gray, 
        fill=gray, 
        minimum size=1mm,
        inner sep=0pt,
        outer sep=0pt,
        scale=0.8},
    special ball/.style={
        circle, 
        draw=blue, 
        preaction={fill, white}, 
        opacity=0.5, 
        pattern=north east lines, 
        pattern color=blue!80, 
        minimum size=20mm},
    graph/.style 2 args={
        minimum width=#1cm,
        minimum height=#2cm,
        thick, 
        rounded corners=10mm, 
        fill=green!20, 
        opacity=0.4,
    },
    boundary edge/.style={
        gray, 
        dashed
    },
    bulk edge/.style={
        gray
    }
    ]

\pgfdeclarelayer{background}
\pgfdeclarelayer{foreground}
\pgfsetlayers{background,main,foreground}

\begin{scope}
    
    \begin{pgfonlayer}{background}
        \draw[graph] (0, 0) rectangle (10, 6) {};
        \def\points{0/1.5/1.5,1/2.7/4.2,2/7.0/3.2,3/8.3/2.8,4/1.9/2.2,5/1.1/5.0,6/2.2/2.0,7/8.5/1.1,8/2.1/3.4,9/1.3/4.5};
        \node[outside vertex] (b1) at (1.5, 2.2) {};
        \node[outside vertex] (b2) at (4.5, 1.5) {};
        \node[outside vertex] (b3) at (5.5, 2) {};
        \node[outside vertex] (b4) at (7.2, 2.2) {};
        \node[outside vertex] (b5) at (8, 2.5) {};
        \node[outside vertex] (b6) at (2, 4.5) {};
        \node[outside vertex] (b7) at (1.3, 5.2) {};
        \node[outside vertex] (b8) at (2.5, 4.8) {};
        \node[outside vertex] (b9) at (6.1, 3.7) {};

        \node[outside vertex] (p0) at (2.5, 4.8) {};
        \node[outside vertex] (p1) at (3, 5) {};
        \node[outside vertex] (p2) at (3.6, 4.3) {};
        \node[outside vertex] (p3) at (4.2, 5.2) {};
        \node[outside vertex] (p4) at (5, 4.2) {};
        \node[outside vertex] (p5) at (5.2, 3.5) {};
        \node[outside vertex] (p6) at (6, 3.2) {};
        \node[outside vertex] (p7) at (6.8, 3.5) {};
        \node[outside vertex] (p8) at (7.2, 2.2) {};
        \node[outside vertex] (p9) at (6.5, 1.6) {};
        \node[outside vertex] (p10) at (5.8, 1.2) {};
        \node[outside vertex] (p11) at (4.5, 1.5) {};
        \node[outside vertex] (p12) at (4, 1.5) {};
        \node[outside vertex] (p13) at (3, 1.8) {};

        \foreach \i in {0,...,12} {
            \draw[bulk edge] (p\i) -- (p\the\numexpr\i+1\relax);
        }
    
        \node at (0.75,0.75) {\huge$G$};
    \end{pgfonlayer}

    \begin{pgfonlayer}{foreground}
        \begin{scope}[shift={(3.5, 3)}]
            \node[special ball] (C) at (0, 0) {};

            \node[center] (c1) at (0, 0) {};
            \node[vertex] (v1) at (0.3, 0.2){};
            \node[vertex] (v2) at (-0.1, 0.6){};
            \node[vertex] (v3) at (-0.5, 0.1){};
            \node[vertex] (v4) at (-0.5, -0.2){};
            \node[vertex] (v5) at (0, -0.4){};
            \node[vertex] (v1-6) at (0.3, -0.5){};
            \node[vertex] (v1-7) at (0.4, -0.2){};
            \node[vertex] (v1-8) at (-0.2, 0.2){};

            \foreach \i in {1, 2, 3, 4, 5} {
                \draw (c1) -- (v\i);
            };
            \foreach \i in {6, 7, 8} {
                \draw (c1) -- (v1-\i);
            };

            \node[vertex] (v2-1) at (0.5, 0.4){};
            \node[vertex] (v2-2) at (0.7, 0.1){};
            \node[vertex] (v2-3) at (0.6, -0.1){};
            
            \foreach \i in {1, 2, 3} {
                \draw (v1) -- (v2-\i);
            };

            \foreach \i in {1, 2,4, 6, 7, 11} {
                \node[outside vertex] (o\i)at ({\i*360/12}:1.4) {};
            };

            \draw[boundary edge] (v2-1) -- (o1);
            \draw[boundary edge] (v2-2) -- (o11);
            \draw[boundary edge] (v2) -- (o4);
            \draw[boundary edge] (v3) -- (o6);
            \draw[boundary edge] (v4) -- (o7);
            \draw[boundary edge] (v4) -- (o6);
            \draw[bulk edge] (o2) -- (o1);
            \draw[bulk edge] (o4) -- (p1);
            \draw[bulk edge] (o1) -- (p4);
            \draw[bulk edge] (o2) -- (p2);
            \draw[bulk edge] (o2) -- (p3);

        \end{scope}
        
        \begin{scope}[shift={(6.5, 4.75)},scale=0.8,every node/.append style={transform shape}]
            \node[special ball] (E) at (0, 0) {};
            \node[center] (c1) at (0, 0) {};
            \node[vertex] (v1-overlap) at (0.5, 0.2){};
            \node[vertex] (v2) at (0.1, 0.7){};
            \node[vertex] (v3) at (-0.4, 0.2){};
            \node[vertex] (v4) at (-0.6, -0.3){};
            \node[vertex] (v4-overlap) at (0.2, -0.4){};
            \node[vertex] (v1-6) at (0, -0.5){};
            \node[vertex] (v1-7) at (0.3, -0.2){};
            \node[vertex] (v1-8) at (-0.3, 0.6){};

            \draw (c1) -- (v1-overlap);
            \draw (c1) -- (v2);
            \draw (c1) -- (v3);
            \draw (c1) -- (v4);
            \draw (c1) -- (v4-overlap);
            \foreach \i in {6, 7, 8} {
                \draw (c1) -- (v1-\i);
            };

            \draw[boundary edge] (v4) -- (b9);
            \draw[bulk edge] (p6) -- (p7) -- (b9) -- (p6);
        \end{scope}

        
        \begin{scope}[shift={(7.75, 4.5)},scale=0.8,every node/.append style={transform shape}]
            \node[special ball] (D) at (0, 0) {};
            \node[center] (c1) at (0, 0) {};
            \node[vertex] (v1) at (0.3, 0.2){};
            \node[vertex] (v2) at (-0.1, 0.6){};
            \node[vertex] (overlap) at (-0.8, -0.05){};
            \node[vertex] (v3) at (-0.3, 0.3){};
            \node[vertex] (v4) at (-0.1, -0.6){};
            \node[vertex] (v1-6) at (0.3, -0.3){};
            \node[vertex] (v1-7) at (0.5, -0.1){};
            \node[vertex] (v1-8) at (0.1, -0.6){};

            \draw (overlap) -- (v1-overlap);
            \draw (overlap) -- (v4-overlap);

            \foreach \i in {1, 2, 3, 4} {
                \draw (c1) -- (v\i);
            };
            \foreach \i in {6, 7, 8} {
                \draw (c1) -- (v1-\i);
            };

            \draw (c1) -- (overlap);
        \end{scope}

        \begin{scope}[shift={(8, 1.5)},scale=0.5,every node/.append style={transform shape}]
            \node[special ball] (E) at (0, 0) {};
            \node[center] (c1) at (0, 0) {};
            \node[vertex] (v1) at (0.7, 0.2){};
            \node[vertex] (v2) at (0.1, 0.6){};
            \node[vertex] (v3) at (-0.5, -0.5){};
            \node[vertex] (v4) at (-0.3, 0.3){};
            \node[vertex] (v5) at (0.1, -0.6){};
            \node[vertex] (v1-6) at (0.3, -0.3){};
            \node[vertex] (v1-7) at (0.5, -0.1){};
            \node[vertex] (v1-8) at (-0.5, 0.0){};

            \foreach \i in {1, 2, 3, 4, 5} {
                \draw (c1) -- (v\i);
            };
            \foreach \i in {6, 7, 8} {
                \draw (c1) -- (v1-\i);
            };

            \draw[boundary edge] (v2) -- (b5);
        \end{scope}

        \draw[bulk edge] (b1) -- (o6);
        \draw[bulk edge] (b1) -- (o7);
        \draw[bulk edge] (b2) -- (b3);
        \draw[bulk edge] (b2) -- (o11);
        \draw[bulk edge] (b3) -- (o11);
        \draw[bulk edge] (b4) -- (b5);
        \draw[bulk edge] (b6) -- (o4);
        \draw[bulk edge] (b7) -- (b6);
        \draw[bulk edge] (b6) -- (b8);
        \draw[bulk edge] (b7) -- (b8);
    \end{pgfonlayer}
    
    \end{scope}
    
    \begin{scope}[shift={(10.5, 2)}]
        \draw[thick] (0,0.75) rectangle (3.5,2.6);
        
        \node[anchor=west] at (0,2.25) {\textbf{Legend}};
    
        \node[anchor=west] at (0.25,1.75) {\tikz\draw[fill=green!20, opacity=0.5](0,0) rectangle (0.5,0.3); {Bulk $B$}};
        \node[anchor=west] at (0.25,1.25) {\tikz\fill[special ball] (0,0) rectangle (0.5,0.3); Near-forest $H$};
    \end{scope}
    \end{tikzpicture}
    \caption{An illustration of our graph decomposition (\Cref{lem:graph-decomp-overview}) for a sparse SBM with average degree $d = 3$ and $\eps = 0.5$. 
    The bulk $B$ (green) consists solely of vertices with degree at most $d(1+\eps) = 4.5$. 
    The near-forest $H$ (shaded blue circles) comprise the local neighborhoods of high-degree vertices (enlarged for emphasis) in $\bG$. 
    The local neighborhoods of nearby high-degree vertices can merge, as shown by the two blue circles in the top right.
    Nevertheless, with high probability, all the connected components in $H$ are trees with at most one additional edge.}
    \label{fig:graph-decomposition}
\end{figure}
We prove the following in the context of sparse random graphs.
\begin{restatable}{lemma}{graphoverview}\label{lem:graph-decomp-overview}
    For $\bG\sim \ER(n,d/n)$ and random signing $\bc$ of the edges of $\bG$, there exists with high probability a subgraph $H$ and $\eps = O(\log d / d)^{1/3}$ such that:
    \begin{itemize}
        \item Every connected component of $H$ is a tree plus at most one additional edge.
        \item The diameter of each connected component of $H$ is at most $O\parens*{\frac{\log n}{\eps^3 d}}$.
        \item For any vertex $v$ and $\ell \ge 1$, the size of $B_{\ell}(v)$ is at most $\Delta \cdot \parens*{d(1+\eps)}^{\ell-1}$ for $\Delta = o(\log n)$.
        \item For any vertex $v$ and $\ell \ge 1$, the size of $B_{\ell}(v) \cap B$ is at most $(d(1+\eps))^{\ell}$ where $B$ is the set of all vertices touched by an edge in $\bG\setminus H$.
        \item The spectral radius of the signed adjacency matrix of $\bG\setminus H$ is at most $2\sqrt{d}(1+\eps)$.
    \end{itemize}
\end{restatable}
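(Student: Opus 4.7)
My plan is to verify the five properties by establishing a single high-probability structural event on $\bG$ via breadth-first-search growth estimates, and then reading off each property from this event. The construction sets $H = \bigcup_v B_{\ell(v)}(v)$, where $\ell(v)$ is the earliest scale from which the ball around $v$ grows at rate at most $d(1+\eps)$; intuitively, $H$ covers the neighborhoods of all ``rogue'' vertices whose BFS grows too quickly.

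For the ball-growth properties (3) and (4), the Chernoff/Galton--Watson tail bound for BFS in $\ER(n,d/n)$ gives $\Pr[|B_L(v)| > (d(1+\eps))^L] \leq \exp(-\Omega(\eps^2 (d(1+\eps))^L))$. A union bound over vertices $v$ and scales $L$, together with the classical maximum-degree bound $\Delta(\bG) = O(\log n /\log\log n) = o(\log n)$, yields property (3) and the uniform bound $\ell(v) \leq L_0 := O(\log\log n/\log d)$. For property (4), every vertex $u$ of the bulk $B$ satisfies $\ell(u) = 0$, which forces $\deg_{\bG}(u) \leq d(1+\eps)$; hence BFS restricted to $B$-vertices expands by at most a factor $d(1+\eps)$ per layer, giving $|B_\ell(v) \cap B| \leq (d(1+\eps))^\ell$.

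For the diameter bound (2), a connected component of $H$ is a union of overlapping balls $B_{\ell(v_i)}(v_i)$, and since each individual $\ell(v_i) \leq L_0$, controlling the diameter reduces to controlling the length $k$ of a chain of overlapping rogue balls. I would estimate the expected number of such chains of length $k$ by a first-moment calculation enumerating rooted subgraph patterns, where the probability that a fixed chain of $k$ specified rogue vertices survives in $\bG$ decays exponentially in $k$; solving for the smallest $k$ at which this is $o(1/n)$ gives the diameter bound $O(\log n /(\eps^3 d))$, with the $\eps^{-3}$ arising from combining the $\eps^2$ decay of the Chernoff tail with the $\eps^{-1}$ factor from geometric growth. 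Property (1) then follows from the classical first-moment bound that the expected number of pairs of short cycles within any diameter-$O(\log n/(\eps^3 d))$ region of $\ER(n,d/n)$ is $o(1)$, a standard consequence of $\bG$ being locally tree-like.

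Finally, for property (5), the spectral radius bound $2\sqrt{d}(1+\eps)$ on the random signed adjacency of $\bG \setminus H$ follows from a Bordenave-style non-backtracking / trace-method analysis for random signings of sparse locally tree-like graphs. The required inputs --- bounded maximum degree $\leq d(1+\eps)$ and few short tangles in the bulk --- are both provided by the earlier steps. I expect property (5) to be the main technical obstacle: the $2\sqrt{d}$ constant is sharp (matching the Alon--Boppana/Friedman lower bound) rather than a generic $O(\sqrt{d})$ obtainable from crude trace bounds, and running the non-backtracking machinery on the bulk subgraph requires carefully accounting for the non-trivial structure inherited from the deletion of high-degree regions.
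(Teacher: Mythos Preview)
Your construction and outline for properties (1)--(4) match the paper almost exactly: the same excision $H=\bigcup_v B_{\ell(v)}(v)$, a branching-process tail bound on ball sizes, a first-moment argument over chains of overlapping rogue balls (the paper formalizes this via paths in a weighted ``cluster graph'', and your parsing of the $\eps^{-3}$ as $\eps^{-2}$ from Chernoff times $\eps^{-1}$ from geometric growth is exactly right), and then the standard small-set tree-excess lemma for $\ER(n,d/n)$ once the diameter is controlled. Two inaccuracies worth noting. Your tail bound $\exp(-\Omega(\eps^2(d(1+\eps))^L))$ is too strong --- the base of the doubly-exponential decay is $1+\Theta(\eps)$, not $d(1+\eps)$ (already at $L=2$, the event $\{|N_1(v)|\approx d(1+2\eps),\ |N_2(v)|\text{ typical}\}$ has probability $\exp(-\Theta(\eps^2 d))$, not $\exp(-\Theta(\eps^2 d^2))$), so the uniform bound on $\ell(v)$ is $O((\log\log n)/\eps)$ rather than $O((\log\log n)/\log d)$. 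And for property (4), ``BFS restricted to $B$-vertices'' bounds the wrong object; the correct point is that $u\in B$ forces $\ell(u)=0$, hence $|B_r^{\bG}(u)|\le(d(1+\eps))^r$ for \emph{every} $r$, not just $r=1$.

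Where you genuinely diverge from the paper is property (5), which you flag as the main obstacle. It is not: the paper never analyzes the bulk's spectrum directly. It quotes a black-box bound $\rho(B_{\bc})\le(1+o(1))\sqrt{d}$ on the non-backtracking spectral radius of the \emph{full} randomly-signed $\bG$, passes through the Ihara--Bass/Bethe Hessian identity to get $(D_{\bG}-I)t^2 - A_{\bc}t + I \succeq 0$ for all $|t|<1/((1+\eps)\sqrt{d})$, and then simply \emph{restricts} this PSD inequality to the bounded-degree vertex set, where $\|(D_{\bG})_S\|\le d(1+\eps)$ immediately yields $\|(A_{\bc})_S\|\le 2\sqrt{d}(1+\eps)$. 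The ``non-trivial structure inherited from deletion'' that worries you never enters. The paper's real technical effort goes into property (2), not (5).
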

We defer a discussion of the proof of this graph decomposition lemma to later in this overview, and focus on the proof of mixing of Glauber dynamics for now.

The above statement motivates the choice of the control matrix as:
\[
    M\coloneqq\beta\cdot\parens*{ A_{\bG \setminus H} + 2\sqrt{d}(1+\eps)\cdot I_{\bG\setminus H}}\mper
\]

\begin{figure}
\resizebox{\linewidth}{!}{
\begin{tikzpicture}
\node (P01) [smallrect] at (-2, 2.5) {\small\begin{tabular}{c}\Cref{thm:main-mixing}\end{tabular}};
\node (P00) [smallrect] at (-2, 4) {\small\begin{tabular}{c}\Cref{th:main}\end{tabular}};

\node (P10) [bigrect] at (5, 5cm) {\small\begin{tabular}{c}\Cref{lem:random-graph-decomposition}\\Graph decomposition\\of stochastic block\\models (\Cref{fig:graph-decomposition})\end{tabular}};
\node (P11) [bigrect] at (5, 2.5cm) {\small\begin{tabular}{c}Conservation of\\entropy\end{tabular}};
\node (P12) [bigrect] at (5, 0cm) {\small\begin{tabular}{c}\Cref{thm:mlsi-bdd-growth-tree}\\MLSI bound\\for near-forest\end{tabular}};

\node (P20) [mediumrect] at (11,3.75cm) {\small\begin{tabular}{c}\Cref{lem:bulk-specrad-signed}\\Boundedness of\\control matrix\\from bulk\end{tabular}};
\node (P22) [mediumrect] at (11,1.25cm) {\small\begin{tabular}{c}\Cref{thm:entropy-conservation}\\Covariance bound\\along path\end{tabular}};

\node (P32) [smallrect] at (18,1.25cm) {\small\begin{tabular}{c}\Cref{lem:near-forest-cov}\\Covariance bound\\for near-forest\end{tabular}};

\draw[-{Latex[length=2mm]}] (P01) -- (P00);

\coordinate (Qf) at ([xshift=+2.75cm]P01.east); 
\draw (P10) .. controls (2,5) and (3,2.5) .. (Qf) -- (Qf); 
\draw (P11) -- (Qf);
\draw (P12) .. controls (2,0) and (3,2.5) .. (Qf) -- (Qf);
\draw[-{Latex[length=2mm]}] (Qf) -- node[above,midway] {\small \Cref{thm:sum-MLSI}} node[below,midway] {\small Localization} (P01);

\coordinate (Qf3) at ([xshift=-0.5cm,yshift=+0.25cm]P20.west);
\coordinate (Qf4) at ([yshift=+0.25cm]P20.west);
\draw (P10) .. controls (8,5) and (8,4) .. (Qf3) -- (Qf3);
\draw[-{Latex[length=2mm]}] (Qf3) -- (Qf4);

\coordinate (Qf2) at ([xshift=+0.5cm]P11.east);
\coordinate (P20temp) at ([yshift=-0.25cm]P20.west);
\draw (P20temp) .. controls (8,3.5) and (8,2.5) .. (Qf2) -- (Qf2);
\coordinate (P22temp) at ([yshift=+0.25cm]P20.west);
\draw (P22) .. controls (8,1.5) and (8,2.5) .. (Qf2) -- (Qf2);
\draw[-{Latex[length=2mm]}] (Qf2) -- (P11);

\draw[-{Latex[length=2mm]}] (P32) -- node[below,midway] {\small HS transform} node[above,midway] {\small\Cref{lem:interaction-sum-cov-bound}} (P22);

\end{tikzpicture}
}
\caption{A flow chart outlining our proof of an MLSI for diluted spin glasses. }
\label{fig:spinglass-flowchart}
\end{figure}
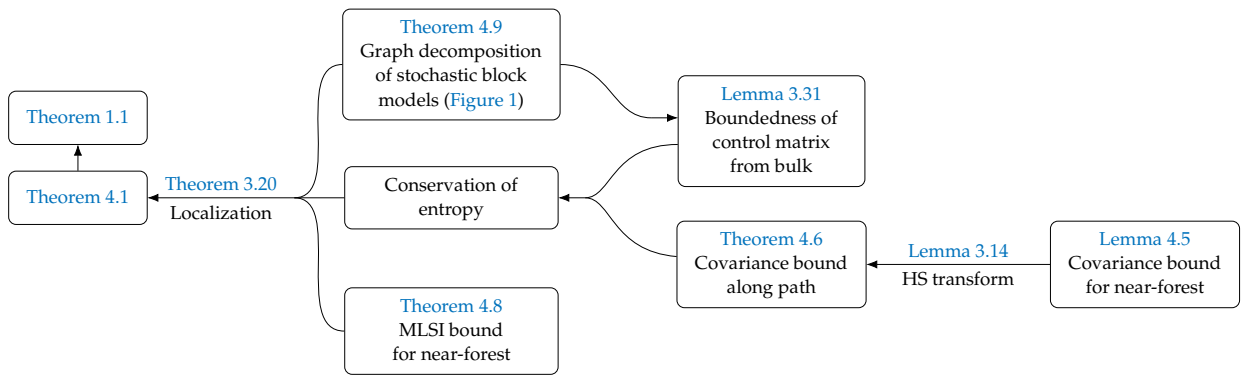

An outline of the proof is presented in \Cref{fig:spinglass-flowchart}. We remark that other proofs of MLSIs that utilize stochastic localization, such as that of \eqref{eq:spectral-condition}, also follow a similar outline, albeit with many steps being much simpler. We shall now concretely clarify our goals before proceeding. 
\begin{itemize}
    \item Recall that our goal in proving conservation of entropy was to give an upper bound on $\norm*{M^{1/2}\cdot\Cov\parens*{\mu_{J-tM, h}}\cdot M^{1/2}}_{\mathrm{op}}$.
    \item We would also like to prove that any Ising model with interaction matrix $J-M$ satisfies an MLSI.
    In light of \Cref{lem:graph-decomp-overview}, the structure of $J-M$ is a ``near-forest'' where every connected component is a low-diameter tree with at most one additional edge with ``not too large'' local neighborhoods.
    Our goal is to prove an MLSI for such near-forest Ising models.
\end{itemize}

\parhead{Covariance bound along localization path.}
Observe that $\norm*{M^{1/2}\cdot\Cov\parens*{\mu_{J-tM, h}}\cdot M^{1/2}}_{\mathrm{op}}$ is at most $\norm*{\Cov\parens*{\mu_{J-tM, h}} \cdot M}_{\mathrm{op}}$.
We show that this spectral norm is $O(1)$.

For simplicity, in this overview, we focus on the special case when $t = 0$ and $h = 0$.
Let us denote $\Cov\parens*{\mu_{J,0}}$ as $Q$.
First observe that:
\[
    \norm*{Q \cdot M}_{\mathrm{op}} \le \norm*{Q}_{\mathrm{op}}\cdot\norm*{M}_{\mathrm{op}}\mper
\]
To obtain a handle on $Q$, we perform a trick identical to how we constructed our measure decomposition, albeit for a completely different purpose now.
\begin{displayquote}
    Let $\bx\sim\mu_{J,0}$, and let $\bz\coloneqq \bx + M^{-1/2}\bg$ for $\bg\sim\calN(0,I)$.\footnote{Technically, $M$ needs to be slightly modified by adding a tiny diagonal on the entries outside $V(B)$, but we ignore that consideration here.}
\end{displayquote}
We obtain a bound on $Q$ by using $\Cov[\bz] \psdge \Cov[\bx]$, which lets us pass to giving a bound on $\Cov[\bz]$.
To this end, we prove that the distribution of $\bz$ is \emph{strongly log-concave} and give quantitative lower bound on its strong log-concavity.
A bound on the covariance then immediately follows from the Brascamp--Lieb inequality, which says:
\begin{displayquote}
    Suppose $\nabla^2 \log f(y) \psdge \Gamma$ for all $y$, where $f$ is the density of $\bz$. Then, $\Cov[\bz]\psdle\Gamma^{-1}$.
\end{displayquote}
A standard calculation reveals that it suffices to give a lower bound in the PSD order on the matrix $M - M\cdot\Cov\parens*{\mu_{J-M,0}}\cdot M$.
Since $M$ is supported only on the ``bulk'' $B\coloneqq \bG\setminus H$, we can write this as $M - M\cdot\Cov\parens*{\mu_{J-M,0}}_{B}\cdot M$, where $\Cov(\mu_{J-M,0})_{B}$ is the principal submatrix of the covariance matrix on the rows and columns corresponding to non-isolated vertices in $B$.\footnote{We need the lower bound $\Gamma$ to be strictly positive definite, which motivates the trick from the previous footnote of adding a small scaling of the identity matrix to the $V(H)$ block.
However, we ignore this technicality for the present discussion.
}
We now make the observation
\[
    M - M\cdot\Cov\parens*{\mu_{J-M,0}}\cdot M = M - M \cdot \Cov\left( \mu_{J-M,0} \right)_B \cdot M \psdge M - M^2 \cdot \norm*{\Cov\parens*{\mu_{J-M,0}}_B}_{\mathrm{op}}\mcom
\]
which motivates obtaining a bound on $\Cov\parens*{\mu_{J-M,0}}_{B}$.
The spectral norm of $M$ can be suitably bounded using existing results on the spectra of sparse random graphs by a small enough constant that scales linearly with $\beta$; see \Cref{sec:graph-prelims} for details.
We will show that $\norm*{\Cov\parens*{\mu_{J-M,0}}_{B}}_{\mathrm{op}}\le O(1)$.

\parhead{The Ising model on near-forests.}
We now discuss how to control the spectral behavior of $\Cov\parens*{\mu_{H, h}}$ where $H$ is a near-forest satisfying the conditions from \Cref{lem:graph-decomp-overview}, as well as how we prove the MLSI for such Ising models.

Observe that $\Cov\parens*{\mu_{H,h}}$ is a block diagonal matrix where each block corresponds to the covariance matrix of the Ising model of a connected component $F$, which is a tree with at most one excess edge.
Each $F$ has a specially identified boundary $\partial F$, which is the intersection of $F$ with $B$.
Thus, it is sufficient to bound the spectral norm of each $\Cov\parens*{\mu_{F, h}}_{\partial F}$.
Bounding the spectral norm of this matrix entails the following steps:
\begin{itemize}
    \item We reduce to proving a covariance norm bound to the case when $F$ is a tree, the interactions are nonnegative, and the external field is $0$, but omit the description of this reduction in the overview; see the proof of \Cref{lem:near-forest-cov} for details.
    \item There is a simple explicit formula for the covariance matrix of an Ising model on a tree with zero external field in terms of the interaction matrix: for a pair of vertices $i$ and $j$, and the unique path $P$ between $i$ and $j$, the covariance $\Cov\parens*{\mu_{F,0}}_{i,j}$ is equal to $\prod_{e\in P}\tanh(J_e)$.
    \item We use the trace moment method to bound the desired spectral norm.
    The explicit formula from the previous point makes such a calculation tractable.
    When performing this calculation, one needs to count closed walks in the tree, and the bound on the size of balls in $H$ guaranteed by \Cref{lem:graph-decomp-overview} lead us to the desired bound.
\end{itemize}

We now turn our attention to proving that the leftover near-forest in $J-M$ satisfies an MLSI for any choice of external field $h$.
In order to make the discussion smoother and concretely point out which models we show an MLSI for, we make the following definition (see \cref{def:pseudorandom-graph} for a formal version).
\begin{definition}[$(\Delta, D)$-pseudorandomness, informal]
    We say a graph $H$ is \emph{$(\Delta,D)$-pseudorandom} if for every vertex $v\in V(H)$, the number of vertices at distance at most $\ell$ from $v$ is at most $\Delta\cdot D^{\ell-1}$.
\end{definition}
It suffices for us to prove an MLSI for $(\Delta, D)$-pseudorandom near-forests where $\Delta = o(\log n)$ and $D = d(1+\eps)$.
To show an MLSI for pseudorandom near-forests, we reduce to exhibiting an MLSI for pseudorandom trees; see \Cref{sec:mlsi-tree-ising} for details.

We prove the following for pseudorandom trees.
\begin{theorem}[Informal version of \Cref{thm:mlsi-bdd-growth-tree}]
    Suppose $\Delta = o(\log n)$ and $0\le \gamma < 1$ is an absolute constant.
    Then, for any $(\Delta,D)$-pseudorandom tree $H$ with interaction matrix $J$ with all entries in $\parens*{-\frac{\gamma}{\sqrt{D}}, \frac{\gamma}{\sqrt{D}}}$ and any external field $h$, Glauber dynamics for $\mu_{J,h}$ satisfies an MLSI.
\end{theorem}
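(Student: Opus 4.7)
The plan is to establish the MLSI via the spectral independence framework. Specifically, I aim to prove a uniform bound $\norm*{\Cov\parens*{\mu_{J, h}}}_{\mathrm{op}} = O(1)$ over all external fields $h \in \R^{V(H)}$, and then invoke the machinery converting spectral independence to entropy factorization for Ising models (e.g., \cite{AJKPV21a, AJKPV24}) to obtain an MLSI with inverse-polynomial constant $n^{-1-o_n(1)}$. For Ising models, the required bounded-marginal hypothesis is handled by separating off spins effectively pinned by a large external field into independent product factors on the remaining pseudorandom subtree, and then applying the covariance bound to each factor.

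For the covariance bound, I would first reduce to the ferromagnetic zero-field case. Since $H$ is a tree, any sign pattern on the edges of $J$ can be gauged to all positive by multiplying $\sigma_v$ by a suitable $\pm 1$ depending on $v$, which preserves the absolute values of covariance entries. General external fields $h$ can then be handled by further reductions (e.g., pinning arguments combined with Griffiths--Kelly--Sherman-type monotonicity), so the essential new task is to bound $\norm*{\Cov}_{\mathrm{op}}$ in the zero-field ferromagnetic case, where the clean identity $\Cov_{ij} = \prod_{e \in P_{ij}} \tanh(J_e)$ holds on the tree, giving the pointwise estimate $|\Cov_{ij}| \leq (\gamma/\sqrt{D})^{d(i,j)}$.

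I would then bound $\norm*{\Cov}_{\mathrm{op}}$ via the trace moment method:
\[
    \norm*{\Cov}_{\mathrm{op}}^{2k} \leq \Tr\parens*{\Cov^{2k}} \leq \sum_{i_0, \ldots, i_{2k-1}} \parens*{\frac{\gamma}{\sqrt{D}}}^{L(i_0, \ldots, i_{2k-1})},
\]
where $L(\cdot)$ is the total length of the induced closed walk in $H$ obtained by concatenating the tree-paths between consecutive indices. Since each edge in a closed walk on a tree is used an even number of times, I would parameterize the sum by the subtree $T'$ of visited edges, their multiplicities, and the Eulerian tour structure on $T'$. Combined with the pseudorandomness bound $|B_\ell(v)| \leq \Delta D^{\ell-1}$, this should yield $\Tr\parens*{\Cov^{2k}} \leq n \cdot (c \gamma^2)^k \cdot \poly(\Delta, k)$ for a universal constant $c > 0$, from which $\norm*{\Cov}_{\mathrm{op}} = O(1)$ follows by taking $k = \Theta(\log n)$. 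The main obstacle will be this trace moment estimate: the evenness of edge multiplicities on the tree must be exploited to save a factor of $D^{L/2}$ off the naive walk count (which would only give the useless bound $(\gamma \sqrt{D})^L$), and repeated visits through the rare high-degree vertices of $H$ (where the degree can reach $\Delta = o(\log n)$) must be controlled so as to incur a $\Delta$-factor only $O(1)$ times per closed walk rather than at every step.
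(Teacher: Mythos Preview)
Your central claim, that $\norm*{\Cov(\mu_{J,h})}_{\mathrm{op}} = O(1)$ uniformly, is false. A $(\Delta,D)$-pseudorandom tree is allowed to have a vertex $v$ of degree $\Delta$; on the star centered at $v$ with edge weights $\gamma/\sqrt{D}$ and zero external field, the all-ones direction gives $v^{\top}\Cov(\mu_{J,0})\,v \gtrsim 1 + \gamma^{2}\Delta/D$, so $\norm{\Cov} = \Omega(\Delta/D)$, which is $o(\log n)$ but not $O(1)$ in the regime of interest. (The paper's \Cref{lem:tree-cov} shows this is tight up to constants: $\norm{\Cov(\mu_{J,0})} \le \tfrac{1}{(1-\gamma)^2}\cdot \tfrac{\Delta}{D}$.) Your trace-moment heuristic fails at exactly this point: the length-$2k$ closed walk that sits at $v$ and goes out-and-back to a neighbor $k$ times contributes $\Delta^{k}(\gamma/\sqrt{D})^{2k}$ to $\Tr(\Cov^{2k})$, so the $\Delta$-factor is necessarily incurred $k$ times, not $O(1)$ times. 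The hoped-for bound $\Tr(\Cov^{2k}) \le n\,(c\gamma^{2})^{k}\,\poly(\Delta,k)$ with a $\poly$ sub-exponential in $k$ is simply not true.

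Even after correcting to $\norm{\Cov} = O(\Delta/D)$, the spectral-independence route does not close. The conversions you cite (e.g., entropic independence from \cite{AJKPV21a}) turn $\eta$-spectral-independence for all tilts into entropy factorization of order $k^{-\eta}$; with $\eta = \Theta(\Delta)$ growing like $o(\log n)$ and $k = n$ sites, this gives an MLSI of order $n^{-\Theta(\Delta)} = n^{-o(\log n)}$, far short of the $n^{-1-o(1)}$ needed. The paper sidesteps both issues by running stochastic localization with a carefully engineered control matrix $C^{2} = (1-s_0^{2})\,\BH_{H}(-s_0)$, where $s_0 = \gamma/\sqrt{D}$. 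The point of this choice is the algebraic cancellation
\[
\BH_{H}(s_0)^{-1}\,\BH_{H}(-s_0) \;=\; I \;+\; \tfrac{2s_0}{1-s_0^{2}}\,\BH_{H}(s_0)^{-1} A_{H},
\]
which (via the third bound in \Cref{lem:tree-cov}) gives $\norm{C\,\Cov(\mu_{J_t,h})\,C} \le 1 + O(\Delta/D)$ uniformly in $t$ and $h$. The $\Delta$-dependence then enters only once, in the exponent of the entropy-conservation factor, yielding $\MLSI \ge e^{-c\Delta}/n$ rather than $n^{-c\Delta}$.
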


To prove the above, we employ the strategy of decomposing the measure into a mixture of product distributions using stochastic localization.
Recall from \eqref{eq:deriv-sl} that to control the derivative of the entropy along the stochastic localization path, it suffices to control $\norm*{ M \cdot \Cov\parens*{\mu_{J-tM, h}}}_{\mathrm{op}}$ where $M$ is the control matrix of choice for stochastic localization.
It is tempting to use $J + \lambda_{\min}(J) \cdot I$ as the control matrix.
However, the exact choice of the control matrix requires care because the parts of both this matrix and the covariance matrix that have high spectrum  are on the same coordinates.
In fact, we need to engineer the control matrix so that its high-spectral norm part partially cancels the corresponding part of the covariance matrix when they are multiplied.
We refer the reader to the proof of \Cref{lem:pseudorandom-ferro-mixing} for details.
We spend the rest of the overview discussing how we prove \Cref{lem:graph-decomp-overview}.

\parhead{Proving the existence of a graph decomposition for sparse random graphs.}
We next outline how to prove \Cref{lem:graph-decomp-overview}, which establishes nice properties of the graph decomposition into bulk and near-forest components for sparse stochastic block models. 
Before diving into the discussion, we restate the lemma for convenience.
\graphoverview*

As its name suggests, the bulk component has a tame spectrum, because every vertex in the bulk has bounded degree by construction.
On the other hand, because sparse random graphs are locally tree-like, the near-forest component $H$ consists of the union of balls around the high degree vertices. 
Let us see how to push this further to prove the other two properties of the graph decomposition.

To show that $H$ is a near-forest, a crucial ingredient is the following  structural property about small sets in sparse random graphs from \cite[Lemma 30]{BLM15}.
\begin{displayquote}
    For $\bG \sim \SBM(n,d,\lambda)$, with probability $1-o(1)$, any connected set of vertices with fewer than $\frac{1}{2} \cdot \log_d n$ vertices has at most one cycle.
\end{displayquote}
In fact, the above lemma can be used to rule out the existence of multiple cycles in any connected set of vertices (in particular the components of $H$), as long as the diameter of this set is $O(\log n)$.
The idea is to rule out the existence of the second cycle by considering the small set of vertices consisting of the two cycles, along with the shortest path between them. 
Because of the diameter bound, the total size of this set is also $O(\log n)$, so the above lemma applies --- if the hidden constant in the diameter bound is small enough, then the lemma implies that this subgraph has at most one cycle, a contradiction.

To summarize, we have boiled down the proof of \Cref{lem:graph-decomp-overview} to showing the diameter bound. 

\parhead{Bounding the diameter of the near-forest.}
We illustrate the intuition behind such a statement from the perspective of branching processes.
Consider the following exploration process, which simulates the ball-growing process that generates $H$.

At all stages of the exploration, we maintain a set of active vertices, unexplored vertices, and inactive vertices.
The active vertices represent the boundary of the connected component we are currently exploring.
\begin{enumerate}
    \item If there are no active vertices, select a vertex $v$ which has not been explored yet. If all vertices have been visited, terminate the process. 
    \item Select a vertex $v$ in the active set, and iteratively query its radius-$r$ neighborhoods. 
    \item If there is some vertex $u$ at distance at most $r$ which is $r$-heavy, that is, $\abs{B_{r}(u)} > (d(1+\eps))^r$, then expand out the active vertices to include $u$ by growing a ball of radius $2 \cdot \ell_u$ around $v$. Set all vertices on the interior of the active set to be inactive.
    \item If no active vertices remain, this means that a new connected component of $H$ has been fully generated. 
\end{enumerate}
Analyzing the exploration process directly is challenging, but one natural approach is to consider a branching process which simulates the above exploration process. 

For simplicity, let us restrict to the case where $\bG$ is a sparse random graph with constant average degree $d$. 
It is well known that for such a graph, the degree of any vertex $v$ has Poisson tails, in that for any $s \ge 1$, we have
\[
    \Pr[\deg{v} \ge sd] \le \exp(-\Omega(sd\log s)).
\]
In particular, the maximum degree of $\bG$ is $\Theta\parens*{\frac{\log n}{\log \log n}}$ with high probability.
In fact, using standard techniques from the theory of branching processes, we prove a similar bound (\Cref{lem:ball-tail}) for the sizes of radius-$r$ neighborhoods of $v$:
\[
    \Pr[\abs{B_{r}(v)} \ge (sd)^{r}] \le \exp(-\Omega(ds^{r})).
\]
Observe how the above tail bound decays doubly exponentially in $r$. Using this, it is not too difficult to show that the probability that a vertex is $r$-heavy also decays as $\exp\left( -\Omega(ds^r) \right)$. We also remark that the proof strategy of this tail bound is key in proving that with high probability, the size of \textit{any} ball of radius $\ell$ is at most $o(\log n) \cdot (d(1+\eps))^{\ell-1}$, which is required to attain an $n^{1+o(1)}$ mixing time.

The hope in a proof involving branching processes would be that although the number of vertices at depth at most $r$ is $O(d^r)$, the probability of any of these vertices being $r$-heavy decays doubly exponentially with $r$, so the radius of the largest ball that captures the vertex under consideration is very small --- it has tail bounds that decay doubly exponentially, and has expectation $e^{-\Omega(d)}$. Consequently, the number of active vertices is likely to shrink in each step of the exploration. Furthermore, the $O(\log n/d)$ diameter bound is tight using such a proof technique: a vertex has slightly abnormal degree and is $1$-heavy with probability $e^{-\Theta(d)}$, so with probability $1/n$, there are $O(\log n/d)$ such vertices that are all adjacent to each other.

However, this proof technique as stated is difficult to carry out, because the definition of heaviness involves not just subtrees rooted at the relevant vertex in the tree, but also subtrees rooted at ancestors and siblings, which become difficult to handle.
For this reason, the proof we give in the body of the paper involves a more direct moment method for the diameter bound, but the intuition remains the same. We now briefly describe the ideas involved therein.

\medskip
In our actual proof, we consider the associated \emph{cluster graph}, which has as its vertex set those vertices $v$ that have $\ell(v) > 0$, and edges between vertices if their corresponding balls intersect. Our goal then is to bound the diameter of this new cluster graph, where each edge is assigned weight equal to the sum of the radii of (the balls around) its endpoints. To do so, we consider the probability that a \textit{fixed} collection of vertices forms a long path. The key is that if these balls have small radii, then they must intersect in a specific fashion. However, each of the balls are essentially random (small!) subsets of the $n$ vertices, so this is extraordinarily unlikely. On the other hand, if the radii are somewhat large, the doubly exponential tail bound on the radius of heaviness kicks in, which again makes this event very unlikely.

\medskip
\parhead{Organization.}
We begin in \Cref{sec:prelims} with an overview of the numerous techniques we will require in our proof.

\Cref{sec:spinglass} is dedicated to proving the mixing time bound for the diluted spin glass. In \Cref{sec:decomposition}, we describe a graph decomposition, and state several results that will play a part in the proof. In \Cref{sec:spinglass-mlsi}, before proving any of these results, we begin by putting them together to prove an MLSI. In \Cref{sec:covariance-near-forest}, we bound the covariances of Ising models supported on near-forests using the trace method. In \Cref{sec:entropy-conservation-spinglass}, we then use this bound to prove a covariance bound that is used to show that entropy is conserved along the localization path. In \Cref{sec:mlsi-tree-ising}, we prove a modified log-Sobolev inequality for Ising models supported on near-forests, which we are left with at the end of the localization scheme.

Many of the results and ideas from the previous section make a return in \Cref{sec:sbm-results}, where we prove an MLSI for Ising models whose interaction matrix is a scaling of a centered adjacency matrix. We begin with a high-level overview of the proof strategy in the centered setting. In \Cref{sec:perturbed-near-forest}, we introduce some additional tools related to the spectral independence framework that are required for the proof, and then instantiate these tools to prove an MLSI for the Ising model obtained at the end of the localization scheme. In \Cref{sec:centered-adjacency}, we reuse many of the tools introduced in the previous subsection to show that entropy is conserved along the path of the localization scheme.

Finally, in \Cref{sec:extinction}, we prove that the decomposition of stochastic block models has the properties we claim.


\section{Preliminaries}
\label{sec:prelims}

\subsection{Notation}

    \begin{itemize}
        \item Given a Markov chain $P$ on $\Omega$, and $x \in \Omega$, $P_x$ is the distribution over $\Omega$ obtained by taking one step of the Markov chain from $x$.
        \item Given $\sigma \in \{\pm 1\}^n$, $\sigma^{\oplus i} \in \{\pm 1\}^n$ is defined as
        \[ \left(\sigma^{\oplus i}\right)_j = \begin{cases} \sigma_j, & j \ne i, \\ -\sigma_i, & j = i. \end{cases} \]
        \item Given a matrix $M$, we denote its operator norm by $\|M\| = \|M\|_{\mathrm{op}}$.
        \item Given a distribution $\mu$ over $\Omega$ and a function $f : \Omega \to \R_{> 0}$,
        \[ \Ent[f] = \Ent_{\mu}[f] \defeq \E_{\mu}[f \log f] - \E_{\mu}[f] \log \E_{\mu}[f]. \]
        \item For a random variable $X$ over $\R^k$, we use $\Cov[X]$ to denote the covariance $\E[XX^\top] - \E[X]\E[X]^\top$. For a probability distribution $\mu$ over $\R^k$, we use $\Cov(\mu)$ to denote the covariance of the associated random variable with law $\mu$.
        \item Given a connected graph $G = (V,E)$, its \emph{tree-excess} is $|E|-|V|+1$.
        \item Given a graph $H$, we use $V(H)$ and $E(H)$ to denote its vertex and edge sets respectively. We use $V'(H)$ to refer to the subset of vertices of $V(H)$ adjacent to at least one other distinct vertex, the set of non-isolated vertices.
        \item For a matrix $M \in \R^{n \times n}$ and a set of vertices $S \subseteq [n]$, by $M_S$ we mean the $n \times n$ matrix where every row and column outside $S$ is zeroed out, i.e., the principal submatrix of $M$ restricted to $S$ but preserving the dimension. If $H$ is a graph on $[n]$, then by $M_H \in \R^{n \times n}$ we mean $M_{V'(H)}$, i.e., the restriction of $M$ to the non-isolated vertices of $H$.
        \item We often abuse notation and identify probability measures $\mu$ with their density functions.
        \item Given a matrix $C$ that is possibly not full-rank, and $\bz \sim \calN(0,I)$, we mean by $C^{-1/2} \bz$ the distribution supported on $\mathrm{Ker}(C)^\perp$ with density at $z$ proportional to $\exp\left(-\frac{1}{2} \cdot z^\top C z\right)$.
    \end{itemize}

\subsection{Markov chains}
    
    In this section, we have a distribution $\mu$ over $\Omega$, and shall look at how one can analyze Markov chains with stationary distribution $\mu$. Throughout, we work solely with time-reversible Markov chains. 
    First, we describe a natural Markov chain to sample from distributions over $\{\pm 1\}^n$.
    \begin{definition}[Glauber dynamics]
        Let $\Omega = \{\pm 1\}^n$, and suppose that $\mu$ is supported on all of $\Omega$. The corresponding \emph{Glauber dynamics} Markov chain $P = P_\mu$ is described by the following transition law from any state $\sigma$:
        \begin{itemize}
            \item Choose index $i$ uniformly at random from $[n]$.
            \item Go to $\sigma^{\oplus i}$ with probability $\displaystyle\frac{\mu(\sigma^{\oplus i})}{\mu(\sigma)+\mu(\sigma^{\oplus i})}$, and stay at $\sigma$ otherwise.
        \end{itemize}
    \end{definition}

    \begin{fact}
        The Markov chain $P_\mu$ is ergodic and time-reversible (with respect to $\mu$), so has $\mu$ as its stationary distribution.
    \end{fact}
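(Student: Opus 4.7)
The plan is to verify the three claimed properties --- time-reversibility with respect to $\mu$, irreducibility, and aperiodicity --- since together they yield the stated fact (reversibility gives stationarity, and irreducibility plus aperiodicity gives ergodicity).

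First I would check the detailed balance equation $\mu(\sigma) P(\sigma,\tau) = \mu(\tau) P(\tau,\sigma)$ for arbitrary $\sigma,\tau \in \{\pm 1\}^n$. The equation is trivial when $\sigma = \tau$, and both sides vanish when $\sigma$ and $\tau$ differ in more than one coordinate since $P$ only allows single-coordinate flips. The interesting case is $\tau = \sigma^{\oplus i}$ for some $i \in [n]$, and here one reads off from the definition that
\[
    P(\sigma,\sigma^{\oplus i}) = \frac{1}{n} \cdot \frac{\mu(\sigma^{\oplus i})}{\mu(\sigma) + \mu(\sigma^{\oplus i})}, \qquad P(\sigma^{\oplus i}, \sigma) = \frac{1}{n} \cdot \frac{\mu(\sigma)}{\mu(\sigma) + \mu(\sigma^{\oplus i})}.
\]
Multiplying the first by $\mu(\sigma)$ and the second by $\mu(\sigma^{\oplus i})$ yields the same symmetric expression $\frac{\mu(\sigma)\mu(\sigma^{\oplus i})}{n(\mu(\sigma) + \mu(\sigma^{\oplus i}))}$, establishing detailed balance. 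As a standard consequence $\mu$ is stationary for $P$.

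Next I would argue irreducibility: because $\mu$ is supported on all of $\{\pm 1\}^n$ by hypothesis, each single-flip transition probability is strictly positive, so any $\tau$ is reachable from any $\sigma$ in at most $n$ steps by flipping the coordinates where they disagree one at a time. For aperiodicity, I would observe that the self-loop probability at $\sigma$ is at least $\frac{1}{n}\cdot\frac{\mu(\sigma)}{\mu(\sigma)+\mu(\sigma^{\oplus i})} > 0$ for any $i$, so the chain has period $1$ at every state. Combined with irreducibility, this gives ergodicity on the finite state space $\{\pm 1\}^n$, and then uniqueness of the stationary distribution identifies it with $\mu$.

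There is really no obstacle here --- the statement is a standard textbook verification and all three properties fall out directly from the explicit transition formula and the assumption that $\mu$ has full support. The only place to be slightly careful is to note that the self-loop probability at $\sigma$ picks up contributions from every coordinate $i$ where no flip is proposed, but even a single coordinate suffices to witness aperiodicity.
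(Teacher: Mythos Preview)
Your proof is correct and complete. The paper states this as a Fact without proof, treating it as a standard and well-known property of Glauber dynamics, so there is no proof in the paper to compare against; your argument is exactly the textbook verification one would supply.
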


    Our goal is to bound the ``mixing time'' of Glauber dynamics on certain distributions.

    \begin{definition}
        Given a Markov chain $P$ with stationary distribution $\mu$, and a distribution $\nu$ that is absolutely continuous with respect to $\mu$, the associated mixing time initialized at $\nu$ is defined by
        \[ \tmix(P,\eps;\nu) = \min\left\{ t > 0 : d_{\mathrm{TV}}(P^t\nu,\mu) < \eps \right\}. \]
        We define the \emph{mixing time} to be
        \[ \tmix(P,\eps) = \max_{x \in \Omega}\tmix(P,\eps;\delta_x). \]
    \end{definition}

    It is well-known that the mixing time of Glauber dynamics (and Markov chains more generally) can be controlled by log-Sobolev inequalities. 

    \begin{definition}[Dirichlet Form]
        Given a distribution $\mu$ over $\{\pm 1\}^n$, a time-reversible Markov chain $P$ with stationary distribution $\mu$, and functions $f,g : \{\pm 1\}^n \to \R$, the associated \emph{Dirichlet form} is defined as
        \[ \mathcal{E}(f,g) = \mathcal{E}_{P}(f,g) \defeq \underset{\substack{x \sim \mu \\ y \sim P_x}}{\E} \left[ (f(x) - f(y))(g(x) - g(y)) \right]. \]
    \end{definition}

    \begin{definition}[Modified log-Sobolev Inequality]
        Given a time-reversible Markov chain $P$ on $\{\pm 1\}^n$ with stationary distribution $\mu$, $P$ is said to satisfy a \emph{modified log-Sobolev inequality} (MLSI) with constant $C$ (possibly depending on $n$) if for any function $f : \{\pm 1\}^n \to \R_{> 0}$,
        \[ \mathcal{E}(f,\log f) \ge C \Ent[f]. \]
        In particular, $\MLSI$ is the best (largest) such constant $C$.
    \end{definition}

    \begin{fact}[{\cite[Corollary 2.8]{bt06}}]
        \label{fact:mixing-mlsi}
        Given a time-reversible Markov chain $P$ with stationary distribution $\mu$, if $P$ satisfies a modified log-Sobolev inequality with constant $\MLSI$, then
        \[ \tmix(P,\epsilon) \le \frac{1}{\MLSI}\left( \log \log \frac{1}{\mu_*} + \log \frac{1}{\epsilon} \right), \]
        where $\mu_* = \min_{x : \mu(x) > 0} \mu(x)$.
    \end{fact}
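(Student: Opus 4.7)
The plan is to combine two standard ingredients: exponential decay of the relative entropy along the Markov semigroup, which is the dynamic content of the MLSI, and Pinsker's inequality, which converts small relative entropy into small total variation distance.

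First I would pass to the continuous-time interpolated chain by considering the semigroup $H_t = \exp(t(P-I))$, which has the same stationary distribution $\mu$ as $P$. For any positive $f : \{\pm1\}^n \to \R_{>0}$, the reversibility of $P$ with respect to $\mu$ yields the heat-flow identity
\[ \frac{d}{dt}\Ent_\mu[H_t f] = -\mathcal{E}(H_t f,\log H_t f). \]
Applying the MLSI pointwise in $t$ turns this into the differential inequality $\frac{d}{dt}\Ent_\mu[H_t f] \le -\MLSI \cdot \Ent_\mu[H_t f]$, and Gronwall's lemma gives the decay estimate $\Ent_\mu[H_t f] \le e^{-\MLSI \cdot t}\Ent_\mu[f]$.

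Next I would specialize to the worst-case initial distribution $\nu_0 = \delta_x$, for which $f = \nu_0/\mu$ satisfies $\Ent_\mu[f] = \log(1/\mu(x)) \le \log(1/\mu_*)$. The decay estimate then reads $\Ent_\mu[H_t \delta_x / \mu] \le e^{-\MLSI \cdot t}\log(1/\mu_*)$. Pinsker's inequality $d_{\mathrm{TV}}^2 \le \tfrac{1}{2}\Ent$ converts this into a total variation bound, and solving for the smallest $t$ that drives the right-hand side below $\epsilon^2$ gives essentially the claimed mixing time, up to constants that should be absorbed into the $\log(1/\epsilon)$ term. Finally, since Glauber dynamics as defined has a substantial self-loop probability at every state, the continuous-time semigroup $H_t$ tracks the discrete iterates $P^t$ up to absolute constant factors in $t$, so the bound transfers to the discrete-time chain.

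The only nontrivial input is the heat-flow identity in the first paragraph, and this is the standard computation for reversible Markov semigroups: differentiate under the sum, expand using the definition of the generator $P - I$, and symmetrize via self-adjointness with respect to $\mu$. I expect the main bookkeeping subtlety to lie in extracting the precise dependence $\log(1/\epsilon)$ rather than the $\log(1/\epsilon^2)$ that falls out of naive Pinsker; a standard fix is a two-step argument that first drives the entropy below an absolute constant and then uses a sharper entropy-to-TV conversion valid in that regime. None of this presents a real obstacle, and the bound follows as a routine corollary of the MLSI hypothesis.
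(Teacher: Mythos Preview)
The paper does not prove this statement; it simply cites it as \cite[Corollary 2.8]{bt06} and uses it as a black box. Your sketch is the standard argument behind that corollary and is essentially correct: MLSI yields exponential decay of relative entropy along the semigroup, Pinsker converts entropy to total variation, and specializing to $\nu_0=\delta_x$ gives the $\log\log(1/\mu_*)$ dependence. The only loose thread you flag yourself---the factor-of-two discrepancy from Pinsker giving $2\log(1/\epsilon)$ rather than $\log(1/\epsilon)$---is genuinely present in the na\"ive argument, and the stated bound in the paper (and in \cite{bt06}) should be read up to such absolute constants; there is nothing to fix here beyond what you already noted.
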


    \begin{fact}[{\cite[Lemma 2.5]{Goe04}}]
        \label{fact:mlsi-product}
        Let $\mu$ be a distribution on $\{\pm 1\}^n$, and $(C_i)_{i=1}^{k}$ a partition of $[n]$ such that there exist distributions $\mu^{(i)}$ on $\{\pm 1\}^{C_i}$ with $\mu = \bigotimes_{i=1}^{K} \mu^{(i)}$. For the Glauber dynamics Markov chain, if each $\mu^{(i)}$ satisfies an MLSI with constant $\rho^{(i)}$, then $\mu$ satisfies an MLSI with constant $\rho = \frac{1}{n} \min_{i} \rho^{(i)} |C_i|$. 
    \end{fact}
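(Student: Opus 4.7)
The plan is to combine two classical ingredients: tensorization of entropy for product measures, followed by an application of the per-block MLSI, coupled with the observation that the Dirichlet form of Glauber dynamics on $\mu$ decomposes cleanly over blocks of a product decomposition. Fix any $f : \{\pm1\}^n \to \R_{>0}$. The first step is to apply the tensorization (subadditivity) identity for entropy of product measures, which gives
\[
    \Ent_{\mu}[f] \;\le\; \sum_{k=1}^{K} \E_{\bsigma \sim \mu}\bracks*{ \Ent_{\mu^{(k)}}\bracks*{ f(\,\cdot\,, \bsigma_{-C_k}) } },
\]
where $\Ent_{\mu^{(k)}}$ on the right-hand side denotes the entropy computed with respect to $\mu^{(k)}$, treating $f$ as a function of the coordinates in $C_k$ with the coordinates in $[n]\setminus C_k$ frozen at $\bsigma_{-C_k}$.

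Second, for each fixed $\bsigma_{-C_k}$ I would invoke the MLSI hypothesis on $\mu^{(k)}$ applied to the restricted positive function $f(\,\cdot\,, \bsigma_{-C_k})$, obtaining
\[
    \Ent_{\mu^{(k)}}\bracks*{ f(\,\cdot\,, \bsigma_{-C_k}) } \;\le\; \frac{1}{\rho^{(k)}} \, \calE_{\mu^{(k)}}\parens*{ f(\,\cdot\,, \bsigma_{-C_k}),\, \log f(\,\cdot\,, \bsigma_{-C_k}) },
\]
where $\calE_{\mu^{(k)}}$ is the Dirichlet form of Glauber dynamics on $\{\pm1\}^{C_k}$ with stationary measure $\mu^{(k)}$.

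Third, I would unwind the Dirichlet forms to recognize that the blockwise contributions combine into the Dirichlet form on $\mu$. The key observation is that because $\mu$ is a product, Glauber on $\mu$ conditioned on picking a coordinate $i \in C_k$ has exactly the same transition kernel on the $C_k$ coordinates as Glauber on $\mu^{(k)}$ conditioned on picking that same $i$. Since Glauber on $\mu$ picks coordinate $i$ with probability $1/n$ while Glauber on $\mu^{(k)}$ picks it with probability $1/|C_k|$, summing over $i \in C_k$ and averaging $\bsigma_{-C_k}$ with respect to its marginal under $\mu$ yields
\[
    \E_{\bsigma \sim \mu}\bracks*{ \calE_{\mu^{(k)}}\parens*{ f(\,\cdot\,, \bsigma_{-C_k}),\, \log f(\,\cdot\,, \bsigma_{-C_k}) } } \;=\; \frac{n}{\abs{C_k}} \, \calE_{\mu}^{(k)}(f, \log f),
\]
where $\calE_\mu^{(k)}$ denotes the piece of $\calE_\mu$ coming from resampling coordinates in $C_k$. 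Summing over $k$ and using $\sum_k \calE_\mu^{(k)} = \calE_\mu$ then gives
\[
    \Ent_\mu[f] \;\le\; \sum_{k=1}^{K} \frac{n}{\rho^{(k)}\abs{C_k}} \, \calE_\mu^{(k)}(f, \log f) \;\le\; \max_k \frac{n}{\rho^{(k)} \abs{C_k}} \cdot \calE_\mu(f, \log f),
\]
which is exactly the claimed MLSI with constant $\rho = \frac{1}{n}\min_k \rho^{(k)}\abs{C_k}$.

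The only conceptual step is the tensorization of entropy in the first display, which is a standard consequence of the chain rule for Kullback--Leibler divergence under product measures; everything else is a direct bookkeeping identity. I do not anticipate any real obstacle here, since all three ingredients (tensorization of entropy, per-factor MLSI, block-diagonal decomposition of the Dirichlet form) fit together without any cross-term errors precisely because $\mu$ factorizes.
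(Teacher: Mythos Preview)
Your proposal is correct and is the standard argument for this tensorization result. Note, however, that the paper does not give its own proof of this statement: it is stated as a \texttt{fact} and attributed to \cite[Lemma 2.5]{Goe04}, so there is nothing to compare against beyond observing that your argument is essentially the classical one.
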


    We will also require the Holley-Stroock perturbation principle, which shows that MLSIs and variances do not change too much if we multiply the densities by a constant.\footnote{Their result works with log-Sobolev inequalities instead of modified log-Sobolev inequalities.
    Nevertheless, the proof is easily adapted.}

    \begin{lemma}[{\cite[p. 1186]{HS86}, see also \cite[Lemma 1.2]{Led01}}]\label{lem:mlsi-bdd-density-relate}
      Let $\mu,\nu$ be two probability measures over $\{\pm1\}^{n}$, and suppose that for some $c > 1$ (possibly depending on $n$), $\frac{1}{c} \leq \frac{\nu(\sigma)}{\mu(\sigma)} \leq c$ uniformly over all $\sigma \in \{\pm1\}^{n}$.
      \begin{enumerate}
          \item Suppose Glauber dynamics for $\mu$ has MLSI constant at least $\MLSI$. Then, Glauber dynamics for $\nu$ has MLSI constant at least $\MLSI / c^2$.
          \item For any function $f$,
          \[ \frac{1}{c} \cdot \Var_{\mu}[f] \le \Var_{\nu}[f] \le c \cdot \Var_{\mu}[f]. \]
      \end{enumerate}
      Also note that if $\nu \propto e^{-f}$ and $\mu \propto e^{-g}$, and $\|f-g\|_{\infty} \le \wt{c}$, then $e^{-2\wt{c}} \le \frac{\nu(\sigma)}{\mu(\sigma)} \le e^{2\wt{c}}$.
    \end{lemma}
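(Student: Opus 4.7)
The plan is to prove the three assertions in order, exploiting the variational characterizations of both entropy and variance together with a pointwise comparison on the Glauber transition rates. Let me write $\rho(\sigma) \defeq \nu(\sigma)/\mu(\sigma) \in [1/c, c]$ throughout.

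For the variance bound in part 2, I will use the identity $\Var_\mu[f] = \inf_{t \in \R} \E_\mu[(f-t)^2]$, with minimizer $t = \E_\mu f$. Taking $t = \E_\mu f$ in the analogous expression for $\nu$ gives
\[
    \Var_\nu[f] \le \E_\nu[(f - \E_\mu f)^2] \le c \cdot \E_\mu[(f - \E_\mu f)^2] = c \Var_\mu[f],
\]
where the middle step just uses the pointwise ratio bound. Swapping the roles of $\mu$ and $\nu$ gives the other direction.

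For the MLSI in part 1, the strategy is to separately compare Dirichlet forms and entropies. For the Dirichlet forms, because Glauber is reversible, for any edge $x \sim y$ in the hypercube the weight $\nu(x)P_\nu(x,y)$ equals $\frac{1}{n} \cdot \frac{\nu(x)\nu(y)}{\nu(x)+\nu(y)}$. The key pointwise estimate I will verify by a one-line algebraic manipulation (assuming WLOG $\rho(x) \le \rho(y)$) is
\[
    \frac{\nu(x)\nu(y)}{\nu(x)+\nu(y)} \;\ge\; \min\{\rho(x),\rho(y)\} \cdot \frac{\mu(x)\mu(y)}{\mu(x)+\mu(y)} \;\ge\; \frac{1}{c} \cdot \frac{\mu(x)\mu(y)}{\mu(x)+\mu(y)}.
\]
Summing against the nonnegative quantity $(f(x)-f(y))(\log f(x)-\log f(y))$ yields $\mathcal{E}_\nu(f,\log f) \ge \frac{1}{c} \mathcal{E}_\mu(f,\log f)$. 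For the entropies, I will use the standard variational representation $\Ent_\mu[g] = \inf_{t>0} \E_\mu[\Phi_t(g)]$, where $\Phi_t(x) \defeq x\log x - x\log t - (x-t) \ge 0$, with the infimum attained at $t = \E_\mu g$. Plugging $t = \E_\mu f$ into the $\nu$-version and using the pointwise upper bound $\rho \le c$ gives
\[
    \Ent_\nu[f] \;\le\; \E_\nu[\Phi_{\E_\mu f}(f)] \;\le\; c \cdot \E_\mu[\Phi_{\E_\mu f}(f)] \;=\; c \cdot \Ent_\mu[f].
\]
Chaining the two comparisons with the hypothesized MLSI for $\mu$ gives
\[
    \mathcal{E}_\nu(f,\log f) \;\ge\; \frac{1}{c}\, \mathcal{E}_\mu(f,\log f) \;\ge\; \frac{\MLSI}{c}\, \Ent_\mu[f] \;\ge\; \frac{\MLSI}{c^2}\, \Ent_\nu[f],
\]
as required.

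For the concluding observation, writing $\rho(\sigma) = \frac{Z_\mu}{Z_\nu} e^{g(\sigma)-f(\sigma)}$, the pointwise bound $\|f-g\|_\infty \le \wt c$ gives $e^{g-f} \in [e^{-\wt c}, e^{\wt c}]$ pointwise, and then $Z_\nu / Z_\mu = \E_\mu[e^{g-f}] \in [e^{-\wt c}, e^{\wt c}]$. Multiplying these two estimates yields $\rho \in [e^{-2\wt c}, e^{2\wt c}]$. The main obstacle here is really just bookkeeping the constants: the factor $c^2$ (rather than a naive $c^4$) in part 1 hinges on exploiting the harmonic-mean structure of the Glauber rate $\frac{ab}{a+b}$ via the sharper $\min\{\rho(x),\rho(y)\}$ bound above, rather than separately estimating numerator and denominator.
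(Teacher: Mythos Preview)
Your proof is correct. The paper does not actually supply a proof of this lemma; it cites it as the Holley--Stroock perturbation principle from \cite{HS86} and \cite{Led01}, noting only that the classical statement for log-Sobolev inequalities adapts easily to the modified log-Sobolev setting. Your argument is precisely that adaptation: the variational formula $\Ent_\mu[f] = \inf_{t>0}\E_\mu[\Phi_t(f)]$ with $\Phi_t \ge 0$ is the standard device for transferring entropy bounds under bounded density ratios, and your harmonic-mean estimate on the Glauber rates is exactly how one avoids losing an extra factor of $c$ in the Dirichlet form comparison. Nothing to add.
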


    We make frequent use of the following simple observation about covariance matrices.
    \begin{observation}\label{fact:var-cov}
        For any distribution $\mu$ on $\R^n$ with finite second moment, and vector $v \in \R^n$,
        \[ v^\top \Cov(\mu) v = \Var[\langle v,x\rangle]. \]
        In particular, 
        \[
            \sup_{\substack{v \in \mathbb{S}^{n-1} \\ \supp(v) \subseteq S}} \Var_{x \sim \mu}[\angles{v, x}] = \norm{\Cov(\mu)_S}.
        \]
    \end{observation}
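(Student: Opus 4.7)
The plan is to verify both equalities by direct computation from the definitions; this observation is essentially bookkeeping, and I do not anticipate any genuine obstacle.

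For the first identity, I would just expand the quadratic form. Writing $\bar{x} = \E_{x \sim \mu}[x]$, by bilinearity of the expectation and the definition $\Cov(\mu) = \E[(x-\bar{x})(x-\bar{x})^\top]$,
\[
v^\top \Cov(\mu) v = \E\bigl[v^\top (x-\bar{x})(x-\bar{x})^\top v\bigr] = \E\bigl[(\langle v, x\rangle - \langle v,\bar{x}\rangle)^2\bigr] = \Var_{x \sim \mu}[\langle v, x\rangle],
\]
where the last equality uses $\langle v, \bar{x}\rangle = \E[\langle v, x\rangle]$. This is the only substantive computation.

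For the second identity, I would combine the first identity with two elementary facts about the zero-padded principal submatrix $\Cov(\mu)_S$ (as defined in the notation subsection). First, whenever $\supp(v) \subseteq S$, the rows and columns zeroed out in $\Cov(\mu)_S$ do not contribute, so $v^\top \Cov(\mu) v = v^\top \Cov(\mu)_S v$. Second, any $u \in \mathbb{S}^{n-1}$ decomposes as $u = u_S + u_{S^c}$ with $\Cov(\mu)_S u_{S^c} = 0$, so $u^\top \Cov(\mu)_S u = u_S^\top \Cov(\mu)_S u_S$; taking $u = u_S/\|u_S\|$ shows the sup of the Rayleigh quotient over all of $\mathbb{S}^{n-1}$ is attained with $\supp(u) \subseteq S$. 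Because $\Cov(\mu)$ is positive semidefinite, so is $\Cov(\mu)_S$, and hence $\|\Cov(\mu)_S\| = \sup_{u \in \mathbb{S}^{n-1}} u^\top \Cov(\mu)_S u$ (no absolute value is needed). Chaining these together,
\[
\sup_{\substack{v \in \mathbb{S}^{n-1}\\ \supp(v) \subseteq S}} \Var_{x \sim \mu}[\langle v, x\rangle] = \sup_{\substack{v \in \mathbb{S}^{n-1}\\ \supp(v) \subseteq S}} v^\top \Cov(\mu)_S v = \|\Cov(\mu)_S\|,
\]
which is the claim. Since every step is one line, the ``main obstacle'' is really just being careful about the nonstandard convention that $\Cov(\mu)_S$ is an $n \times n$ zero-padded matrix rather than the $|S| \times |S|$ submatrix; once that is flagged, the argument writes itself.
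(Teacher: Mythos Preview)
Your proposal is correct and is exactly the standard bookkeeping the paper has in mind; the paper itself treats this as an immediate observation and gives no proof. Your care with the zero-padded convention for $\Cov(\mu)_S$ is on point and is the only thing worth flagging.
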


\subsection{Ising models}
    In this section, we have a symmetric \emph{interaction matrix} $J\in\R^{n\times n}$ and an \emph{external field} $h\in\R^n$. 
    \begin{definition}[Ising model]
        The \emph{Ising model} corresponding to $J$ and $h$ is the probability distribution $\mu_{J, h}$ on $\{\pm1\}^n$, where
        \[
            \mu_{J, h}(\sigma) \propto \exp\parens*{\frac{1}{2}\sigma^\top J \sigma + \langle h,\sigma\rangle}.
        \]
        Its \emph{partition function} is
        \[
            Z_{J, h} \coloneqq \sum_{\sigma\in\{\pm1\}^n} \exp\parens*{\frac{1}{2}\sigma^{\top} J\sigma + \angles*{h,\sigma}}.    
        \]
    \end{definition}

    \begin{observation}
        \label{obs:diag-invariant}
        A simple observation we frequently make use of is that for any diagonal matrix $D$, $\mu_{J, h} = \mu_{J+D, h}$.
    \end{observation}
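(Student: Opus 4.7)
The plan is to observe that the only effect of adding a diagonal matrix $D$ to the interaction matrix is to contribute a quadratic form $\sigma^\top D \sigma$ to the exponent, and to show that this form is constant in $\sigma \in \{\pm 1\}^n$. Concretely, because each coordinate of $\sigma$ satisfies $\sigma_i^2 = 1$, we have
\[
    \sigma^\top D \sigma = \sum_{i=1}^{n} D_{ii} \sigma_i^2 = \sum_{i=1}^{n} D_{ii} = \tr(D),
\]
which does not depend on the configuration $\sigma$.

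Consequently, the unnormalized density at $\sigma$ under $\mu_{J+D, h}$ equals
\[
    \exp\!\left(\tfrac{1}{2}\sigma^\top(J+D)\sigma + \langle h,\sigma\rangle\right) = \exp\!\left(\tfrac{1}{2}\tr(D)\right) \cdot \exp\!\left(\tfrac{1}{2}\sigma^\top J \sigma + \langle h,\sigma\rangle\right),
\]
so it differs from the unnormalized density of $\mu_{J,h}$ by a global multiplicative constant $\exp(\tfrac{1}{2}\tr(D))$. Since both measures are then normalized to sum to $1$ over $\{\pm 1\}^n$, this constant cancels and the two probability measures coincide. Equivalently, at the level of partition functions, $Z_{J+D,h} = \exp(\tfrac{1}{2}\tr(D)) \cdot Z_{J,h}$, and the ratio $\mu_{J+D,h}(\sigma)/Z_{J+D,h}$ matches $\mu_{J,h}(\sigma)/Z_{J,h}$ pointwise. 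There is no real obstacle here; the entire content of the observation is the identity $\sigma_i^2 = 1$ on the Boolean hypercube, and the proposal is essentially a one-line calculation.
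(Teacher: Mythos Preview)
Your proposal is correct and is exactly the intended argument: the paper states this as an observation without proof, and the content is precisely the identity $\sigma_i^2 = 1$ on $\{\pm 1\}^n$, which makes $\sigma^\top D \sigma = \tr(D)$ a configuration-independent constant absorbed by the normalization.
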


    We rely on a well-known trick that lets us decompose an Ising model into a mixture of simpler Ising models.
    \begin{theorem}[Hubbard--Stratonovich transform]    \label{thm:HS}
        For any positive semidefinite matrix $C$ in $\R^{n\times n}$:
        \[
            \mu_{J, h} = \E_{z\sim\nu} \mu_{J-C, Cz+h}
        \]
        where $\nu$ is a distribution over $\R^n$ with density
        \[
            \nu(z) \propto \exp\parens*{-\frac{1}{2}z^{\top} C z + \log Z_{J-C, Cz+h}}.
        \]
        Further, $\Cov\parens*{\mu_{J, h}} \psdle \Cov(\nu)$.
    \end{theorem}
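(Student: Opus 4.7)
The plan is to prove the factorization by introducing an auxiliary Gaussian random variable and identifying $\nu$ and $\mu_{J-C, Cz+h}$ as its marginal and conditional laws. On the augmented space $\{\pm1\}^{n} \times \R^{n}$, I would define a joint law for $(\bx, \bz)$ by first drawing $\bx \sim \mu_{J,h}$, then independently drawing $\bw = C^{-1/2}\bg$ with $\bg \sim \calN(0, I)$ (interpreted as in the notation section, so that $\bw$ is supported on $\mathrm{Ker}(C)^{\perp}$ with density proportional to $\exp(-\tfrac{1}{2} w^{\top} C w)$), and setting $\bz \defeq \bx + \bw$. By construction the marginal of $\bx$ is $\mu_{J,h}$, so the theorem reduces to identifying the marginal law of $\bz$ with $\nu$ and the conditional law of $\bx \mid \bz = z$ with $\mu_{J-C, Cz+h}$.

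The core of the argument is the algebraic identity obtained by completing the square:
\[
    \tfrac{1}{2} x^{\top} J x + \langle h, x\rangle - \tfrac{1}{2}(z-x)^{\top} C (z-x) = \tfrac{1}{2} x^{\top}(J-C)x + \langle h + Cz, x\rangle - \tfrac{1}{2} z^{\top} C z.
\]
This shows that the joint density is proportional to $\exp\!\bigl(\tfrac{1}{2} x^{\top}(J-C)x + \langle h + Cz, x\rangle\bigr) \cdot \exp(-\tfrac{1}{2} z^{\top} C z)$ on $\{\pm 1\}^n \times \mathrm{Ker}(C)^\perp$. Summing out $x \in \{\pm 1\}^{n}$ yields the marginal density of $\bz$ as $\exp(-\tfrac{1}{2} z^{\top} C z) \cdot Z_{J-C, Cz+h}$, which matches $\nu$. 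Dividing the joint by this marginal, the conditional density of $\bx$ given $\bz = z$ is proportional to $\exp\!\bigl(\tfrac{1}{2} x^{\top}(J-C)x + \langle h+Cz, x\rangle\bigr)$, which is exactly $\mu_{J-C, Cz+h}$. Since both sides of the claimed identity are probability measures on $\{\pm1\}^{n}$, normalization is automatic.

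For the covariance bound, I would exploit the independence of $\bx$ and $\bw$ in the coupling above. Since $\bz = \bx + \bw$ with $\bw \perp \bx$, linearity of covariance gives
\[
    \Cov(\nu) = \Cov(\bz) = \Cov(\bx) + \Cov(\bw) \psdge \Cov(\bx) = \Cov(\mu_{J,h}),
\]
as desired. Equivalently, one can derive this from the law of total covariance applied to $\bz$: $\E[\bz \mid \bx] = \bx$ and $\Cov(\bz \mid \bx)$ is PSD, so $\Cov(\bz) \psdge \Cov(\E[\bz \mid \bx]) = \Cov(\bx)$.

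The only real subtlety is handling the PSD (as opposed to strictly positive definite) case. Here the convention from the notation section is essential: it ensures that $\bw$ has a well-defined distribution concentrated on $\mathrm{Ker}(C)^{\perp}$, so the joint density calculation above is carried out on $\{\pm1\}^{n} \times \mathrm{Ker}(C)^{\perp}$ rather than all of $\R^{n}$. Once this convention is fixed, no further modification is needed, and tracking normalizing constants across the ``$\propto$'' manipulations is routine.
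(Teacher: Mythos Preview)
Your proof is correct and follows essentially the same approach as the paper: introduce the coupling $\bz = \bx + C^{-1/2}\bg$, expand the joint density via the same completion-of-the-square identity, and read off the marginal/conditional structure. The only cosmetic difference is that the paper integrates out $z$ to recover $\mu_{J,h}$ directly, while you sum out $x$ to identify $\nu$ and then divide for the conditional; for the covariance bound the paper writes $\Cov(\nu) = \Cov(\mu_{J,h}) + C^{-1}$ explicitly, which is exactly your $\Cov(\bx) + \Cov(\bw)$.
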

    We will only use the HS transform with $C$ being a positive definite matrix.
    \begin{proof}
        Let $\bsigma\sim\mu_{J, h}$ and $\by\sim\calN(0,I)$ be independent, and define $\bz \coloneqq \bsigma + C^{-1/2} \by$.
        The joint probability density function $p$ of $(z,\sigma)$ satisfies:
        \begin{align*}
            p(z,\sigma) &\propto \exp\parens*{-\frac{1}{2}(z-\sigma)^{\top} C (z-\sigma) + \frac{1}{2}\sigma^{\top}J\sigma + \angles*{h,\sigma}} \\
            &= \exp\parens*{ -\frac{1}{2} z^{\top} C z + \angles*{Cz,\sigma} + \frac{1}{2}\sigma^{\top} (J-C) \sigma + \angles*{h,\sigma} }
        \end{align*}
        This tells us:
        \begin{align*}
            \mu_{J,h}(\sigma) &\propto \int_{\R^n} p(z,\sigma) dz \\
            &\propto \int_{\R^n} \exp\parens*{ -\frac{1}{2} z^{\top} C z + \angles*{Cz,\sigma} + \frac{1}{2}\sigma^{\top} (J-C) \sigma + \angles*{h,\sigma} } dz \\
            &= \int_{\R^n} \mu_{J-C, Cz+h}(\sigma) \cdot \exp\parens*{ -\frac{1}{2}z^{\top}Cz + \log Z_{J-C, Cz+h} } dz,
        \end{align*}
        which completes the proof of the first part.
        The PSD inequality $\Cov\parens*{\mu_{J, h}} \psdle \Cov(\nu)$ follows from writing $\Cov(\nu)$ as $\Cov\parens*{\mu_{J, h}} + C^{-1}$.
    \end{proof}

    The Brascamp--Lieb inequality is relevant to bounding the covariance matrix of distributions arising from the Hubbard--Stratonovich transform.
    \begin{theorem}[Brascamp--Lieb inequality] \label{thm:brascamp-lieb}
        For any distribution with density $\nu\propto\exp(-V)$ for some $V:\R^n\to\R$, if there is a matrix $\Gamma\psdg 0$ such that $\nabla^2 V(z)\psdge\Gamma$ uniformly, then
        \(
            \Cov(\nu) \psdle \Gamma^{-1}.
        \)
    \end{theorem}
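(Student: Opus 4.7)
The plan is to first reduce to a general ``Poincar\'e-type'' variance inequality and then establish that inequality via integration by parts against the Langevin generator. Specifically, by \Cref{fact:var-cov}, showing $\Cov(\nu) \psdle \Gamma^{-1}$ amounts to verifying $\Var_\nu[\langle v, x\rangle] \leq v^\top \Gamma^{-1} v$ for every $v \in \R^n$. Taking $f(x) = \langle v, x\rangle$ reduces this to the Brascamp--Lieb variance inequality in its functional form:
\[
    \Var_\nu[f] \leq \int (\nabla f)^\top (\nabla^2 V)^{-1} (\nabla f) \, d\nu
\]
for all sufficiently smooth $f$. Indeed, applied to $f(x) = \langle v, x\rangle$ one has $\nabla f \equiv v$, and the pointwise bound $\nabla^2 V(z) \psdge \Gamma$ yields $(\nabla^2 V(z))^{-1} \psdle \Gamma^{-1}$, giving the desired $v^\top \Gamma^{-1} v$ bound upon integrating.

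To prove the functional inequality, the main tool is the Langevin generator $L = \Delta - \nabla V \cdot \nabla$, which is symmetric on $L^2(\nu)$ with the integration-by-parts identity $\int f \cdot Lg \, d\nu = -\int \nabla f \cdot \nabla g \, d\nu$. Assuming $V$ is smooth, strictly convex, and has enough growth at infinity (which can be arranged by approximation, truncation, and a standard mollification argument), for any mean-zero $f$ there exists $u$ solving the Poisson equation $-Lu = f$. Then
\[
    \Var_\nu[f] = \int f \cdot (-Lu) \, d\nu = \int \nabla f \cdot \nabla u \, d\nu.
\]
Applying the pointwise Cauchy--Schwarz inequality with weight matrix $\nabla^2 V$ yields
\[
    \left( \int \nabla f \cdot \nabla u \, d\nu \right)^2 \leq \left( \int (\nabla f)^\top (\nabla^2 V)^{-1} (\nabla f) \, d\nu \right) \cdot \left( \int (\nabla u)^\top (\nabla^2 V) (\nabla u) \, d\nu \right).
\]

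The key step, and the main technical obstacle, is showing that the second factor is bounded by $\Var_\nu[f] = \int (Lu)^2 \, d\nu$. This is a Bochner-type identity from Bakry--\'Emery $\Gamma_2$ calculus: a direct computation gives $\tfrac{1}{2} L|\nabla u|^2 = |\mathrm{Hess}\, u|_F^2 + \nabla u \cdot \nabla(Lu) + (\nabla u)^\top (\nabla^2 V) (\nabla u)$, and integrating against $\nu$ kills the left-hand side (as $L$ is a divergence-form operator on $\nu$), yielding
\[
    \int (\nabla u)^\top (\nabla^2 V) (\nabla u) \, d\nu + \int |\mathrm{Hess}\, u|_F^2 \, d\nu = -\int \nabla u \cdot \nabla(Lu) \, d\nu = \int (Lu)^2 \, d\nu.
\]
Dropping the nonnegative Hessian term and substituting back into Cauchy--Schwarz gives $(\Var_\nu[f])^2 \leq \Var_\nu[f] \cdot \int (\nabla f)^\top (\nabla^2 V)^{-1} (\nabla f) \, d\nu$, from which the functional inequality follows after dividing. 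The hardest part is justifying the integration by parts rigorously under the minimal hypothesis $\nabla^2 V \psdge \Gamma \psdg 0$; the cleanest route is to first prove the statement for strongly log-concave, smooth, compactly perturbed potentials where solutions to the Poisson equation have the requisite decay, and then pass to the general case by a density/approximation argument, using that both sides of the inequality are continuous under appropriate weak limits.
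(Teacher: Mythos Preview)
The paper does not actually prove \Cref{thm:brascamp-lieb}; it is stated as a classical result and used as a black box (in the proof of \Cref{lem:interaction-sum-cov-bound}). So there is no ``paper's own proof'' to compare against.

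That said, your proposal is a correct and standard route to the Brascamp--Lieb inequality, namely the Bakry--\'Emery/Bochner approach via the Poisson equation for the Langevin generator. The reduction to the functional form $\Var_\nu[f] \le \int (\nabla f)^\top (\nabla^2 V)^{-1} (\nabla f)\, d\nu$ is exactly right, the weighted Cauchy--Schwarz step is the usual trick, and the identity $\int (Lu)^2\, d\nu = \int |\mathrm{Hess}\,u|_F^2\, d\nu + \int (\nabla u)^\top (\nabla^2 V)(\nabla u)\, d\nu$ is indeed the Bochner formula integrated against $\nu$. You correctly flag that the analytic justification (existence and regularity of the Poisson solution, decay at infinity for the integrations by parts) is where the real work lies, and your suggested approximation scheme is the standard way to handle it. One could also mention the alternative, more elementary proof via the dynamic formulation (writing $\Var_\nu[f]$ as an integral of the Dirichlet form along the semigroup and using that $\nabla^2 V \psdge \Gamma$ gives exponential decay of gradients), but what you have is fine.
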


    Using the Hubbard-Stratonovich transform with the Brascamp-Lieb inequality allows us to bound covariances using the following lemma.

    \begin{lemma}   \label{lem:interaction-sum-cov-bound}
        Let $M_1$ and $M_2$ be interaction matrices in $\R^{n\times n}$, with $M_1$ positive definite, and let $h$ be an external field in $\R^n$.
        Suppose for some positive definite matrix $\Gamma$, the following holds for all $z\in\R^n$:
        \[
            M_1 - M_1\cdot \Cov\parens*{\mu_{M_2, M_1z + h}} \cdot M_1 \psdge \Gamma.
        \]
        Then $\Cov\parens*{\mu_{M_1+M_2, h}} \psdle \Gamma^{-1}$.
    \end{lemma}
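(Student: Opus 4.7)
The plan is to combine the Hubbard--Stratonovich transform (\Cref{thm:HS}) with the Brascamp--Lieb inequality (\Cref{thm:brascamp-lieb}) in a straightforward way, using $M_1$ as the mixing covariance matrix. Since $M_1 \psdg 0$, we may apply \Cref{thm:HS} with $J = M_1 + M_2$ and $C = M_1$ to write
\[
    \mu_{M_1+M_2,h} = \E_{z \sim \nu} \mu_{M_2, M_1 z + h},
\]
where $\nu \propto \exp(-V(z))$ with $V(z) = \tfrac{1}{2} z^\top M_1 z - \log Z_{M_2, M_1 z + h}$. The theorem also gives us the PSD inequality $\Cov(\mu_{M_1+M_2,h}) \psdle \Cov(\nu)$, so it suffices to prove $\Cov(\nu) \psdle \Gamma^{-1}$.

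The next step is to compute $\nabla^2 V(z)$ using the standard identities for log-partition functions. Setting $g = M_1 z + h$, the chain rule combined with $\nabla_g \log Z_{M_2, g} = \E_{\mu_{M_2, g}}[\sigma]$ and $\nabla_g^2 \log Z_{M_2, g} = \Cov(\mu_{M_2, g})$ gives
\[
    \nabla_z^2 \log Z_{M_2, M_1 z + h} = M_1 \cdot \Cov(\mu_{M_2, M_1 z + h}) \cdot M_1,
\]
where we used symmetry of $M_1$. Therefore
\[
    \nabla^2 V(z) = M_1 - M_1 \cdot \Cov(\mu_{M_2, M_1 z + h}) \cdot M_1 \psdge \Gamma,
\]
by the hypothesis of the lemma, uniformly in $z$.

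Finally, applying \Cref{thm:brascamp-lieb} to $\nu \propto \exp(-V)$ with the uniform lower bound $\nabla^2 V \psdge \Gamma$ yields $\Cov(\nu) \psdle \Gamma^{-1}$, and chaining with the covariance inequality from \Cref{thm:HS} produces the desired bound $\Cov(\mu_{M_1+M_2,h}) \psdle \Gamma^{-1}$. There is no real obstacle here since all ingredients are already recorded in the excerpt; the only thing to be careful about is the chain-rule computation of $\nabla_z^2 \log Z_{M_2, M_1 z + h}$ and tracking the factor of $M_1$ on both sides, which is what makes the hypothesis match the form of $\nabla^2 V$ exactly.
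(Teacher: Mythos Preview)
Your proof is correct and follows essentially the same approach as the paper: apply the Hubbard--Stratonovich transform with $C=M_1$, compute the Hessian of the negative log-density of the mixing measure $\nu$ to identify it with the left-hand side of the hypothesis, and then invoke Brascamp--Lieb. The only cosmetic difference is that you explicitly spell out the chain-rule step for $\nabla_z^2 \log Z_{M_2, M_1 z + h}$, which the paper states directly.
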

    \begin{proof}
        First, observe that by applying \Cref{thm:HS} with $C = M_1$, we obtain $\Cov\parens*{\mu_{M_1+M_2, h}} \psdle \Cov(\nu)$ where $\nu(z)\propto\exp\parens*{-\frac{1}{2}z^{\top}M_1z + \log Z_{M_2, M_1 z + h}}$.
        It then suffices to bound $\Cov(\nu)$.
        Towards doing so, we show that $\nu$ is log-concave, and then use the Brascamp--Lieb inequality (\Cref{thm:brascamp-lieb}).
    
        We can verify that:
        \[
            -\nabla^2\log\nu(z) = M_1 - \nabla^2 \log Z_{M_2, M_1z + h} = M_1 - M_1\cdot \Cov\parens*{\mu_{M_2, M_1z+h}} \cdot M_1.
        \]
        By assumption, for any $z\in\R^n$, $-\nabla^2\log\nu(z) \psdge \Gamma\psdg 0$, which establishes that $\nu$ is log-concave.
        Consequently, by \Cref{thm:brascamp-lieb} we get $\Cov(\nu)\psdle\Gamma^{-1}$.
    \end{proof}

    To bound the above quantity, the following lemma will be useful.
    
    \begin{fact}
        \label{prop:prop-hs-trick}
        Let $L,M$ be symmetric matrices such that $\|M\| \le \alpha$ and $0 \psdl \eta_1 \psdle L \psdle \frac{1-\eta_2}{\alpha}$. Then,
        \[ L - LML \psdge \eta_1\eta_2. \]
    \end{fact}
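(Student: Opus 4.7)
The plan is to dispense with the non-commutativity between $L$ and $M$ by bounding $LML$ above by a polynomial in $L$, and then reduce the resulting PSD inequality to a scalar inequality by diagonalizing.

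First I would handle $M$ by noting that $L$ is symmetric, so for any $v \in \R^{n}$,
\[
    v^{\top} LML\, v = (Lv)^{\top} M (Lv) \le \|M\| \cdot \|Lv\|^2 \le \alpha \cdot v^{\top} L^2 v.
\]
This is exactly the PSD inequality $LML \psdle \alpha L^2$. Subtracting from $L$ yields
\[
    L - LML \psdge L - \alpha L^2 = L(I - \alpha L),
\]
which is now a polynomial in the single symmetric matrix $L$, hence in particular commutes with $L$.

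Next I would invoke the spectral theorem and verify the desired lower bound eigenvalue-by-eigenvalue. Let $\lambda$ be any eigenvalue of $L$. The hypothesis $\eta_1 I \psdle L \psdle \frac{1-\eta_2}{\alpha} I$ gives $\lambda \ge \eta_1 > 0$ and $\alpha \lambda \le 1-\eta_2$, so that $1 - \alpha\lambda \ge \eta_2 > 0$. Multiplying these two nonnegative bounds,
\[
    \lambda(1 - \alpha\lambda) \ge \eta_1 \eta_2.
\]
Since $L(I - \alpha L)$ has eigenvalues $\{\lambda(1-\alpha\lambda)\}$ on the same eigenbasis as $L$, this upgrades to $L(I - \alpha L) \psdge \eta_1 \eta_2 \cdot I$, and combining with the previous step completes the proof.

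There is no genuine obstacle here; the only subtle point is the non-commutativity of $L$ and $M$, which is precisely why the first step replaces $LML$ by the commutative surrogate $\alpha L^{2}$ before the spectral argument is applied. I also note that the bound $L \psdle (1-\eta_2)/\alpha$ is used in exactly the right way: it ensures $I - \alpha L \psdge \eta_2 I \psdg 0$, so that the scalar product bound $\lambda(1-\alpha\lambda) \ge \eta_1 \eta_2$ is legitimate (both factors are nonnegative).
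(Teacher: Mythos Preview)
Your proof is correct. The paper states this as a \emph{Fact} without proof, so there is no argument to compare against; your two-step approach (first bounding $LML \psdle \alpha L^2$ via $\|M\|\le\alpha$, then reducing to the scalar inequality $\lambda(1-\alpha\lambda)\ge\eta_1\eta_2$ on the spectrum of $L$) is the natural way to verify it.
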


\subsection{Localization schemes}   \label{sec:loc-schemes}

    We will make use of \emph{stochastic localization} to relate our Markov chain to simpler Markov chains \cite{Eld13,CE22}.
    We use a restricted form of stochastic localization, that we state below.
    \begin{definition}[Stochastic localization for Ising models]    \label{def:stoc-loc}
        A \emph{stochastic localization scheme} is a stochastic process $\parens*{\mu_{t}}_{0\le t\le 1}$ on the space of Ising models that is parameterized by a (possibly time-varying) \emph{control matrix} $(C_t)_{0 \le t \le 1}$ and an \emph{initialization} $\mu_{J,h}$, described by the following stochastic differential equation.
        \begin{equation}
            \dif\parens*{\frac{\mu_{t}}{\mu_{J,h}}} = \frac{\mu_{t}}{\mu_{J,h}} \angles*{ x-\E_{\mu_{t}} x, C_t \cdot \dif B_t } \label{eq:stoc-loc}
        \end{equation}
        where $(B_t)_{0\le t\le 1}$ is the standard Brownian motion in $n$ dimensions.
    \end{definition}
    While \eqref{eq:stoc-loc} may appear rather mysterious at first, the measures $\mu_{t}$ take on a natural form.
    \begin{fact}[{\cite[Facts 13 and 14]{CE22}}]    \label{fact:stoc-loc-form}
        The distribution $\mu_t$ is the Ising model
        \(
            \mu_{J_t, h_t}
        \)
        where $J_t = J - \int_{0}^{t} C_s^2 \dif s$ and $h_t = \int_0^t C_s \cdot \dif B_s + C_s^2 \E_{\mu_s} [x] \cdot \dif s$.
        Further, $\mu_t$ is a martingale, in that for any set $A \subseteq \{\pm 1\}^n$, $\E[\mu_t(A)|\mu_s] = \mu_s(A)$ for $0\le s\le t\le 1$.
    \end{fact}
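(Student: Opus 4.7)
The plan is to apply It\^{o}'s formula to the likelihood ratio $\rho_t(x) := \mu_t(x)/\mu_{J,h}(x)$ and read off the Ising structure by matching coefficients in $x$. Since $\{\pm 1\}^n$ is finite, the defining SDE \eqref{eq:stoc-loc} may be viewed as a coupled system of scalar SDEs
\[
\dif \rho_t(x) = \rho_t(x) \langle v_t(x), \dif B_t \rangle, \qquad v_t(x) := C_t(x - \E_{\mu_t} x),
\]
indexed by $x \in \{\pm 1\}^n$, each of geometric-Brownian type with bounded, state-adapted volatility. Well-posedness follows from classical SDE theory since the state space is finite and $C_t$ is bounded on $[0,1]$.

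Applying It\^{o}'s formula to $\log \rho_t(x)$ and using $\rho_0(x) = 1$ gives
\[
\log \rho_t(x) = \int_0^t \langle v_s(x), \dif B_s\rangle - \tfrac{1}{2} \int_0^t \|v_s(x)\|^2 \dif s.
\]
Expanding $\|v_s(x)\|^2 = x^\top C_s^2 x - 2 x^\top C_s^2 \E_{\mu_s} x + (\E_{\mu_s} x)^\top C_s^2 \E_{\mu_s} x$ and grouping terms by their degree in $x$, the pieces independent of $x$ absorb into an $x$-independent (random) additive normalization. Adding $\log \mu_{J,h}(x) = \tfrac{1}{2} x^\top J x + \langle h, x\rangle - \log Z_{J,h}$ then produces
\[
\log \mu_t(x) = \tfrac{1}{2} x^\top \Bigl(J - \int_0^t C_s^2 \dif s\Bigr) x + \Bigl\langle h + \int_0^t C_s \dif B_s + \int_0^t C_s^2 \E_{\mu_s} x \, \dif s,\ x\Bigr\rangle - \log Z_t,
\]
which is precisely $\mu_{J_t, h_t}$ with the $J_t, h_t$ claimed in the statement.

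The martingale property is then immediate: the SDE for $\rho_t(x)$ has vanishing drift, so each $\rho_t(x)$ is a local martingale, and boundedness of $v_t(x)$ (since $x \in \{\pm 1\}^n$ and $C_t$ is bounded) upgrades this to a true martingale. Summing against $\mu_{J,h}$ over any $A \subseteq \{\pm 1\}^n$ then yields $\E[\mu_t(A) \mid \mu_s] = \sum_{x \in A} \E[\rho_t(x) \mid \mu_s]\, \mu_{J,h}(x) = \sum_{x \in A} \rho_s(x) \mu_{J,h}(x) = \mu_s(A)$. As a consistency check, taking $A = \{\pm 1\}^n$ and computing the drift of $\sum_x \rho_t(x)\, \mu_{J,h}(x)$ directly from the SDE, the coefficient of $\dif B_t$ equals $C_t \sum_x \mu_t(x)(x - \E_{\mu_t} x) = 0$, so total mass is conserved and each $\mu_t$ remains a bona fide probability measure.

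The only genuine obstacle is bookkeeping: the SDE is nonlinear in its own law through $\E_{\mu_t}[x]$, so one must be careful to justify It\^{o}'s formula and the existence-uniqueness of the coupled system. However, once $\mu_t$ is parameterized by the finitely many values $\{\rho_t(x)\}_{x \in \{\pm 1\}^n}$, the coefficients are Lipschitz and bounded, so these issues reduce to standard finite-dimensional It\^{o} calculus rather than requiring any new ideas.
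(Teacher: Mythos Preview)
Your argument is correct and is the standard derivation: apply It\^{o}'s formula to the Dol\'{e}ans-Dade exponential $\rho_t(x)$, expand the quadratic variation term $\|C_s(x-\E_{\mu_s}x)\|^2$ by degree in $x$, and read off the Ising parameters; the martingale property falls out because the SDE is driftless. The paper does not supply its own proof of this fact---it is stated as a citation to \cite[Facts~13 and~14]{CE22}---so there is nothing to compare against beyond noting that your computation is precisely the one underlying those cited facts. One minor point: your derivation correctly produces $h_t = h + \int_0^t C_s\,\dif B_s + \int_0^t C_s^2\,\E_{\mu_s}[x]\,\dif s$, whereas the statement as written in the paper omits the initial $h$; this appears to be a typo in the paper rather than an error on your part.
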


    The following is a mild generalization of \cite[Proposition 39 and Lemma 40]{CE22} that tells us entropy conservation for stochastic localization; this stronger version is implicitly proved in \cite[Lemma 40]{CE22}. 
    \begin{lemma}   \label{lem:entropic-stability}
        Consider a stochastic localization process $(\mu_t)_{t \ge 0}$ with respect to (possibly time-varying) control matrix $(C_t)_{t \ge 0}$, started at an Ising model $\mu_0 = \mu_{J,h}$. Suppose that for every $h'\in\R^n$ and $t \in [0,1]$,
        \[
            \norm*{C_t \cdot \Cov\parens*{\mu_{J_t, h'}} \cdot C_t} \le \alpha_t.
        \]
        Then, for any positive-valued $\varphi$,
        \[
            \E \left[\Ent_{\mu_{1}}[\varphi]\right] \ge \exp\parens*{-\int_{0}^{1} \alpha_t \dif t} \cdot \Ent_{\mu_{J,h}}[\varphi].
        \]
    \end{lemma}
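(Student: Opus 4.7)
The plan is to set $F(t) \coloneqq \E[\Ent_{\mu_t}[\varphi]]$ and establish the differential inequality $F'(t) \ge -\alpha_t F(t)$ on $[0,1]$, from which the conclusion follows immediately by Gr\"onwall's inequality.

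First I would compute $F'(t)$ via It\^o calculus on the SDE \eqref{eq:stoc-loc}. Because $(\mu_t)$ is a measure-valued martingale by \Cref{fact:stoc-loc-form}, the expectation $\E[\E_{\mu_t}[\varphi\log\varphi]]$ is constant in $t$, so only the term $-\E[\E_{\mu_t}[\varphi]\log\E_{\mu_t}[\varphi]]$ contributes to $F(t)$. From \eqref{eq:stoc-loc} one reads off $\dif\E_{\mu_t}[\varphi] = \Cov_{\mu_t}(\varphi, x)^{\top} C_t \dif B_t$; applying It\^o's formula to $y \mapsto y\log y$ and taking expectations (the first-order term vanishing by the martingale property) yields
\[F'(t) = -\frac{1}{2}\E\left[\frac{\norm*{C_t \Cov_{\mu_t}(\varphi, x)}^{2}}{\E_{\mu_t}[\varphi]}\right].\]
Setting $\nu \coloneqq \varphi \mu_t / \E_{\mu_t}[\varphi]$, direct computation gives $\Cov_{\mu_t}(\varphi, x) = \E_{\mu_t}[\varphi] \cdot (\E_{\nu}[x] - \E_{\mu_t}[x])$ and $\Ent_{\mu_t}[\varphi] = \E_{\mu_t}[\varphi] \cdot D_{\mathrm{KL}}(\nu\|\mu_t)$, so the desired inequality $F'(t) \ge -\alpha_t F(t)$ reduces to proving the pointwise \emph{entropic stability} bound
\[\norm*{C_t\parens*{\E_\nu[x] - \E_{\mu_t}[x]}}^{2} \le 2\alpha_t \cdot D_{\mathrm{KL}}(\nu\|\mu_t)\]
for every probability measure $\nu$ absolutely continuous with respect to $\mu_t$.

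The crux is to deduce this entropic stability from the covariance hypothesis, which I would do through sub-Gaussianity. Consider the centered cumulant generating function $\psi(v) \coloneqq \log \E_{\mu_t}[\exp(\angles*{v, x - \E_{\mu_t}[x]})]$. Standard exponential-family identities give $\nabla^{2}\psi(v) = \Cov(\mu_{J_t, h_t + v})$, so the hypothesis reads $C_t \nabla^{2}\psi(v) C_t \psdle \alpha_t I$ uniformly in $v$. Since $\psi(0) = 0$ and $\nabla\psi(0) = 0$, Taylor's theorem produces the sub-Gaussian bound $\psi(v) \le \tfrac{\alpha_t}{2} v^{\top} C_t^{-2} v$; any rank-deficiency of $C_t$ can be handled by replacing $C_t$ with $C_t + \epsilon I$ and sending $\epsilon \downarrow 0$ at the end. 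Combining this with the Donsker--Varadhan variational principle, for any $u$ and $s > 0$:
\[s \cdot \angles*{u, \E_\nu[x] - \E_{\mu_t}[x]} \le \psi(su) + D_{\mathrm{KL}}(\nu\|\mu_t) \le \tfrac{\alpha_t s^{2}}{2} u^{\top} C_t^{-2} u + D_{\mathrm{KL}}(\nu\|\mu_t).\]
Optimizing over $s > 0$ (the Herbst argument) gives $\angles*{u, \E_\nu[x] - \E_{\mu_t}[x]}^{2} \le 2\alpha_t (u^{\top} C_t^{-2} u) \cdot D_{\mathrm{KL}}(\nu\|\mu_t)$, and substituting $u = C_t w$ followed by taking the supremum over $\norm{w} = 1$ produces exactly the entropic stability inequality.

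The main obstacle I anticipate is the measure-theoretic bookkeeping around the It\^o step, in particular controlling the potentially singular factor $1/\E_{\mu_t}[\varphi]$ and justifying the interchange of expectation and time-derivative. I would address this via the standard regularization trick used in \cite{CE22}: replace $\varphi$ by $\varphi + \epsilon$, derive everything for the regularized version with an appropriate stopping-time argument, and pass to the limit $\epsilon \downarrow 0$ via bounded/monotone convergence.
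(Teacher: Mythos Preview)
Your proposal is correct and follows precisely the approach the paper has in mind. The paper does not give its own proof of this lemma; it simply cites \cite[Proposition 39 and Lemma 40]{CE22}, and your outline is exactly a reconstruction of that argument --- differentiate $F(t)=\E[\Ent_{\mu_t}[\varphi]]$ via It\^o, reduce to the entropic stability inequality $\norm{C_t(\E_\nu[x]-\E_{\mu_t}[x])}^2 \le 2\alpha_t\, D_{\mathrm{KL}}(\nu\|\mu_t)$, and deduce the latter from the uniform covariance bound through sub-Gaussianity plus Donsker--Varadhan. The paper's technical overview (the discussion around \eqref{eq:deriv-sl}) sketches the same differential inequality $F'(t)\ge -\alpha_t F(t)$.
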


    The following result articulates how the measure decomposition provided by stochastic localization is instrumental in proving a modified log-Sobolev inequality for the Glauber dynamics Markov chain.
    \begin{theorem}[{\cite[Theorem 47]{CE22}}]  \label{thm:anneal}
        Let $J$ be an interaction matrix and $h$ be an external field.
        Let $(\mu_t)_{0\le t\le 1}$ be a stochastic localization scheme initialized at $\mu_{J,h}$.
        Suppose that
        \begin{enumerate}
            \item \label{item:entropy-conservation} We have the entropy conservation inequality
            \[ \frac{\E[\Ent_{\mu_1}[f]]}{\Ent_{\mu_{J,h}}[f]} \ge \eps \]
            for all $f : \Omega \to \R_+$.
            \item \label{item:MLSI-target} Almost surely,
            \[ \MLSI(P_{\mu_1}) \ge \delta. \]
        \end{enumerate}
        Then, $\mu_{J,h}$ has an MLSI coefficient at least $\eps\delta$.
    \end{theorem}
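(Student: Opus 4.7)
The plan is to prove the modified log-Sobolev inequality $\mathcal{E}_{\mu_{J,h}}(f,\log f) \geq \varepsilon\delta \cdot \Ent_{\mu_{J,h}}[f]$ for every positive $f$ by chaining three inequalities together, exactly paralleling the three-step strategy outlined earlier in the paper (Section 2) for decomposing the Gibbs measure into simpler components. The role of the mixture $\rho$ is played here by the law of $\mu_1$, and the role of the component measures $\mu_{\mathbf{z}}$ is played by realizations of the random measure $\mu_1$. The crucial input is that the stochastic localization process is a martingale (\Cref{fact:stoc-loc-form}), so $\mu_{J,h}(x) = \E[\mu_1(x)]$ for every state $x \in \{\pm 1\}^n$.

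Step 1 (Dirichlet form decomposition). I would first show the pointwise inequality
\[
    \mathcal{E}_{\mu_{J,h}}(f,\log f) \geq \E\bigl[\mathcal{E}_{\mu_1}(f,\log f)\bigr].
\]
This is the ``conservation of local entropy'' inequality and follows from exactly the computation already carried out in the overview: write both Dirichlet forms as sums over edges $x \sim y$ in the Glauber graph of the quantity $\frac{\mu(x)\mu(y)}{\mu(x)+\mu(y)} \cdot (f(x)-f(y)) \cdot \log\frac{f(x)}{f(y)}$, substitute $\mu_{J,h}(\cdot) = \E[\mu_1(\cdot)]$, and invoke concavity of $(a,b)\mapsto \frac{ab}{a+b}$ on the nonnegative quadrant together with Jensen's inequality, using that $(f(x)-f(y)) \log\frac{f(x)}{f(y)} \geq 0$ always to preserve the inequality sign.

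Step 2 (almost sure MLSI for $\mu_1$). Hypothesis~\ref{item:MLSI-target} gives, almost surely, $\mathcal{E}_{\mu_1}(f,\log f) \geq \delta \cdot \Ent_{\mu_1}[f]$. Taking expectations over the localization randomness yields
\[
    \E\bigl[\mathcal{E}_{\mu_1}(f,\log f)\bigr] \;\geq\; \delta \cdot \E\bigl[\Ent_{\mu_1}[f]\bigr].
\]

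Step 3 (entropy conservation). Applying hypothesis~\ref{item:entropy-conservation} with the same $f$ directly gives $\E[\Ent_{\mu_1}[f]] \geq \varepsilon \cdot \Ent_{\mu_{J,h}}[f]$. Chaining the three bounds produces $\mathcal{E}_{\mu_{J,h}}(f,\log f) \geq \varepsilon\delta \cdot \Ent_{\mu_{J,h}}[f]$, which is the claimed MLSI with constant $\varepsilon\delta$. Since $f > 0$ was arbitrary, this concludes the proof. The only genuinely nontrivial step is Step~1, and even this is ``plug and play'' once one identifies the right concave function to apply Jensen to; Steps~2 and~3 are just applications of the hypotheses. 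The entire argument is structural and does not use any specifics of the Ising model or of the stochastic localization control matrix $(C_t)$ beyond the martingale property of $\mu_t$ that makes $\mu_{J,h}$ the expectation of $\mu_1$.
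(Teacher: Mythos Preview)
Your proposal is correct and follows exactly the three-step strategy the paper outlines in its technical overview (Section~\ref{sec:overview}): conservation of local entropy via concavity of $(a,b)\mapsto \frac{ab}{a+b}$, the almost-sure MLSI for the component measures, and entropy conservation for the mixture. The paper itself does not reprove \Cref{thm:anneal}---it is imported from \cite[Theorem 47]{CE22}---but your argument is precisely the one sketched in the overview, with the mixture law $\rho$ identified as the law of $\mu_1$ under the stochastic localization martingale.
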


    Using the above two results easily yields the following.

    \begin{theorem} \label{thm:sum-MLSI}
        Let $M_1,M_2$ be matrices in $\R^{n \times n}$ with $M_1$ positive semidefinite, and suppose that for every external field $h' \in \R^{n}$ and $t\in[0,1]$,
        \begin{enumerate}
            \item $\left\| M_1^{1/2} \cdot \Cov(\mu_{(1-t)M_1+M_2,h'}) \cdot M_1^{1/2} \right\| \le q$, and
            \item Glauber dynamics for $\mu_{M_2,h'}$ satisfies $\MLSI\parens*{\mu_{M_2,h'}} \ge \delta$,
        \end{enumerate}
        then, for any external field $h \in \R^{n}$,
        \[
            \MLSI\parens*{\mu_{M_1+M_2,h}} \ge \delta \cdot e^{-q}.    
        \]
    \end{theorem}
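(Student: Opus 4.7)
The plan is to apply the annealing theorem \Cref{thm:anneal} with a suitably chosen stochastic localization scheme initialized at $\mu_{M_1+M_2,h}$. Specifically, I will use the time-independent control matrix $C_t \equiv M_1^{1/2}$, which is well-defined since $M_1 \psdge 0$ by hypothesis. By \Cref{fact:stoc-loc-form}, the resulting process $(\mu_t)_{0 \le t \le 1}$ consists of Ising models $\mu_t = \mu_{J_t, h_t}$ where
\[
    J_t = (M_1 + M_2) - \int_0^t M_1 \dif s = (1-t) M_1 + M_2,
\]
and $h_t$ is some (random) external field. In particular, $J_1 = M_2$, so $\mu_1 = \mu_{M_2, h_1}$ for some random field $h_1 \in \R^n$.

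With this setup, verifying the two hypotheses of \Cref{thm:anneal} is essentially immediate from the assumptions. For the entropy conservation condition, I apply \Cref{lem:entropic-stability} with $\alpha_t = q$, since for any $h' \in \R^n$ and $t \in [0,1]$, hypothesis (1) gives
\[
    \norm*{C_t \cdot \Cov(\mu_{J_t, h'}) \cdot C_t} = \norm*{M_1^{1/2} \cdot \Cov(\mu_{(1-t)M_1 + M_2, h'}) \cdot M_1^{1/2}} \le q.
\]
This yields $\E[\Ent_{\mu_1}[\varphi]] \ge e^{-q} \cdot \Ent_{\mu_{M_1+M_2,h}}[\varphi]$ for every positive $\varphi$, so the entropy conservation ratio is at least $\eps = e^{-q}$. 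For the MLSI condition on $\mu_1$, note that since hypothesis (2) applies to $\mu_{M_2, h'}$ for every external field $h' \in \R^n$, it applies in particular to the (random) realization $h_1$, giving $\MLSI(\mu_1) \ge \delta$ almost surely.

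Combining these two ingredients via \Cref{thm:anneal} yields $\MLSI(\mu_{M_1+M_2,h}) \ge \eps \cdot \delta = \delta \cdot e^{-q}$, which is the desired bound. I do not expect any substantive obstacle: the theorem is essentially a clean packaging of \Cref{lem:entropic-stability} and \Cref{thm:anneal} specialized to the linear interpolation $J_t = (1-t)M_1 + M_2$ induced by the constant control matrix $M_1^{1/2}$. The only minor point worth noting is that the martingale property in \Cref{fact:stoc-loc-form} guarantees that $\mu_1$ is almost surely a genuine Ising model of the form $\mu_{M_2, h_1}$, so the almost-sure MLSI hypothesis feeds cleanly into \Cref{thm:anneal}\cref{item:MLSI-target}.
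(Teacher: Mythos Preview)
Your proposal is correct and follows essentially the same approach as the paper: apply \Cref{thm:anneal} with the constant control matrix $C_t = M_1^{1/2}$, use \Cref{fact:stoc-loc-form} to identify $J_t = (1-t)M_1 + M_2$, verify entropy conservation via \Cref{lem:entropic-stability} with $\alpha_t = q$, and invoke hypothesis~(2) for the almost-sure MLSI on $\mu_1$. If anything, your write-up is more detailed than the paper's own proof, which simply names the control matrix and cites \Cref{lem:entropic-stability} and the assumption.
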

    
    \begin{proof}
        We will use \Cref{thm:anneal} by choosing the stochastic localization to be that with control matrix $M_1^{1/2}$.
        \noindent The desired statement follows once we verify that the following are true:
        \begin{enumerate}
            \item \label{item:entropy-con} $\displaystyle\frac{\E \left[\Ent_{\mu_1}[f]\right]}{\Ent_{\mu_{J,h}}[f]} \ge e^{-q}$ for every $f:\{\pm1\}^n\to\R_+$, and
            \item \label{item:final-MLSI} $\MLSI\parens*{\mu_1} \ge \delta$ almost surely.
        \end{enumerate}
        \Cref{item:entropy-con} follows from \Cref{lem:entropic-stability}, and \Cref{item:final-MLSI} is true by assumption, which completes the proof.
    \end{proof}

\subsection{Graphs} \label{sec:graph-prelims}

    For the rest of this section, let $G$ be a graph on $n$ vertices and $m$ edges, and let $c$ be a weight function on this graph.
    We will use $A_c$ to denote the adjacency matrix of $G$, weighted according to $c$, and $D_c$ as the weighted degree function where $D_c[i,i] = \sum_{j\ne i} A_c[i,j]$.
    \begin{definition}[Nonbacktracking matrix]
        The \emph{nonbacktracking matrix} $B_c$ is a $2m\times 2m$ matrix indexed by directed edges in $G$ where:
        \[
            B_{c}[uv, xy] =
            \begin{cases}
                c(xy) &\text{ if }v=x\text{ and }u\ne y \\
                0 &\text{otherwise.}
            \end{cases}
        \]
        We will use $B_G$ to refer to the \emph{unweighted} nonbacktracking matrix, i.e., when $c(xy)$ is equal to $1$ for every edge in $G$.
    \end{definition}

    The nonbacktracking matrix is related to a matrix known as the Bethe Hessian, and we use this relationship to obtain information about the spectrum of the adjacency matrices of bounded degree subgraphs of a random graph arising from the stochastic block model.
    \begin{definition}[Bethe Hessian]
        \label{def:bethe-hessian}
        The \emph{Bethe Hessian} is defined as:
        \(
            \BH_{c} \coloneqq D_{c\wh{c}} - A_{\wh{c}} + I
        \)
        where $\wh{c} = \frac{c}{1-c^2}$.
        When all the weights are equal to $t$, the Bethe Hessian can be written as:
        \[
            \BH_G(t) \coloneqq \frac{(D_G-I)t^2 - At + I}{1-t^2}.
        \]
    \end{definition}

    The following is a classic statement that relates the eigenvalues of the Bethe Hessian and those of the nonbacktracking matrix; see, e.g., \cite[Proof of Theorem 5.1]{FM17}).
    \begin{lemma}
        For any $\alpha\in(0,1)$, the spectral radius $\rho(B_G) \le \frac{1}{\alpha}$ if and only if $\BH_G(t)\psdg 0$ for all $t\in(-\alpha,\alpha)$.
    \end{lemma}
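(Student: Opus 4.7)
The plan is to invoke the classical \emph{Ihara--Bass formula}, a determinantal identity that relates the characteristic polynomial of the non-backtracking operator to the Bethe Hessian. In our notation it reads
\[
    \det(I - tB_G) \;=\; (1-t^2)^{m-n}\, \det\bigl((1-t^2)\BH_G(t)\bigr) \;=\; (1-t^2)^{m}\, \det(\BH_G(t)),
\]
where the second equality just pulls the scalar out of the $n \times n$ matrix $\BH_G(t)$. Granting this identity (which is standard and which I would simply cite), the lemma reduces to a sign analysis of both sides as $t$ ranges over $(-\alpha,\alpha)$.

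For the forward direction, I assume $\rho(B_G) \le 1/\alpha$ and fix $t \in (-\alpha,\alpha)$. Factoring $\det(I - tB_G) = \prod_i (1 - t\lambda_i)$ over the eigenvalues of $B_G$ counted with algebraic multiplicity, and pairing complex conjugate eigenvalues, each such pair contributes the strictly positive factor $\abs{1 - t\lambda_i}^2$ (using that $t\lambda_i$ is non-real, hence distinct from $1$, whenever $t \ne 0$ and $\lambda_i$ is non-real). Each real eigenvalue $\lambda_i$ satisfies $\abs{\lambda_i} \le 1/\alpha$, so $\abs{t\lambda_i} < 1$ and the factor $1 - t\lambda_i > 0$. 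Hence $\det(I - tB_G) > 0$ throughout $(-\alpha,\alpha)$, and since $(1-t^2)^m > 0$ there, Ihara--Bass gives $\det(\BH_G(t)) > 0$. Because $\BH_G(t)$ is symmetric and varies continuously in $t$ with $\BH_G(0) = I \psdg 0$, no eigenvalue can cross zero on $(-\alpha,\alpha)$ without the determinant vanishing, so continuity forces $\BH_G(t) \psdg 0$ throughout the interval.

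For the converse, I assume $\BH_G(t) \psdg 0$ for all $t \in (-\alpha,\alpha)$, so $\det(\BH_G(t)) > 0$ and hence $\det(I - tB_G) > 0$ on the interval. Suppose toward contradiction that $\rho(B_G) > 1/\alpha$. The key observation is that $B_G$ is entrywise nonnegative, so by the Perron--Frobenius theorem its spectral radius is itself a real (and in fact nonnegative) eigenvalue. Setting $t_0 \coloneqq 1/\rho(B_G) \in (0,\alpha)$ then forces $\det(I - t_0 B_G) = 0$, contradicting the strict positivity just derived. The principal obstacle in this plan is the Ihara--Bass identity itself, which requires a somewhat involved block-matrix argument to prove from scratch; once it is available, what remains is a short continuity argument together with a single invocation of Perron--Frobenius, which is precisely what lets us sidestep the potentially complex eigenvalues of $B_G$ in the converse direction (a step that would fail for arbitrary signings).
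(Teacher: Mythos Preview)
Your proof is correct. The paper does not supply its own argument for this lemma; it presents the statement as classical and simply cites the proof of Theorem~5.1 in \cite{FM17}. Your route via the Ihara--Bass determinantal identity is precisely the standard proof underlying that citation: the identity reduces the question to a sign analysis of $\det(I - tB_G)$, the forward direction is handled by pairing complex-conjugate eigenvalues and noting $|t\lambda_i| < 1$, and the converse uses Perron--Frobenius on the nonnegative matrix $B_G$ to locate a real root of $\det(I - tB_G)$ inside $(-\alpha,\alpha)$. The continuity-of-eigenvalues step to upgrade ``positive determinant'' to ``positive definite'' is clean and correct. There is nothing to add; your write-up is essentially what the cited reference does.
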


    A simple observation is that when $G$ is a tree, its nonbacktracking matrix is nilpotent, and hence its spectral radius is equal to $0$, which gives us the following consequence.
    \begin{corollary}
        \label{cor:bethe-tree-psd}
        When $G$ is a tree, for all $t\in(-1,1)$, we have $\BH_G(t)\psdg 0$.
    \end{corollary}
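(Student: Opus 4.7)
The strategy is to invoke the preceding lemma, which equates the PSDness of the Bethe Hessian $\BH_G(t)$ on an interval $(-\alpha,\alpha)$ with the bound $\rho(B_G) \le 1/\alpha$ on the spectral radius of the nonbacktracking matrix. Hence it suffices to establish that $\rho(B_G) = 0$ when $G$ is a tree, and then let $\alpha \to 1^-$.

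The first step is to verify the claim (already asserted in the statement of the corollary) that $B_G$ is nilpotent when $G$ is a tree. The key observation is that, on a tree, any nonbacktracking walk starting from a directed edge $(u \to v)$ must at every step move strictly further away from $u$ in the tree metric: otherwise the walk would have to use the unique edge it just traversed, contradicting the nonbacktracking constraint. Since the tree has finitely many vertices, any such walk terminates after at most $\mathrm{diam}(G) < |V(G)|$ steps. Interpreting $(B_G^k)[e, e']$ as the number of nonbacktracking walks of length $k$ from $e$ to $e'$, we conclude $B_G^k = 0$ for all $k \ge |V(G)|$, so $B_G$ is nilpotent and $\rho(B_G) = 0$.

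Next, fix any $\alpha \in (0,1)$. Since $\rho(B_G) = 0 < 1/\alpha$, the preceding lemma gives $\BH_G(t) \psdg 0$ for every $t \in (-\alpha, \alpha)$. Because $\alpha \in (0,1)$ was arbitrary and $(-1,1) = \bigcup_{\alpha \in (0,1)} (-\alpha, \alpha)$, this yields $\BH_G(t) \psdg 0$ for all $t \in (-1,1)$, completing the proof.

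I do not anticipate any real obstacle: the only substantive content is the nilpotency observation, which is a one-line structural fact about nonbacktracking walks on trees, and everything else is a direct application of the preceding lemma. One minor pitfall to avoid is attempting to apply the lemma directly with $\alpha = 1$ (which is outside its stated range); quantifying over $\alpha < 1$ and taking the union of the intervals sidesteps this cleanly.
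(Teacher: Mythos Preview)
Your proposal is correct and follows exactly the same approach as the paper: the paper also observes that the nonbacktracking matrix of a tree is nilpotent (hence has spectral radius $0$) and invokes the preceding lemma. Your write-up simply fills in the details of the nilpotency argument and the union over $\alpha \in (0,1)$, which the paper leaves implicit.
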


    We define $A_G^{(s)}$ as the $n\times n$ matrix where the $i,j$ entry contains the number of length-$s$ nonbacktracking walks between $i$ and $j$.
    The following is a classic fact; see, e.g.,~\cite[Equation (2.1)]{ST96}.

    \begin{fact}
        \label{fact:bethe-inverse}
        $\displaystyle \BH_G(t)^{-1} = \sum_{s\ge 0} A_G^{(s)}t^s$.
    \end{fact}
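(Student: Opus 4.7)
The plan is to derive the identity by establishing a linear recurrence for the matrices $A_G^{(s)}$ and then recognizing the generating function $S(t) \coloneqq \sum_{s \ge 0} A_G^{(s)} t^s$ as the inverse of (a rescaling of) $\BH_G(t)$.

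First I would prove, by a direct combinatorial argument, the base cases $A_G^{(0)} = I$, $A_G^{(1)} = A$, $A_G^{(2)} = A^2 - D$, together with the recurrence
\[
    A_G^{(s+1)} \;=\; A \cdot A_G^{(s)} \;-\; (D - I)\cdot A_G^{(s-1)} \qquad \text{for all } s \ge 2.
\]
The derivation is: $(A \cdot A_G^{(s)})[i,j]$ counts sequences $i, k, k_1, \dots, k_s = j$ with $i \sim k$ and $k, k_1, \dots, k_s$ a length-$s$ nonbacktracking walk (NBW); the overall sequence is an NBW iff $i \ne k_1$. The remaining walks have $k_1 = i$, which for $s \ge 2$ corresponds to picking an NBW $i, k_2, \dots, k_s$ of length $s-1$ and then choosing $k$ among the $\deg(i) - 1$ neighbors of $i$ distinct from $k_2$, giving $(D - I)\cdot A_G^{(s-1)}$. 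The $s = 1$ case is different: the "bad" walks have $j = i$ and contribute $D$ rather than $D - I$, which is why $A_G^{(2)} = A^2 - D$ and not $A^2 - D + I$.

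Next I would compute $\bigl[I - At + (D-I)t^2\bigr] \cdot S(t)$ by expanding and collecting powers of $t$. By the recurrence, the coefficient of $t^s$ vanishes for $s = 1$ and for every $s \ge 3$. The coefficient of $t^0$ equals $I$, and at $s = 2$ the coefficient equals
\[
    A_G^{(2)} \;-\; A\cdot A_G^{(1)} \;+\; (D - I) A_G^{(0)} \;=\; (A^2 - D) - A^2 + (D - I) \;=\; -I.
\]
Therefore
\[
    \bigl[I - At + (D - I)t^2\bigr] \cdot S(t) \;=\; (1 - t^2)\, I.
\]

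Finally, recalling from \Cref{def:bethe-hessian} that $\BH_G(t) = \dfrac{I - At + (D - I)t^2}{1 - t^2}$, we divide both sides by $1 - t^2$ and invert to obtain $\BH_G(t)^{-1} = S(t) = \sum_{s \ge 0} A_G^{(s)} t^s$, as desired. The one subtle step is the boundary discrepancy between $s = 1$ and $s \ge 2$ in the recurrence: it is precisely this discrepancy that produces the $-I t^2$ contribution which combines with the leading $I$ to yield the $(1 - t^2)$ factor cancelling the denominator of $\BH_G(t)$; getting this accounting right is the main thing to be careful about.
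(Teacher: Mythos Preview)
Your proof is correct. The recurrence $A_G^{(s+1)} = A\,A_G^{(s)} - (D-I)\,A_G^{(s-1)}$ for $s \ge 2$, the base cases, and the generating-function computation are all right, and you correctly handle the boundary term at $s=2$ that produces the $(1-t^2)$ factor.

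The paper does not actually prove this statement: it is recorded as a classical fact with a citation (to \cite{ST96}) and no argument given. Your derivation via the nonbacktracking-walk recurrence is the standard proof of this identity, so there is nothing to compare against beyond noting that you have supplied what the paper only cites.
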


    We will need to understand the spectrum of the nonbacktracking matrix of a random graph drawn from the stochastic block model, as well as random graphs with randomly signed edge weights.
    For the sequel, let $\bG\sim\SBM(n,d,\lambda)$ with hidden community vector $\bsigma\in\{\pm1\}^n$, and assume $d > 1$.
    Consider the adjacency matrix centered after conditioning on $\bsigma$, i.e., the matrix $\ol{A}_{\bG} \coloneqq A_{\bG} - \E[A_{\bG}|\bsigma]$.
    Let $\bc$ be the weight function on the edges of the complete graph prescribed by $\ol{A}_{\bG}$.

    The following bounds on the nonbacktracking matrix of a random graph drawn from the stochastic block model are effectively due to \cite{BLM15}, but the exact form we will need is the version in \cite{LMR22}.
    \begin{lemma}[Consequence of {\cite[Theorem 7.4]{LMR22}}]
        For $\eps > 0$:
        \[
            |\lambda|_{\max}(B_{\bc}) \le (1+\eps)\cdot\sqrt{d}
        \]
        with probability $1-o_n(1)$.
    \end{lemma}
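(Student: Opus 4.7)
The plan is to directly invoke \cite[Theorem 7.4]{LMR22}, which is a quantitative statement about the spectrum of the weighted nonbacktracking matrix associated to a sparse random graph whose edge weights are conditionally independent. Since the lemma is advertised as an immediate consequence of that theorem, essentially all the work lies in checking hypotheses and translating the conclusion into our setting.

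First I would record the structure of the weights. By construction, $\bc(uv) = \ol{A}_{\bG}(u,v) = A_{\bG}(u,v) - \E[A_{\bG}(u,v)|\bsigma]$, so conditional on $\bsigma$ the weights are independent across pairs $\{u,v\}$, have mean zero, and have conditional variance of order $d/n$. These are precisely the ingredients that Bordenave--Lelarge--Massouli\'e style nonbacktracking bounds require, and in particular $\E[B_{\bc}|\bsigma] = 0$, so the mean structure is entirely absent from $B_{\bc}$.

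Next I would apply \cite[Theorem 7.4]{LMR22} as a black box. In its general form, that theorem decomposes the nonbacktracking spectrum into a finite collection of ``informative'' eigenvalues determined by the expected nonbacktracking matrix, together with a bulk of radius at most $(1+\eps)\sqrt{d}$ with probability $1-o_n(1)$. Because centering with respect to $\E[A_{\bG}|\bsigma]$ annihilates this expectation, the informative eigenvalues vanish and only the bulk bound remains, yielding $|\lambda|_{\max}(B_{\bc}) \le (1+\eps)\sqrt{d}$.

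The only real obstacle is bookkeeping: verifying that the normalization and moment assumptions of \cite[Theorem 7.4]{LMR22} indeed correspond to our $\sqrt{d}$ bulk threshold (rather than some rescaled variant such as $\lambda\sqrt{d}$ that would arise from a different SBM parameterization), and confirming that conditioning on $\bsigma$ does not introduce dependencies that break the theorem's hypotheses. Once these routine checks are carried out, the claim follows.
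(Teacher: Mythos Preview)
Your proposal is correct and matches the paper's treatment: the paper offers no argument beyond citing \cite[Theorem~7.4]{LMR22} as a black box, and your sketch of checking the centered-weight hypotheses and reading off the $(1+\eps)\sqrt{d}$ bulk bound is exactly the intended verification.
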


    By an identical argument to the proof of \cite[Theorem 5.1]{FM17}, the following is true.
    \begin{lemma}
        For any $t\in\parens*{-\frac{1}{\sqrt{d}},\frac{1}{\sqrt{d}}}$,
        \(
            \BH_{{\bc}\cdot t} \psdge 0
        \)
        with probability $1-o_n(1)$.
    \end{lemma}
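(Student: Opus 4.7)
The plan is to establish a weighted analog of the Bethe--Hessian/nonbacktracking equivalence stated just above the lemma, and then combine it with the preceding spectral radius bound $\rho(B_{\bc}) \leq (1+\eps)\sqrt{d}$. Concretely, the target weighted statement is the following: if $c$ is a weight function satisfying $\rho(B_{c}) \leq 1/\alpha$, then $\BH_{c \cdot s} \psdg 0$ for every $s \in (-\alpha, \alpha)$ with $\|s\cdot c\|_{\infty} < 1$ (the latter condition just ensuring $\wh{c \cdot s}$ is well-defined).

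To obtain the weighted correspondence, I would redo the determinantal calculation underlying the unweighted proof of \cite[Theorem~5.1]{FM17}. Inserting $c \mapsto c\cdot s$ into the definition $\BH_{c} = D_{c\wh{c}} - A_{\wh{c}} + I$ with $\wh{c} = c/(1 - c^{2})$, and then performing the same block-matrix / edge-incidence manipulation that relates $\BH$ to $B_{c}$ in the unweighted setting, produces a weighted Ihara--Bass-type identity of the schematic form
\[
    \det(\BH_{c \cdot s}) \cdot \prod_{e \in E}\left(1 - s^{2} c_{e}^{2}\right) \;=\; \det(I - s\cdot B_{c}),
\]
valid as a polynomial identity in $s$. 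Since $\BH_{c \cdot 0} = I \psdg 0$ and the right-hand side has no roots for $|s| < 1/\rho(B_{c})$, a standard continuity-in-$s$ argument---the smallest eigenvalue of $\BH_{c \cdot s}$ cannot cross zero without $\det \BH_{c \cdot s}$ vanishing, and the left-hand factor $\prod_{e}(1 - s^{2} c_{e}^{2})$ is strictly positive in the regime of interest---shows that $\BH_{c \cdot s}$ is strictly positive definite throughout $|s| < 1/\rho(B_{c})$.

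To conclude the lemma, fix $t$ with $|t| < 1/\sqrt{d}$ and pick $\eps > 0$ small enough that $|t| < 1/((1+\eps)\sqrt{d})$. By the immediately preceding lemma, with probability $1 - o_{n}(1)$ we have $\rho(B_{\bc}) \leq (1+\eps)\sqrt{d}$; on this event, $|t| < 1/\rho(B_{\bc})$, and applying the weighted correspondence (with $c = \bc$, $\alpha = 1/((1+\eps)\sqrt{d})$, $s = t$) yields $\BH_{\bc \cdot t} \psdg 0$, hence $\psdge 0$ as claimed. The condition $\|t \cdot \bc\|_{\infty} < 1$ needed to apply the correspondence is automatic: the entries of $\bc$ corresponding to the centered adjacency matrix lie in $[-1, 1]$ up to $O(1/n)$ corrections, and $|t| < 1/\sqrt{d} < 1$ since $d > 1$.

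The main obstacle is verifying the weighted Ihara--Bass-type determinantal identity above. The text's phrasing ``by an identical argument'' suggests this is essentially routine bookkeeping, but one must carefully track the substitution $\wh{c} = c/(1-c^{2})$ and the edge-versus-vertex dimension accounting that is automatic in the unweighted case; one also needs the weighted analog of the power series expansion of $\BH_{c\cdot s}^{-1}$ in terms of weighted nonbacktracking walk counts, analogous to the unweighted identity $\BH_G(t)^{-1} = \sum_{s \ge 0} A_G^{(s)} t^s$ recalled in the preliminaries. Once the identity is in hand, the combination with the NB spectral bound is direct.
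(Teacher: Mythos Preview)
Your proposal is correct and follows precisely the route the paper intends: the weighted Ihara--Bass identity $\det(I - sB_{c}) = \det(\BH_{c\cdot s})\cdot\prod_{e}(1 - s^{2}c_{e}^{2})$ (which is the content of the argument in \cite[Theorem~5.1]{FM17}, generalized to edge weights as in Watanabe--Fukumizu), combined with the continuity argument from $s=0$ and the preceding bound $\rho(B_{\bc})\le(1+\eps)\sqrt d$, is exactly what ``an identical argument'' refers to. One small remark: the weighted power-series expansion of $\BH_{c\cdot s}^{-1}$ you mention at the end is not needed here---the determinantal identity together with continuity of the smallest eigenvalue already suffices.
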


    Recalling that the centered adjacency matrix is given by $\ol{A}_{\bG} = A_{\bG} - \E A_{\bG} | \bsigma$, and using the fact that the entries of $\E A_{\bG}|\bsigma$ are of magnitude $O\parens*{\frac{d}{n}}$, and the fact that all degrees in $\bG$ are at most $O(\log n)$ with high probability, we get the following relationship.
    \begin{corollary}   \label{cor:bethe-bounds}
        For any $t\in\parens*{-\frac{1}{\sqrt{d}}, \frac{1}{\sqrt{d}}}$, we get that with probability $1-o_n(1)$:
        \[
            \parens*{D_{\bG}-I}t^2 - A_{\bG}t + \E[A_{\bG}|\bsigma] \cdot t(1-t^2) + I (1 + o(1)) \psdge 0.
        \]
        Further, by using the fact that $\norm*{\E[A_{\bG}|\bsigma}] \le d\cdot(1+o(1))$, we get:
        \[
            \wt{\BH}_{\bG}(t) \coloneqq \parens*{D_{\bG}-I}t^2 - \ol{A}_{\bG} t + I \cdot\parens*{1 + d|t|^3 + o(1)} \psdge 0\mper
        \]
    \end{corollary}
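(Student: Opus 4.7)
The plan is to deduce both displayed inequalities from the preceding lemma $\BH_{\bc t}\psdge 0$ by explicitly expanding the Bethe Hessian in a Taylor series in terms of $A_{\bG}$ and $P \coloneqq \E[A_{\bG}|\bsigma]$, and then using the spectral norm bound $\|P\| \le d(1+o(1))$ to pass from the first inequality to the second.

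For the first inequality, I would expand $\wh{\bc_{ij} t} = \sum_{k \ge 0}(\bc_{ij} t)^{2k+1}$ for the off-diagonal entries of $\BH_{\bc t}$ and the analogous series $\sum_{k \ge 1}(\bc_{ij} t)^{2k}$ for the diagonal contribution; both converge absolutely since $\abs{\bc_{ij} t} \le \abs{t} < 1/\sqrt{d} < 1$. Writing $\bc_{ij} = A_{\bG}[i,j] - P_{ij}$, using $A_{ij}^2 = A_{ij}$, and separating edges ($A_{ij}=1$) from non-edges, a direct computation shows that for every $m \ge 2$ we have $\bc_{ij}^m = A_{ij} + R_m[i,j]$ with $\abs{R_m[i,j]} \le O(d/n)\, A_{ij} + O((d/n)^m)(1-A_{ij})$. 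Since $\bG$ has $O(dn)$ edges w.h.p., this yields $\|R_m\| \le \|R_m\|_F = o(1)$ uniformly in $m \ge 2$. Combined with the high-probability maximum degree bound $\max_i \deg_{\bG}(i) = O(\log n)$ to handle the diagonal contributions, summing the resulting geometric series produces the operator-norm identity $\BH_{\bc t} = I + \tfrac{t^2}{1-t^2} D_{\bG} - \tfrac{t}{1-t^2} A_{\bG} + tP + o(1) \cdot I$. Multiplying through by the positive scalar $1-t^2$ preserves the PSD order and gives
\[
    (1-t^2)\BH_{\bc t} = (D_{\bG} - I) t^2 - A_{\bG} t + Pt(1-t^2) + I + o(1)\cdot I.
\]
Since $\BH_{\bc t}\psdge 0$ with probability $1-o_n(1)$ by the preceding lemma, absorbing the $o(1)\cdot I$ correction into $I(1+o(1))$ establishes the first inequality.

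For the second inequality, I substitute $A_{\bG} = \ol{A}_{\bG} + P$ into the first, which collapses $-A_{\bG} t + Pt(1-t^2)$ to $-\ol{A}_{\bG} t - Pt^3$. The spectral norm bound then yields $-Pt^3 \psdge -d(1+o(1))\abs{t}^3 \cdot I$, so this perturbation can be absorbed into the identity coefficient to produce exactly $I(1 + d\abs{t}^3 + o(1))$, which is the claim $\wt{\BH}_{\bG}(t) \psdge 0$. The main technical obstacle is contained in the previous paragraph: carefully controlling the summed Taylor remainders so that $\sum_{k \ge 1} \abs{t}^{2k+1}\|R_{2k+1}\|$ and its diagonal analogue remain $o(1)$ in operator norm. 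This should reduce to combining the uniform Frobenius bound $\|R_m\|_F = o(1)$ with a geometric series estimate based on $\abs{t} < 1$, together with the maximum degree bound for the diagonal terms.
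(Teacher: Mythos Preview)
Your approach is essentially the paper's: both expand $\BH_{\bc t}$ using $\bc_{ij} = A_{\bG}[i,j] - P_{ij}$ with $P_{ij} = O(d/n)$, use the high-probability degree bound $O(\log n)$ to control the diagonal, and then pass to the second display via $\|P\|\le d(1+o(1))$. One small slip: the claim $\|R_m\|_F = o(1)$ \emph{uniformly} in $m$ is false, since on edges $R_m = (1-P_{ij})^m - 1$ is $\Theta(1)$ once $m \sim n/d$, giving $\|R_m\|_F = \Theta(\sqrt{dn})$; however this does not break your argument, because (as you correctly flag) only the \emph{summed} remainder matters, and the geometric weight $|t|^m$ with $|t|<1$ handles large $m$. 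A cleaner route that sidesteps this entirely is to skip the termwise series and plug $c_{ij} = 1-P_{ij}$ (edges) or $c_{ij} = -P_{ij}$ (non-edges) directly into the closed forms $\widehat{c_{ij} t} = \tfrac{c_{ij} t}{1 - c_{ij}^2 t^2}$ and $c_{ij}\,\widehat{c_{ij} t} = \tfrac{c_{ij}^2 t^2}{1 - c_{ij}^2 t^2}$.
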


    We are finally ready to prove the main result about the spectrum of the bulk in our decomposition.
    \begin{lemma}
        \label{lem:bulk-spectral-radius}
        Let $S\subseteq[n]$ such that the degree of every vertex in $S$ within $\bG$ is at most $(1+\eps)d$, and suppose that $\sqrt{d(1+\eps)} + \frac{1}{1+\eps} \le \sqrt{d}(1+\eps)$.
        Then
        \[
            \norm*{\parens*{\ol{A}_{\bG}}_S} \le 2\sqrt{d}(1+\eps)
        \]
        with probability $1-o_n(1)$.
        Consequently, the spectral diameter of $\parens*{\ol{A}_{\bG}}_S$ is at most $4\sqrt{d}(1+\eps)$.
    \end{lemma}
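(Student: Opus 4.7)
The plan is to extract the spectral norm bound from the Bethe Hessian positivity supplied by \Cref{cor:bethe-bounds} by restricting to the principal submatrix on $S$ and then using the degree cap on $S$ to bound the $D_{\bG}$ contribution. Concretely, \Cref{cor:bethe-bounds} gives that with probability $1 - o_n(1)$, the matrix
\[
    \wt{\BH}_{\bG}(t) = (D_{\bG} - I) t^2 - \ol{A}_{\bG} \cdot t + I \cdot (1 + d|t|^3 + o(1))
\]
is PSD for all $t \in (-1/\sqrt{d}, 1/\sqrt{d})$ simultaneously. Since PSD-ness is preserved under restriction to any coordinate subspace, for any unit vector $v \in \R^{n}$ supported on $S$, this yields
\[
    \sum_{u \in S} v_u^2 \cdot (d_u - 1) \cdot t^2 - v^{\top} (\ol{A}_{\bG})_S v \cdot t + (1 + d|t|^3 + o(1)) \ge 0,
\]
where $d_u$ is the degree of $u$ in $\bG$.

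Next, I would invoke the degree cap $d_u \le (1+\eps) d$ for all $u \in S$ to bound the quadratic term by $(1+\eps) d t^2$. Choosing $t = 1/\sqrt{d(1+\eps)} \in (0, 1/\sqrt{d})$ and dividing through by $t > 0$ gives an upper bound
\[
    v^{\top} (\ol{A}_{\bG})_S v \le 2\sqrt{d(1+\eps)} - \tfrac{1}{\sqrt{d(1+\eps)}} + \tfrac{1}{1+\eps} + o(1) \le 2\sqrt{d(1+\eps)} + \tfrac{1}{1+\eps} + o(1),
\]
after expanding each term explicitly. Invoking the standing assumption $\sqrt{d(1+\eps)} + \tfrac{1}{1+\eps} \le \sqrt{d}(1+\eps)$ bounds this by $\sqrt{d(1+\eps)} + \sqrt{d}(1+\eps) + o(1) \le 2\sqrt{d}(1+\eps) + o(1)$, using $\sqrt{d(1+\eps)} \le \sqrt{d}(1+\eps)$. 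The matching lower bound $v^{\top} (\ol{A}_{\bG})_S v \ge -2\sqrt{d}(1+\eps) + o(1)$ follows by the symmetric argument with $t = -1/\sqrt{d(1+\eps)}$, carefully reversing the inequality when dividing by a negative scalar. Taking the supremum over unit $v$ supported on $S$ and absorbing the $o(1)$ slack into the defining $\eps$ delivers $\norm{(\ol{A}_{\bG})_S} \le 2\sqrt{d}(1+\eps)$, and the spectral diameter bound $4\sqrt{d}(1+\eps)$ is immediate.

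There is no substantive obstacle here: the lemma is essentially a direct consequence of \Cref{cor:bethe-bounds} coupled with the explicit degree bound on $S$. The only real care required is algebraic, namely choosing $t = \pm 1/\sqrt{d(1+\eps)}$ (which is exactly the value that makes the degree term match the leading order of the other two) and tracking the signs when dividing by $t$. The standing assumption $\sqrt{d(1+\eps)} + \tfrac{1}{1+\eps} \le \sqrt{d}(1+\eps)$ is precisely calibrated so that after this optimization, the resulting constant simplifies to the stated $2\sqrt{d}(1+\eps)$.
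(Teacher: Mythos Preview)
Your proposal is correct and follows essentially the same approach as the paper: restrict the PSD matrix $\wt{\BH}_{\bG}(t)$ from \Cref{cor:bethe-bounds} to $S$, use the degree cap to control the $(D_{\bG}-I)t^2$ term, then solve for the eigenvalue bound and repeat with $-t$. The only cosmetic difference is that the paper plugs in $t = \tfrac{1}{\sqrt{d}(1+\eps)}$ rather than your $t = \tfrac{1}{\sqrt{d(1+\eps)}}$; both choices lie in $(-1/\sqrt d,1/\sqrt d)$ and, after invoking the standing assumption $\sqrt{d(1+\eps)} + \tfrac{1}{1+\eps} \le \sqrt{d}(1+\eps)$, collapse to the same bound $2\sqrt{d}(1+\eps)$.
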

    \begin{proof}
        We shall use \Cref{cor:bethe-bounds}. Set $t = \frac{1}{(1+\eps)\sqrt{d}}$. Observing that any principal submatrix of the PSD matrix $\wt{\BH}_{\bG}(t) \psdge 0$ is also PSD, and $\|(D_G)_S\| \le d(1+\eps)$, we get that
        \begin{align*}
            \lambda_{\mathrm{max}}\left((\ol{A}_{\bG})_S\right) &\le \frac{1 + dt^3 - t^2 + d(1+\eps)t^2}{t} \\
                &\le \sqrt{d}(1+\eps) + \frac{1}{1+\eps} + \sqrt{d(1+\eps)} \le 2\sqrt{d}(1+\eps).
        \end{align*}
        Doing the same for the matrix $\wt{\BH}_{\bG}(-t) \psdge 0$,
        \begin{align*}
            \lambda_{\mathrm{min}}\left((\ol{A}_{\bG})_S\right) &\ge \frac{1 + dt^3 - t^2 + d(1+\eps)t^2}{-t} \\
                &\ge - \sqrt{d}(1+\eps) - \frac{1}{1+\eps} - \sqrt{d(1+\eps)} \ge -2\sqrt{d}(1+\eps). \qedhere
        \end{align*}
    \end{proof}
    Similarly, there are well-established bounds on the spectral radius of the nonbacktracking matrix of randomly signed \erdos--\renyi graphs.
    \begin{lemma}[{Consequence of \cite[Theorem 2]{SM22}}]
        Let $\bG\sim \SBM(n, d, \lambda)$ and let $\bc$ be a random assignment of $\pm1$-valued signs to the edges of $\bG$.
        Then, with probability $1-o_n(1)$,
        \[
            |\lambda|_{\max}\parens*{B_{\bc}} \le (1+o_n(1))\cdot\sqrt{d}\mper
        \]
    \end{lemma}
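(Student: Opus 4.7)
The plan is to verify that the trace moment method underlying \cite[Theorem 2]{SM22} --- which establishes this bound for randomly signed Erd\H{o}s--R\'enyi graphs --- extends essentially verbatim to the SBM setting. Conditional on $\bsigma$, the entries $B_{\bc}[uv, vw] = \bc(vw) \cdot \mathbf{1}[vw \in \bG]$ are independent across underlying edges (by independence of both the SBM edges and the signs), mean-zero (by symmetry of $\bc$), bounded by $1$ in absolute value, and have variance $p_{uv} = (d + \bsigma_u \bsigma_v \lambda\sqrt{d})/n$. Thus $B_{\bc}$ is a non-backtracking matrix built from an inhomogeneous centered symmetric random matrix whose entrywise variance profile agrees with the Erd\H{o}s--R\'enyi variance $d/n$ up to a $(1 + O(\lambda/\sqrt{d}))$ multiplicative factor.

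The main technical step is the non-backtracking trace moment method: I would bound $|\lambda|_{\max}(B_{\bc})^{2k} \le \operatorname{Tr}\parens*{(B_{\bc}B_{\bc}^\top)^k}$ for $k = k(n) \to \infty$ growing slowly with $n$, and expand the expectation as a sum over closed non-backtracking walks of length $2k$ on $K_n$. Since the signs $\bc(e)$ are independent, mean-zero, and independent of edge existence, only walks traversing every edge an even number of times contribute in expectation. The surviving terms are organized via the tangle-free / tangled decomposition of \cite{BLM15, SM22}, and each tangle-free walk --- whose skeleton is essentially a tree with roughly $k+1$ vertices and $k$ edges --- contributes a product $\prod_e p_e$ over its distinct edges.

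The key remaining step is to argue that replacing the SBM edge probabilities $p_e = (d/n)(1 + \lambda \bsigma_u \bsigma_v/\sqrt{d})$ by $d/n$ only perturbs the leading order of the trace. Expanding the product, the lower-order contributions carry factors $\prod_{e \in S}\bsigma_u \bsigma_v$ over subsets $S$ of a walk's edges; summed over the $\approx n^{|V|}$ choices of vertex labellings, these exhibit cancellations because $\sum_v \bsigma_v = O(\sqrt{n})$ with high probability. The main obstacle is that this perturbation analysis is delicate: organizing the cancellations precisely enough to recover the $(1+o_n(1))\sqrt{d}$ leading constant --- rather than the $(1+O(\lambda/\sqrt{d}))\sqrt{d}$ constant one would get by naively upper-bounding $p_e \le (d+\lambda\sqrt{d})/n$ uniformly --- requires a careful combinatorial bookkeeping in the spirit of \cite{BLM15}. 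Since this is precisely the combinatorial analysis carried out by \cite{BLM15, LMR22} for the unsigned SBM non-backtracking matrix, an alternative and perhaps cleaner route is to observe that the trace-moment proof of \cite{SM22} only uses entrywise variance bounds (rather than vertex-exchangeability or homogeneous edge probabilities), so it applies directly to the centered signed adjacency matrix, with the SBM-specific refinement of the leading constant imported from \cite{BLM15, LMR22}.
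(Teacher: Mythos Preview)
The paper does not actually prove this lemma; it simply invokes \cite[Theorem~2]{SM22} as a black box. That result is stated for non-backtracking matrices of \emph{weighted inhomogeneous} random graphs, so the SBM with random $\pm1$ signs is a direct instantiation rather than something requiring a re-run of the trace-moment argument. Concretely, the relevant spectral parameter in \cite{SM22} is an averaged quantity (the Perron eigenvalue, equivalently the maximal row sum, of the variance profile $P_{uv}=p_{uv}$), and for the balanced SBM each row of $P$ sums to $d+\lambda\sqrt{d}\cdot\bsigma_u\cdot\frac{1}{n}\sum_v\bsigma_v=d+o_n(1)$, which immediately yields the sharp constant $(1+o_n(1))\sqrt{d}$.

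Your proposal is correct in outline but does more work than needed. The ``delicate cancellation'' you flag --- recovering $(1+o_n(1))\sqrt{d}$ rather than the cruder $\sqrt{d+\lambda\sqrt{d}}$ coming from the entrywise bound $p_e\le(d+\lambda\sqrt{d})/n$ --- is not something you have to re-engineer inside the walk combinatorics or import from \cite{BLM15,LMR22}. It is already handled by the fact that the \cite{SM22} bound depends on an averaged quantity rather than on the largest individual edge probability, and the averaging is exactly where the balanced-community condition $\frac{1}{n}\langle\mathbf{1},\bsigma\rangle=o_n(1)$ does its work. So the intended ``proof'' is: check the hypotheses of \cite[Theorem~2]{SM22} for the inhomogeneous variance profile of the signed SBM, compute its leading parameter as $d(1+o_n(1))$, and read off the conclusion.
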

    An identical argument to the proof of \Cref{lem:bulk-spectral-radius} can be used to establish the following.
    \begin{lemma}   \label{lem:bulk-specrad-signed}
        Let $\bG\sim \SBM(n, d, \lambda)$ and let $\bc$ be a random assignment of $\pm1$-valued signs to the edges of $\bG$. With probability $1-o_n(1)$, the following holds.
        Let $S\subseteq[n]$ such that the degree of every vertex in $S$ within $\bG$ is at most $(1+\eps)d$.
        Then,
        \[
            \norm*{A_{\bc}} \le 2\sqrt{d}(1+\eps).
        \]
    \end{lemma}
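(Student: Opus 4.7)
The plan is to mirror the proof of \Cref{lem:bulk-spectral-radius} almost exactly, swapping the unsigned adjacency matrix for the signed adjacency matrix $A_{\bc}$ and using the signed nonbacktracking radius bound of \cite{SM22} in place of the \cite{BLM15}/\cite{LMR22} bound that drove the unsigned case. The starting point is the claim that $|\lambda|_{\max}(B_{\bc}) \le (1+o_n(1))\sqrt{d}$ with high probability, which by the exact analogue of \cite[Proof of Theorem 5.1]{FM17} yields $\BH_{\bc\cdot t} \psdge 0$ with probability $1-o_n(1)$ for every $t \in \bigl(-\tfrac{1}{\sqrt{d}}(1-o_n(1)), \tfrac{1}{\sqrt{d}}(1-o_n(1))\bigr)$.

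The next step is to unpack $\BH_{\bc\cdot t}$ using the definition in \Cref{def:bethe-hessian}. Since $\bc$ is $\pm1$-valued, we have $(\bc\cdot t)^2 = t^2$ for every edge, so $\widehat{\bc\cdot t} = \frac{\bc\cdot t}{1-t^2}$ and the weighted-degree matrix $D_{\bc\cdot t \cdot \widehat{\bc\cdot t}}$ collapses to $\frac{t^2}{1-t^2} D_{\bG}$. Multiplying through by the positive scalar $1-t^2$, the inequality $\BH_{\bc\cdot t}\psdge 0$ becomes the convenient form
\[
    D_{\bG}\,t^2 \;-\; A_{\bc}\, t \;+\; (1-t^2)\, I \;\psdge\; 0,
\]
which is the signed counterpart of the PSD bound supplied by \Cref{cor:bethe-bounds}. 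This is essentially the only place where the combinatorial structure enters.

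From here the proof is routine linear algebra, following the proof of \Cref{lem:bulk-spectral-radius} verbatim. I would restrict the above PSD inequality to the principal submatrix indexed by $S$ (PSD is preserved), use the hypothesis $(D_{\bG})_S \psdle d(1+\eps)\, I_S$ to replace $D_{\bG}$ by a scalar multiple of the identity, rearrange to isolate $(A_{\bc})_S$, and then plug in $t = \frac{1}{(1+\eps)\sqrt{d}}$. Expanding the right-hand side gives the upper bound $\sqrt{d}(1+\eps) + \frac{1}{1+\eps} + o(1)$, which under the mild condition on $\eps$ implicit in the lemma's context is at most $2\sqrt{d}(1+\eps)$. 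Repeating with $t = -\frac{1}{(1+\eps)\sqrt{d}}$ (i.e., using $\BH_{-\bc\cdot t}$, which is PSD by the same argument) yields the matching lower bound $\lambda_{\min}((A_{\bc})_S) \ge -2\sqrt{d}(1+\eps)$, completing the operator norm bound.

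I do not anticipate serious obstacles; the only point that requires a small check is the signed Ihara-type identity linking $|\lambda|_{\max}(B_{\bc})$ to the positivity of $\BH_{\bc\cdot t}$, which proceeds by the same determinant/factorization manipulation as in \cite[Theorem 5.1]{FM17} and holds for arbitrary real edge weights. The error terms produced by the $(1+o_n(1))$ slack in the \cite{SM22} bound are absorbed harmlessly because we are already losing a multiplicative $(1+\eps)$ factor when we pick $t$.
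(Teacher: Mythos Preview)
Your proposal is correct and follows essentially the same approach as the paper, which simply states that the argument is identical to that of \Cref{lem:bulk-spectral-radius} with the signed nonbacktracking bound from \cite{SM22} in place of the centered one. One small refinement: because all nonzero entries of $A_{\bc}$ are $\pm 1$ (so the $d|t|^3$ correction term from the centering is absent), the resulting bound $d(1+\eps)t + \tfrac{1}{t} - t \le 2\sqrt{d}(1+\eps)$ at $t=\tfrac{1}{(1+\eps)\sqrt{d}}$ holds without any side condition on $d$ and $\eps$, as the paper explicitly remarks immediately after the lemma.
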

    \begin{remark}
        Notice that one difference between \Cref{lem:bulk-spectral-radius} and \Cref{lem:bulk-specrad-signed} is that the latter does not require a constraint on $d$ and $\eps$.
        This owes to the fact that in the former case there are nonzero entries both at the scale of $\Theta(1)$ and $\Theta(1/n)$ whereas all nonzero entries in the latter case are in $\{\pm1\}$.
    \end{remark}


\section{Glauber dynamics on near-forest decompositions}
\label{sec:spinglass}

This section is dedicated to proving the following result.

\begin{restatable}{theorem}{spinglassmixing}
    \label{thm:main-mixing}
    Fix $d > 1$. Let $A$ be the adjacency matrix of a randomly signed graph $\bG \sim \SBM(n, d, \lambda)$. There exists a universal constant $\beta > 0$ such that with high probability over the draw of $\bG$, for any external field $h$, we have $\MLSI(\mu_{\beta A/\sqrt{d}, h}) \ge n^{-1 - o(1)}$. 
\end{restatable}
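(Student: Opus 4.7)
The plan is to apply the localization-based reduction of \Cref{thm:sum-MLSI}, choosing the control matrix $M_1$ to be supported on the bulk component of the graph decomposition from \Cref{lem:graph-decomp-overview}, so that the residual interaction $M_2 = \frac{\beta}{\sqrt{d}} A - M_1$ lives on the near-forest $H$, for which an MLSI can be established from the ingredients outlined in \Cref{sec:overview}.

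With high probability, \Cref{lem:graph-decomp-overview} yields a decomposition $\bG = B \sqcup H$ where $B$ has maximum degree at most $d(1+\eps)$, and every component of $H$ is a tree plus at most one edge, has diameter $O(\log n / (\eps^3 d))$, and is $(\Delta, d(1+\eps))$-pseudorandom with $\Delta = o(\log n)$. By \Cref{lem:bulk-specrad-signed}, the signed bulk matrix satisfies $\|A_B\| \leq 2\sqrt{d}(1+\eps)$, so I would take
\[
    M_1 \coloneqq \frac{\beta}{\sqrt{d}}\bigl(A_B + 2\sqrt{d}(1+\eps)\, I_{V'(B)}\bigr),
\]
which is positive semidefinite with $\|M_1\| = O(\beta)$; a tiny diagonal regularizer on $V(H)$ can be added to make $M_1$ strictly positive definite when invoking \Cref{prop:prop-hs-trick}. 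Let $M_2 \coloneqq \frac{\beta}{\sqrt{d}} A - M_1$; since $M_2$ differs from $\frac{\beta}{\sqrt{d}} A_H$ only on the diagonal, $\mu_{M_2, \cdot}$ coincides with the Ising model of an interaction supported only on $H$ by \Cref{obs:diag-invariant}.

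To invoke \Cref{thm:sum-MLSI}, I need to verify, uniformly in $t \in [0,1]$ and $h' \in \R^n$, that (a) $\|M_1^{1/2}\Cov(\mu_{(1-t)M_1+M_2, h'}) M_1^{1/2}\| \leq q = O(1)$, and (b) $\MLSI(\mu_{M_2, h'}) \geq \delta = n^{-1-o(1)}$. For (a), since $M_1$ is supported (up to the negligible regularizer) on $V'(B)$, the quantity in question is dominated by $\|M_1\| \cdot \|\Cov(\mu_{(1-t)M_1+M_2, h'})_{V'(B)}\|$, so it suffices to bound the bulk-restricted covariance by $O(1/\beta)$. This is achieved by applying \Cref{lem:interaction-sum-cov-bound} together with \Cref{prop:prop-hs-trick}, which reduces the task to showing $\|\Cov(\mu_{M_2 - sM_1, h''})_{V'(B)}\| = O(1)$ for the relevant $s, h''$. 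The latter is the content of the near-forest covariance estimate \Cref{lem:near-forest-cov}, proved via the trace moment method: the tree correlation identity $\Cov(\mu_{J,0})_{ij} = \prod_{e \in P_{ij}} \tanh(J_e)$ expresses covariance entries as weighted walks in $H$, the $(\Delta, d(1+\eps))$-pseudorandomness of $H$ controls walk counts, and the bound $|J_e| \leq \beta/\sqrt{d}$ ensures that the resulting geometric series converge for sufficiently small $\beta$.

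For (b), the interaction $M_2$ is block-diagonal across connected components of $H$, each of which is a pseudorandom tree plus at most one extra edge, of size $n^{o(1)}$. A Holley--Stroock perturbation (\Cref{lem:mlsi-bdd-density-relate}) absorbs the single excess edge per component at bounded density-ratio cost $e^{O(\beta/\sqrt{d})} = O(1)$, reducing the problem to the MLSI for pseudorandom trees with $O(1/\sqrt{d})$-magnitude interactions supplied by \Cref{thm:mlsi-bdd-growth-tree}. Tensoring across components via \Cref{fact:mlsi-product}, and accounting for the factor $\min_i |C_i|/n \geq n^{-1+o(1)}$, yields $\MLSI(\mu_{M_2, h'}) \geq n^{-1-o(1)}$. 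Combining (a) and (b) via \Cref{thm:sum-MLSI} gives $\MLSI(\mu_{\beta A/\sqrt{d}, h}) \geq \delta \cdot e^{-q} \geq n^{-1-o(1)}$. The main obstacle is (a): the trace moment computation must be carried out uniformly over arbitrary external fields and intermediate times $t$, using a ball-growth bound in place of a uniform degree bound, and this will likely entail a preliminary reduction (via monotonicity-type tricks or a pass through the Hubbard--Stratonovich transform) to a ferromagnetic zero-field case before extracting the closed-walk sum.
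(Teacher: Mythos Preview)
Your proposal is correct and follows essentially the same route as the paper: decompose via \Cref{lem:random-graph-decomposition}, take $M_1$ to be the (diagonally shifted) signed bulk interaction and $M_2$ the near-forest piece, verify the covariance hypothesis of \Cref{thm:sum-MLSI} through \Cref{lem:interaction-sum-cov-bound} and the trace-method bound \Cref{lem:near-forest-cov}, and verify the endpoint MLSI through \Cref{thm:mlsi-bdd-growth-tree}. The paper packages the covariance step as \Cref{thm:entropy-conservation} and handles the diagonal regularizer on $H\setminus\partial H$ explicitly there, but the substance is identical; your identification of the ferromagnetic zero-field reduction as the main preliminary step for the trace computation is also exactly what the paper does.
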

    
Given a sparse random graph $\bG$, our goal is to show that for any external field $h$, Glauber dynamics for the randomly signed Ising model on $\bG$ mixes rapidly in the appropriate temperature regime.

In \Cref{sec:decomposition}, we describe the graph decomposition, and state several results that will play a part in the proof. In \Cref{sec:spinglass-mlsi}, before proving any of these results, we begin by putting them together to prove an MLSI --- the localization scheme we use will be described therein. In \Cref{sec:covariance-near-forest}, we bound the covariances of Ising models supported on near-forests using the trace method --- these bounds are used in \Cref{sec:entropy-conservation-spinglass}, where we establish a covariance bound used in showing that entropy is conserved along the localization path. In \Cref{sec:mlsi-tree-ising}, we prove a modified log-Sobolev inequality for Ising models supported on near-forests, which we are left with at the end of the localization scheme.


\subsection{On decomposing random graphs}
\label{sec:decomposition}
    In this section, we describe an explicit class of decompositions of an Ising Hamiltonian into the sum of two Hamiltonians which are supported on two disjoint sets of edges, which we term the \emph{bulk} and the \emph{near-forest} components. 
    We then use a localization scheme to anneal the original Ising model into an Ising model with interactions supported only on the near-forest component.
    We will verify the two conditions needed to apply \cref{thm:sum-MLSI} to imply rapid mixing of Glauber dynamics on sparse random graphs.

    \begin{definition}[Near-forest]
        We say a graph $H$ is a \emph{near-forest} if all its connected components have at most a single cycle. Equivalently, each component has tree-excess at most $1$.
    \end{definition}

    A crucial ingredient in our proof is that sparse stochastic block model graphs can be decomposed into the union of two graphs, a \emph{bulk} and a \emph{near-forest}, where the bulk has bounded degree and the near-forest satisfies some pseudorandom properties.
    The following definitions capture the pseudorandom property the near-forest satisfies and the precise form of the decomposition.
    \begin{definition}[Pseudorandom graph]\label{def:pseudorandom-graph}
        We say a graph $H$ is \emph{$(\Delta,D)$-pseudorandom} with respect to $S \subseteq V(H)$ if it satisfies the following:
        \begin{enumerate}
            \item For any $r\ge 1$, and any vertex $u\in V(H)$, the size of the $r$-ball around $u$ is at most $D^{r-1}\cdot\Delta$. Note in particular that this implies that the maximum degree is at most $\Delta$.
            \item For any $r\ge 0$ and $u\in V(H)$, $\abs*{\braces*{v\in S: \dist(u,v)\le r}} \le D^r$. 
        \end{enumerate}
    \end{definition}
    While the above statement bounds the sizes of balls, most of our proofs will only need bounds on the sizes of neighborhoods. We are now ready to formally state our graph decomposition.
    \begin{definition}[Graph decomposition]
        For a graph $G$, let $B, H$ be subgraphs of $G$ with possibly overlapping sets of vertices, which we refer to as the \emph{bulk} and \emph{near-forest} components.
        We say that $(B, H)$ is a \emph{$(\Delta, D)$-decomposition} of $G$ if the following properties hold.
        \begin{enumerate}
            \item $E(G) = E(B) \sqcup E(H)$, i.e. the bulk and near-forest components partition the edges of $G$. 
            \item Let $\partial H = V'(B) \cap V'(H)$ denote the vertex boundary of $H$, the vertices in $H$ incident to an edge in $E(B)$. Then $H$ is a $(\Delta, D)$-pseudorandom near-forest with respect to $\partial H$.
        \end{enumerate}

        If such a $(B, H)$ exists, we say that $G$ admits a $(\Delta, D)$-decomposition.
    \end{definition}
    
    Many of our results hold generically assuming that $G$ admits a $(\Delta, D)$-decomposition (or is itself a $(\Delta, D)$-pseudorandom near-forest);  we refer the readers back to \Cref{fig:graph-decomposition} for an illustration of such a decomposition. In \Cref{lem:random-graph-decomposition}, we show that sparse stochastic block model graphs admit such a decomposition.

    As a crucial first step towards analyzing the class of Ising models supported on such graphs, we establish the following covariance bounds for \emph{any} Ising model with bounded interactions supported on a pseudorandom near-forest; we defer the proof to \Cref{sec:covariance-near-forest}. 
    \begin{restatable}{lemma}{nearforestcov}
        \label{lem:near-forest-cov}
        Let $H$ be a $(\Delta,D)$-pseudorandom near-forest with respect to $S \subseteq V(H)$.
        For any Ising model with interactions $J$ supported on $E(H)$ with values in $\bracks*{-\frac{\gamma}{\sqrt{D}}, \frac{\gamma}{\sqrt{D}}}$, along with an arbitrary external field $h$, we have
        \[
            \norm*{\Cov\parens*{\mu_{J,h}}_S} \le \frac{e^{2\gamma/\sqrt{D}}}{(1-\gamma)^2}.
        \]
        Additionally,
        \[
            \norm*{\Cov\parens*{\mu_{J,h}}} \le \frac{e^{2\gamma/\sqrt{D}}}{(1-\gamma)^2} \cdot \frac{\Delta}{D}.
        \]
    \end{restatable}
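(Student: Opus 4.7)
The plan is a sequence of structural reductions followed by a trace moment computation. I describe the argument for the first bound in detail; the second follows by the same method, with the pseudorandom estimate $|B_r(u) \cap S| \le D^r$ replaced by $|B_r(u)| \le \Delta D^{r-1}$, which yields the extra $\Delta/D$ factor.

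First, I reduce to the case of a tree with nonnegative interactions. The covariance is block-diagonal across connected components of $H$, so it suffices to work within a single component, which has tree-excess at most one by hypothesis. Deleting the single extra edge alters the density by a pointwise factor of at most $e^{\gamma/\sqrt{D}}$, and by the Holley--Stroock comparison of \Cref{lem:mlsi-bdd-density-relate} this changes the covariance by a factor of at most $e^{2\gamma/\sqrt{D}}$, accounting for the prefactor in the claimed bound. A gauge transformation $\sigma_v \mapsto \pm\sigma_v$ rooted at an arbitrary vertex of the resulting (bipartite) tree makes all interactions nonnegative without altering $|\Cov(\sigma_u, \sigma_v)|$. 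On such a tree, standard path-based correlation decay estimates give
\[ |\Cov(\sigma_u, \sigma_v)| \le \prod_{e \in P(u,v)} |\tanh(J_e)| \le t^{d(u,v)}, \qquad t \coloneqq \gamma/\sqrt{D}, \]
uniformly in the external field $h$.

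Writing $C \coloneqq \Cov(\mu_{J,h})$, the trace moment method reduces the operator-norm bound to a combinatorial estimate: for each $k \ge 1$,
\[ \|C_S\|^{2k} \le \tr(C_S^{2k}) \le \sum_{(i_1,\ldots,i_{2k}) \in S^{2k}} t^{L(\vec i)}, \qquad L(\vec i) \coloneqq \sum_{j=1}^{2k} d(i_j, i_{j+1}), \]
with indices taken cyclically. It suffices to show $\tr(C_S^{2k}) \le |S| \cdot K^{2k}$ with $K$ of order $(1-\gamma)^{-2}$ and independent of $k$, since sending $k \to \infty$ absorbs the $|S|^{1/(2k)}$ prefactor and yields $\|C_S\| \le K$.

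The main obstacle, which I expect to occupy the bulk of the proof, is the combinatorial count. A naive row-sum bound yields $\sum_j t^{d(i,j)} \le \sum_r (tD)^r$, which diverges as soon as $\gamma\sqrt{D} > 1$; the required cancellation instead comes from the closed-tour constraint, since every edge of the tree is traversed an even number of times in any closed tour. Fixing $i_1 \in S$ and parameterizing each sequence $\vec i$ by its underlying closed walk on $H$, the pseudorandom bound $|B_r(u) \cap S| \le D^r$ controls the number of waypoint placements at each stage. The resulting generating series tracks the nonbacktracking walk spectrum of the infinite $D$-regular tree and sums to $(1 - t\sqrt{D-1})^{-2} \le (1-\gamma)^{-2}$, matching the desired bound.
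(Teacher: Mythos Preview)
Your proposal is correct and follows essentially the same route as the paper: block-diagonal reduction to a single component, Holley--Stroock deletion of the one excess edge, a gauge flip to ferromagnetic interactions, the entrywise bound $\Cov(\mu_{J,h})_{uv}\le\prod_{e\in P(u,v)}\tanh J_e$ (which the paper obtains by citing \cite{DSS23} to reduce to $h=0$ and then using the exact tree formula), and a trace-moment count exploiting the up/down decomposition of closed walks on a tree together with the pseudorandom ball-growth bound. The only cosmetic difference is that the paper first splits the entrywise majorant as $\sum_{\ell\ge 0} t^\ell A^{(\ell)}$ and proves the fixed-distance estimate $\|A^{(\ell)}_S\|\le(\ell+1)D^{\ell/2}$ by trace moments (\Cref{lem:trace-method-best-method}), then sums $\sum_\ell(\ell+1)\gamma^\ell=(1-\gamma)^{-2}$; your direct trace on the full matrix factorizes to the same expression once the linkage lengths $\ell_j$ are summed independently, and the constant you will actually obtain is $(1-\gamma)^{-2}$ rather than $(1-t\sqrt{D-1})^{-2}$, since the pseudorandom hypothesis bounds $|B_r(u)\cap S|$ by $D^r$ and not $D(D-1)^{r-1}$.
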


    As a consequence of \Cref{lem:near-forest-cov}, we can establish conservation of entropy and a good MLSI constant for the annealed model. We prove these implications in \Cref{sec:entropy-conservation-spinglass,sec:mlsi-tree-ising}, respectively.

    \begin{restatable}{theorem}{entropyconservation}
        \label{thm:entropy-conservation}
        Let $(B, H)$ be a $(\Delta, D)$-decomposition of $G$, and let $J_B, J_H \in \R^{n \times n}$ be interaction matrices whose off-diagonal entries are supported on $E(B)$ and $E(H)$, respectively. Suppose that there are constants $\eta, K \in (0, 1)$ such that $\eta I_B \psdle J_B \psdle K I_B$, and that all interactions in $J_H$ are in $\left[ - \frac{\gamma}{\sqrt{D}} , \frac{\gamma}{\sqrt{D}} \right]$ for some $\gamma \in [0, 1)$. Finally, suppose that 
        \[
            K \cdot \frac{e^{2\gamma/\sqrt{D}}}{(1-\gamma)^2} \le 1 - \delta
        \]
        for a positive constant $\delta > 0$.
        There exists a positive constant $C$ not depending on $\Delta$ such that for any $0 \le t \le 1$ and external field $h \in \R^{n}$,
        \[ \Cov(\mu_{(1-t)J_B + J_H,h}) \psdle C (I_B + \Delta I_{H \setminus \partial H}). \]
    \end{restatable}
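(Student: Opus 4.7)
The plan is to apply the Hubbard--Stratonovich route of \Cref{lem:interaction-sum-cov-bound} with control matrix $M_1 \coloneqq (1-t) J_B + c_B P_B + c_H P_H$ and residual $M_2 \coloneqq J_H$, where $P_B \coloneqq I_{V'(B)}$, $P_H$ is the identity on the complementary vertex set, and $c_B, c_H > 0$ are small constants to be chosen. The extra diagonal shift leaves the Ising model unchanged by \Cref{obs:diag-invariant}, while rendering $M_1$ positive definite. The lemma then reduces the desired covariance bound to producing a PSD lower bound on $M_1 - M_1 \Sigma(h') M_1$ uniformly over $h' \in \R^n$, where $\Sigma(h') \coloneqq \Cov(\mu_{J_H, h'})$.

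To control $\Sigma(h')$, I invoke both conclusions of \Cref{lem:near-forest-cov} together with the observation that spins in $V'(B) \setminus V'(H)$ are isolated in $H$ and hence independent of everything else under $\mu_{J_H, h'}$. Setting $\kappa \coloneqq e^{2\gamma/\sqrt{D}}/(1-\gamma)^2$, this yields $\norm*{\Sigma(h')_{V'(B)}} \le \kappa$ and $\norm*{\Sigma(h')} \le \kappa\Delta/D$. Combining these with the standard PSD off-diagonal block inequality $\norm*{\Sigma(h')_{V'(B), V \setminus V'(B)}}^2 \le \norm*{\Sigma(h')_{V'(B)}} \cdot \norm*{\Sigma(h')_{V \setminus V'(B)}}$ and Young's inequality, I obtain, for any $\lambda > 0$,
\[
\Sigma(h') \psdle \kappa(1+\lambda)\, P_B + \kappa(1+1/\lambda)(\Delta/D)\, P_H.
\]
Expanding $v^\top M_1 \Sigma(h') M_1 v$ with this bound and invoking $\tilde J_B^2 \psdle ((1-t)K + c_B) \tilde J_B$ for $\tilde J_B \coloneqq (1-t)J_B + c_B P_B$, the quadratic form cleanly decomposes into a $V'(B)$-part and a $V\setminus V'(B)$-part, giving
\[
M_1 - M_1 \Sigma(h') M_1 \psdge \bracks*{1 - \kappa(1+\lambda)((1-t)K + c_B)}\, \tilde J_B + c_H \bracks*{1 - \kappa(1+1/\lambda)(\Delta/D)\, c_H}\, P_H.
\]

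The main step is to balance $\lambda, c_B, c_H$ so that both bracketed factors are positive constants uniformly in $t \in [0,1]$, which is precisely where the slack $K\kappa \le 1 - \delta$ is essential. Taking $\lambda = \delta/2$, $c_B = \Theta(\delta/\kappa)$ small enough that the $B$-coefficient exceeds $\delta/4$, and $c_H = \Theta(\delta D/(\kappa \Delta))$ tuned so the $H$-coefficient equals $c_H/2$, and then using $\tilde J_B \psdge c_B P_B$ to convert the first summand into $\Omega_\delta(\kappa^{-1}) P_B$, one arrives at
\[
M_1 - M_1 \Sigma(h') M_1 \psdge \Omega_\delta(\kappa^{-1})\, P_B + \Omega_\delta(D/(\kappa\Delta))\, P_H.
\]
Inverting this PSD bound in the orthogonal block decomposition $P_B, P_H$ and feeding it into \Cref{lem:interaction-sum-cov-bound} yields $\Cov(\mu_{(1-t)J_B + J_H, h}) \psdle O_\delta(\kappa)\, I_B + O_\delta(\kappa/D) \cdot \Delta I_H$, which implies the desired $C(I_B + \Delta I_{H \setminus \partial H})$ bound with $C = O_\delta(\kappa)$ independent of $\Delta$.
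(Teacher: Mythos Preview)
Your proposal is correct and follows essentially the same route as the paper: apply \Cref{lem:interaction-sum-cov-bound} with a control matrix obtained by shifting $(1-t)J_B$ by small multiples of $I_B$ and $I_{H\setminus\partial H}$, and invoke both parts of \Cref{lem:near-forest-cov} to bound the near-forest covariance on the bulk and globally. The only minor variation is in how $M_1\Sigma M_1$ is controlled---you first PSD-dominate $\Sigma$ by a $P_B,P_H$-block-diagonal matrix (via the off-diagonal block estimate plus Young) and then sandwich by $M_1$, whereas the paper applies AM--GM to $(J_B+\tfrac{\rho}{\Delta}I_{H\setminus\partial H})\Sigma(\cdot)$ and then uses the structural observation $J_B\Sigma J_B=J_B\Sigma_B J_B$; both maneuvers yield the same lower bound on $\Gamma$.
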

    
    \begin{remark}
        The assumptions on the spectrum of $J_B$ may look artificial, but they are in fact without loss of generality. Indeed, by \cref{obs:diag-invariant}, one can shift $J_B$ by a constant multiple of $I_B$ to ensure it is PSD lower bounded by $\eta$ when restricted to the bulk. 
    \end{remark}
    \begin{restatable}{theorem}{mlsitree}
        \label{thm:mlsi-bdd-growth-tree}
        Let $H$ be a $(\Delta,D)$-pseudorandom near-forest, and $J \in \R^{n \times n}$ an interaction matrix supported on $H$ with all interactions in $\left[ - \frac{\gamma}{\sqrt{D}} , \frac{\gamma}{\sqrt{D}} \right]$ for some $\gamma \in [0,1)$. There is a constant $c$ (depending on $\gamma$) such that for any external field $h \in \R^{n}$,
        \[ \MLSI(\mu_{J,h}) \ge \frac{1}{ne^{c\Delta}} . \]
    \end{restatable}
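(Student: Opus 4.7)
The plan is to first reduce to a single connected tree component of $H$ via a standard product factorization and a Holley--Stroock perturbation, and then establish the MLSI for a pseudorandom tree via the stochastic localization framework of \Cref{thm:sum-MLSI}, annealing all the way down to a product distribution.

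\emph{Reduction to pseudorandom trees.}
Since every connected component of the near-forest $H$ has tree-excess at most $1$, the measure $\mu_{J,h}$ factorizes as a product of Ising models over the components together with isolated vertices. By \Cref{fact:mlsi-product}, it therefore suffices to establish $\MLSI \ge \Omega(1/(m\,e^{c\Delta}))$ for any single component of size $m$. For a unicyclic component, deleting its lone cyclic edge $e$ alters the density pointwise by a factor $\exp(\pm J_e) = e^{\pm O(\gamma/\sqrt{D})} = O(1)$, so by Holley--Stroock (\Cref{lem:mlsi-bdd-density-relate}) it further suffices to treat the pseudorandom tree case.

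\emph{Localization to a product distribution.}
For a pseudorandom tree $T$ of size $m$, I would invoke \Cref{thm:sum-MLSI} with a decomposition $J = M_1 + M_2$ in which $M_2$ is chosen to be \emph{diagonal}. By \Cref{obs:diag-invariant}, $\mu_{M_2,h'}$ then coincides with the product distribution $\mu_{0,h'}$, whose Glauber dynamics satisfies $\MLSI \ge 2/m$ by \Cref{fact:mlsi-product}. It then suffices to design a PSD control matrix $M_1 = J - M_2$ such that
\[
q \;:=\; \sup_{t \in [0,1],\,h' \in \R^n} \left\| M_1^{1/2}\,\Cov(\mu_{(1-t)M_1 + M_2,h'})\,M_1^{1/2}\right\| \;\le\; c\Delta,
\]
since \Cref{thm:sum-MLSI} would then produce $\MLSI(\mu_{J,h}) \ge (2/m)\,e^{-c\Delta}$, matching the target.

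\emph{Covariance bound along the path, and the main obstacle.}
Since the off-diagonal entries of $(1-t)M_1 + M_2$ agree with those of $(1-t)J$ and so have magnitude at most $\gamma/\sqrt{D}$, \Cref{lem:near-forest-cov} applies uniformly along the entire localization path, yielding $\|\Cov(\mu_{(1-t)M_1+M_2,h'})\| = O(\Delta/D)$ on all of $V(T)$ but only $O(1)$ on any pseudorandom subset. The naive control matrix $M_1 = J + |\lambda_{\min}(J)|\,I$ satisfies $\|M_1\| = O(\gamma\sqrt{\Delta/D})$, and the crude splitting $\|M_1^{1/2}\Cov M_1^{1/2}\| \le \|M_1\|\|\Cov\|$ then gives only $q = O(\Delta^{3/2}/D^{3/2})$, which is much larger than $\Delta$ when $\Delta \gg D$. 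The principal technical hurdle, as flagged in the overview, is that under the naive shift the top eigenvectors of $M_1$ concentrate on the same high-growth vertices of $T$ as those of $\Cov$, so the two spectral norms compound. To achieve $q = O(\Delta)$, I would instead take $M_1 = J + D$ where $D$ is a \emph{non-scalar} diagonal shift whose entry at each $v$ depends on the local branching structure of $T$ at $v$, chosen so that the high-spectrum subspaces of $M_1$ and $\Cov$ partially cancel. Verifying the resulting bound should proceed via \Cref{lem:interaction-sum-cov-bound} and \Cref{prop:prop-hs-trick}, after appropriate reductions to the ferromagnetic zero-field tree case where the explicit path formula $\Cov(\mu_{J,0})_{u,v} = \prod_{e \in \mathrm{path}(u,v)}\tanh(J_e)$ can be tracked edge by edge.
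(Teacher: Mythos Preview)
Your reduction to a single pseudorandom tree via \Cref{fact:mlsi-product} and Holley--Stroock, and the plan to anneal to a product measure using \Cref{thm:sum-MLSI}, are exactly the paper's route. You also correctly identify the obstacle: the naive shift $M_1 = J + |\lambda_{\min}(J)|I$ gives $q = O((\Delta/D)^{3/2})$, not $O(\Delta)$, because the top eigenspaces of $M_1$ and of $\Cov$ sit on the same high-degree vertices.

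The genuine gap is that you stop short of the actual fix, and the tools you name for closing it are not the right ones. The paper's choice is the \emph{Bethe Hessian at the negated interaction strength}: with $s_0 = \gamma/\sqrt{D}$, take $M_1 = (1-s_0^2)\,\BH_H(-s_0) = (D_H - I)s_0^2 + s_0 A + I$, which is PSD by \Cref{cor:bethe-tree-psd} and is indeed $J$ plus a degree-dependent diagonal. The point of this specific choice is an algebraic cancellation: since $\Cov(\mu_{(1-t)J,0})$ is entrywise dominated by $\BH_H(s_0)^{-1}$ (via the path formula and \Cref{fact:bethe-inverse}), one gets
\[
\Cov \cdot M_1 \;\preceq_{\text{entrywise}}\; (1-s_0^2)\,\BH_H(s_0)^{-1}\BH_H(-s_0) \;=\; (1-s_0^2)I + 2s_0\,\BH_H(s_0)^{-1}A,
\]
so the problem reduces to bounding $\|\Cov(\mu_{J,0})\cdot A\|$. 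This is exactly the \emph{third} inequality of \Cref{lem:tree-cov}, which gives $O(\Delta/\sqrt{D})$ and hence $q = O(\Delta/D)$ (in fact better than $O(\Delta)$). Your proposed tools \Cref{lem:interaction-sum-cov-bound} and \Cref{prop:prop-hs-trick} are for the Brascamp--Lieb route to bounding a covariance itself, not for controlling $\|M_1^{1/2}\Cov\,M_1^{1/2}\|$, and would not produce this cancellation. Also note that the reduction to the ferromagnetic zero-field case (via \Cref{obs:sign-conjugation}, \Cref{lem:all-but-one-neg}, and \Cref{cor:cov-ferro-external-psd}) is essential here, since the argument relies on entrywise domination of nonnegative matrices.
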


    To apply \Cref{thm:entropy-conservation,thm:mlsi-bdd-growth-tree} for concrete graphs to prove rapid mixing results, we need to establish that these graphs admit a $(\Delta, D)$-decomposition.

    For sparse random graphs, we explicitly construct the near-forest decomposition as outlined in \Cref{sec:overview} and obtain the following explicit bounds on the parameters $\Delta$ and $D$; the analysis is given in \Cref{sec:extinction}.
    \begin{restatable}{theorem}{randomgraphdecomp}   \label{lem:random-graph-decomposition}
        Let $\bG\sim\SBM(n, d, \lambda)$ for any $d\ge 1$.
        There exist universal constants $C_1$ and $C_2$ such that for any $\eps \ge C_1 \cdot \left( \frac{C_2 + \log d}{d} \right)^{1/3}$, with high probability over $\bG$, we can partition the edge set $E(\bG)$ as $E(\bG_1) \sqcup E(\bG_2)$ for graphs $\bG_1$ and $\bG_2$ on $[n]$ where:
        \begin{enumerate}
            \item \label{item:max-degree-bound} The maximum degree of $\bG_1$ is at most $(1+\eps)d$.
            \item \label{item:near-forest-property} $\bG_2$ is a $\parens*{\Delta, D}$-pseudorandom near-forest with respect to $S\coloneqq V'(\bG_1)\cap V'(\bG_2)$ for $\Delta = o(\log n)$ and $D = (1+\eps)d$.
        \end{enumerate}
    \end{restatable}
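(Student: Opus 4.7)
For each $v \in [n]$, let $\ell(v)$ be the smallest non-negative integer such that $|B_L(v)| \leq (d(1+\eps))^L$ for every $L \geq \ell(v)$ (well-defined because $(d(1+\eps))^L > n$ once $L \geq \log_d n$), and call $v$ \emph{heavy} if $\ell(v) \geq 1$. Declare $E(\bG_2)$ to be the set of edges with at least one endpoint in $B_{\ell(v)-1}(v)$ for some heavy $v$, and set $E(\bG_1) = E(\bG) \setminus E(\bG_2)$. The first property is then immediate: a vertex $u$ incident to an edge of $\bG_1$ cannot be heavy (otherwise every edge at $u$ would lie in $\bG_2$), so $\ell(u) = 0$ and hence $\deg_{\bG_1}(u) \leq \deg_{\bG}(u) = |B_1(u)| - 1 \leq d(1+\eps)$.

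\textbf{Ball-size pseudorandomness.} I first establish the doubly-exponential tail bound
\[
\Pr\bigl[|B_r(v)| \geq (sd)^r\bigr] \leq \exp\bigl(-\Omega(ds^r)\bigr), \qquad s > 1,
\]
by dominating the BFS exploration of $B_r(v)$ in $\bG$ with a Galton--Watson branching process with $\mathrm{Bin}(n, d/n)$ offspring distribution, and iterating a Chernoff bound across levels. A union bound over $v \in [n]$ and $r \geq 1$ then yields $|B_r(v)| \leq \Delta \cdot (d(1+\eps))^{r-1}$ simultaneously for some $\Delta = o(\log n)$, where the near-root fluctuations are absorbed using the a priori $O(\log n / \log\log n)$ bound on the maximum degree of $\bG$. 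This gives the first pseudorandomness condition.

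\textbf{Near-forest property.} The key step is to show that every connected component of $\bG_2$ has diameter $O(\log n / (\eps^3 d))$. Introduce the \emph{cluster graph} $\mathcal{C}$ whose vertices are the heavy vertices and whose edges connect $u \sim v$ when $B_{\ell(u)}(u) \cap B_{\ell(v)}(v) \neq \emptyset$, weighted by $\ell(u) + \ell(v)$; a large-diameter component of $\bG_2$ forces a long weighted path in $\mathcal{C}$. To bound the probability that a \emph{fixed} tuple $(v_1, \dots, v_m)$ of heavy vertices with prescribed radii $(\ell_1, \dots, \ell_m)$ realizes such a path, combine two inputs: the event $\ell(v_i) \geq \ell_i$ implies $|B_{\ell_i - 1}(v_i)| > (d(1+\eps))^{\ell_i-1}$, which has probability at most $\exp(-\Omega(d(1+\eps)^{\ell_i - 1}))$ by the tail bound above; and the requirement that consecutive balls intersect contributes combinatorial rarity (the intersection must hit a specific small vertex set out of $n$, costing a factor of roughly $n^{-1}$ per linkage). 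Summing over discrete tuples of length $m = \Omega(\log n / (\eps^3 d))$ and over radius assignments, the hypothesis $\eps \geq C_1 ((C_2 + \log d)/d)^{1/3}$ makes the total contribution $o(1)$. With the diameter bound in hand, the ball-size estimate of the previous paragraph implies each component of $\bG_2$ spans only $o(\log n) \ll \tfrac{1}{2}\log_d n$ vertices, so \cite[Lemma 30]{BLM15} (every small connected subset of $\bG$ has tree-excess at most one, w.h.p.) yields the near-forest property. The second pseudorandomness condition, $|S \cap B_r(u)| \leq D^r$, follows from an analogous moment computation, since $S = V'(\bG_1) \cap V'(\bG_2)$ lies in the boundary of the heavy balls, whose local density is again governed by the same tail bound.

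\textbf{Main obstacle.} The diameter bound is the technical crux. Because $\ell(v)$ depends on $v$'s entire multi-scale neighborhood and is not a locally determined quantity, a direct branching-process analysis on $\bG_2$ itself is unavailable. The cluster-graph moment method sidesteps this by conditioning on a fixed tuple of candidate centers, and separately quantifying the doubly-exponential rarity of heaviness and the combinatorial rarity of ball overlap; making both contributions combine into an $o(1)$ bound is precisely what drives the $\eps = \Omega((\log d / d)^{1/3})$ regime.
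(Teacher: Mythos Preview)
Your overall strategy matches the paper's closely: the construction, the first (degree) property, the ball-size tail bound for $\Delta = o(\log n)$, and the cluster-graph moment method for the diameter bound are all essentially what the paper does. However, there is one genuine gap and one minor misattribution.

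\textbf{The genuine gap.} In deducing the near-forest property from the diameter bound, you write that ``each component of $\bG_2$ spans only $o(\log n) \ll \tfrac{1}{2}\log_d n$ vertices.'' This is false. The diameter bound is $r^* = O(\log n/(\eps^3 d))$, and the ball-size estimate gives $|C| \leq |B_{r^*}(v)| \leq \Delta \cdot D^{r^*-1}$ with $D = d(1+\eps)$. The factor $D^{r^*-1}$ is not $O(1)$; in fact the paper shows (Lemma~\ref{lem:single-component-bound}) that components can have size up to $n^{c\log(d(1+\eps))/(\eps^3 d)}$, which is polynomial in $n$. So you cannot apply \cite[Lemma 30]{BLM15} to the whole component. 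The paper's fix is short but essential: assuming a component $C$ has tree-excess at least $2$, take two minimal cycles $C_1, C_2$ in $C$ together with a shortest path between them. This ``witness'' subgraph is connected, has tree-excess at least $2$, and has at most $5\cdot\mathrm{diam}(C) = O(\log n/(\eps^3 d))$ vertices. The hypothesis $\eps \geq C_1((C_2+\log d)/d)^{1/3}$ gives $\eps^3 d \gtrsim \log d$, so this size is at most $\tfrac{1}{2}\log_d n$ for large enough constants, and now \cite[Lemma 30]{BLM15} yields the contradiction.

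\textbf{The minor point.} The second pseudorandomness condition $|\{v\in S:\mathrm{dist}(u,v)\leq r\}|\leq D^r$ does not require a moment computation; it is a \emph{deterministic} consequence of the construction. Since every $v\in S$ is incident to a bulk edge, $v$ is not heavy, so $\ell(v)=0$. Now split on $r$: if $r<\ell(u)$, then $B_r(u)\subseteq B_{\ell(u)-1}(u)$, and every vertex $w$ there has all its edges inside $B_{\ell(u)}(u)\subseteq H$, hence $w\notin V'(\bG_1)$ and the count is zero; if $r\geq\ell(u)$, then $|B_r(u)|\leq D^r$ directly by the definition of $\ell(u)$. This is why the paper states it as an Observation rather than a Lemma.
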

    
    \begin{remark}
        We will henceforth refer to $\bG_1$ as the \emph{bulk} part in the decomposition, and to $\bG_2$ as the \emph{near-forest} part of the decomposition.
    \end{remark}

    \begin{remark}
        The bound on the maximum degree of $\bG_1$ is important for obtaining a bound on the spectral norm of the interaction matrix restricted to the edges in $\bG_1$.
        The bound on $\Delta$ of $o(\log n)$ is crucial to obtain a bound of $n^{1+o(1)}$ on the mixing time via \Cref{thm:mlsi-bdd-growth-tree}.
    \end{remark}

\subsection{Proving an MLSI for diluted spin glasses}
\label{sec:spinglass-mlsi}

    In this section, we apply \Cref{thm:entropy-conservation,thm:mlsi-bdd-growth-tree,lem:random-graph-decomposition} to obtain a lower bound on the MLSI constant for Glauber dynamics on randomly signed sparse random graphs. We restate the theorem here for convenience.

    \spinglassmixing*
    \begin{proof}
        Consider the bulk $\bG_1$ and near-forest $\bG_2$ guaranteed by \Cref{lem:random-graph-decomposition}, and let $J_B = \frac{\beta}{\sqrt{d}} A_{\bG_1}$ and $J_H = \frac{\beta}{\sqrt{d}} A_{\bG_2}$. 
        We will verify the conditions of \Cref{thm:entropy-conservation} for some $\beta = \Omega(1)$. We want to verify that 
        \begin{align}
            K \cdot \frac{e^{2\gamma/\sqrt{D}}}{(1-\gamma)^2} \le 1-\delta 
            \label{eq:conservation-condition}
        \end{align}
        
        Before we dive into the calculations, let us first observe that from \Cref{lem:random-graph-decomposition}, we have that $\bG_2$ is a $(o(\log n),d(1+\eps))$-pseudorandom near-forest. 
        Thus, the interaction strengths on $J_H$ are in $\left[ -\frac{\beta}{\sqrt{d}} , \frac{\beta}{\sqrt{d}} \right] = \left[ - \frac{\beta\sqrt{1+\eps}}{\sqrt{d(1+\eps)}} , \frac{\beta\sqrt{1+\eps}}{\sqrt{d(1+\eps)}} \right]$.
        We will thus check \eqref{eq:conservation-condition} for $\gamma = \beta\sqrt{1+\eps}$. We have
        \[ \frac{e^{2\gamma/\sqrt{D}}}{(1-\gamma)^2} = \frac{e^{2\beta/\sqrt{d}}}{(1-\beta\sqrt{1+\eps})^2}. \]
    
        Turning now to $K$, \Cref{lem:bulk-specrad-signed} furnishes the bound
        \[
            -2\beta(1+\eps) I_B \psdle J_B \psdle 2\beta(1+\eps) I_B.
        \]
        For some ``small'' $\eta > 0$ that we fix later, we can define $\wt{J}_B = J_B + (\eta + 2\beta(1+\eps))I_B$ without changing the Ising model to ensure
        \[
            \eta I_B \psdle \wt{J}_B \psdle \left(\eta + 4\beta(1+\eps)\right) I_B.
        \] 
        Thus, we choose $K = \eta + 4\beta(1+\eps)$. Hence, the LHS of \eqref{eq:conservation-condition} is at most
        \begin{align*}
            K \cdot \frac{e^{2\beta/\sqrt{d}}}{(1-\beta\sqrt{1+\eps})^2} \le e^{2\beta/\sqrt{d}} \cdot \frac{\eta + 4\beta(1+\eps)}{(1-\beta(1+\eps))^2}.
        \end{align*}
        If we have
        \begin{align*}
            \beta < \min\left\{ 1 , \frac{1+2e^{2/\sqrt{d}} - \sqrt{4e^{4/\sqrt{d}} + 4e^{2/\sqrt{d}}}}{1+\eps} \right\} = (3-\sqrt{8})(1 - o_d(1)) \approx 0.17 \cdot (1-o_d(1)),
        \end{align*}
        sufficiently small constants $\eta$ and $\delta$ may be chosen so the above is less than $1-\delta$.
    
        Now, we can apply \Cref{thm:entropy-conservation} to show that 
        \[ 
            \Cov(\mu_{(1-t)J_B + J_H,h}) \psdle C (I_B + o(\log n) \cdot I_{H \setminus \partial H}), 
        \]
        and in turn 
        \[
            \exp(-\norm{\wt{J}_B^{1/2} \cdot \Cov(\mu_{(1-t)J_B + J_H,h}) \cdot \wt{J}_B^{1/2}}) \ge \Omega(1).
        \]
        On the other hand, \Cref{thm:mlsi-bdd-growth-tree} shows that for any $h'$ we have 
        \[
            \MLSI(\mu_{J_H, h'}) \ge \frac{1}{n} \cdot e^{-o(\log n)} = n^{-1-o(1)}.
        \] 
        Finally, applying \Cref{thm:sum-MLSI} with $M_1 = \wt{J}_B$ and $M_2 = J_H$ allows us to conclude that $\MLSI(\mu_{\beta A/\sqrt{d}, h}) \ge n^{-1 - o(1)}$.
    \end{proof}
    Combined with \Cref{fact:mixing-mlsi}, we immediately conclude \Cref{th:main}.

    \subsection{Covariance bounds on near-forests}\label{sec:covariance-near-forest}
    In this section we establish \Cref{lem:near-forest-cov}, which bounds the covariance of an Ising model supported on a pseudorandom near-forest.
    \nearforestcov*
    Our strategy to prove \Cref{lem:near-forest-cov} is:
    \begin{enumerate}
        \item Reduce to proving the covariance norm bound in the case when $H$ is a tree, the interactions are nonnegative, and there is no external field.
        \item Use an explicit formula for the covariance in tree Ising models with a zero external field, and bound the spectral norm of that matrix using the trace moment method.
    \end{enumerate}
    The first ingredient in this reduction is the following simple observation.
    \begin{observation} \label{obs:sign-conjugation}
        For any interaction matrix $J$, external field $h$, and diagonal matrix of signs $D$, we have:
        \[
            \norm*{\Cov\parens*{\mu_{J,h}}} = \norm*{\Cov\parens*{\mu_{DJD, Dh}}}\mper
        \]
        In fact, $\MLSI(\mu_{J,h}) = \MLSI(\mu_{DJD,Dh})$.
    \end{observation}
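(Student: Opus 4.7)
The plan is to exhibit an explicit measure-preserving bijection on $\{\pm 1\}^n$ that intertwines the two Ising models, and then observe that all relevant quantities (covariance norm, Glauber dynamics, Dirichlet form, entropy) transform covariantly under this bijection. Concretely, since $D$ is a diagonal matrix of signs, the map $\Phi : \sigma \mapsto D\sigma$ is an involution on $\{\pm 1\}^n$. A direct computation shows
\[
    \tfrac{1}{2}(D\sigma)^{\top} (DJD)(D\sigma) + \langle Dh, D\sigma\rangle = \tfrac{1}{2}\sigma^{\top} J \sigma + \langle h, \sigma\rangle,
\]
using $D^2 = I$, so $\Phi$ pushes $\mu_{J,h}$ forward to $\mu_{DJD, Dh}$.

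For the covariance norm, if $\bsigma \sim \mu_{J,h}$ and $\btau = D\bsigma$, then $\Cov(\btau) = D \Cov(\bsigma) D$. Since $D$ is an orthogonal matrix, $\|D \Cov(\bsigma) D\|_{\mathrm{op}} = \|\Cov(\bsigma)\|_{\mathrm{op}}$, which gives the first claim.

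For the MLSI claim, the key observation is that $\Phi$ commutes with the structure of Glauber dynamics: flipping coordinate $i$ in $\sigma$ corresponds, under $\Phi$, to flipping coordinate $i$ in $D\sigma$, i.e., $\Phi(\sigma^{\oplus i}) = \Phi(\sigma)^{\oplus i}$. Combined with the fact that $\Phi$ pushes $\mu_{J,h}$ to $\mu_{DJD, Dh}$, this means the transition probabilities of the two Glauber chains are identified under $\Phi$, so the chains are conjugate. Therefore, for any $f : \{\pm 1\}^n \to \R_{>0}$, defining $\tilde{f} = f \circ \Phi$ gives $\Ent_{\mu_{DJD, Dh}}[\tilde{f}] = \Ent_{\mu_{J,h}}[f]$ and $\calE_{\mu_{DJD, Dh}}(\tilde{f}, \log \tilde{f}) = \calE_{\mu_{J,h}}(f, \log f)$, from which $\MLSI(\mu_{J,h}) = \MLSI(\mu_{DJD, Dh})$ follows immediately. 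There is no real obstacle here — the entire observation is a routine change of variables — but the point worth being careful about is verifying that $\Phi$ intertwines single-site updates (which it does because $\Phi$ acts diagonally on coordinates).
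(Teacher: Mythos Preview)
Your proposal is correct and follows essentially the same approach as the paper: both use the bijection $\sigma \mapsto D\sigma$ to identify the two Ising models, deduce $\Cov(\mu_{DJD,Dh}) = D\,\Cov(\mu_{J,h})\,D$ and invoke orthogonality of $D$ for the covariance norm, and then observe that composing test functions with this bijection preserves both entropy and the Dirichlet form to conclude equality of the MLSI constants. Your explicit remark that $\Phi(\sigma^{\oplus i}) = \Phi(\sigma)^{\oplus i}$ makes the Dirichlet-form equality slightly more transparent than the paper's terse statement, but the argument is otherwise identical.
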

    \begin{proof}
        Note that if $\sigma\sim\mu_{J,h}$, the vector $D\sigma$ is distributed as $\mu_{DJD,Dh}$.
        Thus, $\Cov\parens*{\mu_{DJD,Dh}} = D\cdot\Cov\parens*{\mu_{J,h}}\cdot D$. The first observation now follows since $D$ is an orthogonal matrix. For the second part, we have that for any function $f : \{\pm 1\}^n \to \R_{>0}$, if we define $g : \{\pm 1\}^n \to \R_{>0}$ by $g(\sigma) = f(D\sigma)$, then the entropies and Dirichlet forms of $f$ and $g$ with respect to $\mu_{J,h}$ and $\mu_{DJD,Dh}$ (respectively) are equal.
    \end{proof}

    We now use the above observation to pass from an Ising model on a forest with arbitrary interactions to one with nonnegative interactions on all but one edge. 
    \begin{lemma}   \label{lem:all-but-one-neg}
        For any connected graph $H$ and spanning tree $T$ of $H$, interaction matrix $J$ supported on $H$, and an external field $h$, there is a diagonal matrix $D$ such that $DJD$ has nonnegative interactions on the edges of $T$.
    \end{lemma}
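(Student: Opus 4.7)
The plan is to construct the diagonal matrix $D$ explicitly via a greedy sign propagation along the spanning tree $T$, exploiting the acyclicity of $T$ to guarantee consistency.

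First I would observe that if $D = \mathrm{diag}(d_1, \ldots, d_n)$ with each $d_i \in \{\pm 1\}$, then $(DJD)_{ij} = d_i d_j J_{ij}$. Hence the sign of the interaction on any edge $\{i,j\}$ in $DJD$ is $d_i d_j \cdot \mathrm{sign}(J_{ij})$ (taking $\mathrm{sign}(0) = +1$). So our goal reduces to choosing signs $d_i \in \{\pm 1\}$ so that $d_i d_j = \mathrm{sign}(J_{ij})$ for every tree edge $\{i,j\} \in E(T)$; equivalently, we want to realize a prescribed $\pm 1$ function on the edges of $T$ as a coboundary of a $\pm 1$ function on the vertices.

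The key step is to use the tree structure to do this propagation without conflict. I would fix an arbitrary root $r$ of $T$, set $d_r := +1$, and then recursively define $d_c := d_p \cdot \mathrm{sign}(J_{pc})$ for each child $c$ of a parent $p$ in $T$ (say, via a BFS/DFS traversal from $r$). Since $T$ is a tree, each non-root vertex has a unique parent, so this recursion is well-defined and terminates after visiting every vertex. By construction, for every tree edge $\{p,c\} \in E(T)$ we have $d_p d_c J_{pc} = \mathrm{sign}(J_{pc})^2 \cdot |J_{pc}| = |J_{pc}| \geq 0$, as desired.

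There is no real obstacle here: the entire argument rests on the fact that $T$ is acyclic, so propagating signs along the unique root-to-vertex path never creates a conflicting constraint. I would emphasize that the lemma makes no claim about the signs of interactions on edges of $H \setminus T$ (nor about the external field $h$, which transforms as $Dh$ but is left arbitrary), so the construction above suffices.
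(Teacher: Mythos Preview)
Your proposal is correct and takes essentially the same approach as the paper: both root $T$ at an arbitrary vertex and set $D_{v,v}$ to be the product of the signs of the interactions along the unique root-to-$v$ path in $T$, which is exactly what your recursive rule $d_c := d_p \cdot \mathrm{sign}(J_{pc})$ unfolds to. The verification that each tree edge becomes nonnegative is identical.
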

    \begin{proof}
        Arbitrarily root $T$ at a vertex $r$.
        Construct $D$ by choosing $D_{v,v} = \prod_{e'\in P} \sign(J_{e'})$ where $P$ is the unique path from $r$ to $v$ in $T$.
        For any edge $uv\in E(T)$, $(DJD)_{u,v} = J_{u,v} D_{u,u} D_{v,v} = J_{u,v} \sign\parens*{J_{u,v}} \ge 0$.
    \end{proof}

    We will need the following fact about ferromagnetic Ising models with external fields.

    \begin{lemma}[{\cite[Corollary 1.3]{DSS23}}]   \label{lem:cov-ferro-external}
        For any ferromagnetic interaction matrix $J\in\R_{\ge 0}^{n\times n}$ and external field $h\in\R^n$, we have for any $u,v\in[n]$:
        \[
            \Cov\parens*{\mu_{J,h}}_{u,v} \le \Cov\parens*{\mu_{J,0}}_{u,v}.
        \]
    \end{lemma}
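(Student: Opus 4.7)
The plan is to combine the $\Z_2$ symmetry of the zero-field model with the Griffiths--Hurst--Sherman (GHS) inequality. Set $\Phi(h) \coloneqq \Cov(\mu_{J,h})_{u,v}$, so that $\partial_k \Phi(h) = \partial_u \partial_v \partial_k \log Z_{J,h}$ is the third joint cumulant of $\sigma_u$, $\sigma_v$, and $\sigma_k$ under $\mu_{J,h}$. First, I would check that $\nabla \Phi(0) = 0$: at $h = 0$ the measure is invariant under the global spin flip $\sigma \mapsto -\sigma$, so every odd-order joint moment, and hence every third cumulant, vanishes.

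Next, for $h$ in the positive orthant $\R_{\geq 0}^n$, I would invoke the GHS inequality, which asserts that this third cumulant is nonpositive for any ferromagnetic $J$ and nonnegative external field. Consequently $\Phi$ is coordinate-wise non-increasing on $\R_{\geq 0}^n$, and together with $\nabla \Phi(0) = 0$ this yields $\Phi(h) \le \Phi(0)$ throughout the positive orthant. Combined with the trivial observation that $\Phi(h) = \Phi(-h)$ by the $\sigma \mapsto -\sigma$ symmetry, this already handles all $h$ in either the positive or the negative orthant.

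For general $h \in \R^n$ with mixed signs, I would pass through the Griffiths ghost-site representation. Introduce an auxiliary spin $g$ with couplings $\tilde J_{ig} = \lvert h_i \rvert$ and record the signs $\epsilon_i \in \{\pm 1\}$ satisfying $\epsilon_i h_i = \lvert h_i \rvert$ separately. The correlations of $\mu_{J,h}$ can then be expressed, via the random-current representation, as sums over pairs of current configurations on the enlarged ferromagnetic graph, weighted by a parity that tracks the $\epsilon_i$ on ghost edges. Aizenman's switching lemma rearranges $\Phi(0) - \Phi(h)$ as a nonnegative combinatorial sum over such pairs, yielding the claim. The main obstacle is precisely this sign bookkeeping: the clean GHS/concavity argument cannot be pushed past the positive orthant, since GHS itself can break down once some $h_i < 0$, so one genuinely needs the random-current machinery and the switching lemma, as carried out in \cite{DSS23}, to close the argument.
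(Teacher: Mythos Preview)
The paper does not prove this lemma; it is quoted directly as \cite[Corollary~1.3]{DSS23} and used as a black box. So there is no ``paper's own proof'' to compare against --- the paper and your proposal ultimately point to the same external reference.

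On the content of your sketch: the positive-orthant argument via GHS is correct, though the observation $\nabla\Phi(0)=0$ is redundant --- coordinate-wise nonpositivity of $\partial_k\Phi$ on $\R_{\ge 0}^n$ already gives $\Phi(h)\le\Phi(0)$ by integrating along any monotone path from $0$ to $h$, without any separate information at the origin. The symmetry $\Phi(h)=\Phi(-h)$ then handles the negative orthant. For genuinely mixed-sign $h$ you correctly identify that GHS alone is insufficient (the sign-flip that would reduce to the nonnegative case destroys ferromagneticity of $J$), and you defer to the random-current/switching machinery of \cite{DSS23}. That is exactly what the paper does as well, so your proposal is consistent with the paper's treatment; just be aware that the mixed-sign case is where all the work lies, and your last paragraph is a pointer to that work rather than a proof.
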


    \begin{fact}
        \label{fact:ferro-nonneg-correlation}
        For any ferromagnetic interaction matrix $J \in \R_{\ge 0}^{n \times n}$, external field $h \in \R^n$, and $u,v \in [n]$, $\Cov\left( \mu_{J,h} \right)_{u,v} \ge 0$.
    \end{fact}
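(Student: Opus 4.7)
The plan is to prove nonnegativity of the covariance via the FKG inequality, exploiting the fact that the Ising model with nonnegative interactions is monotone in a natural lattice structure on $\{\pm 1\}^n$. Concretely, I would equip $\{\pm1\}^n$ with the coordinatewise partial order, giving it the structure of a distributive lattice under coordinatewise $\vee$ (max) and $\wedge$ (min). The functions $\sigma \mapsto \sigma_u$ and $\sigma \mapsto \sigma_v$ are monotone increasing in this order, so if I can show $\mu_{J,h}$ is positively associated (i.e., $\E_{\mu_{J,h}}[fg] \ge \E_{\mu_{J,h}}[f] \cdot \E_{\mu_{J,h}}[g]$ for monotone increasing $f,g$), then taking $f(\sigma)=\sigma_u$, $g(\sigma)=\sigma_v$ immediately gives $\Cov(\mu_{J,h})_{u,v} \ge 0$.

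By Holley's inequality (i.e., the standard derivation of FKG from the lattice condition), it suffices to verify the log-supermodularity/lattice condition
\[
    \mu_{J,h}(\sigma\vee\tau)\cdot\mu_{J,h}(\sigma\wedge\tau) \ge \mu_{J,h}(\sigma)\cdot\mu_{J,h}(\tau) \quad \text{for all } \sigma,\tau\in\{\pm 1\}^n.
\]
I would check this directly in the exponent. The external field contribution is linear and therefore exactly balanced: $\langle h,\sigma\vee\tau\rangle + \langle h,\sigma\wedge\tau\rangle = \langle h,\sigma\rangle + \langle h,\tau\rangle$, since $\max(a,b)+\min(a,b)=a+b$ coordinatewise. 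So the condition reduces to
\[
    (\sigma\vee\tau)^\top J(\sigma\vee\tau) + (\sigma\wedge\tau)^\top J(\sigma\wedge\tau) \ge \sigma^\top J\sigma + \tau^\top J\tau,
\]
and since $J \in \R_{\ge 0}^{n\times n}$, it suffices to check the pointwise inequality $\max(a,c)\max(b,d)+\min(a,c)\min(b,d) \ge ab+cd$ for each pair of coordinates $(a,c)=(\sigma_i,\tau_i)$ and $(b,d)=(\sigma_j,\tau_j)$. A short case analysis (the only nontrivial case is when the orderings disagree, where the inequality reduces to $(a-c)(d-b) \ge 0$) settles this.

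Once the lattice condition is established, Holley's inequality yields the FKG positive association property for $\mu_{J,h}$, and applying it to the two increasing coordinate functions completes the proof. No step here is really an obstacle, as this is essentially a textbook application of FKG; the only mild subtlety is noting that the argument works for \emph{arbitrary} external fields $h \in \R^n$ (with no sign restriction), because the field contribution cancels exactly in the lattice condition, so ferromagnetic interactions alone suffice.
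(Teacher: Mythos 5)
Your proof is correct and follows essentially the same route as the paper: both establish positive association of the ferromagnetic Ising model via FKG and apply it to the two coordinate functions. The only difference is that you verify the FKG lattice (log-supermodularity) condition by the standard direct computation, whereas the paper simply cites it from Grimmett's book, so there is nothing substantive to change.
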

    \begin{proof}
        Ferromagnetic Ising models with arbitrary external fields satisfy the hypothesis of the FKG inequality \cite[Theorem 2.16]{gri06}.
        The statement then follows from the FKG inequality applied to the functions $\sigma_i - \E[\sigma_i]$ and $\sigma_j - \E[\sigma_j]$.
    \end{proof}

    \begin{corollary}
        \label{cor:cov-ferro-external-psd}
        Let $M$ be a matrix with all non-negative entries. For any ferromagnetic interaction matrix $J\in\R_{\ge 0}^{n\times n}$ and external field $h\in\R^n$, we have
        \[
            \left\|\Cov\parens*{\mu_{J,h}} \cdot M\right\| \le \left\|\Cov\parens*{\mu_{J,0}} \cdot M\right\|.
        \]
        In particular,
        \[ \left\|\Cov(\mu_{J,h})\right\| \le \left\|\Cov(\mu_{J,0})\right\|. \]
    \end{corollary}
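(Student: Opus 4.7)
The plan is to combine the entrywise inequality from \Cref{lem:cov-ferro-external} with the entrywise nonnegativity from \Cref{fact:ferro-nonneg-correlation}, and then invoke a standard characterization of the operator norm for matrices with nonnegative entries. Concretely, writing $A := \Cov(\mu_{J,h})$ and $B := \Cov(\mu_{J,0})$, both matrices have nonnegative entries (by \Cref{fact:ferro-nonneg-correlation} applied with the given $h$ and with $h = 0$, respectively), and entrywise we have $0 \le A_{ij} \le B_{ij}$ by \Cref{lem:cov-ferro-external}. Since $M$ also has nonnegative entries, the products $AM$ and $BM$ are entrywise nonnegative.

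The first key step will be the observation that if $X \in \R_{\ge 0}^{n \times n}$ has nonnegative entries, then
\[
    \|X\|_{\mathrm{op}} = \sup_{\substack{u,v \in \R_{\ge 0}^{n} \\ \|u\|_{2} = \|v\|_{2} = 1}} u^{\top} X v.
\]
This follows from the triangle inequality at the entry level: $|(Xw)_i| = |\sum_j X_{ij} w_j| \le \sum_j X_{ij}|w_j| = (X|w|)_i$, so $\|Xw\|_2 \le \|X|w|\|_2$, meaning the $\ell_2 \to \ell_2$ operator norm is achieved (in supremum) by nonnegative test vectors, and then a second application of the same inequality handles the left-hand side.

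Applying this characterization to $X = AM$ and using the entrywise bound $A \le B$, for any nonnegative unit vectors $u, v$ the vector $Mv$ is nonnegative, so
\[
    u^{\top} A M v = \sum_{i,j} u_i A_{ij} (Mv)_j \le \sum_{i,j} u_i B_{ij} (Mv)_j = u^{\top} B M v \le \|BM\|_{\mathrm{op}}.
\]
Taking the supremum over nonnegative unit $u, v$ on the left and applying the characterization again yields $\|AM\|_{\mathrm{op}} \le \|BM\|_{\mathrm{op}}$. The ``in particular'' statement follows by specializing to $M = I$, since both $A$ and $B$ are themselves entrywise nonnegative.

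There is no serious obstacle here; the only subtlety is to notice that the operator-norm comparison requires not only the entrywise bound but also entrywise nonnegativity of both $A$ and $B$ (otherwise entrywise dominance does not imply operator-norm dominance), which is exactly what the FKG-based \Cref{fact:ferro-nonneg-correlation} provides in the ferromagnetic setting.
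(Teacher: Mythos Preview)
Your proof is correct and takes essentially the same approach as the paper: combine the entrywise nonnegativity of both covariances (\Cref{fact:ferro-nonneg-correlation}) with the entrywise dominance (\Cref{lem:cov-ferro-external}) and the fact that entrywise dominance between nonnegative matrices implies operator-norm dominance. The paper attributes this last fact to the Perron--Frobenius theorem, whereas you give a direct elementary argument via the triangle inequality; the content is the same.
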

    \begin{proof}
        This is immediate by \Cref{fact:ferro-nonneg-correlation,lem:cov-ferro-external}, and the Perron-Frobenius theorem.
    \end{proof}

    Next, we prove a bound on the operator norm of matrices that encode when two vertices are a certain distance apart. 

    \begin{lemma}
        \label{lem:trace-method-best-method}
        Let $H$ be a $(\Delta,D)$-pseudorandom tree with respect to $S \subseteq V(H)$. Let $\ell \ge 1$, and set $A^{(\ell)} = A^{(\ell)}_H$ to be the matrix with $(A^{(\ell)})_{uv} = 1$ if vertices $u$ and $v$ are distance exactly $\ell$ apart, and $0$ otherwise. Also let $A = A^{(1)}$ be the adjacency matrix of $H$. Then,
        \begin{align*}
            \left\| A^{(\ell)} \right\| &\le D^{\ell/2} \cdot (\ell+1) \cdot \left( \frac{\Delta}{D} \right) \\
            \left\| A^{(\ell)}_S \right\| &\le D^{\ell/2} \cdot (\ell+1) \\
            \left\| A^{(\ell)} A \right\| &\le D^{\ell/2} \cdot (\ell+1) \cdot \left( \frac{2\Delta}{\sqrt{D}} \right) \mper
        \end{align*}
    \end{lemma}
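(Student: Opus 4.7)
All three bounds are proved by the trace (moment) method, using the pseudorandom ball-growth of $H$. For the main bound $\|A^{(\ell)}\|$, since $A^{(\ell)}$ is a nonnegative symmetric matrix, for any positive integer $k$ I use
\[
\|A^{(\ell)}\|^{2k} \le \mathrm{tr}\bigl((A^{(\ell)})^{2k}\bigr) = \sum_{v_0} \#\bigl\{ \text{closed walks of length } 2k \text{ from } v_0 \text{ in the distance-}\ell\text{ graph on } V(H) \bigr\}.
\]
The key structural observation is that each closed walk $v_0, v_1, \ldots, v_{2k-1}, v_0$ lifts, by concatenating the unique tree-geodesic within each leg, to a closed walk $W$ of length $2k\ell$ in $H$. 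Since $H$ is a tree, the support of $W$ is a subtree $T_W \ni v_0$ in which every edge is traversed an even number of times; in particular $|E(T_W)| \le k\ell$ and $T_W \subseteq B_{k\ell}(v_0)$.

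\textbf{Counting.} I then enumerate over pairs $(T_W, \text{even-closed walk on } T_W)$. The $(\Delta,D)$-pseudorandomness controls the number of eligible subtrees through the ball bound $|B_r(v)| \le \Delta D^{r-1}$, while the number of valid closed walks of length $2k\ell$ on a fixed subtree with $m$ edges (each traversed evenly) is a Catalan-type combinatorial quantity. Summing over $v_0$, taking the $(2k)$-th root, and letting $k \to \infty$ produces the bound $D^{\ell/2}(\ell+1)(\Delta/D)$: the factor $D^{\ell/2}$ arises from closure (each edge is traversed at least twice, effectively halving the exponent from $\ell$ to $\ell/2$); the factor $(\ell+1)$ arises from enumerating the LCA/turnaround position within each leg of length $\ell$; and $\Delta/D$ reflects that the root $v_0$ uses the max-degree bound rather than the amortized growth rate $D$. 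For $\|A^{(\ell)}_S\|$ the same argument runs but with all intermediate $v_i$ required to lie in $S$; the sharper bound $|\{v \in S : \dist(u,v) \le r\}| \le D^r$ from pseudorandomness with respect to $S$ then removes the $\Delta/D$ factor throughout.

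\textbf{Product bound and obstacle.} For $\|A^{(\ell)} A\|$, rather than a fresh trace computation, I plan to exploit an algebraic identity: in a tree, any neighbor $w$ of $v$ satisfies $\dist(u, w) = \dist(u, v) \pm 1$, so (up to a diagonal correction at $u = v$) one has $A^{(\ell)} A = A^{(\ell+1)} + A^{(\ell-1)}(D_H - I)$, where $D_H$ is the diagonal degree matrix of $H$. Combining the just-proved bounds on $\|A^{(\ell\pm1)}\|$ with $\|D_H - I\| \le \Delta$ and simplifying yields the claimed $2\Delta/\sqrt{D}$ factor. The main obstacle lies in the combinatorial enumeration inside the trace: getting the polynomial factor $(\ell+1)$ in the final bound, rather than a naive exponential like $2^\ell$ that would come from iterating $\|A\|^\ell$, requires simultaneously exploiting the even-multiplicity constraint on closed walks in trees and the pseudorandom growth, which amounts to a careful coupling between the Catalan-type Euler-tour count and the subtree enumeration.
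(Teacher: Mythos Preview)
Your approach to the first two bounds is essentially the paper's: trace method, lifting closed walks in the distance-$\ell$ graph to closed walks of length $2k\ell$ in the tree, and exploiting the even-multiplicity (equal up-steps and down-steps). The paper makes the counting crisp by parametrizing each length-$\ell$ leg via its turnaround depth $s_t$ (so $s_t$ steps toward the root, $\ell - s_t$ away), using $\sum_t s_t = \ell k$ to get the $D^{\ell k}$ factor, and bounding the number of endpoints of each leg by the ball-size $\Delta_R D^{\ell - s_t - 1}$. Your ``Catalan-type subtree enumeration'' is vaguer, but since you correctly locate the $(\ell+1)$ factor in the choice of turnaround position per leg, this part is fine and would converge to the paper's argument once written out.

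The third bound is where your proposal has a genuine gap. The identity $A^{(\ell)} A = A^{(\ell+1)} + A^{(\ell-1)}(D_H - I)$ is correct (for $\ell \ge 2$), but bounding via submultiplicativity gives
\[
\|A^{(\ell)} A\| \;\le\; \|A^{(\ell+1)}\| + \|A^{(\ell-1)}\|\cdot\|D_H - I\|
\;\le\; D^{\ell/2}\,\frac{\Delta}{\sqrt{D}}\Bigl((\ell+2) + \ell\,\frac{\Delta}{D}\Bigr),
\]
which matches the target $D^{\ell/2}(\ell+1)\cdot \tfrac{2\Delta}{\sqrt{D}}$ only when $\Delta \le D$. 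In the regime of interest (the application has $\Delta = o(\log n)$ and $D$ a constant), you are off by a factor of $\Delta/D$, which would propagate into the MLSI bound and destroy the $n^{1+o(1)}$ mixing time. The loss comes from paying $\|D_H - I\| \le \Delta$ separately on top of the $\Delta/D$ already present in $\|A^{(\ell-1)}\|$; effectively you are charging two ``first down-step'' costs of size $\Delta$ per leg instead of one. The paper avoids this by running the trace method directly on $A^{(\ell)}A$: each leg is a length-$\ell$ nonbacktracking walk followed by a single extra step, and that extra step is absorbed into the up/down accounting (via an indicator $\epsilon_t \in \{0,1\}$), contributing only a factor of $2\sqrt{D}$ rather than $\Delta$. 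So for the third inequality you should abandon the algebraic reduction and redo the trace computation with length-$(\ell+1)$ legs.
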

    \begin{proof}
        We shall bound these operator norms using the trace moment method. Let $R$ be $S$ or $V(H)$, so we are interested in bounding $\left\|A^{(\ell)}_R\right\|$. Let us define $\calW_{\ell,m,R}$ to be the set of all closed walks in $H$ of length $\ell\cdot m$ that can be expressed as the concatenation of $m$ nonbacktracking walks\footnote{In a tree, nonbacktracking walks are equivalent to shortest paths.} between a pair of vertices in $R$ of length-$\ell$ each.
        Henceforth, we refer to each of these length-$\ell$ nonbacktracking walks as a \emph{linkage}.
        For any even integer $m > 0$, we have:
        \[ \left\| A^{(\ell)}_R \right\|^m \le \Tr\parens*{ \parens*{A^{(\ell)}_R}^m } = \abs*{\calW_{\ell,m,R}}. \]
        For each vertex $r\in R$, we count the number of walks that start and end at $r$.
        The $t$-th linkage, if started at a vertex $v$, is composed of $s_t$ steps ``upwards'' away from $v$, followed by $\ell-s_t$ steps ``downwards''.
        Since $H$ is a tree, and the walks we are counting start and end at the same vertex, exactly half of the steps in any walk are upwards, and exactly half are downwards, that is,
        $$
            \sum_{t=1}^m s_t = \sum_{t=1}^m \ell-s_t = \ell\cdot m / 2\mper
        $$
        Given $s_t$, the number of choices for the $t$-th linkage is at most $D^{\ell-s_t-1} \cdot \Delta_R$ where $\Delta_R$ is $D$ when $R = S$ and $\Delta$ when $R = V(H)$.
        \[
            \abs*{\calW_{\ell,m,R}} \le |R| \cdot \sum_{s_1,\dots,s_{m}}  \prod_{t=1}^m D^{\ell-s_t}\cdot\frac{\Delta_R}{D} = |R| \cdot \sum_{s_1,\dots,s_m} D^{\ell\cdot m / 2} \cdot \parens*{\frac{\Delta_R}{D}}^m \le |R| \cdot D^{\ell\cdot m / 2} \cdot \parens*{\frac{\Delta_R}{D}}^m \cdot (\ell+1)^m \mper
        \]
        Thus, taking $m \to \infty$ so $|R|^{1/m} \to 1$, we get that
        \[ \norm*{ A^{(\ell)}_R } \le D^{\ell/2} \cdot \left( \frac{\Delta_R}{D} \right) \cdot (\ell+1). \]
        The first two inequalities follow by plugging in $\Delta_S = D$ and $\Delta_{V(H)} = \Delta$.\\

        The proof of the third inequality is very similar.
        The main difference is that each of the linkages is not a length-$\ell$ nonbacktracking walk, but instead a length-$(\ell+1)$ walk, of which the first $\ell$ steps are non-backtracking. That is, the $t$-th ``linkage'' is now composed of $s_t$ steps upwards, followed by $\ell-s_t$ steps downwards, followed by a single step upwards or downwards.
        If the final step is downwards, this is just $s_t$ steps upwards then $\ell - s_t + 1$ steps downwards. 
        If the final step is upwards, then this can be thought of as changing $s_{t+1}$ to $s_{t+1} + 1$.
        We encode this with $\eps_t, \eps'_t \in \{0,1\}$, wherein the $t$-th linkage consists of $s_t + \eps_t$ steps upwards followed by $\ell - s_t + \eps'_t$ steps downwards. Since $\eps'_t+\eps_{t+1}=1$, the sequence $(\eps'_t)$ is determined by $(\eps_t)$, and $\sum \eps_t + \eps'_t = m$. Consequently,
        \[
            \sum_{t} s_t + \eps_t = \sum_{t} \ell - s_t + \eps'_t = \frac{(\ell+1)m}{2}\mper
        \]
        Setting $\mathcal{W}'_{\ell,m}$ as the set of walks enumerated by this trace moment method calculation,\footnote{In this case, $R$ is fixed to be $V(H)$.} we have
        \begin{align*}
            |\mathcal{W}'_{\ell,m}| &\le V(H) \cdot \sum_{s_1,\ldots,s_m,\eps_1,\ldots,\eps_m} \prod_{t=1}^{m} D^{\ell - s_t + \eps'_t} \cdot \frac{\Delta}{D} \\
                &= V(H) \cdot \sum_{s_1,\ldots,s_m,\eps_1,\ldots,\eps_m} D^{(\ell+1)m/2} \cdot \left(\frac{\Delta}{D}\right)^m \\
                &\le V(H) \cdot (\ell+1)^m \cdot 2^m \cdot D^{(\ell+1)m/2} \cdot \left( \frac{\Delta}{D} \right)^m.
        \end{align*}
        Again, taking $m\to\infty$ so $V(H)^{1/m} \to 1$, we get that 
        \[ \left\| A^{(\ell)} A \right\| \le D^{(\ell+1)/2} \cdot \frac{\Delta}{D} \cdot 2 (\ell+1) \]
        as claimed.
    \end{proof}

    As our final ingredient, we now prove a version of \Cref{lem:near-forest-cov} for trees with positive interactions and no external field.

    \begin{lemma} \label{lem:tree-cov}
        Let $H$ be a $(\Delta,D)$-pseudorandom tree with respect to $S\subseteq V(H)$. Let $A$ be the adjacency matrix of $H$.
        If $J$ is a collection of interactions supported on the edges of $H$ with values in $\bracks*{0, \frac{\gamma}{\sqrt{D}}}$ for some $\gamma \in [0,1)$, then
        \begin{align*}
            \norm*{\Cov\parens*{\mu_{J,0}}_S} &\le \frac{1}{(1-\gamma)^2} \\
            \norm*{\Cov\parens*{\mu_{J,0}}} &\le \frac{1}{(1-\gamma)^2}\cdot\frac{\Delta}{D} \\
            \norm*{\Cov\left(\mu_{J,0}\right) \cdot A} &\le \frac{1}{(1-\gamma)^2} \cdot \frac{2\Delta}{\sqrt{D}} \mper
        \end{align*}
    \end{lemma}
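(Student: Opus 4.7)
The plan is to leverage the well-known explicit formula for pairwise correlations in a tree Ising model with no external field: for any $u, v \in V(H)$,
\[
  \Cov(\mu_{J,0})_{u,v} = \prod_{e \in P_{uv}} \tanh(J_e),
\]
where $P_{uv}$ is the unique path from $u$ to $v$ in the tree $H$. Since the interactions are nonnegative and bounded above by $\gamma/\sqrt{D}$, and $\tanh$ is increasing with $\tanh(x) \le x$, each entry satisfies $0 \le \Cov(\mu_{J,0})_{u,v} \le (\gamma/\sqrt{D})^{\dist(u,v)}$. Consequently, we obtain the entrywise domination
\[
  \Cov(\mu_{J,0}) \le \sum_{\ell \ge 0} \left(\frac{\gamma}{\sqrt{D}}\right)^{\ell} A^{(\ell)},
\]
where $A^{(\ell)}$ is the distance-$\ell$ indicator matrix from \Cref{lem:trace-method-best-method}.

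Since both sides have nonnegative entries, Perron--Frobenius allows us to pass from this entrywise inequality to an operator norm bound via the triangle inequality. For the first inequality, I would restrict to $S$ and apply the second bound of \Cref{lem:trace-method-best-method}:
\[
  \|\Cov(\mu_{J,0})_S\| \le \sum_{\ell \ge 0} \left(\frac{\gamma}{\sqrt{D}}\right)^{\ell} \|A^{(\ell)}_S\| \le \sum_{\ell \ge 0} \gamma^{\ell}(\ell+1) = \frac{1}{(1-\gamma)^2}.
\]
The second inequality follows in the same way using the first bound of \Cref{lem:trace-method-best-method}, which produces the extra factor of $\Delta/D$. For the third inequality, the same entrywise decomposition yields $\Cov(\mu_{J,0}) \cdot A \le \sum_{\ell \ge 0} (\gamma/\sqrt{D})^{\ell}\, A^{(\ell)} A$ entrywise (products of nonnegative matrices are nonnegative); applying the third bound of \Cref{lem:trace-method-best-method} and summing $\sum_\ell \gamma^\ell(\ell+1) = (1-\gamma)^{-2}$ again gives the claimed estimate with the factor $2\Delta/\sqrt{D}$.

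There is no real obstacle here once the explicit tree correlation formula is in hand: the argument is a clean reduction from a spectral bound on the covariance to the combinatorial spectral bounds on $A^{(\ell)}$, $A^{(\ell)}_S$, and $A^{(\ell)} A$ already furnished by the trace-method computation of \Cref{lem:trace-method-best-method}. The only thing to check with some care is that Perron--Frobenius genuinely applies in the restricted form $\|M_S\| \le \|N_S\|$ whenever $0 \le M \le N$ entrywise, which holds because $M_S$ and $N_S$ retain nonnegative entries and the operator norm of a nonnegative matrix coincides with its Perron eigenvalue.
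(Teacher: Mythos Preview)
Your proposal is correct and follows essentially the same approach as the paper: both use the explicit tree correlation formula $\Cov(\mu_{J,0})_{u,v} = \prod_{e \in P_{uv}} \tanh(J_e)$, dominate entrywise by $\sum_{\ell \ge 0} (\gamma/\sqrt{D})^\ell A^{(\ell)}$, and then invoke the three operator norm bounds from \Cref{lem:trace-method-best-method} term by term. One small remark: your closing Perron--Frobenius justification (operator norm equals Perron eigenvalue) is phrased for symmetric matrices, whereas $\Cov(\mu_{J,0}) \cdot A$ need not be symmetric; the cleaner justification, which the paper also uses implicitly, is simply that $0 \le M \le N$ entrywise implies $\|M\|_{\mathrm{op}} \le \|N\|_{\mathrm{op}}$ for any nonnegative matrices, symmetric or not.
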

    While only the first two of these bounds are needed in the current section, the third will come in handy in \Cref{sec:mlsi-tree-ising} when we show an MLSI for the Ising model obtained after running stochastic localization, which is supported on a pseudorandom near-forest.
    \begin{proof}
        We start with the first two bounds, using the following explicit formula for the covariance matrix.
        \[
            \Cov\parens*{\mu_{J,0}} = \sum_{\ell\ge 0} X^{(\ell)},
        \]
        where $X^{(\ell)}_{i,j}$ is $0$ if there is no length-$\ell$ path between $i$ and $j$ in $H$, and is otherwise equal to $\prod_{e\in P(i,j)} \tanh(J_e)$ where $P(i,j)$ is the unique such path.
        For $R\subseteq V(H)$, will use the bound:
        \begin{equation}
            \label{eq:trace-moment-cov-bound-ez}
            \norm*{\Cov\parens*{\mu_{J,0}}_R} \le \sum_{\ell\ge 0} \norm*{X^{(\ell)}_R}.
        \end{equation}
        Consider the matrix $Y^{(\ell)}$ obtained by replacing each nonzero entry of $X^{(\ell)}$ with $\parens*{\frac{\gamma}{\sqrt{D}}}^{\ell}$.
        Since the interactions are all nonnegative, $X^{(\ell)}$ is a nonnegative matrix, and $Y^{(\ell)}$ is entrywise at least $X^{(\ell)}$ since $\tanh(x) \le x$ for $x \ge 0$. 
        Consequently,
        \begin{equation}
            \label{eq:trace-moment-termwise-bound-ez}
            \norm*{X^{(\ell)}_R} \le \norm*{Y^{(\ell)}_R}.
        \end{equation}
        \Cref{lem:trace-method-best-method} now implies that
        \[ \norm*{Y^{(\ell)}_R} \le \left( \frac{\gamma}{\sqrt{D}} \right)^{\ell} \cdot D^{\ell/2} \cdot (\ell+1) \cdot \left(\frac{\Delta_R}{D}\right) = \gamma^{\ell} \cdot (\ell+1) \cdot \left(\frac{\Delta_R}{D}\right). \]
        Therefore,
        \[
            \norm*{\Cov\parens*{\mu_{J,0}}_R} \le \frac{\Delta_R}{D} \cdot \sum_{\ell \ge 0} (\ell+1) \gamma^{\ell} \le \frac{1}{(1-\gamma)^2}\cdot\frac{\Delta_R}{D}\mcom
        \]
        which recovers the desired bounds by plugging in $\Delta_R = \Delta$ for $R = V(H)$, and $\Delta_R = D$ for $R=S$.\\

        For the third inequality, we again have
        \[ \Cov\left(\mu_{J,0}\right)\cdot A = \sum_{\ell \ge 0} \norm*{X^{(\ell)} \cdot A}. \]
        Again, the non-negativity of $X^{(\ell)}$ and $A$ implies that 
        \[ \norm*{X^{(\ell)} \cdot A} \le \norm*{Y^{(\ell)} \cdot A}. \]
        The third bound in \Cref{lem:trace-method-best-method} now implies that
        \begin{align*}
            \norm*{Y^{(\ell)} \cdot A} &\le \left(\frac{\gamma}{\sqrt{D}}\right)^{\ell} D^{(\ell+1)/2} \cdot (\ell+1) \cdot 2 \cdot \left( \frac{\Delta}{D} \right) \\
                &= \gamma^{\ell} \cdot (\ell+1) \cdot 2 \cdot \frac{\Delta}{\sqrt{D}}.
        \end{align*}
        Therefore,
        \[ \norm*{\Cov\left(\mu_{J,0}\right) \cdot A} \le \sum_{\ell \ge 0} \gamma^{\ell} \cdot (\ell+1) \cdot 2 \cdot \frac{\Delta}{\sqrt{D}} \le \frac{1}{(1-\gamma)^2} \cdot \frac{2\Delta}{\sqrt{D}}, \]
        completing the proof.
    \end{proof}

    We are now ready to prove \Cref{lem:near-forest-cov}.
    \begin{proof}[Proof of \Cref{lem:near-forest-cov}]
        The matrix $\Cov\parens*{\mu_{J,h}}$ is a block diagonal matrix with a block per connected component of $H$, and so without loss of generality, it suffices to achieve a bound on the spectral norm in the case where $H$ is a connected near-forest (a near-tree) --- a tree with possibly one extra edge inserted. If $H$ has tree-excess $0$, the desideratum is trivial by \Cref{fact:ferro-nonneg-correlation,lem:cov-ferro-external,lem:tree-cov}.
        Hence, let us assume that $H$ has tree-excess $1$.

        Let $T$ be an arbitrary ($(\Delta,D)$-pseudorandom) spanning tree of $H$, such that the edge set of $H$ consists of those in $T$ along with an extra edge $ij$. By \Cref{obs:sign-conjugation} and \Cref{lem:all-but-one-neg}, it suffices to consider the case where $J$ has nonnegative interactions on all edges in $T$, and possibly a negative interaction on $ij$. Also let $J'$ be the interaction matrix which is identical to $J$ except that $J'_{ij} =J'_{ji} = 0$. Note that $\left|\sigma^\top J \sigma - \sigma^\top J' \sigma\right| \le \frac{\gamma}{\sqrt{D}}$ for any $\sigma \in \{\pm 1\}^n$. Let $R$ be equal to $S$ or $V(H)$, with $\Delta_S = D$ and $\Delta_{V(H)} = \Delta$, and set $v$ to be a unit vector that witnesses the operator norm of $\Cov(\mu_{J,h})_{R}$. Then,
        \begin{align*}
            \|\Cov\left( \mu_{J,h} \right)_R\| &= v^\top \Cov\left( \mu_{J,h} \right) v  \\
            &= \Var_{\sigma \sim \mu_{J,h}}(\langle v,\sigma\rangle) \tag*{(\text{\Cref{fact:var-cov}})} \\
            &\le e^{2\gamma/\sqrt{D}} \cdot \Var_{\sigma \sim \mu_{J',h}} (\langle v,\sigma\rangle) \tag*{(\text{\Cref{lem:mlsi-bdd-density-relate}})} \\
            &\le e^{2\gamma/\sqrt{D}} \cdot \| \Cov\left( \mu_{J',h} \right)_R \| \tag*{(\text{\Cref{fact:var-cov}})} \\
            &\le e^{2\gamma/\sqrt{D}} \cdot \| \Cov\left( \mu_{J',0} \right)_R \| \tag*{(\text{\Cref{cor:cov-ferro-external-psd}})} \\
            &\le \frac{e^{2\gamma/\sqrt{D}}}{(1-\gamma)^2} \cdot \frac{\Delta_R}{D} \tag*{(\text{\Cref{lem:tree-cov}})}
        \end{align*}
        as desired.
    \end{proof}

\subsection{Entropy conservation for near-forest decompositions}
\label{sec:entropy-conservation-spinglass}
 
    In this section, we shall bound the covariance matrix of the distribution obtained along the localization path for Ising models supported on a graph which admits a $(\Delta, D)$-decomposition, towards satisfying the first requirement for applying \Cref{thm:sum-MLSI}. 

    Let us now prove the main result of this section, which is restated here for convenience.
    \entropyconservation*
    \begin{proof}
        Let $\rho > 0$ be a positive constant that we fix later.
        We use \Cref{lem:interaction-sum-cov-bound}, with the instantiation
        \[ M_1 = (1-t) J_B + \frac{t}{2}I_B +  \frac{\rho}{\Delta} I_{H \setminus \partial H} \text{ and } M_2 = J_H. \]
        Since $\eta I_B \psdle (1-t) J_B + \frac{t}{2}I_B \psdle K I_B$, we can reduce to the case where $t=0$.
        Now, set $\delta' = \delta/2$.
        For $t=0$ and an arbitrary external field $h'$, we have
        \begin{align*}
            M_1 \cdot \Cov(\mu_{M_2,h'}) \cdot M_1 &= \left( J_B + \frac{\rho}{\Delta} I_{H \setminus \partial H} \right) \cdot \Cov(\mu_{M_2,h'}) \cdot \left( J_B + \frac{\rho}{\Delta} I_{H \setminus \partial H} \right) \\
                &\psdle \left( 1 + \delta' \right) \cdot J_B \cdot \Cov(\mu_{M_2,h'}) \cdot J_B + \left( 1 + \frac{1}{\delta'} \right) \cdot \frac{\rho^2}{\Delta^2} I_{H \setminus \partial H} \cdot \Cov(\mu_{M_2,h'}) \cdot I_{H \setminus \partial H} \\
                &= \left(1 + \delta'\right) J_B \cdot \Cov(\mu_{M_2,h'}) \cdot J_B + \left(1 + \frac{1}{\delta'}\right) \cdot \frac{\rho^2}{\Delta^2} \Cov(\mu_{M_2,h'})_{H \setminus \partial H}.
        \end{align*}
        Above, the inequality follows from the observation that for symmetric matrices $A_1,A_2$, a positive semidefinite matrix $M$, and any $\alpha > 0$,
        \[ (A_1 + A_2) M (A_1 + A_2) \psdle \left(1+\alpha\right) A_1 M A_1 + \left(1 + \frac{1}{\alpha}\right) A_2 M A_2. \]
        We now make the crucial observation that we can restrict the covariance to the bulk in the first term, that is, 
        \[ J_B \cdot \Cov(\mu_{M_2,h'}) \cdot J_B = J_B \cdot \Cov(\mu_{M_2,h'})_B \cdot J_B. \]
        Indeed, observe that $(J_B)_{ij}$ must be zero for any $i,j$ where $i$ or $j$ lies in $H \setminus \partial H$, the interior of the near-forest. In particular, any such $i$ or $j$ is not in the bulk.

        We are interested in showing the positive definiteness of
        \[ 
            \left(J_B - (1+\delta') \cdot J_B \cdot \Cov(\mu_{M_2,h'})_B \cdot J_B\right) + \frac{\rho}{\Delta}\left( I_{H \setminus \partial H} - \left( 1 + \frac{1}{\delta'}\right) \cdot \frac{\rho}{\Delta} \cdot \Cov(\mu_{M_2,h'})_{H \setminus \partial H} \right). 
        \]
        Note that had we not added the extra $(\rho/\Delta) I_H$ when defining $M_1$, the second term above would be absent, making the matrix entries zero on the rows and columns $V'(H) \setminus V'(B)$, which precludes a meaningful lower bound by a full-rank matrix.\\

        By assumption, $M_2$ is supported on a $(\Delta, D)$-pseudorandom near-forest, and the interaction strengths are in $\left[ -\frac{\gamma}{\sqrt{D}} , \frac{\gamma}{\sqrt{D}} \right]$. Furthermore, since $\gamma < 1$, the first part of \Cref{lem:near-forest-cov} yields
        \begin{align*}
            \left\| \Cov\left( \mu_{M_2,h'} \right)_B \right\| \le \frac{e^{2\gamma/\sqrt{D}}}{(1-\gamma)^2} \eqdef f(\gamma).
        \end{align*}

        Consequently, \Cref{prop:prop-hs-trick} and the assumption on $\gamma$ shows that
        \begin{align*}
             J_B - \left( 1 + \delta' \right) \cdot J_B \cdot \Cov(\mu_{M_2,h'}) \cdot J_B &\psdge \eta \cdot \left(1 - \left( 1 + \delta' \right) K f(\gamma)\right) I_B  \\
             &\psdge \eta \cdot \delta' \cdot I_B.
        \end{align*}

        Similarly, the second part of \Cref{lem:near-forest-cov} shows that
        \[ 
            \left\|\Cov(\mu_{M_2,h'})_{H \setminus \partial H}\right\| \le \left\|\Cov(\mu_{M_2,h'})_{H}\right\| \le \frac{\Delta}{D} \cdot f(\gamma).
        \]
        
        Selecting $\rho \coloneqq \frac{D}{2 f(\gamma) \cdot \left( 1 + \frac{1}{\delta'} \right)}$, we see that
        \[ 
            I_{H \setminus \partial H} - \left(1 + \frac{1}{\delta'}\right) \cdot \frac{\rho}{\Delta} \cdot \Cov(\mu_{M_2,h'})_{H \setminus \partial H} \psdge \frac{1}{2} I_{H \setminus \partial H}.
         \]
        
        Thus, we have that
        \begin{align*}
            M_1 - M_1 \cdot \Cov(\mu_{M_2,h'}) \cdot M_1 &\psdge \eta \delta' \cdot  I_B + \frac{\rho}{2\Delta} \cdot I_{H \setminus \partial H} \\
            &\psdge c\left(I_B + \frac{1}{\Delta}I_{H \setminus \partial H}\right),
        \end{align*}
        for some universal constant $c$ (that does not depend on $\Delta$).
        The claim follows by applying \Cref{lem:interaction-sum-cov-bound} and setting $C = c^{-1}$. 
    \end{proof}

\subsection{MLSI for Ising models on pseudorandom near-forests}
\label{sec:mlsi-tree-ising}

    Our goal in this section will be to prove a modified log-Sobolev inequality for the Ising model obtained after stochastic localization of the original Ising model, which we recall for convenience.

    \mlsitree*

    \Cref{fact:mlsi-product} implies that it suffices to prove an MLSI of $\frac{1}{|C| e^{\Delta}}$ for each component $C$ of this near-forest, a ``near-tree''. By a simple comparison argument, it in fact suffices to prove an MLSI for pseudorandom trees with ferromagnetic interactions --- we shall elaborate on this in the proof.

    \begin{restatable}{lemma}{pseudorandomferromixing}
        \label{lem:pseudorandom-ferro-mixing}
         Let $H$ be a $(\Delta,D)$-pseudorandom tree on $n$ vertices, and $J$ an interaction matrix supported on $H$ with all interactions in $\left[0, \frac{\gamma}{\sqrt{D}}\right]$ for some $\gamma \in [0,1)$.
        There is a constant $c$ (depending on $\gamma$) such that for any external field $h \in \R^n$,
        \[ \MLSI(\mu_{J,h}) \ge \frac{1}{ne^{c\Delta}}\mper \]
    \end{restatable}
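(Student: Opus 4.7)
The plan is to apply \Cref{thm:sum-MLSI} with a decomposition $J = M_1 + M_2$ where $M_2$ is a diagonal matrix (so that $\mu_{M_2, h'}$ is a product distribution and has $\MLSI \geq 1/n$ by \Cref{fact:mlsi-product}) and $M_1 \psdge 0$ is chosen to ensure
\[
    \bigl\|M_1^{1/2}\,\Cov(\mu_{(1-t)M_1 + M_2, h'})\,M_1^{1/2}\bigr\| \leq c\,\Delta
\]
uniformly in $t \in [0, 1]$ and $h' \in \R^n$, for a constant $c$ depending only on $\gamma$. Since $(1-t)M_1 + M_2$ differs from $(1-t)J$ only by a diagonal, \Cref{obs:diag-invariant} reduces the covariance we need to control to $\Cov(\mu_{(1-t)J, h'})$, the covariance of a ferromagnetic Ising model on the same tree with interactions scaled by $(1-t)$. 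Then \Cref{thm:sum-MLSI} yields the target MLSI of $\frac{1}{n}\,e^{-c\Delta}$.

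\textbf{Reductions and covariance estimates.} First, since $H$ is a tree, I apply \Cref{obs:sign-conjugation} together with the diagonal sign matrix from \Cref{lem:all-but-one-neg} (taking the spanning tree to be $H$ itself) to reduce without loss of generality to the case where $J$ is ferromagnetic with all off-diagonal entries in $[0, \gamma/\sqrt{D}]$. Next, the two bounds from \Cref{lem:tree-cov},
\[
    \|\Cov(\mu_{(1-t)J, 0})\| \leq O(\Delta/D) \quad\text{and}\quad \|\Cov(\mu_{(1-t)J, 0})\cdot A\| \leq O(\Delta/\sqrt{D}),
\]
extend to arbitrary external fields $h'$ via \Cref{cor:cov-ferro-external-psd} (since $J$, $A$, and $\Cov$ have nonnegative entries). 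Writing $M_1 \cdot \Cov = J\cdot \Cov + D_0\cdot \Cov$ for a diagonal correction $D_0$ to be chosen, the entrywise inequality $J \leq (\gamma/\sqrt{D})\,A$ together with nonnegativity yields $\|J\,\Cov\| \leq (\gamma/\sqrt{D})\,\|A\,\Cov\| = O(\Delta/D)$, which is $O(\Delta)$ for $D$ constant.

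\textbf{Designing $M_1$ and the main obstacle.} It remains to choose $D_0$ so that $M_1 = J + D_0 \psdge 0$ while $\|D_0\,\Cov\| = O(\Delta)$. The naive choice $D_0 = \lambda I$ with $\lambda = \|J\|_{\mathrm{op}}$ is insufficient: for a $(\Delta, D)$-pseudorandom tree, $\|J\|_{\mathrm{op}}$ can be of order $\Theta(\gamma\sqrt{\Delta/D})$ (take e.g.\ a vertex of maximum degree $\Delta$), forcing $\lambda\,\|\Cov\| = O(\Delta^{3/2}/D^{3/2})$ and losing a factor of $\sqrt{\Delta}$. To close this gap, I would take $D_0$ to be the vertex-wise weighted degree diagonal $D_0(v,v) = \sum_u J_{vu}$, so that $M_1 = J + D_0 = \sum_e J_e\,(e_u + e_v)(e_u + e_v)^\top \psdge 0$ is the signless weighted Laplacian of $H$. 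The main obstacle -- precisely the delicate engineering flagged in the technical overview -- is then to show that the ``high-spectrum'' rows of $D_0$ (at vertices $v$ with degree near $\Delta$, where $D_0(v,v) \asymp \sqrt{\Delta/D}$) cancel the ``high-spectrum'' rows of $\Cov$ (whose $\ell_2$-norm at such $v$ is also $\asymp \sqrt{\Delta/D}$) so that $\|D_0\,\Cov\| = O(\Delta)$ instead of the naive $O(\Delta^{3/2})$. I expect the refined bound $\|\Cov\cdot A\| = O(\Delta/\sqrt{D})$ from \Cref{lem:tree-cov} -- which already encodes an analogous cancellation between the degree operator and the covariance -- to be the key tool for extracting this savings, possibly via a direct comparison of $D_0$ to a multiple of the adjacency-weighted diagonal of $J$.
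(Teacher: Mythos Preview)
Your overall plan---localize with a control matrix of the form $J+(\text{diagonal})$ so that the endpoint is a product measure, and reduce the entropy-conservation step to bounding $\|(J+D_0)\Cov(\mu_{(1-t)J,h'})\|$---matches the paper's, and your reductions via \Cref{cor:cov-ferro-external-psd} to the zero-field, entrywise-maximal case are sound. The genuine gap is exactly the one you flag: you do not supply a mechanism for $\|D_0\,\Cov\|=O(\Delta)$. Your proposed $D_0$ is the weighted degree diagonal (so $M_1$ is the signless weighted Laplacian), and the hoped-for ``direct comparison of $D_0$ to a multiple of the adjacency-weighted diagonal of $J$'' is too vague to close the estimate. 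The issue is not the tool $\|\Cov\cdot A\|$ (which is indeed the right ingredient) but the \emph{coefficient} on the degree diagonal: with $s_0=\gamma/\sqrt{D}$ and $J=s_0A$ in the equal-weight case, your choice puts $s_0 D_H$ on the diagonal, and there is no obvious way to express $s_0 D_H\cdot\BH_H(s_0)^{-1}$ as a bounded operator plus an $A\cdot\BH_H(s_0)^{-1}$ correction without first dividing by $s_0$.

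The paper's fix is to change the diagonal, not to fight harder for the signless Laplacian. It takes $C^2=(1-s_0^2)\,\BH_H(-s_0)=s_0A+s_0^{2}D_H+(1-s_0^2)I$, which is PSD on trees by \Cref{cor:bethe-tree-psd}. The point of the coefficient $s_0^{2}$ (rather than your $s_0$) on $D_H$ is the algebraic identity
\[
(1-s_0^2)\,\BH_H(-s_0)=(1-s_0^2)\,\BH_H(s_0)+2s_0\,A,
\]
so that, with $\Cov\le \BH_H(s_0)^{-1}$ entrywise,
\[
\|\Cov\cdot C^2\|\le\|\BH_H(s_0)^{-1}C^2\|=\|(1-s_0^2)I+2s_0\,\BH_H(s_0)^{-1}A\|\le 1+\tfrac{8}{(1-\gamma)^2}\cdot\tfrac{\Delta}{D},
\]
using precisely the third bound of \Cref{lem:tree-cov}. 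This is the ``engineered cancellation'' the overview alludes to: the degree diagonal is absorbed into $\BH_H(s_0)$ and disappears in the product. (If you insist on the signless Laplacian, you can still recover the bound by writing $s_0^{2}D_H=(1-s_0^2)\BH_H(s_0)+s_0A-(1-s_0^2)I$ and dividing through, but that is the same Bethe-Hessian trick in disguise and costs an extra $1/s_0$.) For non-constant interactions the paper replaces the fixed $C$ by the time-varying $C_t=((1-s_0^2)\BH_{H_t}(-s_0))^{1/2}$, where $H_t=\{ij:J_{ij}>t\,s_0\}$ is the thresholded subgraph; the same telescoping then goes through uniformly in $t$.
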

    
    \begin{remark}
        In fact, our proof works in a slightly broader range of the interactions, only requiring that for some $\gamma \in [0,1)$, all the interactions are bounded (in absolute value) by $\tanh^{-1} \left( \frac{\gamma}{\sqrt{D}} \right)$. We conjecture that this is optimal. 
    \end{remark}
    
    In this section, we only provide a proof of the above in the setting where all the nonzero interactions are exactly equal to $\frac{\gamma}{\sqrt{D}}A$. The more general proof is provided in \Cref{appendix-a}.
    
    \begin{proof}
        Let $H$ be a tree as in the lemma statement, $A$ its adjacency matrix, and $J = \frac{\gamma}{\sqrt{D}}$ the corresponding interaction matrix. Also set $D_H$ be the (diagonal) degree matrix of $H$.

        To prove this, we shall once again use \Cref{lem:entropic-stability}. The control matrix will be of the form $C = (J+E)^{1/2} \psdge 0$ for a non-negative diagonal matrix $E$. Due to the ferromagneticity of the system, $C^2$ has all non-negative entries. At time $0 \le t \le 1$, $J_t$ will have the form $(1-t) J - tE$ -- in particular, at time $t=1$, $\mu_t$ is almost surely a product distribution so $\MLSI(\mu_t) = \Omega(1/n)$. In the context of \Cref{lem:entropic-stability}, our goal is to bound, for all external fields $h \in \R^n$, the operator norm
        \begin{equation}
            \label{eq:bethe-hessian-coincidence-1}
            \norm*{C \cdot \Cov\left( \mu_{(1-t)J,h} \right) \cdot C} \le \norm*{\Cov\left( \mu_{(1-t)J,h} \right) \cdot C^2 }.
        \end{equation}
        By \Cref{cor:cov-ferro-external-psd} (using the matrix $C^2$), it suffices to bound the above for $h = 0$. For $h = 0$, we recall that if $A^{(\ell)}$ is the matrix where the $uv$-th entry is $1$ if $u$ and $v$ are distance exactly $\ell$ apart and $0$ otherwise,
        \[ \Cov\left( \mu_{(1-t)J,0} \right) \le \sum_{\ell \ge 0} \left( \frac{\gamma}{\sqrt{D}} \right)^{\ell} A^{(\ell)} \]
        in the sense that the matrix on the right is entry-wise at least the matrix on the left.
        In particular,
        \begin{equation}
            \label{eq:bethe-hessian-coincidence-2}
            \norm*{\Cov\left( \mu_{(1-t)J,h} \right) \cdot C^2 } \le \norm*{ \sum_{\ell \ge 0} \left( \frac{\gamma}{\sqrt{D}} \right)^{\ell} A^{(\ell)} \cdot C^2 }.
        \end{equation}
        For ease of notation, denote $s_0 = \frac{\gamma}{\sqrt{D}}$. Recalling \Cref{def:bethe-hessian}, set
        \[ \BH(s) = \frac{(D_H - I) s^2 - As + I}{1-s^2}, \]
        so by \Cref{fact:bethe-inverse},
        \[ \BH\left( s_0 \right)^{-1} = \sum_{\ell \ge 0} \left( \frac{\gamma}{\sqrt{D}} \right)^\ell A^{(\ell)}. \]
        We choose
        \begin{align}
            \label{eq:control-matrix-tree-mlsi}
            C = \left((1-s_0^2) \cdot \BH\left( - s_0 \right)\right)^{1/2} .
        \end{align}
        Now,
        \begin{align*}
            \sum_{\ell \ge 0} \left( \frac{\gamma}{\sqrt{D}} \right)^{\ell} A^{(\ell)} \cdot C^2 &= (1-s_0^2) \cdot \BH\left( s_0 \right)^{-1} \cdot \BH\left( -s_0 \right) \\
                &= (1-s_0^2) \cdot \BH(s_0)^{-1} \cdot \left( \BH(s_0) + \frac{2s_0}{1-s_0^2} \cdot A \right) \\
                &= (1-s_0^2) \cdot I + 2 s_0 \cdot \BH(s_0)^{-1} \cdot A.
        \end{align*}
        Using the third bound in \Cref{lem:tree-cov} with \eqref{eq:bethe-hessian-coincidence-1} and \eqref{eq:bethe-hessian-coincidence-2}, we get the desired claim that
        \begin{align*}
            \norm*{C \cdot \Cov\left( \mu_{(1-t)J,h} \right) \cdot C} &\le \norm*{ (1-s_0^2) \cdot I + 2 s_0 \cdot \BH(s_0)^{-1} \cdot A } \\
                &\le 1 + \frac{8}{(1-\gamma)^2} \cdot \frac{\Delta}{D}.
        \end{align*}
        Given that the product measure obtained at time $1$ satisfies an MLSI with constant $1/n$, it follows by \Cref{lem:entropic-stability,thm:anneal} that
        \begin{align*}
            \MLSI(\mu_{J,h}) &\ge \frac{1}{n} \cdot \exp\left( - \int_{0}^{1} \sup_{h \in \R^n} \norm*{ C \cdot \Cov\left( \mu_{(1-t)J, h} \right) \cdot C } \,dt \right) \\
                &\ge \frac{1}{n} \cdot \exp\left( - \left( 1 + \frac{8}{(1-\gamma)^2} \cdot \frac{\Delta}{D} \right) \right) = \frac{1}{ne^{O(\Delta)}}
        \end{align*}
        as desired.
    \end{proof}

    Before moving on, we make a handful of remarks about the above proof, specifically the choice of control matrix in \eqref{eq:control-matrix-tree-mlsi}:
    \begin{itemize}
        \item The choice is well-defined due to \Cref{cor:bethe-tree-psd}, which implies that $\BH\left(-s_0\right) \psdg 0$.
        \item It is important that we provide a negative argument to $\BH$ so that $C^2$ is equal to some positive scaling of the interaction matrix (which in turn is a scaling of the adjacency matrix because all the nonzero interactions are equal) plus a diagonal matrix -- this is required for stochastic localization to eventually kill the interaction matrix. Note that as a consequence of this, $C^2$ has all non-negative entries as desired.
        \item It is also important that the precise negative argument we provide is equal to $-s_0$ in order to be able to express $C^2$ as $\BH(s_0)$ plus some scaling of the adjacency matrix -- otherwise, there would also be a term corresponding to the degree matrix in the definition of the Bethe Hessian. This would cause issues in the methods we use to bound the operator norm. 
    \end{itemize}

    The above lemma easily yields a similar result even when interactions are allowed to be negative, as long as their absolute values satisfy the same bound.

    \begin{corollary}
        \label{lem:pseudorandom-nonferro-mixing}
         Let $H$ be a $(\Delta,D)$-pseudorandom tree on $n$ vertices, and $J$ an interaction matrix supported on $H$ with all interactions in $\left[- \frac{\gamma}{\sqrt{D}}, \frac{\gamma}{\sqrt{D}}\right]$.
        For any external field $h$, 
        \[ \MLSI(\mu_{J,h}) \ge \frac{1}{ne^{O(\Delta)}}. \]
    \end{corollary}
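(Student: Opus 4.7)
The plan is to reduce the general (possibly mixed-sign) case to the ferromagnetic case already handled by \Cref{lem:pseudorandom-ferro-mixing} via a sign-conjugation trick. The two ingredients that make this reduction immediate are \Cref{obs:sign-conjugation}, which says that the MLSI constant is invariant under the transformation $(J,h) \mapsto (DJD, Dh)$ for any diagonal sign matrix $D$, and \Cref{lem:all-but-one-neg}, which lets us choose $D$ so that conjugation produces nonnegative interactions on all edges of a fixed spanning tree.

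Concretely, I would proceed as follows. Since $H$ is a tree, it is its own spanning tree, so applying \Cref{lem:all-but-one-neg} with $T = H$ produces a diagonal matrix $D \in \{\pm 1\}^{n \times n}$ such that the conjugated interaction matrix $\widetilde{J} \coloneqq DJD$ has $\widetilde{J}_{uv} \geq 0$ for every edge $uv \in E(H)$. Sign conjugation preserves absolute values entrywise and preserves the support of $J$, so $\widetilde{J}$ is still supported on $H$ and satisfies $\widetilde{J}_{uv} \in \left[0, \tfrac{\gamma}{\sqrt{D}}\right]$ for every edge.

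Now \Cref{lem:pseudorandom-ferro-mixing} applies directly to the Ising model $\mu_{\widetilde{J}, Dh}$, which is supported on the same $(\Delta, D)$-pseudorandom tree $H$, with the same interaction bound $\gamma/\sqrt{D}$, and with the (arbitrary) external field $Dh$. This yields $\MLSI(\mu_{\widetilde{J}, Dh}) \geq \frac{1}{n e^{O(\Delta)}}$. Finally, the second part of \Cref{obs:sign-conjugation} gives $\MLSI(\mu_{J,h}) = \MLSI(\mu_{DJD, Dh})$, and chaining the two yields the desired bound.

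There is essentially no obstacle here; this is a one-line reduction since all the work has been done in the ferromagnetic case and the sign-conjugation operation behaves perfectly with respect to both the interaction range and the MLSI constant. The only mild subtlety worth noting is that \Cref{obs:sign-conjugation} must be invoked for the pair $(J,h) \mapsto (DJD, Dh)$ rather than $(J,h) \mapsto (DJD, h)$, since the external field must also be conjugated to preserve the distribution up to relabeling, but this is built into the statement of the observation.
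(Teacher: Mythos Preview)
Your proposal is correct and matches the paper's proof essentially line for line: the paper also invokes \Cref{lem:all-but-one-neg} (with $T=H$, since $H$ is its own spanning tree) together with \Cref{obs:sign-conjugation} to reduce to the ferromagnetic case covered by \Cref{lem:pseudorandom-ferro-mixing}.
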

    \begin{proof}
        This immediately follows using \Cref{lem:pseudorandom-ferro-mixing} in conjunction with \Cref{obs:sign-conjugation} and \Cref{lem:all-but-one-neg}, since $H$ is a tree.
    \end{proof}

    Given the above, \Cref{thm:mlsi-bdd-growth-tree} is near-immediate using the Holley-Stroock perturbation principle \Cref{lem:mlsi-bdd-density-relate}.

    \begin{proof}[Proof of \Cref{thm:mlsi-bdd-growth-tree}]
        As remarked earlier, by \Cref{fact:mlsi-product}, it suffices to prove an MLSI of $\Omega\left(\frac{1}{|C|e^{\Delta}}\right)$ for each component $C$ of $H$, which is a near-tree. Let $\mu$ be the corresponding Ising model on this component, with interaction matrix $\restr{J}{C}$ and external field $\restr{h}{C}$. Let $T$ be an arbitrary spanning tree of $C$ (which must also be $(\Delta,D)$-pseudorandom), and $\nu$ the Ising model supported on $T$ with the same interaction strengths as in $\mu$. Observe that because $C$ has at most one edge in addition to those in $T$, and the interaction strengths are bounded by $\frac{\gamma}{\sqrt{D}}$ in absolute value, we have
        \[ e^{-\frac{2\gamma}{\sqrt{D}}} \le \frac{\mu(\sigma)}{\nu(\sigma)} \le e^{\frac{2\gamma}{\sqrt{D}}}. \]
        Furthermore, $\nu$ satisfies an MLSI by \Cref{lem:pseudorandom-nonferro-mixing}, so \Cref{lem:mlsi-bdd-density-relate} completes the proof.
    \end{proof}



\section{Glauber dynamics for centered stochastic block models}\label{sec:sbm-results}

In this section, we establish the following result proving an MLSI for Ising models where the interactions are proportional to the centered adjacency matrix $\ol{A}_{\bG}$ for a sparse random graph $\bG$.

\begin{restatable}{theorem}{centeredmixing}
    \label{thm:main-centered-mixing}
    Fix constants $d > 1, \lambda \geq 0$. There exists a universal constant $\beta > 0$ such that the following is true. Let $\ol{A}_{\bG} \coloneqq A_{\bG} - \E[A_{\bG}|\bsigma]$ be the centered adjacency matrix of $\bG \sim \SBM(n, d, \lambda)$. With high probability over the draw of $\bG$, for any external field $h$, $\MLSI(\mu_{(\beta/\sqrt{d})\ol{A}_{\bG}, h}) \ge n^{-1-o_d(1)}$. 
\end{restatable}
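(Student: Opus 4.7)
The plan is to follow the same template as the proof of \Cref{thm:main-mixing} for the diluted spin glass, applying the graph decomposition of \Cref{lem:random-graph-decomposition} and stochastic localization via \Cref{thm:sum-MLSI}, but with additional care to handle the rank-two dense centering term $-(\beta/\sqrt{d})\E[A_{\bG}|\bsigma] = -(\beta\sqrt{d}/n)\bone\bone^{\top} - (\beta\lambda/n)\bsigma\bsigma^{\top}$ (up to a diagonal shift). First I would invoke \Cref{lem:random-graph-decomposition} to decompose $\bG$ into a bulk $\bG_1$ with maximum degree at most $(1+\eps)d$ and a $(\Delta,D)$-pseudorandom near-forest $\bG_2$, with $\Delta = o(\log n)$ and $D = d(1+\eps)$. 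Then I would split the interaction as $M_1 + M_2$, where $M_1 \coloneqq (\beta/\sqrt{d}) A_{\bG_1} + c \cdot I_B$ is the PSD-shifted bulk component (chosen exactly as in the proof of \Cref{thm:main-mixing} via \Cref{lem:bulk-specrad-signed}), and $M_2 \coloneqq (\beta/\sqrt{d})(A_{\bG_2} - \E[A_{\bG}|\bsigma])$ folds the rank-two centering into the near-forest component. Up to a diagonal shift absorbed by \Cref{obs:diag-invariant}, the sum equals $(\beta/\sqrt{d}) \ol{A}_{\bG}$.

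The remainder of the proof consists of verifying the two hypotheses of \Cref{thm:sum-MLSI}. For the MLSI hypothesis on $\mu_{M_2, h'}$, the interaction is a pseudorandom near-forest perturbed by a rank-two negative semidefinite matrix whose operator norm is $O(1)$ at our temperature scaling. I would extend \Cref{thm:mlsi-bdd-growth-tree} to accommodate this perturbation using the spectral independence / low-rank perturbation framework developed in \Cref{sec:perturbed-near-forest} (in the spirit of \cite{KLR22}), which absorbs $O(1)$ outlier eigenvalues at only a $\poly(n)$ cost in the MLSI constant, preserving the target $n^{-1-o_d(1)}$ scaling. For the entropy-conservation hypothesis, I would reuse the machinery of \Cref{thm:entropy-conservation}: since $M_1$ remains bulk-supported and PSD-sandwiched, the argument based on \Cref{lem:interaction-sum-cov-bound} and \Cref{prop:prop-hs-trick} reduces matters to a uniform bound of the form $\|\Cov(\mu_{M_2, h'})_B\| = O(1)$. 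This bound in turn follows by combining \Cref{lem:near-forest-cov} (which controls the near-forest part of $M_2$) with the same low-rank perturbation tools of \Cref{sec:perturbed-near-forest} to account for the centering.

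The main obstacle is that the centering term is dense and negative semidefinite, and so fits into neither the bulk nor the near-forest component of the decomposition. Simple perturbative principles such as Holley--Stroock (\Cref{lem:mlsi-bdd-density-relate}) fail here because the anti-ferromagnetic quadratic $(\bone^{\top} \sigma)^2$ can be as large as $n^2$, which translates to an exponential loss in both the MLSI constant and variance ratios. The low-rank structure of the centering --- rank at most two with operator norm $O(1)$ --- is therefore essential: it allows one to invoke a covariance stability result that inflates covariance norms by only a constant factor under rank-$O(1)$ interaction perturbations of bounded spectral norm, together with a companion comparison argument that preserves the MLSI constant up to $\poly(n)$ factors. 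Assembling these ingredients inside the template of \Cref{thm:sum-MLSI} yields the claimed bound $\MLSI(\mu_{(\beta/\sqrt{d})\ol{A}_{\bG}, h}) \ge n^{-1-o_d(1)}$.
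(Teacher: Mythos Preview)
Your proposal has two genuine gaps, both stemming from where you place the centering term.

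First, your bulk piece $M_1 = (\beta/\sqrt{d}) A_{\bG_1} + c I_B$ uses the \emph{uncentered} adjacency and appeals to \Cref{lem:bulk-specrad-signed}, but that lemma is for randomly signed matrices. The raw $A_{\bG_1}$ retains a Perron eigenvalue near the average degree $\Theta(d)$, so after scaling, the spectral diameter of $M_1$ is $\Theta(\beta\sqrt{d})$ rather than $O(\beta)$. The constant $K$ in the entropy-conservation hypothesis of \Cref{thm:entropy-conservation} then grows with $d$, and the requirement $K \cdot e^{2\gamma/\sqrt{D}}/(1-\gamma)^2 < 1$ cannot hold for any universal $\beta$. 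The paper avoids this by setting $J_B = (\beta/\sqrt{d})(A_{\bG_1} - \E[A|\bsigma]_{\bG_1})$, i.e.\ it pushes the bulk-restricted part of the centering \emph{into} the bulk control matrix, so that \Cref{lem:bulk-spectral-radius} (not \Cref{lem:bulk-specrad-signed}) gives the needed $O(\sqrt{d})$ spectral radius.

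Second, you describe \Cref{sec:perturbed-near-forest} as a ``low-rank perturbation framework in the spirit of \cite{KLR22}'', but it is nothing of the sort: it is an \emph{entry-wise} perturbation argument via a block Dobrushin bound over the connected components of $H$. \Cref{ass:parameters} requires the perturbation to have entries $O(d/n)$ and, crucially, to \emph{vanish on pairs of isolated vertices}; combined with $\sum_i |C_i|^2 = o(n)$ from \Cref{lem:component-size}, this forces $\|\dobinf\| = o(1)$. Because your $M_2$ carries the full centering, it is nonzero on the entire $B' \times B'$ block; the Dobrushin influence matrix restricted to the $\Theta(n)$ isolated vertices then has operator norm $\Theta(d)$, not $o(1)$, and the argument collapses. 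A genuine low-rank route would have to bypass this entirely, but the near-forest base already has unboundedly many large eigenvalues, so results of the \cite{KLR22} type (which absorb a few outliers on top of a spectrally tame base) do not apply, and no such tool is developed here. The paper's decomposition is engineered precisely so that $J_R$ vanishes on $B' \times B'$, which is exactly what makes the block-Dobrushin route in \Cref{sec:perturbed-near-forest} go through.
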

Given this, \Cref{fact:mixing-mlsi} immediately yields \Cref{thm:main-centered}.

Due to the presence of the weak (but nonzero) centering interactions, the analysis turns out to be significantly more delicate than that in \Cref{sec:spinglass}. To prove an MLSI, one still uses \Cref{thm:sum-MLSI}, where the initial localization scheme anneals away the bulk interactions.
However, the annealed Ising model now has small but nonzero interactions that are supported beyond the near forest $H$. 

Thus, we need to prove analogous versions of \Cref{thm:entropy-conservation,thm:mlsi-bdd-growth-tree} for entropy conservation and an MLSI for the annealed model.
We state the results below and prove them in subsequent sections.

\begin{restatable}{theorem}{centeredadjacency}
    \label{thm:centered-adjacency}
    Let $\bG \sim \SBM(n, d, \lambda)$ for any constant $d\ge 1$, and let $B = \bG_1$ and $H = \bG_2$ be the bulk component and near-forest component respectively that are guaranteed by \Cref{lem:random-graph-decomposition}.     
    There exists a constant $\beta^* > 0$ such that with high probability over the draw of $\bG$ and any external field $h$, the following holds for all $\beta < \beta^*$. Set $J_B = \frac{\beta}{\sqrt{d}}(A_{\bG_1} - \E[A| \bsigma]_{\bG_1})$ and $J_R = \frac{\beta}{\sqrt{d}}\ol{A}_{\bG} - J_B$. There exists a positive constant $C$ such that for any $0 \le t \le 1$ and external field $h \in \R^n$,
    \[ \Cov(\mu_{(1-t)J_B+ J_R,h}) \psdle C(I_{B} + \Delta I_{H \setminus \partial H}). \]
\end{restatable}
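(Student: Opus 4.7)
The overall plan mirrors the proof of \Cref{thm:entropy-conservation}. I would invoke \Cref{lem:interaction-sum-cov-bound} with the control matrix
\[
M_1 \;=\; (1-t)\, J_B + \tfrac{t}{2}\, I_B + \tfrac{\rho}{\Delta}\, I_{H \setminus \partial H},
\]
after a harmless diagonal shift (via \Cref{obs:diag-invariant}) ensuring $\eta I_B \psdle J_B \psdle K I_B$ on its support, which is possible thanks to \Cref{lem:bulk-spectral-radius} applied to the centered adjacency matrix restricted to $V'(B)$. Setting $M_2 = J_R$ and reducing to the case $t = 0$ exactly as before, splitting $M_1$ along the bulk versus interior-of-near-forest blocks and applying \Cref{prop:prop-hs-trick} reduces the task to establishing the two uniform covariance bounds
\[
\sup_{h'} \norm*{\Cov(\mu_{J_R, h'})_B} = O(1), \qquad \sup_{h'} \norm*{\Cov(\mu_{J_R, h'})_{H \setminus \partial H}} = O(\Delta / D).
\]

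The central new obstacle compared to \Cref{thm:entropy-conservation} is that $J_R$ is no longer supported on a pseudorandom near-forest. Writing $\Lambda \coloneqq \E[A_{\bG} \mid \bsigma]$, we have
\[
J_R \;=\; \tfrac{\beta}{\sqrt{d}}\, A_{\bG_2} \;-\; \tfrac{\beta}{\sqrt{d}}\bigl(\Lambda - \Lambda_{\bG_1}\bigr),
\]
i.e.\ the sum of a signed pseudorandom near-forest interaction, to which \Cref{lem:near-forest-cov} applies directly, and a dense but low-rank centering perturbation of rank at most four (inherited from the two rank-one pieces $\bone\bone^\top$ and $\bsigma\bsigma^\top$ making up $\Lambda$ together with its restriction by projection onto $V'(B)$), operator norm $O(\beta \sqrt d) = O(1)$, and individual entries of magnitude $O(d/n)$. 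Consequently \Cref{lem:near-forest-cov} does not apply off-the-shelf to $J_R$.

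To handle the perturbation, I would use the perturbation-of-near-forest tools developed in \Cref{sec:perturbed-near-forest}. Conceptually, one peels off the rank-$O(1)$ centering via a second Hubbard--Stratonovich transform: up to a diagonal (absorbed using \Cref{obs:diag-invariant}), the centering piece is a PSD combination of at most four rank-one matrices supported in the ``signal'' subspace spanned by $\bone$, $\bsigma$, and their restrictions to $V'(B)$. Applying \Cref{thm:HS} against this low-rank piece writes $\mu_{J_R, h'}$ as a mixture, indexed by an $O(1)$-dimensional Gaussian $\bz$ living in this signal subspace, of Ising models on the pure signed near-forest $\tfrac{\beta}{\sqrt d} A_{\bG_2}$ with a shifted external field. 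By the law of total variance,
\[
\Cov(\mu_{J_R, h'}) \;=\; \E_{\bz}\bigl[\Cov(\mu_{\text{near-forest},\, h' + C\bz})\bigr] \;+\; \Cov_{\bz}\bigl(\E[\bx \mid \bz]\bigr).
\]
The first summand is bounded blockwise by \Cref{lem:near-forest-cov}, yielding the required $O(1)$ estimate on the bulk and the $O(\Delta/D)$ estimate on the interior. The second summand is supported on the $O(1)$-dimensional signal subspace and is controlled by the MLSI and entropic-stability estimates for the annealed model to be established in \Cref{sec:perturbed-near-forest}, which themselves bootstrap off \Cref{thm:mlsi-bdd-growth-tree}.

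The step I expect to be most delicate is verifying the sharper $O(\Delta/D)$ bound on the interior block, since the conditional-mean term is naively only $O(1)$ in operator norm, which is too weak for $H \setminus \partial H$. Resolving this will rely on the delocalization of the signal directions $\bone$ and $\bsigma$ — their projections onto any small subset of interior vertices are correspondingly small — combined with the $\rho/\Delta$ weight that $M_1$ places on the interior block. Choosing $\rho = \Theta(D)$ and $\beta = O(1/\sqrt d)$ sufficiently small, and then assembling the bound via \Cref{lem:interaction-sum-cov-bound} exactly as in the spin-glass argument, closes the proof.
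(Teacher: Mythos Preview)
Your setup and reduction are correct and match the paper: apply \Cref{lem:interaction-sum-cov-bound} with the $M_1$ you describe, shift $J_B$ via \Cref{lem:bulk-spectral-radius}, reduce to $t=0$, split along bulk versus interior, and reduce everything to showing $\|\Cov(\mu_{J_R,h'})_B\|=O(1)$ and $\|\Cov(\mu_{J_R,h'})_{H}\|=O(\Delta/D)$ uniformly in $h'$.

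The gap is in how you propose to obtain these two bounds. The second Hubbard--Stratonovich step does not close. After the law-of-total-variance decomposition, the first summand is indeed handled by \Cref{lem:near-forest-cov}, but the claim that $\Cov_{\bz}(m(\bz))$ ``is supported on the $O(1)$-dimensional signal subspace'' is false: the Jacobian of $\bz\mapsto m(\bz)$ is $\Cov(\mu_{\text{near-forest},h'(\bz)})\cdot C$, whose column space rotates with $\bz$ through the full near-forest covariance, so $\Cov_{\bz}(m(\bz))$ is generically full rank. Your delocalization heuristic controls $I_{H\setminus\partial H}\,C\,I_{H\setminus\partial H}$, not $I_{H\setminus\partial H}\,\Cov_{\bz}(m(\bz))\,I_{H\setminus\partial H}$. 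More fundamentally, the only a priori inequality you have on the conditional-mean term is $\Cov_{\bz}(m(\bz))\psdle \Cov(\mu_{J_R,h'})$, which is the very quantity you are trying to bound, so the decomposition yields no information without an independent handle on this term. Appealing to the estimates of \Cref{sec:perturbed-near-forest} here is circular: those are precisely the tools built to control $\Cov(\mu_{J_R,h'})$, and they do not route through a low-rank HS transform.

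The paper does not use the low-rank structure at all. It uses that the perturbation entries are $O(d/n)$ together with the component-size estimates $\sum_i|C_i|^2=o(n/d)$ and $\max_i|C_i|=n^{o_d(1)}$ (\Cref{lem:component-size,lem:single-component-bound}). These make the Dobrushin influence matrix for the \emph{block} Glauber dynamics over the components of $H$ have norm $o(1)$ (\Cref{cor:dobrushin}), so by \cite{Hay06} the block chain has an $\Omega(1/k)$ Poincar\'e constant. The Poincar\'e inequality then gives, for any unit $v$ supported on $S\in\{B,H\}$,
\[
\Var_{\mu_{J_R,h'}}(\langle v,x\rangle)\;\le\;O(1)\sum_{C}\E_{x_{-C}}\Var_{\mu_{(J_R)_C,\,h(x_{-C})}}(\langle v_C,x_C\rangle).
\]
Each term is transferred from the perturbed component model to the clean near-forest model on $C$ via Holley--Stroock (\Cref{lem:mlsi-bdd-density-relate}) at cost $1+o(1)$, since $|C|^2\cdot d/n=o(1)$; then \Cref{lem:near-forest-cov} applies componentwise. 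Thus the relevant feature of the centering is entrywise smallness combined with small component sizes, not low rank.
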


\begin{restatable}{theorem}{nearforestjunk}
    \label{thm:near-forest-junk-mlsi}
    Let $J_R \in \R^{n \times n}$ be the interaction matrix for the perturbed near-forest as defined in \Cref{thm:centered-adjacency}. Then, for any external field $h$,
    \[
        \MLSI(\mu_{J_R, h}) \ge \frac{1}{n^{1+o_d(1)}}\mper
    \]
\end{restatable}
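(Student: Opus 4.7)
The plan is to apply \Cref{thm:sum-MLSI} a second time, now peeling off the dense centering perturbation in $J_R$. Decompose
\[
    J_R = J_H + V,
    \qquad J_H \coloneqq \tfrac{\beta}{\sqrt{d}} A_{\bG_2},
    \qquad V \coloneqq -\tfrac{\beta}{\sqrt{d}}\bigl(\E[A \mid \bsigma] - \E[A \mid \bsigma]_{\bG_1}\bigr).
\]
Here $J_H$ inherits the $(\Delta, D)$-pseudorandom near-forest structure from \Cref{lem:random-graph-decomposition} with $\Delta = o(\log n)$ and $D = (1+\eps)d$. Since $\E[A \mid \bsigma]$ is, up to an $O(d/n)$ diagonal correction, equal to the rank-$2$ PSD matrix $\tfrac{d}{n}\bone\bone^{\top} + \tfrac{\lambda\sqrt{d}}{n}\bsigma\bsigma^{\top}$, the remainder $V$ is essentially rank-$2$ and negative semidefinite with $\|V\| = O(\beta\sqrt{d}) = O(1)$ in the small-$\beta$ regime.

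I would invoke \Cref{thm:sum-MLSI} with $M_1 \coloneqq V + c_1 I$ for a constant $c_1 > 0$ ensuring $M_1 \psdge \eta I$, and $M_2 \coloneqq J_H - c_1 I$. Because $M_2$ and $J_H$ define the same Ising model by diagonal-shift invariance (\Cref{obs:diag-invariant}), \Cref{thm:mlsi-bdd-growth-tree} applied to $\bG_2$ yields $\MLSI(\mu_{M_2, h'}) \ge n^{-1-o_d(1)}$ uniformly in $h'$, verifying the second hypothesis of \Cref{thm:sum-MLSI}.

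The substantive task is entropy conservation: uniformly bounding $\|M_1^{1/2}\Cov(\mu_{(1-t)M_1 + M_2, h'})M_1^{1/2}\|$ for $t \in [0,1]$ and $h' \in \R^n$. At time $t$ the intermediate interaction is $J_H + (1-t)V - tc_1 I$, i.e., a pseudorandom near-forest perturbed by a scaled dense rank-$O(1)$ matrix. I would apply a further Hubbard--Stratonovich reduction via \Cref{lem:interaction-sum-cov-bound} with inner PSD-shifted control $C_t \coloneqq (1-t)V + \widetilde{c}\, I$: subtracting $C_t$ from the intermediate interaction leaves $J_H - (tc_1 + \widetilde{c})I$, a pure near-forest interaction whose covariance is bounded by \Cref{lem:near-forest-cov}. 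Combined with $\|M_1\| = O(1)$, this should yield an integrated bound $\int_0^1 \alpha_t\,dt = o(\log n)$, producing a $n^{-o_d(1)}$ contraction factor that composes with the annealed MLSI to give $\MLSI(\mu_{J_R, h}) \ge n^{-1-o_d(1)}$ via \Cref{lem:entropic-stability}.

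The principal technical obstacle is that the clean Brascamp--Lieb step in \Cref{lem:interaction-sum-cov-bound} (via \Cref{prop:prop-hs-trick}) requires $\|C_t\| \cdot \|\Cov(\mu_{J_H, h''})\| < 1$, whereas in our regime this product is $O(\Delta/D)$, which is $o(\log n)$ but \emph{not} uniformly less than $1$ when $d$ is fixed. Overcoming this is where the spectral-independence-style tools flagged in \Cref{sec:perturbed-near-forest} should enter: one exploits the near-rank-$2$ structure of $V$---so that the bad directions of $M_1$ lie approximately in the two-dimensional span of $\bone$ and $\bsigma$---together with the fact that the susceptibility of the near-forest Ising model in these directions can be controlled more tightly than via the crude operator-norm bound (e.g., by directly summing the closed-form tree covariance from \Cref{lem:tree-cov} as a geometric series in $\gamma = \beta\sqrt{(1+\eps)d}$, which is bounded away from $1$ in the small-$\beta$ regime). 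A direction-sensitive refinement of the HS--Brascamp--Lieb reduction of this form decouples the large eigendirections of the forest covariance from the effective action of $M_1^{1/2}$ and closes the argument.
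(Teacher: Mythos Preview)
Your approach differs fundamentally from the paper's, and the obstacle you identify is real and not overcome by the fix you propose.

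The core issue: once you shift to $M_1 = V + c_1 I$ with $c_1 \gtrsim \|V\|$, the control matrix has eigenvalue exactly $c_1$ on the entire $(n-O(1))$-dimensional orthogonal complement of the range of $V$. Thus the large-eigenvalue directions of $M_1$ are \emph{not} concentrated in $\mathrm{span}\{\bone,\bsigma\}$ as you suggest---they are everywhere else. Meanwhile $\Cov(\mu_{J_H,h''})$ has many eigenvalues of order $\Theta(\Delta/D)$, essentially one per high-degree vertex in $H$, with localized eigenvectors that lie generically in that orthogonal complement. Hence $\|M_1^{1/2}\Cov\,M_1^{1/2}\| \ge c_1\cdot\Theta(\Delta/D)$, and projecting out an $O(1)$-dimensional subspace cannot help. (A direct star-graph computation shows the Rayleigh quotient of the near-forest covariance in the $\bone$-direction is also $\Theta(\Delta/D)$, so the susceptibility estimate you sketch gives no improvement.) Even if you accept an $e^{-O(\Delta)}=n^{-o(1)}$ entropy-conservation factor, you still need $\|\Cov(\mu_{J_H+(1-t)V,h'})\|=O(\Delta)$ for $t<1$; your inner HS step fails for the same Brascamp--Lieb reason, and since $(1-t)V$ is not PSD it cannot be peeled off directly by HS either. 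Establishing that perturbed covariance bound is essentially the same difficulty as the original problem.

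The paper avoids stochastic localization here entirely. The perturbation $-(E_H+E)$ has entries bounded by $O(d/n)$ and vanishes on $B'\times B'$ pairs, so a \emph{block} Glauber chain---blocks being the connected components $C_i$ of $H$, plus singleton blocks for each vertex of $B'$---has tiny inter-block Dobrushin influence: at most $\eta|C_i||C_j|$ with $\eta=O(d/n)$. The graph-decomposition inputs $\sum_{|C_i|>1}|C_i|^2=o(n)$ and $\max_i|C_i|\le n^{o_d(1)}$ (\Cref{lem:component-size,lem:single-component-bound}) then give $\|\dobinf\|=o_d(1)$ for every tilt, hence spectral independence and block entropy factorization via \Cref{cor:entind-dob}. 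Within each block the total perturbation to the Hamiltonian is at most $\eta|C_i|^2=o(1)$, so Holley--Stroock (\Cref{lem:mlsi-bdd-density-relate}) transfers the near-forest MLSI of \Cref{thm:mlsi-bdd-growth-tree} to the perturbed block with negligible loss (\Cref{lem:perturb-mlsi-centered}); combining via \Cref{lem:mlsi-under-assumptions} yields $\MLSI(\mu_{J_R,h})\ge n^{-1-o_d(1)}$. The component-size bounds---not any low-rank feature of $V$---are what make the argument close.
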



Given the above lemmas, the proof of \Cref{thm:main-centered-mixing} is exactly like that of \Cref{thm:main-mixing}.

\begin{proof}[Proof of \Cref{thm:main-centered-mixing}]
    By \Cref{thm:centered-adjacency}, for sufficiently low inverse temperature $\beta$, it holds with high probability that all $0 \le t \le 1$ and external fields $h \in \R^n$,
    \[ \Cov(\mu_{(1-t)J_B+ J_R,h}) \psdle C(I_B + o(\log n) \cdot I_{H \setminus \partial H}). \]
    With $\wt{J}_B = J_B + 2\beta(1+\eps) I_B$,
    \Cref{lem:bulk-spectral-radius} implies that 
    \[ 0 \psdle \wt{J}_B \psdle 4\beta(1+\eps) I_B. \]
    Thus,
    \[ \exp \left( - \left\| \wt{J}_B^{1/2} \cdot \Cov\left( \mu_{(1-t)J_B + J_R, h} \right) \cdot \wt{J}_B^{1/2} \right\| \right) = \Omega(1). \]
    On the other hand, \Cref{thm:near-forest-junk-mlsi} says that for all $h$,
    \[ \MLSI\left( \mu_{J_R,h} \right) \ge \frac{1}{n^{1+o_d(1)}}. \]
    \Cref{thm:sum-MLSI} completes the proof.
\end{proof}

\subsection{Technical overview for centered stochastic block models}

We now elucidate the high-level strategy for proving \Cref{thm:centered-adjacency,thm:near-forest-junk-mlsi}. 
The annealed Ising model is $\mu_{J_R, h}$ for some external field $h$; here $J_R$ stands for the remainder of the interactions outside of the bulk. 
We will refer to $J_R$ as the \emph{perturbed near-forest} interactions, and $\mu_{J_R, h}$ as the perturbed near-forest Ising model.
Let us introduce some notation to separate out the different pieces of $J_R$. 

Let $H' = V'(H)$ denote the set of non-isolated vertices in $H$, and let $B' \coloneqq V(B) \setminus H'$ denote the set of vertices on the interior of the bulk (we slightly abuse notation here for notational simplicity).
Furthermore, for any two matrices $M, N \times \R^{n \times n}$, the Hadamard product $M \circ N \in \R^{n \times n}$ denotes the entrywise product of $M$ and $N$. 

We can decompose $J_R = M_H + E_H + E$, with the latter defined explicitly as 
\begin{align}
    J_R &= \underbrace{\frac{\beta}{\sqrt{d}}(\ol{A}_{H} \circ A_H)}_{M_{H}} - \underbrace{\frac{\beta}{\sqrt{d}}\E[A|\bsigma]_{H} \circ (\bone_{H'} \bone_{H'}^\top - A_H)}_{E_H} - \underbrace{\frac{\beta}{\sqrt{d}} \left(\E[A|\bsigma]_{H', B'} + \E[A|\bsigma]_{B', H'}\right) }_{E} \label{eq:new-Jb},
\end{align}
where
\[ \E[A|\bsigma]_{H', B'} = \frac{d}{n}\left(\bone_{H'} \bone_{B'}^\top + \frac{\lambda}{\sqrt{d}}\bsigma_{H'}\bsigma_{B'}^\top\right) = \E[A|\bsigma]_{B', H'}^\top. \]
When $\bG$ is a sparse \erdos-\renyi graph (instead of the more general stochastic block model),
\begin{itemize}
    \item $\frac{\sqrt{d}}{\beta} \cdot M_H$ is equal to $\left( 1 - \frac{d}{n} \right)$ on the edges within the near-forest $H$.
    \item $\frac{\sqrt{d}}{\beta} \cdot E_H$ is equal to $\frac{d}{n}$ on all the non-edges in the near-forest $H$.
    \item $\frac{\sqrt{d}}{\beta} \cdot E$ is equal to $\frac{d}{n}$ on all pairs between the near-forest $H$ and the interior of the bulk $B'$.
\end{itemize}
In particular, $J_R$ is equal to zero on the $B' \times B'$ pairs.
 
Note that if only $M_H$ were present, then we would be back in the setting of \Cref{lem:near-forest-cov}.
Accordingly, we shall show that the perturbation terms $E$ and $E_H$ do not affect the covariances or MLSIs by too much.

Crucially, observe that the perturbation interactions are $O\left(\frac{d}{n}\right)$, and nonzero if and only if the edge is incident to a vertex of $H$. 
In particular, for any isolated vertex, corresponding to a vertex in $B'$, the perturbation vanishes. 
Moreover, we show that the size of each connected component is $o(n)$, which means that the perturbation interactions are genuinely small relative to the scale of connected components. 
This motivates introducing a Markov chain whose states correspond to the nontrivial connected components $C_i$ of $H$ with size strictly greater than $1$.

Since the perturbation interactions are small, we can show that the spectral norm version of Dobrushin's condition \cite{Hay06} is satisfied for this Markov chain.
In the end, this allows us to pass to the simpler Ising model supported on a single connected component $C_i$, where we can apply our results from \Cref{sec:decomposition}.

As in the technical overview, we outline the structure of our proof in \Cref{fig:sbm-flowchart}. The new elements in the proof compared to the simpler spin glass setting are on the bottom right of the chart --- a bound on a certain Dobrushin influence matrix is the primary new tool that is required.

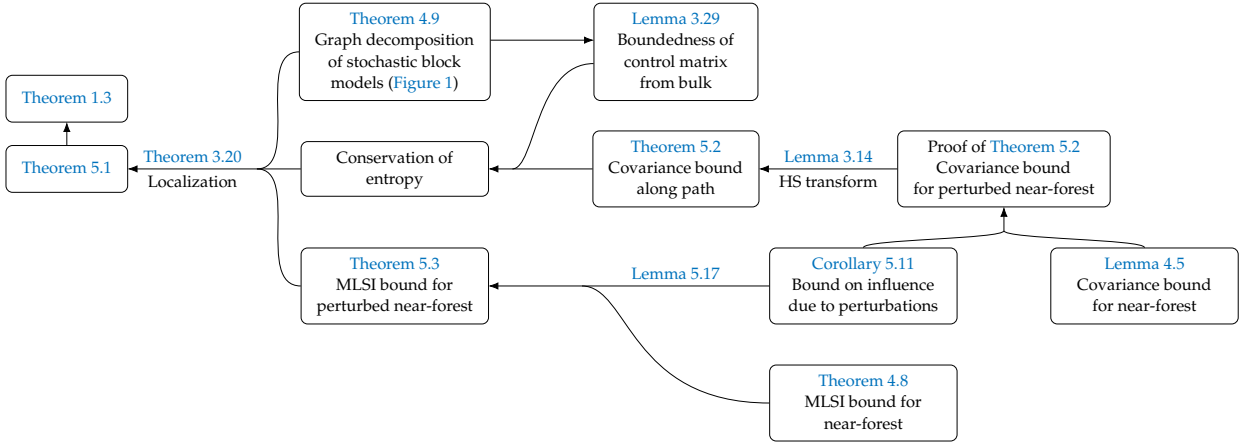
\begin{figure}
\resizebox{\linewidth}{!}{
\begin{tikzpicture}
\node (P01) [smallrect] at (-2, 2.5) {\small\begin{tabular}{c}\Cref{thm:main-centered-mixing}\end{tabular}};
\node (P00) [smallrect] at (-2, 4) {\small\begin{tabular}{c}\Cref{thm:main-centered}\end{tabular}};

\node (P10) [bigrect] at (5, 5cm) {\small\begin{tabular}{c}\Cref{lem:random-graph-decomposition}\\Graph decomposition\\of stochastic block\\models (\Cref{fig:graph-decomposition})\end{tabular}};
\node (P11) [bigrect] at (5, 2.5cm) {\small\begin{tabular}{c}Conservation of\\entropy\end{tabular}};
\node (P12) [bigrect] at (5, 0cm) {\small\begin{tabular}{c}\Cref{thm:near-forest-junk-mlsi}\\MLSI bound for\\perturbed near-forest\end{tabular}};

\node (P20) [mediumrect] at (11,5cm) {\small\begin{tabular}{c}\Cref{lem:bulk-spectral-radius}\\Boundedness of\\control matrix\\from bulk\end{tabular}};
\node (P21) [mediumrect] at (11,2.5cm) {\small\begin{tabular}{c}\Cref{thm:centered-adjacency}\\Covariance bound\\along path\end{tabular}};

\node (P32) [bigrect] at (15,0cm) {\small\begin{tabular}{c}\Cref{cor:dobrushin}\\Bound on influence\\due to perturbations\end{tabular}};
\node (P33) [bigrect] at (15,-2.5cm) {\small\begin{tabular}{c}\Cref{thm:mlsi-bdd-growth-tree}\\MLSI bound for\\near-forest\end{tabular}};

\node (P31) [mediumrect] at (18,2.5cm) {\small\begin{tabular}{c}Proof of \Cref{thm:centered-adjacency}\\Covariance bound\\for perturbed near-forest\end{tabular}};

\node (P41) [bigrect] at (21,0cm) {\small\begin{tabular}{c}\Cref{lem:near-forest-cov}\\Covariance bound\\for near-forest\end{tabular}};

\draw[-{Latex[length=2mm]}] (P01) -- (P00);

\coordinate (Qf) at ([xshift=+2.75cm]P01.east); 
\draw (P10) .. controls (2,5) and (3,2.5) .. (Qf) -- (Qf); 
\draw (P11) -- (Qf);
\draw (P12) .. controls (2,0) and (3,2.5) .. (Qf) -- (Qf);
\draw[-{Latex[length=2mm]}] (Qf) -- node[above,midway] {\small  \Cref{thm:sum-MLSI}} node[below,midway] {\small Localization} (P01);

\coordinate (Qf3) at ([xshift=-0.5cm,yshift=+0.25cm]P20.west);
\coordinate (Qf4) at ([yshift=+0.25cm]P20.west);
\coordinate (Qf5) at ([yshift=+0.25cm]P10.east);
\draw[-{Latex[length=2mm]}] (Qf5) -- (Qf4);

\coordinate (Qf2) at ([xshift=+0.5cm]P11.east);
\coordinate (P20temp) at ([yshift=-0.25cm]P20.west);
\draw (P20temp) .. controls (8,4.75) and (8,2.5) .. (Qf2) -- (Qf2);
\draw (P21) -- (Qf2);
\draw[-{Latex[length=2mm]}] (Qf2) -- (P11);

\coordinate (Qf6) at ([xshift=-4cm]P32.west);
\draw[-{Latex[length=2mm]}] (Qf6) -- (P12);
\draw (P32) -- node[above,midway] {\small\Cref{lem:parameter-dep}} (Qf6);
\draw (P33) .. controls (10,-2.5) and (10,0) .. (Qf6) -- (Qf6);

\coordinate (Qf7) at ([yshift=-0.5cm]P31.south);
\draw[-{Latex[length=2mm]}] (Qf7) -- (P31);
\draw (P32) .. controls (15,1) and (18,1) .. (Qf7) -- (Qf7);
\draw (P41) .. controls (21,1) and (18,1) .. (Qf7) -- (Qf7);

\draw[-{Latex[length=2mm]}] (P31) -- node[below,midway] {\small HS transform} node[above,midway] {\small\Cref{lem:interaction-sum-cov-bound}} (P21);

\end{tikzpicture}
}
\caption{A flow chart outlining our proof of an MLSI for the centered adjacency matrix. The main difference in the structure of the proof compared to \Cref{fig:spinglass-flowchart} are the three boxes on the bottom right, which handle the interactions present on nonedges of the graph due to centering.}
\label{fig:sbm-flowchart}
\end{figure}

To rigorously execute the above plan, we will need some additional machinery.
We introduce these tools in \Cref{sec:perturbed-near-forest} to prove \Cref{thm:near-forest-junk-mlsi}. 
Using the same tools, we then prove \Cref{thm:centered-adjacency} in \Cref{sec:centered-adjacency}.

\subsection{A crash course on spectral independence}

We first introduce some additional tools that we use to establish the MLSI for the perturbed near-forest. 
These ingredients are stated somewhat abstractly, but we will next point out what the definitions correspond to in our application and tie it all together when we apply them to sparse random graphs.

\parhead{Influence matrices and spectral independence.}
We work in the following general setup.
Let $\nu$ be a distribution on a generic product domain $\calP = \calP_1\times\dots\times\calP_k$.
A key object of interest associated to $\nu$ is its \emph{Dobrushin influence matrix}, which is very standard in the literature on mixing times of Glauber dynamics.
\begin{definition}[Dobrushin Influence matrix \cite{Dob68, BD97}]
    The \emph{Dobrushin influence matrix} of $\nu$, denoted $\dobinf_{\nu}$, is a $k\times k$ matrix where:
    \[
        \dobinf_{\nu}[i,j] \coloneqq \max_{\substack{\sigma_{-i}, \tau_{-i} \\ \text{that agree on coordinates }\\ [k]\setminus \{i,j\} }} d_{\mathrm{TV}}\parens*{ \nu|\sigma_{-i}, \nu|\tau_{-i} }\mper
    \]
\end{definition}

The following was proved in \cite{AJKPV24}, with a slightly weaker version presented in \cite{Liu21,BCCPSV22}.
\begin{theorem}[{\cite[Theorem 43]{AJKPV24}}]    \label{thm:dobrushin-to-specind}
    If $\norm*{\dobinf_{\nu}} \le \delta < 1$, then $\nu$ is $\parens*{\frac{1}{1-\delta}}$-spectrally independent.
\end{theorem}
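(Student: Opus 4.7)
The plan is to bound the spectral radius of the pairwise correlation matrix $\Psi_{\nu}$ that underlies the definition of spectral independence, showing $\rho(\Psi_{\nu}) \le \frac{1}{1-\delta}$. My strategy is a two-stage coupling argument producing a recursive entrywise comparison between $\Psi_{\nu}$ and $\dobinf_{\nu}$, followed by a Neumann series bound.

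First, I would recall the definitions of the two non-negative $k \times k$ matrices indexed by the coordinates of $\calP = \calP_1 \times \dots \times \calP_k$: $\dobinf_{\nu}[i,j]$ is the worst-case TV distance at coordinate $j$ under pinnings of the remaining coordinates that differ only at $i$, while $\Psi_{\nu}[i,j]$ is an \emph{averaged} (under $\nu$-conditioning) version of the same influence, whose spectral radius defines the spectral independence constant.

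The key technical step is to establish the entrywise inequality
\[
    \Psi_{\nu}[i,j] \le \mathbf{1}[i=j] + \sum_{k} \dobinf_{\nu}[i,k] \cdot \Psi_{\nu}[k,j],
\]
i.e., $\Psi_{\nu} \le I + \dobinf_{\nu} \cdot \Psi_{\nu}$ entrywise. The intuition is that the influence of coordinate $i$ on coordinate $j$ is either accounted for directly (when $i=j$) or must factor through some intermediate coordinate $k$: one incurs TV cost at most $\dobinf_{\nu}[i,k]$ to couple the marginals at $k$, after which the residual averaged influence of $k$ on $j$ is captured by $\Psi_{\nu}[k,j]$. Establishing this rigorously requires a two-stage coupling together with a telescoping identity between conditional distributions that differ in their pinnings at $i$; this is the technical heart of \cite[Theorem 43]{AJKPV24} and is the \emph{main obstacle}, since setting the coupling up correctly (and keeping track of which coordinates are being averaged versus worst-cased) is notationally delicate.

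Once the recursion is in hand, rearranging gives $(I - \dobinf_{\nu})\Psi_{\nu} \le I$ entrywise. Since $\|\dobinf_{\nu}\| \le \delta < 1$, the matrix $I - \dobinf_{\nu}$ is invertible with non-negative inverse $(I - \dobinf_{\nu})^{-1} = \sum_{m \ge 0} \dobinf_{\nu}^{m}$, so $\Psi_{\nu} \le \sum_{m \ge 0} \dobinf_{\nu}^{m}$ entrywise. Applying Perron--Frobenius to these non-negative matrices yields $\rho(\Psi_{\nu}) \le \rho\bigl(\sum_{m \ge 0} \dobinf_{\nu}^{m}\bigr)$, and since $\rho(\dobinf_{\nu}) \le \|\dobinf_{\nu}\| \le \delta$,
\[
    \rho\Bigl(\sum_{m \ge 0} \dobinf_{\nu}^{m}\Bigr) \le \sum_{m \ge 0} \rho(\dobinf_{\nu})^{m} \le \sum_{m \ge 0} \delta^{m} = \frac{1}{1-\delta},
\]
yielding the claim.
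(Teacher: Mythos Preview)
The paper does not prove this statement at all; it is quoted verbatim as \cite[Theorem 43]{AJKPV24} and used as a black box. So there is no ``paper's own proof'' to compare against---your task reduces to sketching the argument from the cited reference, and your outline does capture the essential structure of that argument: an entrywise recursion $\Psi_{\nu} \le I + \dobinf_{\nu}\,\Psi_{\nu}$ obtained by a coupling/telescoping argument, followed by the Neumann series bound $\Psi_{\nu} \le (I-\dobinf_{\nu})^{-1}$ and a spectral radius estimate.

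One small remark on the final step: the inequality $\rho\bigl(\sum_{m\ge 0}\dobinf_{\nu}^{m}\bigr) \le \sum_{m\ge 0}\rho(\dobinf_{\nu})^{m}$ is not true for general matrices since spectral radius is not subadditive. It does hold here because all the summands $\dobinf_{\nu}^{m}$ are non-negative and share a common Perron eigenvector, but it is cleaner to argue directly that the eigenvalues of $(I-\dobinf_{\nu})^{-1}$ are $1/(1-\lambda_i)$ and use $|1-\lambda_i| \ge 1-|\lambda_i| \ge 1-\delta$. Also, when passing from $(I-\dobinf_{\nu})\Psi_{\nu} \le I$ to $\Psi_{\nu} \le (I-\dobinf_{\nu})^{-1}$, you implicitly use that $(I-\dobinf_{\nu})^{-1}$ is entrywise non-negative (so that left-multiplication preserves the entrywise inequality) and that $\Psi_{\nu}$ has finite entries; both are true but worth stating.
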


In order to prove the MLSI for the perturbed Ising model, we will need to show that the block Glauber dynamics on connected components introduced earlier has entropy factorization. 
To this end, we will need the notion of a \emph{tilt}.
\begin{definition}[Tilts]
    A \emph{tilt} $\lambda = (\lambda_1,\dots,\lambda_k)$ is a collection of functions $\lambda_i:\calP_i\to\R_{> 0}$ for $i \in [k]$, and the \emph{tilted distribution} $\nu\ast\lambda$ is:
    \[
        (\nu\ast\lambda)(x) \propto \nu(x)\cdot \prod_{i=1}^k \lambda_i(x_i)\mper
    \]
\end{definition}

The following is a consequence of \cite[Theorem 4 and Theorem 5]{AJKPV21a} and \cite[Proposition 20 and Remark 70]{AASV21}.
\begin{lemma}   \label{thm:specind-to-entind}
    Suppose for every tilt $\lambda$, the distribution $(\nu\ast\lambda)$ is $\gamma$-spectrally independent for some constant $\gamma > 0$. Then $\nu$ satisfies the following conservation of entropy property.
    For any function $f:\calP\to\R_{> 0}$\,,
    \[
        \E_{i\sim[k]}\E_{\bx_{-i}\sim\nu_{-i}} \Ent_{\nu_i}[f] \ge \frac{1}{k^{\gamma}}\cdot\Ent_{\nu}[f],
    \]
    where $\nu_i$ is the conditional marginal of site $i$ conditioned on all other sites.
\end{lemma}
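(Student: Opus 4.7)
The plan is to apply the established dictionary from the spectral independence literature that upgrades $\gamma$-spectral independence under all tilts into approximate tensorization of entropy. The proof proceeds in two conceptual steps: first, upgrade $\gamma$-spectral independence under all tilts of $\nu$ into the stronger notion of $\gamma$-\emph{entropic independence} of $\nu$; second, apply a local-to-global theorem to convert entropic independence at every level of conditioning into the claimed entropy factorization.

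For the first step, I would follow the perturbation argument of \cite{AJKPV21a}. Entropic independence is a one-parameter analogue of spectral independence in which the variance-form inequality is replaced by a log-moment-generating-form inequality involving a single coordinate. To deduce it from spectral independence under all tilts, one picks an arbitrary test function $g_i : \calP_i \to \R$, applies the hypothesis to the tilt $\lambda_i = \exp(\eps g_i)$ to obtain $\gamma$-spectral independence of $\nu \ast \lambda$, and Taylor-expands in $\eps$ around $0$. The leading nontrivial order recovers the entropic independence inequality with the same constant $\gamma$. This is precisely the passage between Theorems 4 and 5 of \cite{AJKPV21a}.

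For the second step, the local-to-global framework of \cite{AASV21} (Proposition 20 and Remark 70) takes as input $\gamma$-entropic independence of $\nu$ together with $\gamma$-entropic independence of every conditional measure $\nu(\cdot \mid \bx_{-S})$ for $S \subseteq [k]$, and outputs the approximate tensorization $\Ent_\nu[f] \le k^{\gamma} \cdot \E_{i \sim [k]} \E_{\bx_{-i} \sim \nu_{-i}} \Ent_{\nu_i}[f]$, which is equivalent to the stated conservation of entropy. The hypothesis propagates correctly down the conditioning tree: any tilt of a conditional measure $\nu(\cdot \mid \bx_{-S})$ can be realized as the conditional measure of a tilt of $\nu$, so $\gamma$-spectral independence under all tilts holds for every conditional distribution that appears, and step one then supplies entropic independence at every level. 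The factor $k^{\gamma}$ itself arises from telescoping the per-level ratios $\prod_{j=0}^{k-1}\bigl(\tfrac{k-j}{k-j-\gamma}\bigr)$ and bounding the product via a standard Gamma-function identity.

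The main technical subtlety lies in the first step, where one must carefully verify that the quadratic form appearing in the $\eps^2$ coefficient of the log-partition function of $\nu \ast \exp(\eps g)$ agrees with the influence-style matrix whose top eigenvalue governs spectral independence, so that the two notions transfer with the same constant $\gamma$; this is a standard but nontrivial calculation that is already carried out in the cited references. Since the entire argument is a careful composition of existing results, no genuinely new ideas are required beyond the correct propagation of the tilt hypothesis through the conditioning tree.
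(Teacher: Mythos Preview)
Your proposal is correct and aligns with the paper's own treatment: the paper does not give a self-contained proof but simply records the lemma as a consequence of \cite[Theorems 4 and 5]{AJKPV21a} and \cite[Proposition 20 and Remark 70]{AASV21}, which is exactly the two-step composition (spectral independence under tilts $\Rightarrow$ entropic independence $\Rightarrow$ approximate tensorization) you outline. Your write-up in fact supplies more detail on the mechanism than the paper itself does.
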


The following is an immediate corollary of \Cref{thm:dobrushin-to-specind,thm:specind-to-entind}.
\begin{corollary}   \label{cor:entind-dob}
    Suppose for every tilt $\lambda$, the influence matrix satisfies:
    \[
        \norm*{\dobinf_{\nu\ast\lambda}} \le \delta < 1.
    \]
    Then for every function $f:\calP\to\R_{> 0}$\mcom
    \[
        \E_{i\sim[k]}\E_{\bx_{-i}\sim\nu_{-i}} \Ent_{\nu_i}[f] \ge \frac{1}{k^{\frac{1}{1-\delta}}}\cdot\Ent_{\nu}[f]\mper
    \]
\end{corollary}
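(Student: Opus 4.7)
}
The plan is to simply chain the two preceding results together, treating the hypothesis about tilts as the bridge. First, I would fix an arbitrary tilt $\lambda$ and invoke \Cref{thm:dobrushin-to-specind} on the tilted distribution $\nu \ast \lambda$: since by assumption $\|\dobinf_{\nu \ast \lambda}\| \le \delta < 1$, that theorem immediately yields that $\nu \ast \lambda$ is $\frac{1}{1-\delta}$-spectrally independent. Crucially, the hypothesis is stated uniformly over \emph{all} tilts $\lambda$, so we get $\frac{1}{1-\delta}$-spectral independence for every tilt of $\nu$ simultaneously, with a single constant $\gamma \coloneqq \frac{1}{1-\delta}$ independent of $\lambda$.

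Next, I would feed this uniform spectral-independence bound directly into \Cref{thm:specind-to-entind}. The hypothesis of that lemma is precisely the statement we just derived (spectral independence of all tilts with a common constant $\gamma$), and its conclusion gives the entropy-factorization inequality
\[
    \E_{i \sim [k]} \E_{\bx_{-i} \sim \nu_{-i}} \Ent_{\nu_i}[f] \ge \frac{1}{k^{\gamma}} \cdot \Ent_{\nu}[f]
\]
for every positive function $f$. Substituting $\gamma = \frac{1}{1-\delta}$ yields exactly the claimed inequality.

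There is no real obstacle here beyond checking that the quantifiers align correctly: the tilt hypothesis in \Cref{cor:entind-dob} is stated in the same ``for every tilt'' form that \Cref{thm:specind-to-entind} requires after the translation through \Cref{thm:dobrushin-to-specind}, and the constants propagate transparently. Consequently, the corollary follows with essentially a one-line argument, and the only thing to be careful about is that the $\delta < 1$ assumption ensures $\gamma$ is a well-defined positive constant so that \Cref{thm:specind-to-entind} is applicable.
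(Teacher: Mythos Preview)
Your proposal is correct and matches the paper's own treatment: the paper states that \Cref{cor:entind-dob} is an immediate corollary of \Cref{thm:dobrushin-to-specind,thm:specind-to-entind}, which is exactly the two-step chaining you describe with $\gamma = \tfrac{1}{1-\delta}$. There is nothing further to add.
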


\subsection{MLSI for Ising models on perturbed near-forests} \label{sec:perturbed-near-forest}

We now establish an MLSI for perturbed Ising models which satisfy certain assumptions (\Cref{ass:parameters}). Following this, we verify these assumptions for our concrete setting (\Cref{lem:parameter-dep}). For convenience, we restate our goal.

\nearforestjunk*

Many of the tools introduced in this subsection are reused in the next subsection, where we bound the covariance of the Ising models obtained along the localization path.

\parhead{MLSI for perturbed Ising models.} Consider a collection of connected components $C_1,\dots,C_m$, as well as a collection of isolated vertices $v_1,\dots,v_{m'}$, and set $n = \sum_{i} \abs{C_i} + m'$. We study a ``base'' Ising model with interaction matrix $J \in \R^{n \times n}$ supported on the edges of the $(C_i)$ and diagonal, along with an arbitrary external field $h \in \R^n$. 

A key assumption we make is the following.
\begin{assumption}  \label{ass:parameters}
    For each $C_i$, and for any external field $h$, the Ising model $\mu_{J_{C_i},h}$ on $\{\pm1\}^{V(C_i)}$ satisfies an MLSI with constant $\zeta$.
    Additionally, suppose that $\eta, \delta, \theta > 0$ are small enough parameters (which can be shrinking with $n$) such that:
    \begin{align*}
        \max_i |C_i| &\le n^{\theta} \\
        \eta \cdot \sum_i |C_i|^2 &< \delta \\
        2\cdot \eta^2\cdot m' \cdot \sum_i |C_i|^2 &< \delta\mper
    \end{align*}
    We will be interested in perturbed Ising models of the form $J+E$, where every entry of $E$ is at most $\eta$, and $E_{ij} = 0$ if both $i$ and $j$ are among the $(v_k)_{1 \le k \le m'}$.
\end{assumption}
To keep notation succinct, we use $\mu$ as shorthand for the Gibbs distribution of the perturbed Ising model $\mu_{J+E, h}$.

In the setting of \Cref{cor:entind-dob}, each $\calP_i$ corresponds to the collection of spins on the vertices of one of the connected components $C_j$ or the spin on one of the isolated vertices $v_j$.
We index our influence matrix by $C_1,\dots,C_m,v_1,\dots,v_{m'}$.
The following is a consequence of a straightforward calculation.

\begin{observation}
    For any tilt $\lambda$, the influence matrix satisfies:
    \begin{align*}
        \dobinf_{\mu\ast\lambda}[C_i, C_j] &\le 1 - \exp(-\eta |C_i|\cdot|C_j|) \\
        \dobinf_{\mu\ast\lambda}[C_i, v_j] &\le 1 - \exp(-\eta |C_i|) \\
        \dobinf_{\mu\ast\lambda}[v_i, C_j] &\le 1 - \exp(-\eta |C_j|) \\
        \dobinf_{\mu\ast\lambda}[v_i, v_j] &= 0\mper
    \end{align*}
\end{observation}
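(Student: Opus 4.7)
The plan is a direct Hamiltonian-comparison computation that exploits the block structure of $J+E$. Fix two blocks $B_1, B_2$, each one of the $C_i$'s or $v_j$'s, and fix two external configurations $\sigma, \tau \in \{\pm 1\}^n$ that agree outside $B_1 \cup B_2$ and hence differ only on coordinates in $B_2$. The observation driving everything is that $J$ has no between-block entries --- its support lies within the connected components $C_i$ together with the diagonal --- and the tilt $\lambda$ factors across the same block partition. Consequently, the only cross-block contributions to the conditional Hamiltonians of $(\mu\ast\lambda) \mid \sigma_{-B_1}$ and $(\mu\ast\lambda) \mid \tau_{-B_1}$ come from $E$. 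Writing these conditional densities as proportional to $\exp(-H_\sigma(\sigma_{B_1}))$ and $\exp(-H_\tau(\sigma_{B_1}))$ respectively, a short calculation using symmetry of $E$ yields
\[
\delta(\sigma_{B_1}) \;\coloneqq\; H_\sigma(\sigma_{B_1}) - H_\tau(\sigma_{B_1}) \;=\; -\sigma_{B_1}^\top E_{B_1, B_2}(\sigma_{B_2} - \tau_{B_2}),
\]
since every other term in the two Hamiltonians either does not depend on coordinates in $B_2$ or involves only $\sigma_{B_1}$ and $J$ (which is block-diagonal).

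The next step is a standard Gibbs-measure comparison: two probability measures on a common finite set whose log-densities differ by a function with oscillation at most $c$ have density ratio in $[e^{-c}, e^c]$, and hence total variation distance at most $1 - e^{-c}$. Since $\delta$ is a linear function on $\{\pm 1\}^{B_1}$, its oscillation is exactly $2\sum_{i \in B_1} \bigl| (E_{B_1,B_2}(\sigma_{B_2} - \tau_{B_2}))_i \bigr|$, which the hypothesis $|E_{ij}| \le \eta$ together with $|\sigma_j - \tau_j| \le 2$ bounds by a constant times $\eta |B_1||B_2|$. Taking the maximum over $\sigma_{-B_1}$ and $\tau_{-B_1}$ delivers the first three inequalities of the observation.

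The final equality $\dobinf_{\mu\ast\lambda}[v_i,v_j] = 0$ is immediate from the same calculation: when $B_1 = \{v_i\}$ and $B_2 = \{v_j\}$ are both isolated-vertex singletons, the hypothesis $E_{ij} = 0$ forces $\delta \equiv 0$, so the two conditional distributions are literally identical. There is no substantive obstacle here: the whole argument is a one-line Hamiltonian expansion followed by a standard log-density comparison. The only care required is in verifying that $J$ and $\lambda$ truly contribute nothing to cross-block differences, which is exactly where the block-diagonal structure of $J$ (interactions supported on the $C_i$ plus a diagonal) and the product form of the tilt $\lambda$ are used.
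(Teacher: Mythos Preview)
Your approach is correct and is exactly the ``straightforward calculation'' the paper alludes to without spelling out: expand the conditional Hamiltonian, observe that only the cross-block entries of $E$ survive in the difference, and convert an $\ell_\infty$ bound on the log-density ratio into a total variation bound. One minor point: carrying the constants through your argument gives oscillation $\le 4\eta|B_1||B_2|$ (since $|\sigma_j-\tau_j|\le 2$ and the linear function $x\mapsto x^\top a$ on $\{\pm1\}^{B_1}$ has oscillation $2\|a\|_1$), hence $d_{\mathrm{TV}}\le 1-e^{-4\eta|B_1||B_2|}$ rather than the stated $1-e^{-\eta|B_1||B_2|}$. You already flag this (``a constant times $\eta|B_1||B_2|$''), and it is immaterial downstream---the subsequent corollary only uses $1-e^{-x}\le x$, so the extra factor of $4$ would just propagate into the constant $\delta$ in Assumption~\ref{ass:parameters} without changing any conclusion.
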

Using $\exp(x)\ge 1+x$, we get the following as a corollary.

\begin{corollary}\label{cor:dobrushin}
    For any tilt $\lambda$,
    \[
        \norm*{\dobinf_{\mu\ast\lambda}}_F \le \sqrt{ \eta^2\cdot\parens*{\sum_i |C_i|^2}^2 + 2\cdot\eta^2 \cdot m'\cdot\sum_i |C_i|^2 }\mper
    \]
    Hence, under \Cref{ass:parameters}, $\norm*{\dobinf_{\mu\ast\lambda}}_{\mathrm{op}} \le \delta + \sqrt{\delta} \le 2\sqrt{\delta}$.
\end{corollary}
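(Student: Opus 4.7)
The plan is to directly compute the Frobenius norm from the entrywise bounds in the preceding observation, and then invoke \Cref{ass:parameters} together with the standard inequality $\|M\|_{\mathrm{op}} \le \|M\|_F$.

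First, I would simplify each bound in the preceding observation using the elementary inequality $1 - e^{-x} \le x$ for $x \ge 0$, which converts the bounds into the cleaner forms $\dobinf_{\mu\ast\lambda}[C_i, C_j] \le \eta |C_i| |C_j|$, $\dobinf_{\mu\ast\lambda}[C_i, v_j] \le \eta |C_i|$, $\dobinf_{\mu\ast\lambda}[v_i, C_j] \le \eta |C_j|$, and $\dobinf_{\mu\ast\lambda}[v_i, v_j] = 0$.

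Next, I would partition the sum defining $\|\dobinf_{\mu\ast\lambda}\|_F^2$ according to the four types of blocks. The $(C_i, C_j)$ block contributes at most $\sum_{i,j} \eta^2 |C_i|^2 |C_j|^2 = \eta^2 \bigl(\sum_i |C_i|^2\bigr)^2$. The $(C_i, v_j)$ and $(v_j, C_i)$ blocks each contribute at most $\sum_{i, j} \eta^2 |C_i|^2 = \eta^2 \, m' \sum_i |C_i|^2$, giving $2\eta^2 m' \sum_i |C_i|^2$ in total. The $(v_i, v_j)$ block contributes nothing. Taking the square root then yields the first inequality of the corollary.

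For the second inequality, I would apply \Cref{ass:parameters}: the assumption $\eta \sum_i |C_i|^2 < \delta$ bounds the first term under the square root by $\delta^2$, and $2\eta^2 m' \sum_i |C_i|^2 < \delta$ bounds the second term. Thus $\|\dobinf_{\mu\ast\lambda}\|_F \le \sqrt{\delta^2 + \delta} \le \delta + \sqrt{\delta}$ by subadditivity of $\sqrt{\cdot}$, and using $\delta \le \sqrt{\delta}$ for $\delta < 1$ (which holds since $\delta$ is assumed small) gives the bound $2\sqrt{\delta}$. Finally, $\|\dobinf_{\mu\ast\lambda}\|_{\mathrm{op}} \le \|\dobinf_{\mu\ast\lambda}\|_F$ yields the operator norm bound.

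There is no real obstacle here --- the proof is a direct computation, and the hardest conceptual ingredient, namely the entrywise bound on the influence matrix, is supplied by the preceding observation. The only minor care required is in bookkeeping the factor of $2$ from symmetry between the $(C_i, v_j)$ and $(v_j, C_i)$ blocks.
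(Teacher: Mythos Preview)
Your proposal is correct and is exactly the argument the paper has in mind: the paper's own justification is the single line ``Using $\exp(x)\ge 1+x$, we get the following as a corollary,'' and you have filled in precisely the Frobenius-norm bookkeeping and the invocation of \Cref{ass:parameters} that this line abbreviates.
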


Thus, by \Cref{cor:entind-dob}, we get the following.

\begin{corollary}\label{cor:entropy-conservation-perturb}
    Let $k = m + m'$. Under \Cref{ass:parameters}, the perturbed Ising model $\mu$ satisfies the following entropy conservation bound.
    \[
        \E_{b\sim\{C_1,\dots,C_m\}\cup\{v_1,\dots,v_{m'}\} } \E_{\bx_{-b}\sim\mu_{-b}} \Ent_{\mu_b}[f] \ge \frac{1}{k^{\frac{1}{1-2\sqrt{\delta}}}} \cdot \Ent_{\mu}[f]\mper
    \]
\end{corollary}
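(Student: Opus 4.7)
The statement is essentially a direct chaining of the abstract tools developed just before it, specialized to the block structure $\calP = \{\pm1\}^{V(C_1)} \times \cdots \times \{\pm1\}^{V(C_m)} \times \{\pm1\}^{\{v_1\}} \times \cdots \times \{\pm1\}^{\{v_{m'}\}}$, where the $k = m + m'$ ``coordinates'' of the product domain correspond to the nontrivial connected components together with the isolated vertices. So the plan is to verify that for \emph{every} tilt $\lambda$, the tilted measure $\mu \ast \lambda$ has small spectral independence, and then invoke the spectral-independence-implies-entropy-factorization machinery.

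The first step is to observe that tilts respect the block product structure: a tilt $\lambda = (\lambda_{C_1}, \dots, \lambda_{C_m}, \lambda_{v_1}, \dots, \lambda_{v_{m'}})$ corresponds to reweighting $\mu_{J+E,h}$ by $\prod_i \lambda_{C_i}(x_{C_i}) \prod_j \lambda_{v_j}(x_{v_j})$, which just shifts the (arbitrary) external field $h$ on each block internally. In particular, the tilted measure $\mu \ast \lambda$ is itself an Ising model with interaction matrix $J+E$ and some external field $h'$, so \Cref{cor:dobrushin} applies verbatim to it and yields $\|\dobinf_{\mu \ast \lambda}\|_{\mathrm{op}} \le 2\sqrt{\delta}$.

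The second step invokes \Cref{thm:dobrushin-to-specind}: since $\|\dobinf_{\mu \ast \lambda}\|_{\mathrm{op}} \le 2\sqrt{\delta} < 1$ (ensured by \Cref{ass:parameters} choosing $\delta$ small enough that $2\sqrt{\delta} < 1$), the tilted measure $\mu \ast \lambda$ is $\frac{1}{1 - 2\sqrt{\delta}}$-spectrally independent. Since this holds uniformly in $\lambda$, we can feed this into \Cref{thm:specind-to-entind} with $\gamma = \frac{1}{1-2\sqrt{\delta}}$ and the above block product domain of size $k = m + m'$, producing exactly the claimed inequality
\[
    \E_{b} \E_{\bx_{-b} \sim \mu_{-b}} \Ent_{\mu_b}[f] \ge \frac{1}{k^{\frac{1}{1-2\sqrt{\delta}}}} \cdot \Ent_{\mu}[f].
\]

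There is no real obstacle here; the only thing to check carefully is that \Cref{cor:dobrushin}, \Cref{thm:dobrushin-to-specind}, and \Cref{thm:specind-to-entind} are all consistent about which product structure is being used. In particular, the conditional marginals $\mu_b$ appearing in \Cref{thm:specind-to-entind} are the conditional distributions on an entire block (a full component $C_i$ or a single vertex $v_j$), matching exactly the indexing used to define $\dobinf_{\mu \ast \lambda}$. Once that bookkeeping is made explicit, the corollary follows immediately.
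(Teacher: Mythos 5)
Your proposal is correct and follows essentially the same route as the paper: the paper deduces the statement by combining \Cref{cor:dobrushin} with \Cref{cor:entind-dob}, and the latter is exactly the chaining of \Cref{thm:dobrushin-to-specind} and \Cref{thm:specind-to-entind} that you spell out, applied to the block product structure indexed by the components $C_i$ and isolated vertices $v_j$. One small remark: your intermediate claim that a tilt merely shifts the external field (so that $\mu\ast\lambda$ is again an Ising model with interaction $J+E$) is not quite accurate for blocks $C_i$ with more than one vertex, since $\lambda_{C_i}$ may be an arbitrary positive function of $x_{C_i}$ rather than of product form; this is harmless, however, because \Cref{cor:dobrushin} is already stated for arbitrary tilts, so no such reduction is needed.
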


Next, we observe that based on the assumptions on the parameters, for any individual connected component $C_i$, any Ising model with the perturbed interaction matrix restricted to $C_i$ satisfies an MLSI.

\begin{lemma}\label{lem:perturb-mlsi-centered}
    Under \Cref{ass:parameters}, for any external field $h$, we have 
    \[
        \exp\parens*{-O\parens*{\eta \cdot n^{2\theta}}} \le \frac{ \mu_{(J+E)_{C_i}, h}(x) }{ \mu_{J_{C_i}, h}(x) } \le \exp\parens*{O\parens*{\eta \cdot n^{2\theta}}}\mper
    \]
    Consequently, the Glauber dynamics on $\mu_{(J+E)_{C_i}, h}$ satisfies
    \[
        \MLSI(\mu_{(J+E)_{C_i}, h}) \ge \exp\parens*{-O\parens*{\eta \cdot n^{2\theta}}}\cdot\zeta.
    \]
\end{lemma}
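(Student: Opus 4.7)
The plan is to directly apply the Holley--Stroock perturbation principle (\Cref{lem:mlsi-bdd-density-relate}) after bounding the pointwise difference between the Hamiltonians of $\mu_{(J+E)_{C_i}, h}$ and $\mu_{J_{C_i}, h}$. The key observation is that when we restrict $E$ to the block indexed by $C_i$, the difference of Hamiltonians is exactly the quadratic form $\frac{1}{2}\sigma^\top E_{C_i}\sigma$. Since each entry of $E$ has magnitude at most $\eta$ and $E_{C_i}$ has at most $|C_i|^2 \le n^{2\theta}$ nonzero entries, for any $\sigma \in \{\pm 1\}^{C_i}$ this quadratic form is bounded uniformly by $\frac{1}{2}\eta \cdot n^{2\theta}$.

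Concretely, writing $\mu_{(J+E)_{C_i}, h} \propto \exp(-f)$ and $\mu_{J_{C_i}, h} \propto \exp(-g)$, the above uniform bound gives $\|f - g\|_\infty \le \frac{1}{2}\eta n^{2\theta}$. The last part of \Cref{lem:mlsi-bdd-density-relate} then immediately yields the pointwise ratio bound
\[
    \exp\parens*{-\eta n^{2\theta}} \le \frac{\mu_{(J+E)_{C_i}, h}(\sigma)}{\mu_{J_{C_i}, h}(\sigma)} \le \exp\parens*{\eta n^{2\theta}},
\]
establishing the first claim of the lemma.

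For the MLSI conclusion, I will invoke part (1) of the Holley--Stroock principle with $c = \exp(\eta n^{2\theta})$: since $\mu_{J_{C_i}, h}$ satisfies an MLSI with constant at least $\zeta$ by \Cref{ass:parameters}, we obtain
\[
    \MLSI\parens*{\mu_{(J+E)_{C_i}, h}} \ge \zeta/c^2 = \zeta \cdot \exp\parens*{-2\eta n^{2\theta}},
\]
which matches the desired $\zeta \cdot \exp\parens*{-O(\eta n^{2\theta})}$ bound. There is no substantial obstacle; the argument is an essentially mechanical consequence of the bound $|C_i| \le n^\theta$ together with the entrywise bound on $E$. The only thing to track carefully is the distinction between the bound on the log-density difference (of order $\eta |C_i|^2$) and the resulting factor on the MLSI constant, where the factor of $c^2$ in Holley--Stroock introduces a benign constant in the exponent.
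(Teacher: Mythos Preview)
Your proposal is correct and follows essentially the same approach as the paper: bound the Hamiltonian difference by $\eta|C_i|^2 \le \eta n^{2\theta}$ using the entrywise bound on $E$ and the size bound on $C_i$, then invoke the Holley--Stroock principle (\Cref{lem:mlsi-bdd-density-relate}) for both the density ratio and the MLSI transfer. The paper's proof is terser but identical in substance.
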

\begin{proof}
    For any $x\in\{\pm1\}^{V(C_i)}$, the first bound easily follows from the bound on $|C_i|$ and the entries of $E$. 
    The second statement then follows from \Cref{lem:mlsi-bdd-density-relate}.
\end{proof}

We are now ready to prove the MLSI for Glauber dynamics for $\mu$. Recall that $n = \sum_{i}|C_i| + m'$, so $\mu$ is a distribution on $\{\pm1\}^n$.
\begin{lemma}\label{lem:mlsi-under-assumptions}
    Under \Cref{ass:parameters}, Glauber dynamics on $\mu$ satisfies
    \[
        \MLSI(\mu) \ge \exp\parens*{-O\parens*{\eta \cdot n^{2\theta}}} \cdot \frac{\zeta}{n^{\frac{1}{1-2\sqrt{\delta}}}} \cdot \left(1 - \frac{\delta}{\eta n}\right).
    \]
\end{lemma}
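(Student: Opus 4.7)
The plan is to decompose the Glauber Dirichlet form on $\mu$ into per-block contributions, apply the per-block MLSI from \Cref{lem:perturb-mlsi-centered} within each block, and then reassemble using the block entropy factorization \Cref{cor:entropy-conservation-perturb}.

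First, I would expand the site-wise Dirichlet form $\calE_\mu(f,\log f) = \frac{1}{n}\sum_{i=1}^n \E_{\sigma_{-i}}\calE_{\mu_i\mid \sigma_{-i}}(f,\log f)$ and group the $n$ summands according to the block $b \in \{C_1,\dots,C_m,v_1,\dots,v_{m'}\}$ that contains site $i$. Writing $\E_{\sigma_{-i}} = \E_{\sigma_{-b}}\E_{\sigma_{b\setminus i}\mid \sigma_{-b}}$ and recognizing $\frac{1}{|b|}\sum_{i\in b}\E_{\sigma_{b\setminus i}\mid\sigma_{-b}}\calE_{\mu_i\mid\sigma_{-i}}$ as the Glauber Dirichlet form $\calE^{\mathrm{Glauber}}_{\mu_b\mid\sigma_{-b}}$ on the conditional Gibbs measure yields
\[
    \calE_\mu(f,\log f) = \frac{1}{n}\sum_b |b|\cdot \E_{\sigma_{-b}}\calE^{\mathrm{Glauber}}_{\mu_b\mid \sigma_{-b}}(f,\log f).
\]

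Next, for a connected-component block $b = C_i$, the conditional measure is the Ising model with interactions $(J+E)_{C_i}$ and a shifted external field, so \Cref{lem:perturb-mlsi-centered} applies and gives MLSI constant at least $\alpha \coloneqq \exp(-O(\eta n^{2\theta}))\cdot\zeta$. For an isolated-vertex block, the conditional measure is a Bernoulli; on a single site the identity $\calE(f,\log f) = 2(\E[f\log f] - \E[f]\E[\log f])$ combined with $\E[\log f] \le \log \E[f]$ (Jensen's inequality) gives MLSI $\ge 2$. Assuming $\alpha \le 1$, we have $|b|\cdot\MLSI_b \ge \alpha$ uniformly, so
\[
    \calE_\mu(f,\log f) \;\ge\; \frac{\alpha}{n}\sum_b \E_{\sigma_{-b}}\Ent_{\mu_b\mid\sigma_{-b}}[f].
\]
Applying \Cref{cor:entropy-conservation-perturb} with $k = m+m'$ and multiplying through by $k$ yields $\sum_b \E_{\sigma_{-b}}\Ent_{\mu_b\mid\sigma_{-b}}[f] \ge k^{1 - 1/(1-2\sqrt{\delta})}\Ent_\mu[f]$, so that
\[
    \calE_\mu(f,\log f) \;\ge\; \alpha \cdot \frac{k}{n}\cdot k^{-1/(1-2\sqrt{\delta})}\cdot \Ent_\mu[f].
\]

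Finally, I would convert the prefactor using \Cref{ass:parameters}: the bound $\eta\sum_i|C_i|^2 < \delta$ implies $\sum_i|C_i| \le \sum_i|C_i|^2 < \delta/\eta$, so $n - k = \sum_i(|C_i|-1) \le \delta/\eta$ and $k/n \ge 1 - \delta/(\eta n)$; together with $k \le n$ on the remaining factor, this gives the claimed bound. The argument is essentially assembly of already-established lemmas; the one conceptually important step is correctly identifying the decomposition of the site-wise Dirichlet form as a weighted sum of block-Glauber Dirichlet forms, after which the rest is mechanical bookkeeping.
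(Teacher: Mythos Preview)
Your proof is correct and essentially the same as the paper's: both decompose the single-site Dirichlet form as a size-weighted average of block-Glauber Dirichlet forms, apply \Cref{lem:perturb-mlsi-centered} per block, and then invoke \Cref{cor:entropy-conservation-perturb}. The only cosmetic difference is that the paper packages the size-weighting as a distribution $\calD$ on blocks and compares $\calD$ to the uniform distribution at the end, whereas you drop the $|b|$ factor up front via $|b|\cdot\MLSI_b\ge\alpha$ and recover the same $k/n\ge 1-\delta/(\eta n)$ factor afterward.
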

\begin{proof}
    We can sample a uniform index $i$ from $[n]$ by the following two-step process. First, sample $b$ from a certain distribution $\calD$ on $\{C_1,\dots,C_m\}\cup\{v_1,\dots,v_{m'}\}$, and then choose $i$ as a uniformly random vertex from $b$. In particular, the distribution $\calD$ is defined as:
    \[
        \Pr[b = v_i] = \frac{1}{n} \qquad \qquad \Pr[b = C_i] = \frac{|C_i|}{n}.
    \]
    For any $b$, and any pinning $x_{-b}$ to the spins outside of $b$, the resulting conditional distribution on $x_b$ is an Ising model with interaction matrix $(J+E)_b$ and some external field $h\parens*{x_{-b}}$ depending on the pinning.
    This lets us write the distribution $\mu$ as a mixture:
    \[
        \mu = \E_{b\sim\calD} \E_{x_{-b}\sim\mu_{-b}} \mu_{(J+E)_b, h\parens*{x_{-b}}}\mper
    \]
    We can succinctly express the mixture distribution as $\rho$, the pair $(b, x_{-b})$ as $\bz$, and the resulting Ising model as $\pi_{\bz}$.
    To prove the MLSI, we write the following chain of inequalities.
    For any function $f:\{\pm1\}^n\to\R_{\ge 0}$:
    \begin{align*}
        \calE_{\mu}(f, \log f) &\ge \E_{\bz\sim\rho} \calE_{\pi_{\bz}}(f, \log f) \\
        &\ge \exp\parens*{-O\parens*{\eta \cdot n^{2\theta}}} \cdot \zeta \cdot \E_{\bz\sim\rho} \Ent_{\pi_{\bz}}[f] \tag*{\text{(MLSI for $\pi_{\bz}$, \Cref{lem:perturb-mlsi-centered})}}\\
        &= \exp\parens*{-O\parens*{\eta \cdot n^{2\theta}}} \cdot \zeta \cdot \E_{b\sim\calD} \E_{x_{-b}\sim\mu_{-b}} \Ent_{\pi_{(b,x_{-b})}}[f] \\
        &\ge \exp\parens*{-O\parens*{\eta \cdot n^{2\theta}}} \cdot \zeta \cdot \left(1 - \frac{\delta}{\eta n}\right) \cdot \E_{b\sim\{C_1,\dots,C_m\}\cup\{v_1,\dots,v_{m'}\}} \E_{x_{-b}\sim\mu_{-b}} \Ent_{\pi_{(b,x_{-b})}}[f] \\
        &\ge \exp\parens*{-O\parens*{\eta \cdot n^{2\theta}}} \cdot \frac{\zeta}{n^{\frac{1}{1-2\sqrt{\delta}}}} \cdot \left(1 - \frac{\delta}{\eta n}\right) \cdot \Ent_{\mu}[f],  \tag*{(\Cref{cor:entropy-conservation-perturb})}
    \end{align*}
    where in the fourth line we move from $\calD$ to the uniform distribution over the parts using the fact that for any component $C_i$,
    \[ \frac{|C_i|/n}{1/(m+m')} \ge \frac{m}{n} \ge \frac{n - \sum_{i : |C_i| > 1} |C_i|}{n} \ge 1 - \frac{\delta}{\eta n}. \]
    This completes the proof.
\end{proof}

\parhead{Component sizes of sparse random graphs.}
To prove \Cref{thm:near-forest-junk-mlsi} and obtain explicit parameter dependencies, we will need to verify \Cref{ass:parameters} for sparse random graphs. 
The following results give finer control on the near-forest decomposition; we defer their proofs to \cref{sec:extinction}.
\begin{restatable}{lemma}{componentsize}\label{lem:component-size}
    Let $\bG\sim\SBM(n, d, \lambda)$ for any constant $d\ge 1$. Let $\bG_2$ be the near-forest component guaranteed by \Cref{lem:random-graph-decomposition}, and write $\bG_2$ as the union of connected components $\bG_2 = \bigsqcup_{i} C_i$.

    For $\eps \ge \Omega(\log d/d)^{1/3}$, the following holds with $\alpha \coloneqq \exp(-c\eps^3 d)$ for some absolute constant $c > 0$.
    With probability $1 - o(1)$ over $\bG$, 
    \[
        \sum_{i: \abs{C_i} > 1} \abs{C_i}^2 \le \alpha n.
    \]
\end{restatable}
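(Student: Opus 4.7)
The plan is a first-moment argument together with a concentration step. First, I would rewrite
\[
    \sum_{i : \abs{C_i} > 1} \abs{C_i}^2 \;=\; \sum_{v \in V'(\bG_2)} \abs{C_{\bG_2}(v)},
\]
where $C_{\bG_2}(v)$ denotes the connected component of $v$ in $\bG_2$, and aim to show
\[
    \E\bigl[\abs{C_{\bG_2}(v_0)} \cdot \mathbf{1}[v_0 \in V'(\bG_2)]\bigr] \;\le\; \alpha
\]
for an arbitrary fixed vertex $v_0$, absorbing polynomial-in-$d$ slack into the constant $c$ of $\alpha$. The structural hook is that $v_0 \in V'(\bG_2)$ forces the existence of a \emph{heavy} vertex $w$ (with $\ell(w) \ge 1$) covering $v_0$ in the sense $\dist(v_0, w) \le \ell(w)$, where $\ell(\cdot)$ is the depth function from the construction of $\bG_2$.

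The key probabilistic input I would use is a doubly-exponential tail bound on $\ell(\cdot)$ extracted from the branching-process analysis in \Cref{sec:extinction}, of the form
\[
    \Pr[\ell(w) \ge L] \;\le\; \exp\bigl(-c\, \eps^3 d \cdot (1+\eps)^{L-1}\bigr)
\]
for $L \ge 1$, which at $L = 1$ recovers $\Pr[w \text{ heavy}] \le \alpha$. I would first use this to bound $\Pr[v_0 \in V'(\bG_2)]$ by a union bound over candidate witnesses $w$ at varying distances (with the expected number of vertices at distance $k$ bounded by $(d(1+\eps))^k$), yielding $\Pr[v_0 \in V'(\bG_2)] \lesssim d\alpha$ from the dominant $L = 1$ term. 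Then, conditional on $v_0 \in V'(\bG_2)$, I would show that the expected component size is polynomial in $d$: with overwhelming probability, the witnessing $w$ has $\ell(w) = 2$ so its ball $B_2(w)$ is of size $O(d)$; deeper witnesses ($\ell(w) = L \ge 3$) could blow up the component to size $(d(1+\eps))^L$ but are exponentially rarer by the tail; and mergers of multiple heavy witnesses into one component are a secondary event, since two independent witnesses being close costs an additional factor of $d^{O(1)}\alpha$. Combining, $\E\bigl[\sum_i \abs{C_i}^2\bigr] \le \mathrm{poly}(d) \cdot \alpha n$, and the assumption $\eps \ge \Omega\bigl((\log d/d)^{1/3}\bigr)$ (so $\eps^3 d \gtrsim \log d$) lets us absorb $\mathrm{poly}(d)$ into $\alpha$ by slightly decreasing $c$.

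Finally, I would promote the expectation bound into the claimed high-probability statement. A clean route is an edge-exposure martingale concentration (Azuma--McDiarmid): each edge of $\bG$ affects $\sum_i \abs{C_i}^2$ by at most $(\max_i \abs{C_i})^2$, and the max component size is controlled (up to polynomial factors) by the diameter and ball-size bounds of \Cref{lem:random-graph-decomposition}. The main obstacle, and the technical crux of \Cref{sec:extinction}, is establishing the sharp doubly-exponential tail on $\ell(w)$ with the correct $\eps^3 d$ base; the power $\eps^3$ will arise from delicately compounding per-level Chernoff deviations ($\exp(-\Omega(\eps^2 d))$ per level) with the polynomial-in-$1/\eps$ depth at which the ball growth must be tracked in order to rule out accumulated excess.
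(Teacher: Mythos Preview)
Your first-moment outline is essentially the paper's argument: the paper writes $X = \sum_{u\neq v} \mathbf{1}[u,v \text{ in same component}]$, observes that $u,v$ being connected in $\bG_2$ forces a path in the cluster graph between heavy witnesses $u',v'$ with $u \in B_{\ell(u')}(u')$, $v \in B_{\ell(v')}(v')$, and then sums over all such paths using exactly the doubly-exponential tail $\Pr[\ell(w)\ge L] \le \exp(-\eta d(1+\eps/4)^{L-1})$. Your ``mergers cost an extra $d^{O(1)}\alpha$'' is the one-line summary of this cluster-graph path calculation; the paper's proof of \Cref{lem:no-long-path-cluster} is where this is made precise, and you would have to redo essentially that computation. (Small slip: the minimal positive value of $\ell(w)$ is $2$, but then $|B_2(w)| \le (d(1+\eps))^2 = O(d^2)$, not $O(d)$; harmless after absorbing into $\alpha$.)

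The concentration step has a genuine gap as written. A vanilla edge-exposure Azuma/McDiarmid bound uses worst-case Lipschitz differences over all $\binom{n}{2}$ potential edges; with per-edge effect bounded by $(\max_i|C_i|)^2 \le n^{o_d(1)}$ this gives $\sum_e c_e^2 = n^{2+o_d(1)}$, and the resulting tail $\exp\bigl(-(\alpha n)^2 / n^{2+o_d(1)}\bigr)$ does not tend to $0$. The paper instead uses Efron--Stein: resampling a single edge disagrees with the original with probability $O(d/n)$, so $\Var(X) \le \binom{n}{2}\cdot O(d/n)\cdot n^{o_d(1)} = n^{1+o_d(1)}$, and then Chebyshev against $(\alpha n)^2 = \Theta_d(n^2)$ finishes. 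You need this extra $d/n$ factor from the edge-resampling probability (equivalently, a Bernstein-type bounded-variance inequality rather than bounded differences); otherwise the concentration fails.
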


\begin{restatable}{lemma}{singlecomp}\label{lem:single-component-bound}
    With probability $1-o(1)$, the maximum size of a connected component is at most $n^{\tfrac{c\log(d(1+\eps))}{\eps^3 d}}$ for some absolute constant $c > 0$.
\end{restatable}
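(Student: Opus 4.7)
The plan is to combine two ingredients: a diameter bound for each connected component of $\bG_2$ together with the $(\Delta, D)$-pseudorandomness with $\Delta = o(\log n)$ and $D = d(1+\eps)$ already guaranteed by \Cref{lem:random-graph-decomposition}. Fixing any vertex $v$ in a connected component $C$ of diameter $R$, we have $C \subseteq B_R(v)$, so the second condition of \Cref{def:pseudorandom-graph} gives $|C| \le \Delta \cdot D^{R-1}$. Plugging in the target $R = O(\log n/(\eps^3 d))$ together with $\Delta \le n^{o(1)}$ yields
\[
    |C| \le n^{o(1)} \cdot (d(1+\eps))^{O(\log n/(\eps^3 d))} = n^{O(\log(d(1+\eps))/(\eps^3 d))},
\]
which matches the claimed bound after absorbing the $n^{o(1)}$ factor into the constant $c$.

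The real work is to establish the $O(\log n/(\eps^3 d))$ diameter bound for components of $\bG_2$. Recall the construction $\bG_2 = \bigcup_{v : \ell(v) \ge 1} B_{\ell(v)}(v)$. I would introduce the \emph{cluster graph} $\mathcal{C}$ with vertex set $V_0 = \{v \in V(\bG) : \ell(v) \ge 1\}$ and an edge between $u, v \in V_0$ whenever $B_{\ell(u)}(u) \cap B_{\ell(v)}(v) \neq \emptyset$. Any two vertices in the same connected component of $\bG_2$ are connected by some $\mathcal{C}$-path $u = v_0, v_1, \ldots, v_k = w$ along which their $\bG$-distance is upper bounded by $\sum_{i=0}^{k-1}(\ell(v_i) + \ell(v_{i+1})) \le 2 \sum_i \ell(v_i)$. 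It therefore suffices to prove that with high probability no such path has total weight exceeding $R = c \log n/(\eps^3 d)$ for an appropriate absolute constant $c$.

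The quantitative input is a doubly-exponential tail bound of the form $\Pr[\ell(v) \ge r] \le \exp(-\Omega(d(1+\eps)^r))$, which can be extracted from the branching-process / ball-growth analysis described in the technical overview and formalized in \Cref{sec:extinction}. Using this, I would bound the probability that $\mathcal{C}$ contains a weighted path of total weight at least $R$ by a union bound over ordered tuples of cluster centers $(v_1, \ldots, v_k)$ and radius profiles $(r_1, \ldots, r_k)$ with $\sum_i r_i \ge R$. For each fixed tuple, the contributing probability splits into (i) the probability that the prescribed small balls actually intersect pairwise along the path --- unlikely because the balls look like essentially random small subsets of $[n]$ --- and (ii) the product of tail probabilities $\prod_i \exp(-\Omega(d(1+\eps)^{r_i}))$. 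Summing over $k$ and over radius profiles, the dominant contribution comes from profiles with almost all $r_i \in \{1, 2\}$, which forces $k = \Omega(R \cdot \eps d)$ edges, and then the intersection condition contributes a factor dropping as $n^{-\Omega(k/\log n)}$ per edge, making the overall bound $o(1)$ once $c$ is large enough.

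The main obstacle is that the radii $\ell(v_i)$ associated to nearby cluster centers are not independent, since their defining neighborhoods share edges of $\bG$. I expect this is handled by restricting each event $\{\ell(v) \ge r\}$ to the radius-$r$ neighborhood of $v$ and applying a careful edge-exposure argument treating neighborhoods of different cluster centers along the path as essentially disjoint (the overlap contribution being negligible because of the tight doubly-exponential tails). This mirrors the strategy one would use for the companion \Cref{lem:component-size}, so I would expect the formal proof in \Cref{sec:extinction} to follow a similar moment-method template applied to weighted paths in $\mathcal{C}$.
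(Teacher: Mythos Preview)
Your proposal is correct and takes essentially the same approach as the paper: invoke the diameter bound $\diam(C) \le O(\log n/(\eps^3 d))$ (which the paper isolates as \Cref{lem:components-diam}, proved via exactly the cluster-graph weighted-path argument you sketch), then bound $|C| \le |B_{\diam(C)}(u)|$ and control the ball size. The only cosmetic difference is that you use the $(\Delta,D)$-pseudorandomness bound $|B_R(u)| \le \Delta \cdot D^{R-1}$ with $\Delta = o(\log n)$, whereas the paper asserts that $u$ is $\diam(C)$-light to get $|B_{\diam(C)}(u)| \le (d(1+\eps))^{\diam(C)}$ without the $\Delta$ factor; both yield the same final exponent after absorbing $n^{o(1)}$ into $c$.
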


Let us put everything together and prove that \Cref{ass:parameters} holds for sparse random graphs; this will also be required later when we bound the covariance of the Ising models obtained along the localization path.

\begin{lemma}\label{lem:parameter-dep}
    Let $\bG \sim \SBM(n, d, \lambda)$. Then $\mu_{J_{R}}$ satisfies \Cref{ass:parameters} with $m' \le n$ and
    \begin{align*}
        (\eta, \delta, \theta, \zeta) \leftarrow (\tfrac{2d}{n}, 8\alpha d^2, \tfrac{c\log(d(1+\eps))}{\eps^3 d}, n^{-o_d(1)}),
    \end{align*}
    where $c$ is a positive universal constant.
    Consequently, for $\eps = \Omega(\log d/d)^{1/3}$, Glauber dynamics on $\mu_{J_R}$ satisfies
    \[
        \MLSI(\mu_{J_R}) \ge \frac{1}{n^{1 + o_d(1)}}\mper
    \]
\end{lemma}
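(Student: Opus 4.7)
The plan is to verify each bullet of \Cref{ass:parameters} directly with the stated parameters and then invoke \Cref{lem:mlsi-under-assumptions}. Recall from \eqref{eq:new-Jb} that $J_R = M_H + (E_H + E)$, where $M_H = \frac{\beta}{\sqrt{d}}(\ol{A}_H \circ A_H)$ is the genuine edge-interaction piece supported on $E(H)$, and the ``perturbation'' $E_H + E$ has entries of magnitude at most $\frac{\beta}{\sqrt{d}} \cdot \frac{d + \lambda\sqrt{d}}{n} \le \frac{2d}{n}$ supported only on pairs touching $H' = V'(H)$. Thus, in the setup of \Cref{ass:parameters} I take the base matrix to be $J \leftarrow M_H$ whose connected components are precisely the nontrivial components $C_i$ of $H$, the isolated vertices to be $B' = [n] \setminus H'$ (so $m' \le n$), and $E_{\mathrm{pert}} \leftarrow E_H + E$ which by construction vanishes on $B' \times B'$. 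This fixes $\eta \leftarrow 2d/n$.

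The three size conditions follow immediately from our earlier structural lemmas. First, $\max_i |C_i| \le n^{c\log(d(1+\eps))/(\eps^3 d)} = n^\theta$ is exactly \Cref{lem:single-component-bound}. Second, by \Cref{lem:component-size},
\[ \eta \sum_i |C_i|^2 \le \tfrac{2d}{n} \cdot \alpha n = 2\alpha d < 8\alpha d^2 = \delta. \]
Third, $2\eta^2 m' \sum_i |C_i|^2 \le 2 \cdot (2d/n)^2 \cdot n \cdot \alpha n = 8\alpha d^2 = \delta$. For the per-component MLSI constant $\zeta$, observe that $J|_{C_i} = \frac{\beta}{\sqrt{d}}\ol{A}_{C_i}$ has nonzero entries of absolute value $\beta/\sqrt{d} = \beta\sqrt{1+\eps}/\sqrt{d(1+\eps)}$, so taking $\gamma = \beta\sqrt{1+\eps} < 1$ and $D = d(1+\eps)$, \Cref{lem:random-graph-decomposition} and \Cref{thm:mlsi-bdd-growth-tree} give
\[ \MLSI(\mu_{J_{C_i}, h'}) \ge \frac{1}{|C_i| \cdot e^{c \Delta}} \ge \frac{1}{n^{\theta} \cdot e^{o(\log n)}} = n^{-o_d(1)} \eqqcolon \zeta, \]
where I use $\Delta = o(\log n)$ and that $\theta = o_d(1)$ under our choice $\eps \ge \Omega((\log d/d)^{1/3})$.

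All hypotheses of \Cref{lem:mlsi-under-assumptions} now hold, yielding
\[ \MLSI(\mu_{J_R}) \ge \exp(-O(\eta \cdot n^{2\theta})) \cdot \frac{\zeta}{n^{1/(1-2\sqrt{\delta})}} \cdot \left( 1 - \tfrac{\delta}{\eta n} \right). \]
The three correction factors are all $1 - o_d(1)$: we have $\eta \cdot n^{2\theta} = O(d \cdot n^{2\theta - 1}) = o(1)$ since $\theta < 1/2$; $\delta/(\eta n) = 4\alpha d = o_d(1)$ since $\alpha = e^{-c\eps^3 d}$ decays exponentially in $d$; and $\delta = 8\alpha d^2 \to 0$ as $d \to \infty$ (again by the exponential decay of $\alpha$), so $\frac{1}{1-2\sqrt{\delta}} = 1 + o_d(1)$. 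Combined with $\zeta = n^{-o_d(1)}$, this gives $\MLSI(\mu_{J_R}) \ge n^{-1 - o_d(1)}$, completing the proof.

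The only real subtlety — rather than a genuine obstacle — is that the parameters $\eta$ and $\sum_i |C_i|^2$ have to be balanced against each other: the perturbation scale is $\eta = \Theta(d/n)$ while the total quadratic mass in components is $\Theta(\alpha n)$, so their product is $\Theta(\alpha d)$, which is small exactly because $\alpha = e^{-\Omega(\eps^3 d)}$ decays rapidly. Everything else is bookkeeping of $o_d(1)$ terms.
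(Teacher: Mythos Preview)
Your proof is correct and follows essentially the same approach as the paper: you identify $J \leftarrow M_H$ and $E_{\mathrm{pert}} \leftarrow E_H + E$, verify the entrywise bound $\eta = 2d/n$, invoke \Cref{lem:component-size} and \Cref{lem:single-component-bound} for the size conditions, use \Cref{thm:mlsi-bdd-growth-tree} for the per-component MLSI constant $\zeta$, and then plug everything into \Cref{lem:mlsi-under-assumptions} and check that all three correction factors are $1-o_d(1)$. The paper does exactly the same, with only cosmetic differences in how the inequalities are arranged.
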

\begin{proof}
    Let $\cc(H)$ denote the connected components of $H$, and write $H = \sqcup_{C \in \cc(H)} C$. Let $\calS = \{C \in \cc(H): \abs{C} > 1\}$ be the nontrivial connected components, and $\calT = \{C \in \cc(H): \abs{C} = 1\}$ be the isolated vertices. 
    In our setup, the error $E_H + E$ has entries whose magnitude are bounded by $\frac{d}{n}\left(1 + \frac{\abs{\lambda}}{\sqrt{d}}\right) \le \frac{2d}{n}$. 
    On the other hand, 
    \Cref{lem:component-size} implies that $\abs{\calS} \le \alpha n$, where $\alpha = \exp(-c\eps^3 d)$, and we trivially have $\abs{\calT} \le n$.
    
    Hence, we will verify the conditions for $\eta = \frac{2d}{n}$ and $m' = n$.
    Plugging these parameters in, we need $\delta$ to satisfy
    \begin{align*}
        \frac{2 d}{n} \cdot \sum_{C \in \calS} \abs{C}^2 &< \delta \\
        2 \cdot \left(\frac{2d}{n}\right)^2 \cdot n \cdot \sum_{C \in \calS} \abs{C}^2 &< \delta.
    \end{align*}
    Applying \Cref{lem:component-size} again, we see that the above inequalities are satisfied for $\delta > 8\alpha d^2$.
    Hence, we pick $\delta = 8\alpha d^2$, and we easily compute $1 - \frac{\delta}{\eta n} = 1 - 4\alpha d = 1 - o_d(1)$.

    Turning now to $\theta$, we need $\theta$ to satisfy 
    \[
        \max_{C \in \calS} \abs{C} \le  n^{\theta}.
    \]
    \Cref{lem:single-component-bound} implies that we can take $\theta = \frac{c\log(d(1+\eps))}{\eps^3 d} = o_d(1)$ for some positive constant $c$. 
    Next, by the proof of \Cref{thm:mlsi-bdd-growth-tree}, we can take $\zeta = n^{-\theta - o_n(1)} = n^{-o_d(1)}$ (observe that we get $n^{-\theta-o(1)}$ instead of $n^{-1-o(1)}$ here because we are considering the Ising model over $\{ \pm 1\}^{V(C_i)}$ instead of $\{\pm 1\}^n$).

    Finally, plugging these parameters into \Cref{lem:mlsi-under-assumptions}, we have 
    \[
        \exp(-O(\eta \cdot n^{2\theta})) = 1 - O\left(\frac{d}{n^{1-2\theta}}\right)
    \]
    By picking the constant factor in $\eps$ large enough, we can ensure that $\theta$ is sufficiently small so that the above term is $1 - o_n(1)$. 
\end{proof}
Clearly, \Cref{lem:parameter-dep} directly proves \Cref{thm:near-forest-junk-mlsi}.

\subsection{Entropy conservation for centered stochastic block models}
\label{sec:centered-adjacency}

In this section, we prove \Cref{thm:centered-adjacency}, which is used to establish entropy conservation for centered Ising models on sparse SBMs.

\centeredadjacency*

Let $B,H$ be a $(\Delta, D)$-decomposition as guaranteed by \Cref{lem:random-graph-decomposition}.
As before, we will use \Cref{lem:interaction-sum-cov-bound} with 
\[
    M_1 = (1-t)J_B + \theta I_B + \frac{\rho}{\Delta} I_{H \setminus \partial H}  \quad \text{ and } M_2 = J_R,
\]
where we can take $\theta = \frac{t}{2} + \eta + 2\beta(1+\eps)$ due to \Cref{lem:bulk-spectral-radius} and $\rho$ as in the proof of \Cref{thm:entropy-conservation}.
As before, we can reduce to the case $t=0$. 

Moreover, since $J_B$ is only supported on the bulk, one can follow the proof of \Cref{thm:entropy-conservation} to argue that to bound $\Cov(\mu_{M_1 + M_2, h})$, one only needs to bound covariances of Ising models on $M_2$. 
Namely, it suffices to establish the following PSD upper bounds for arbitrary external field $h'$:
\begin{align*}
    \left\| \Cov\left( \mu_{M_2,h'} \right)_B \right\| &\le O(1) \\
    \left\|\Cov(\mu_{M_2,h'})_H\right\| &\le O\left(\frac{\Delta}{D}\right).
\end{align*}

The main difficulty stems from the fact that $M_2 = J_R$ has support outside $H$; it is now supported on the complete graph on $H$, as well as the edges between $B$ and $H$.

To analyze $\Cov(\mu_{M_2, h})$, the high level strategy will be to use \Cref{fact:var-cov} and transfer over to the case of having interactions purely supported on $H$. 
To do so, we will prove an optimal Poincar\'{e} inequality for a Markov chain where the states are the connected components of $H$.
Let us illustrate this argument informally; the subsequent lemmas will justify why we can execute these steps.

Recall that $\cc(H)$ denotes the set of connected components of $H$, and let $C \in \cc(H)$ be an arbitrary component.
By a slight abuse of notation, let $M_C = (M_H)_{C}$ denote the restriction of $M_H$ to $C$ and $E_C = (E_H)_{C}$ denote the restriction for the error in the near-forest. 
For any $v \in \R^n$ supported on $S \subseteq [n]$, we have
\begin{align*}
    \Vart_{x \sim \mu_{M_2, h'}}(\angles{v, x}) &\le O(1) \sum_{C \in \cc(H)} \E_{x_{-C}} \Vart_{x \sim \mu_{M_C + E_C, h(x_{-C})}}(\angles{v_C, x_C}) \tag*{(\text{Poincar\'{e}})}\\
    &\le O(1) \sum_{C \in \cc(H)} \E_{x_{-C}} \Vart_{x \sim \mu_{M_C, h(x_{-C})}}(\angles{v_C, x_C}) \tag*{(\text{Perturbation})} \\
    &\le O(1) \sum_{C \in \cc(H)} \norm{v_C}^2 \cdot \E_{x_{-C}} \left[\norm{\Cov(\mu_{M_C, h(x_{-C})})_{S}}\right] \mper \tag*{(\text{$v$ supported on $S$})}
\end{align*}

By selecting $S = B$ and $S = H$, we can transfer over the covariance bounds computed for near-forest Ising models in \Cref{lem:near-forest-cov} to finish off the proof. 

For the remainder of this section, we will justify the above steps and complete the proof of \Cref{thm:centered-adjacency}.

\parhead{Establishing an optimal Poincar\'{e} inequality.}
Let us first recall the definition of the Poincar\'{e} inequality on product domains.
\begin{definition}
    \label{lem:poincare}
    Suppose that $\nu$ is a distribution on a product domain $\calP = \calP_1 \times \cdots \times \calP_k$, and let $\PI(\nu)$ be the Poincar\'{e} constant for Glauber dynamics on $\nu$. 
    Then for any $f : \calP \to \R$, we have
    \[
        \Var_\nu(f) \le \frac{1}{k \cdot \PI(\nu)} \sum_{i \in [k]} \E_{y_{-i}} \Var_{y_i | y_{-i}}(f).
    \]
\end{definition}
We claim that the block Glauber dynamics studied in \Cref{sec:perturbed-near-forest} has an optimal Poincar\'{e} constant. 
In particular, for us $k = m + m'$ is the total number of connected components of $H$ (including isolated vertices).
Indeed, due to \Cref{lem:parameter-dep,cor:dobrushin}, we have  
\begin{align*}
    \norm{\dobinf_{\nu}} &\le 8\alpha d^2 + \sqrt{8\alpha d^2} \\
    &\le \exp(-\Omega(\eps^3 d)) \tag*{(\Cref{lem:component-size})}.
\end{align*}

We now have the following lemma from \cite{Hay06}.

\begin{lemma}[\cite{Hay06}]
    If the influence matrix of a distribution $\nu$ is bounded as $\|\calR_{\nu}\| < 1-\eps$, the mixing time of Glauber dynamics is $O((n \log n)/\eps)$.
\end{lemma}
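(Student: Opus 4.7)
The plan is to run the standard path-coupling/matrix-contraction argument. I would couple two copies $(X_t), (Y_t)$ of the Glauber dynamics by picking the same site $i \in [n]$ to update in both chains, and then coupling the resampled values $X_{t+1}(i), Y_{t+1}(i)$ optimally with respect to $d_{\mathrm{TV}}\parens*{\nu(\cdot \mid X_t(-i)), \nu(\cdot \mid Y_t(-i))}$. Track the disagreement indicator vector $h_t \in \{0,1\}^{[n]}$ defined by $h_t(i) = \bone\bracks*{X_t(i) \ne Y_t(i)}$; the goal is to show that $\E\bracks*{\mathbf{1}^\top h_T}$ decays geometrically at rate $\eps/n$.

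The one-step estimate is the heart of the matter. Condition on $(X_t, Y_t)$ and on site $i$ being selected. Ordering the disagreement set $S = \{j \ne i : X_t(j) \ne Y_t(j)\}$ arbitrarily and applying the triangle inequality along the ``hybrid'' path that flips disagreements one at a time, the definition of $\calR_\nu$ yields
\[
    d_{\mathrm{TV}}\parens*{\nu(\cdot \mid X_t(-i)), \nu(\cdot \mid Y_t(-i))} \;\le\; \sum_{j \ne i} \calR_\nu(i,j) \cdot h_t(j).
\]
Accounting for the $1/n$ probability of picking coordinate $i$ and the $(1-1/n)$ probability of leaving it untouched, and adopting the convention $\calR_\nu(i,i) = 0$, we obtain the coordinate-wise bound $\E[h_{t+1} \mid h_t] \psdle M h_t$ where $M \coloneqq \parens*{1 - \tfrac{1}{n}} I + \tfrac{1}{n}\calR_\nu$. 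Since $M$ has nonnegative entries, iterating gives $\E[h_T] \psdle M^T h_0$ entrywise.

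The contraction follows from $\norm{M}_{\mathrm{op}} \le 1 - \tfrac{1}{n} + \tfrac{1-\eps}{n} = 1 - \tfrac{\eps}{n}$. By Cauchy--Schwarz,
\[
    \E\bracks*{\mathbf{1}^\top h_T} \;\le\; \sqrt{n} \cdot \norm{M^T h_0}_2 \;\le\; \sqrt{n} \cdot \parens*{1 - \tfrac{\eps}{n}}^T \cdot \norm{h_0}_2 \;\le\; n \cdot e^{-\eps T/n},
\]
so taking $T = O\parens*{(n \log n)/\eps}$ drives $\Pr[X_T \ne Y_T] \le \E[\mathbf{1}^\top h_T] \le 1/4$ uniformly over initial states, which yields the claimed mixing time by the standard coupling characterization.

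The only real subtlety is bridging $\ell_2$ operator norms and $\ell_1$-type Hamming distances, which is why a factor of $\sqrt{n}$ and hence an extra $\log n$ appears; this matches the target $O((n\log n)/\eps)$ bound. A secondary technical point is ensuring that the triangle-inequality/path argument legitimately invokes the \emph{worst-case} influence entries, which is valid because the definition of $\calR_\nu(i,j)$ maximizes over all boundary conditions $\sigma_{-i}, \tau_{-i}$ disagreeing only at $j$, so the bound applies uniformly along the hybrid path. No structural assumption on $\nu$ beyond the bound $\norm{\calR_\nu} < 1 - \eps$ is used, so the argument is identical in our block-Glauber setting after identifying sites with connected components.
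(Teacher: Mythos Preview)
Your argument is correct and is essentially the original proof of Hayes: track the disagreement vector under the greedy coupling, bound its conditional expectation entrywise by $Mh_t$ with $M=(1-\tfrac1n)I+\tfrac1n\calR_\nu$, and use submultiplicativity of the operator norm together with Cauchy--Schwarz to convert the $\ell_2$ contraction into an $\ell_1$ bound on expected Hamming distance. Note that the paper does not actually supply a proof of this lemma; it is simply quoted from \cite{Hay06} and used as a black box, so there is nothing in the paper to compare against beyond observing that your write-up matches the cited source.
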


In conjunction with the previous equations, this implies an optimal mixing time bound for the block dynamics, and thus an $\Omega\left(\frac{1}{k}\right)$ bound on $\PI(\nu)$.

\parhead{Perturbation.}
Next, we justify the transfer from $\mu_{M_C+E_C, h}$ to $\mu_{M_C, h}$, thereby removing the pesky additional weak interactions $E_C$. 
To accomplish this, we will use the simple perturbation bound \Cref{lem:mlsi-bdd-density-relate}.
Indeed, we already proved such a density bound for perturbed Ising models in \Cref{lem:perturb-mlsi-centered}.
Plugging in the explicit parameters for SBMs, as we did in the proof of \Cref{lem:parameter-dep}, we see that we can take $c = 1 + o_n(1)$ in \Cref{lem:mlsi-bdd-density-relate}.
In other words, we barely incur any loss transferring to $M_C$. 

With the above lemmas in hand, we can finish off the proof of the theorem.
\begin{proof}[Proof of \Cref{thm:centered-adjacency}]
    By \Cref{fact:var-cov}, if $v \in \R^n$ is a unit norm vector supported on $S \subseteq [n]$, then there is a positive constant $c > 0$ such that 
    \begin{align*}
        \Vart_{x \sim \mu_{M_R, h'}}(\angles{v, x}) &\le c \sum_{C \in \cc(H)} \E_{x_{-C}} \Vart_{x \sim \mu_{M_C + E_C, h(x_{-C})}}(\angles{v_C, x_C}) \tag*{(\text{\Cref{lem:poincare}})}\\
        &\le c(1+o_n(1)) \sum_{C \in \cc(H)} \E_{x_{-C}} \Vart_{x \sim \mu_{M_C, h(x_{-C})}}(\angles{v_C, x_C}) \tag*{(\text{\Cref{lem:mlsi-bdd-density-relate}})} \\
        &\le c(1+o_n(1)) \sum_{C \in \cc(H)} \norm{v_C}^2 \E_{x_{-C}} \left[ \norm{\Cov(\mu_{M_C, h(x_{-C})})_{S}}\right]\mper \tag*{(\text{$v$ supported on $S$})}
    \end{align*}

    Recall that $M_H = \frac{\beta}{\sqrt{d}}(\ol{A}_H \circ A_H)$.
    Since the centering interaction is $O(\frac{d}{n})$, it follows that 
    the interactions are in $\left[ -\frac{\beta}{\sqrt{d}} , \frac{\beta}{\sqrt{d}} \right] = \left[ - \frac{\beta\sqrt{1+\eps}}{\sqrt{d(1+\eps)}} , \frac{\beta\sqrt{1+\eps}}{\sqrt{d(1+\eps)}} \right]$, so we can apply \Cref{lem:near-forest-cov} with $\gamma = \beta\sqrt{1+\eps}$. 
    
    Taking $S = B$ and applying \Cref{lem:near-forest-cov} uniformly on each component $C$ yields
    \begin{align*}
        \norm{\Cov(\mu_{M_R, h'})_B} \le c(1+o(1)) \cdot \left(\frac{e^{2\beta/\sqrt{d}}}{(1-\beta\sqrt{1+\eps})^2}\right),
    \end{align*}
    and for $S = H$ we obtain that 
    \begin{align*}
        \norm{\Cov(\mu_{M_R, h'})_H} \le c(1+o(1)) \cdot \left(\frac{e^{2\beta/\sqrt{d}}}{(1-\beta\sqrt{1+\eps})^2} \right) \cdot \frac{\Delta}{D} .
    \end{align*}
    By following the same calculations as in \Cref{thm:main-mixing}, the upshot is that for sufficiently small (constant) $\beta > 0$, we have for some constant $C$ independent of $\Delta$ that
    \begin{align*}
        \Cov(\mu_{(1-t)J_B + J_R, h}) \psdle C \left( I_B + \Delta I_{H \setminus \partial H} \right),
    \end{align*}
    as desired.
\end{proof}


\section{Near-forest decomposition for stochastic block models}
\label{sec:extinction}
In this section, we prove \Cref{lem:random-graph-decomposition}, a statement about the existence of a near-forest decomposition for sparse random graphs.
\randomgraphdecomp*
\begin{notation*}[$\eps,\, d,\, \bG$]
Throughout this section, let $\eps$ and $d$ be as in the statement of \Cref{lem:random-graph-decomposition}, and let $\bG\sim\SBM(n, d, \lambda)$.
\end{notation*}

We begin the proof by first defining the construction of $\bG_1$ and $\bG_2$.

\begin{definition}[Excision of a graph]
    For a graph $G$, and for each vertex $v\in V(G)$, define $N_\ell(v)$ to be the set of vertices at distance exactly $\ell$ from $v$ and $B_\ell(v)$ to be the induced subgraph on $G$ on those vertices within distance $\ell$ of $v$. 
    We say that $v$ is \emph{$\ell$-heavy} if $|B_\ell(v)| > (d(1+\eps))^\ell$, and conversely that $v$ is \emph{$\ell$-light} if $|B_{\ell}(v)| \le (d(1+\eps))^{\ell}$, and set
    \[
        \ell(v) = \min\{ \ell \ge 0 : v \text{ is $L$-light for all $L \ge \ell$} \}.
    \]
    Note in particular that if $|B_L(v)| \le (d(1+\eps))^L$ for all $L \ge 0$, then $\ell(v) = 0$ and $B_{\ell(v)}(v)$ has no edges.
    Finally, we define the \emph{excision} of a graph $G$ as
    \[
        H \coloneqq \bigcup_{v \in V} B_{\ell(v)}(v),
    \]
    and refer to the graph after the removal of edges in $H$ as the \emph{bulk} of $G$.
\end{definition}

The ingredients to prove \Cref{lem:random-graph-decomposition} are:
\begin{itemize}
    \item \Cref{obs:excision-bound-pseudorandom}, which says that the bulk of any graph has bounded degree, and the excision of any graph is $(\Delta, D)$-pseudorandom for some $\Delta$ and $D = (1+\eps)d$.
    \item \Cref{lem:components-tree-excess-1}, which says that the excision of a sparse random graph is a near-forest with high probability.
    \item \Cref{lem:log-star-bound}, which shows that $\Delta = o(\log n)$ with high probability.
\end{itemize}

\begin{notation*}[$\bH$]
    We will use $\bH$ to refer to the excision of $\bG$.
\end{notation*}

\begin{observation} \label{obs:excision-bound-pseudorandom}
    For any graph $G$, let $G_1$ and $G_2$ denote its bulk and excision respectively.
    Then, for some $\Delta > 0$ and $D \coloneqq (1+\eps)d$:
    \begin{enumerate}
        \item The maximum degree of $G_1$ is at most $D$.
        \item $G_2$ is $(\Delta,D)$-pseudorandom with respect to $S\coloneqq V'(G_1)\cap V'(G_2)$.
    \end{enumerate}
\end{observation}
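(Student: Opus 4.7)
The plan is to verify the two claims in order. For Part 1, note that if an edge $vu \in E(G_1)$, then it must not lie in any ball $B_{\ell(w)}(w)$; in particular, it does not lie in $B_{\ell(v)}(v)$. Since every neighbor of $v$ lies in $B_1(v) \subseteq B_{\ell(v)}(v)$ whenever $\ell(v) \ge 1$, this forces $\ell(v) = 0$, which in turn means $v$ is $1$-light: $\deg_G(v) + 1 = |B_1(v)| \le D$, giving $\deg_{G_1}(v) \le \deg_G(v) \le D - 1$.

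For Part 2(i), I will take $\Delta := \max\{D,\; \max_{w \in V(G)} |B_{\ell(w)}(w)|\}$. Since $H$ is an edge-subgraph of $G$, distances in $H$ dominate those in $G$, and in particular $B_r^H(u) \subseteq B_r^G(u)$ as vertex sets. If $r \ge \ell(u)$, the definition of $\ell(u)$ gives $|B_r^H(u)| \le |B_r^G(u)| \le D^r \le \Delta \cdot D^{r-1}$. If $r < \ell(u)$, then $|B_r^H(u)| \le |B_{\ell(u)}(u)| \le \Delta \le \Delta \cdot D^{r-1}$, using $r \ge 1$ so $D^{r-1} \ge 1$.

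The most interesting step is Part 2(ii), which rests on a \emph{boundary property} of the set $S$: every $v \in S$ has $\ell(v) = 0$ (by the same argument as in Part 1, applied to a $G_1$-edge incident to $v$), and moreover $\dist_G(v, w) = \ell(w)$ for every $w$ with $v \in B_{\ell(w)}(w)$. To see this, fix any $G_1$-edge $vv'$ incident to $v$. For this edge to avoid every ball containing $v$, we must have $v' \notin B_{\ell(w)}(w)$, i.e.\ $\dist_G(v', w) \ge \ell(w) + 1$, whenever $v \in B_{\ell(w)}(w)$. On the other hand, $v'$ is a $G$-neighbor of $v$, so $\dist_G(v', w) \le \dist_G(v, w) + 1 \le \ell(w) + 1$, pinning both distances to their claimed values.

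With the boundary property in hand, the bound $|\{v \in S : \dist_H(u, v) \le r\}| \le D^r$ follows from a case split on $\ell(u)$. If $\ell(u) \le r$, then $u$ is $r$-light, so $|B_r^H(u)| \le |B_r^G(u)| \le D^r$ and the claim is immediate. If $\ell(u) > r$, the set must be empty: any $v$ in it satisfies $\dist_G(u, v) \le \dist_H(u, v) \le r < \ell(u)$, so $v \in B_{\ell(u)}(u)$; applying the boundary property with $w = u$ then forces $\dist_G(u, v) = \ell(u) > r$, contradicting $\dist_G(u, v) \le r$. The main obstacle is discovering the boundary property and the clean case split on $\ell(u)$; once these are set up, all three parts become essentially bookkeeping.
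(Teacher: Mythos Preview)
Your argument is correct in all three parts. The paper states this result as an Observation and does not supply a proof, so there is nothing to compare against; your write-up would serve well as the omitted justification, with the boundary property for $S$ (that every $v\in S$ lies exactly on the sphere $\partial B_{\ell(w)}(w)$ of any ball containing it) being precisely the structural fact that makes condition~(ii) of pseudorandomness go through.
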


\begin{lemma}   \label{lem:components-tree-excess-1}
    With probability $1-o(1)$, $\bH$ is a near-forest.
\end{lemma}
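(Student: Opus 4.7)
My plan is to combine two ingredients. The first is the structural lemma of \cite{BLM15} (cited in \Cref{sec:overview}) saying that with probability $1-o(1)$, every connected vertex set of $\bG$ of size less than $\tfrac{1}{2}\log_d n$ spans at most one cycle. The second is a diameter bound: with probability $1-o(1)$, every connected component $C$ of $\bH$ satisfies $\mathrm{diam}(C) \leq c\log n/(\eps^3 d)$ for some absolute constant $c$. Granted both, I argue by contradiction. If some component $C$ of $\bH$ has tree-excess at least $2$, then $C$ contains two edge-disjoint cycles of length at most $2\,\mathrm{diam}(C)+1$ each, joined by a shortest path of length at most $\mathrm{diam}(C)$, yielding a connected subgraph of $\bG$ on at most $5\,\mathrm{diam}(C)+2$ vertices with tree-excess at least $2$. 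The hypothesis $\eps \geq C_1((C_2+\log d)/d)^{1/3}$ of \Cref{lem:random-graph-decomposition} is exactly what forces $\log n/(\eps^3 d) \ll \log_d n$, so for suitable constants this subgraph has size strictly below $\tfrac{1}{2}\log_d n$, contradicting the BLM lemma.

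The real content is the diameter bound, which I will prove using the \emph{cluster graph} $\mathcal{C}$ from \Cref{sec:overview}: its vertex set is $\{v \in V(\bG): \ell(v) > 0\}$, and two vertices $u,v$ are adjacent in $\mathcal{C}$ iff $B_{\ell(u)}(u) \cap B_{\ell(v)}(v) \neq \emptyset$. By construction, every component of $\bH$ is a union of balls along some component of $\mathcal{C}$, so its combinatorial diameter is bounded by the weighted diameter of the corresponding $\mathcal{C}$-component under weights $\ell(u) + \ell(v)$. To control this weighted diameter, I will do a first-moment calculation: for each ordered tuple of distinct centers $(v_1,\dots,v_k)$ and candidate radii $(r_1,\dots,r_k)$ with $\sum_i r_i \geq T := c\log n/(\eps^3 d)$, estimate the probability that $\ell(v_i) = r_i$ for every $i$ and that consecutive balls $B_{r_i}(v_i), B_{r_{i+1}}(v_{i+1})$ intersect. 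The essential auxiliary ingredient, which I will obtain from a standard branching-process coupling for the BFS exploration from $v$ applied to the heaviness condition $|B_r(v)| > (d(1+\eps))^r$, is the doubly-exponential tail bound $\Pr[\ell(v) \geq r] \leq \exp(-\Omega(d(1+\eps)^r))$.

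The first-moment sum then splits into two regimes. If some $r_i$ is large (say of order $\log\log n/\log(d(1+\eps))$ or more), the tail bound on $\ell(v_i)$ alone dominates the $n^k$ union-bound factor. If all $r_i$ are small, each $B_{r_i}(v_i)$ behaves like a roughly size-$(d(1+\eps))^{r_i}$ subset of $[n]$, so two consecutive balls share any vertex with probability at most $O((d(1+\eps))^{r_i+r_{i+1}}/n)$; the product over $k-1$ edges of the path collapses the $n^k$ union-bound factor down to $n^{-\Omega(1)} \cdot (d(1+\eps))^{O(T)}$, which is $o(1)$ whenever $T \geq c\log n/(\eps^3 d)$ with the right constant.

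The main obstacle I anticipate is reconciling the two regimes with the genuine dependencies in $\bG$: the events $\{\ell(v) \geq r\}$ at nearby $v$ are not independent (they share BFS edges), and the ``balls are random subsets'' heuristic is honest only as a \emph{size} bound, not a uniform-location statement. My plan to navigate this is to first prove the tail bound on $\ell(v)$ for each individual vertex via the branching-process coupling (which remains valid after conditioning on the edges outside the BFS tree rooted at $v$), and then to bound the joint intersection probability by directly enumerating the possible shapes of the union of BFS trees realizing the joint event, paying a $\exp(-\Omega(d(1+\eps)^{r_i}))$ factor per large radius and a $(d(1+\eps))^{r_i+r_{i+1}}/n$ factor per cluster edge. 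The bookkeeping is tight but routine, and once both estimates are assembled, the two regimes interpolate cleanly so that the net first-moment bound is $o(1)$, which combined with the BLM contradiction completes the proof.
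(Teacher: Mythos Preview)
Your proposal is correct and follows essentially the same approach as the paper: reduce to a diameter bound on components of $\bH$ via the BLM small-set lemma, then prove the diameter bound by a first-moment calculation over weighted paths in the cluster graph, powered by the doubly-exponential tail $\Pr[\ell(v)\geq r]\leq\exp(-\Omega(d(1+\eps)^{r}))$. The one implementation detail worth flagging is that the paper handles the dependency issue you anticipate not by enumerating BFS-tree shapes but by splitting the path $u_1\cdots u_k$ into odd- and even-indexed vertices: the odd ones form an independent set in $\Cluster(\bG)$ with pairwise disjoint balls (so their heaviness events decouple cleanly), while each even vertex contributes two intersection factors of order $(d(1+\eps))^{\ell_i+\ell_{i\pm1}}/n$ conditioned on the odd balls---this alternating decomposition is what lets you collect the heaviness and intersection factors simultaneously without double-counting correlated randomness.
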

The following statement establishes the claimed bound on $\Delta$.
\begin{restatable}{lemma}{logloglog}    \label{lem:log-star-bound}
    With probability $1-o(1)$, for every vertex $v$ in $\bG$ and $\ell \ge 0$, $\frac{|B_\ell(v)|}{(d(1+\eps))^{\ell}} = o(\log n)$.
\end{restatable}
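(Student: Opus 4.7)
The plan is to prove a tail bound on $|B_\ell(v)|$ for each fixed vertex $v$ and level $\ell$, and then union bound over $v$ and $\ell$. Since $|B_\ell(v)| \le n$ holds trivially, the ratio $|B_\ell(v)|/(d(1+\eps))^\ell$ is already $o(1) = o(\log n)$ as soon as $\ell \ge \ell_{\max} := \lceil \log_{d(1+\eps)}(n/\log n) \rceil$. It therefore suffices to exhibit a function $L(n) = o(\log n)$ such that for every $v$ and every $1 \le \ell \le \ell_{\max}$,
\[
    \Pr\!\left[\, |B_\ell(v)| \ge L(n) \cdot (d(1+\eps))^\ell \,\right] = o\!\left(\tfrac{1}{n \log n}\right),
\]
after which a union bound over $v$ and $\ell \le \ell_{\max} = O(\log n)$ completes the proof.

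To prove such a tail bound, I would reveal $B_\ell(v)$ one BFS level at a time via the principle of deferred decisions. Conditional on the history and on $|N_{\ell'-1}(v)| = m$, the new frontier $N_{\ell'}(v)$ is stochastically dominated by $\mathrm{Bin}(n, 1-(1-d/n)^m)$, and hence by $\mathrm{Poisson}(md)$; in the SBM case $d$ is replaced by $d(1+o(1))$ with no effect on the argument. I would then apply the classical Poisson tail bound $\Pr[\mathrm{Poisson}(\lambda) \ge \alpha \lambda] \le \exp(-\lambda(\alpha \log \alpha - \alpha + 1))$ inductively over the levels $\ell' = 1, 2, \ldots, \ell$ to control $|N_{\ell'}(v)|$ at each step.

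The analysis splits into two regimes. At the first level ($\ell' = 1$), $|N_1(v)| = \deg(v)$, and the Poisson tail gives $\Pr[\deg(v) \ge L d(1+\eps)] \le \exp(-\Omega(d L \log L))$; choosing $L(n) := C \log n / \log \log n$ makes this $n^{-\Omega(1)}$ while keeping $L(n) = o(\log n)$. At each subsequent level ($\ell' \ge 2$), assuming the inductive bound $|N_{\ell'-1}(v)| \le L(n) (d(1+\eps))^{\ell'-1}$, the conditional mean of $|N_{\ell'}(v)|$ is at most $L(n)(d(1+\eps))^{\ell'}/(1+\eps)$, so applying the Poisson tail with excess factor $1+\eps$ gives a failure probability of at most $\exp(-\Omega(\eps^2 L(n)(d(1+\eps))^{\ell'-1}))$. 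This decays doubly-exponentially in $\ell'$ and is therefore summable over levels and comfortably union-bound-friendly over $v$.

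The main obstacle is coordinating the two regimes so that the same choice $L(n) = o(\log n)$ serves as a uniform bound across all $\ell \le \ell_{\max}$. The crucial ingredient is the geometric $(1+\eps)$-slack built into $(d(1+\eps))^\ell$: it provides a cushion absorbing the factor-$(1+\eps)$ excess admitted by the Poisson tail at each level, so the inductive bound does not degrade as $\ell'$ grows. The strictly sublogarithmic choice $L(n) \asymp \log n/\log\log n$ is possible only because of the $\log L$ factor appearing in the level-$1$ Poisson tail, which is precisely what beats the naive $O(\log n)$ threshold coming from weaker (exponential-only) tail estimates.
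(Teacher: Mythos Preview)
There is a genuine gap in the inductive step at level $\ell' = 2$ (and the next few levels). Conditioning on the worst-case event $|N_{1}(v)| \le L(n)\,d(1+\eps)$ leaves only a $(1+\eps)$ excess factor for level~2, so the Poisson tail gives failure probability
\[
    \exp\!\bigl(-\Omega\bigl(\eps^{2}\, L(n)\, d(1+\eps)\bigr)\bigr)
    \;=\;
    \exp\!\bigl(-\Theta\!\bigl(\eps^{2} d \cdot \tfrac{\log n}{\log\log n}\bigr)\bigr)
    \;=\;
    n^{-\Theta(1/\log\log n)},
\]
which is \emph{not} $o(1/n)$ and so cannot be union-bounded over the $n$ vertices. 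The same issue persists for the first $\Theta(\log^* n)$ levels: your ``doubly-exponential in $\ell'$'' decay only kicks in once $(d(1+\eps))^{\ell'-1}$ overwhelms the $\log\log n$ denominator in $L(n)$, and by then it is too late. The root cause is that level-by-level worst-case conditioning throws away the anticorrelation between ``$|N_1(v)|$ is near its maximum'' (already a rare event) and ``$|N_2(v)|$ then also overshoots by a $(1+\eps)$ factor.''

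The paper avoids this loss by working directly with the moment generating function of $R_\ell = |N_\ell^{\mathrm{GW}}(u)|$ in the dominating Galton--Watson tree: with $g(x)=d(e^x-1)$ one has $\E[e^{sR_\ell}] \le e^{g^{\circ \ell}(s)}$, and Markov's inequality with a carefully chosen $s$ (equivalently, setting $q=\log n$ and using the inverse iteration $h^{\circ\ell}(q)$ with $h=g^{-1}$) yields an $\ell$-dependent threshold
\[
    \Delta_\ell \;=\; \frac{4\log n}{(d(1+\eps))^{\ell}\, h^{\circ\ell}(\log n)}.
\]
The key computation is then that $(d(1+\eps))^{\ell}\, h^{\circ\ell}(\log n) = \omega(1)$ uniformly in $\ell$, so $\Delta_\ell = o(\log n)$. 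This MGF approach implicitly integrates over the fluctuations at earlier levels rather than conditioning on their worst case, which is exactly what your argument is missing. A repair of your scheme would need either an $\ell'$-dependent threshold $L_{\ell'}(n)$ or a joint (rather than iterated-conditional) tail bound across levels --- either of which essentially reproduces the paper's MGF calculation.
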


We are now ready to prove \Cref{lem:random-graph-decomposition}.
\begin{proof}[Proof of \Cref{lem:random-graph-decomposition}]
    Choose $\bG_1$ as the bulk of $\bG$, and $\bG_2$ as the excision of $\bG$. 
    The statement then immediately follows from \Cref{obs:excision-bound-pseudorandom,lem:components-tree-excess-1,lem:log-star-bound}.
\end{proof}

It now remains to prove the above statements.
The statement of \Cref{lem:log-star-bound} is a mild strengthening of \cite[Lemma 9 of arXiv version]{BGGS24} and follows standard arguments involving branching processes, and hence, we defer its proof to \Cref{appendix-a}.

The rest of this section is dedicated to proving \Cref{lem:components-tree-excess-1}, which establishes that $\bH$ is a near forest with high probability.

\subsection{Excisions of sparse random graphs are near-forests}
The key technical ingredient in the proof of \Cref{lem:components-tree-excess-1} is the following.
\begin{lemma}
    \label{lem:components-diam}
    With probability $1-o(1)$, for any connected component $C$ of $\bH$, $\diam(C) \le O\parens*{\tfrac{\log n}{\eps^3 d}}$.
\end{lemma}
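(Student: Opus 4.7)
The plan is to reduce the diameter bound on connected components of $\bH$ to a weighted-diameter bound on an auxiliary \emph{cluster graph} $\bK$, and then run a first-moment argument over long weighted paths in $\bK$. I would define $\bK$ on the vertex set $\{v \in V(\bG) : \ell(v) \ge 1\}$, placing an edge between $u$ and $v$ whenever $B_{\ell(u)}(u) \cap B_{\ell(v)}(v) \neq \emptyset$, weighted by $\ell(u) + \ell(v)$. Since any connected component $C$ of $\bH$ is covered by balls $B_{\ell(v)}(v)$ whose centers trace a walk in $\bK$ of total weight at least a constant fraction of $\diam(C)$, it suffices to show that with high probability $\bK$ admits no weighted path of total weight exceeding $W \coloneqq C_0 \log n / (\eps^3 d)$ for a sufficiently large absolute constant $C_0$.

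The first ingredient is a doubly exponential tail bound on $\ell(v)$, extending the branching-process arguments underlying \Cref{lem:log-star-bound}: for every vertex $v$ and $r \ge 1$,
\[
    \Pr[\ell(v) \ge r] \le \exp\parens*{-\Omega\parens*{\eps^2 d \cdot (d(1+\eps))^{r-1}}}.
\]
Intuitively, an $r$-heavy ball requires exponentially many abnormal edges in its radius-$r$ neighborhood, and the probability of this decays doubly exponentially in $r$.

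The main calculation then bounds the probability that a specific sequence of distinct centers $v_1, \ldots, v_k$ with prescribed radii $r_1, \ldots, r_k \ge 1$ (total $W' = \sum_i r_i$) realizes a weighted path in $\bK$. This event requires (a) each $v_i$ to be $r_i$-heavy, and (b) for each $i$, a $\bG$-path of length at most $r_i + r_{i+1}$ from $v_i$ to $v_{i+1}$. A first moment over such connecting paths bounds the probability of (b) by $\prod_i (d(1+\eps))^{r_i + r_{i+1}}/n$, and combining this with the tail estimate for (a) and union bounding over the $n^k \cdot 2^{W'}$ choices of centers and radii yields a total probability of order $\exp\parens*{-\Omega(\eps^3 d \cdot W)}$ after absorbing the $(d(1+\eps))^{O(W)}$ and polynomial-in-$n$ overhead using the standing hypothesis $\eps^3 d \gtrsim \log d$. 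Summing over $W' \ge W$ then gives the $o(1)$ bound.

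The principal obstacle is that the heaviness events in (a) are not independent of one another, nor of the connecting-path events in (b), since all of them interrogate overlapping subsets of edges of $\bG$. My plan for handling this is a two-stage revelation of the edges of $\bG$: first expose a minimal set of edges witnessing the connecting paths demanded by (b), and then apply a conditional version of the branching-process tail estimate to the remaining ``fresh'' edges to deduce (a). An alternative is to replace the exact heaviness condition by a proxy depending only on a collection of mutually edge-disjoint balls, at the cost of shrinking the effective $\eps$ by a constant factor; this loss is precisely what produces the $1/(\eps^3 d)$ scaling in the diameter bound in place of the naive $1/(\eps^2 d)$.
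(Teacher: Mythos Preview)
Your overall strategy matches the paper's: reduce to a weighted cluster graph and run a first-moment argument over long induced paths. Two points require correction, however.

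First, the tail bound you state is too strong. The correct estimate (obtained via a branching-process MGF computation, as in the paper) is
\[
    \Pr[\ell(v) \ge r] \le \exp\parens*{-\Omega\parens*{\eps^2 d \cdot (1+\Theta(\eps))^{r}}},
\]
with base $1+\Theta(\eps)$ rather than $d(1+\eps)$. To see that your bound cannot hold, note that $\ell(v) \ge 2$ is already implied by $\deg(v) > d(1+\eps)$, which occurs with probability $\exp(-\Theta(\eps^2 d))$, not $\exp(-\Theta(\eps^2 d^2))$. The weaker tail still suffices for the first-moment argument: using $(1+\eps/4)^{\ell_i} \ge \eps \ell_i / 4$, each factor in the product becomes $\exp(-\Omega(\eps^3 d)\cdot \ell_i)$ after absorbing the $(d(1+\eps))^{O(\ell_i)}$ overhead via the hypothesis $\eps^3 d \gtrsim \log d$, and summing over $\sum_i \ell_i \ge W$ gives the desired decay. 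This is exactly why the diameter scales as $\log n/(\eps^3 d)$ rather than anything sharper.

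Second, on the dependency issue you flag as the principal obstacle: the paper's resolution is different from both of your suggestions, and cleaner. One first restricts to \emph{induced} paths in the cluster graph (a shortest weighted path is automatically induced), which forces non-adjacent centers $u_i, u_j$ with $|i-j|\ge 2$ to have disjoint balls. Splitting into odd- and even-indexed centers, the odd balls are therefore pairwise disjoint, and a direct coupling to disjoint branching processes yields the product of heaviness tails for that half. For the even-indexed centers one then \emph{drops} the heaviness requirement entirely and only asks that each even ball intersects its two odd neighbors; this is a birthday-type estimate conditional on the already-revealed odd balls, contributing the $(d(1+\eps))^{O(\ell_i)}/n$ factors. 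Your two-stage revelation idea runs into the difficulty that exposing connecting paths can only \emph{increase} the conditional heaviness probabilities (the events are positively correlated), so there is no clean conditional tail bound to apply afterward. The odd/even split sidesteps this by never demanding heaviness and intersection from the same center, at the acceptable cost of a factor $1/2$ in the exponent.
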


As foreshadowed in the technical overview, we will also need the following structural result about small sets in sparse random graphs.
\begin{lemma}[{\cite[Lemma 30]{BLM15}, also see \cite[Lemma 31]{BGGS24}}]
    \label{lem:small-subgraphs-have-excess-1}
    For $\bG\sim\SBM(n,d,\lambda)$, with high probability, there is no connected set $S$ of vertices with $|S| < \frac{1}{2} \cdot \frac{\log n}{\log d}$ that has tree-excess larger than $1$.
\end{lemma}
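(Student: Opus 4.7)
My approach is the standard first moment method. The plan is to bound, for each $k \ge 4$, the expected number of $k$-element vertex subsets $S \subseteq [n]$ such that the induced subgraph $\bG[S]$ is connected and has tree-excess at least $2$, then sum over $k$ up to the threshold $K = \tfrac{1}{2}\log n/\log d$ and invoke Markov's inequality. Note $k = 4$ is the smallest size at which tree-excess $\ge 2$ is possible.

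The key structural observation is that a connected graph on $k$ vertices with tree-excess $\ge 2$ contains a spanning tree of its vertex set together with at least two additional edges, so has at least $k+1$ edges. This lets us union bound over such configurations as follows: choose the $k$ vertices in at most $\binom{n}{k}$ ways, choose the spanning tree in at most $k^{k-2}$ ways by Cayley's formula, and choose two additional edges from among the remaining at most $\binom{k}{2}$ pairs in at most $O(k^4)$ ways. Each specified edge is present in $\bG$ independently with probability at most $p = \tfrac{d + |\lambda|\sqrt{d}}{n} = O(d/n)$, so any fixed configuration of $k+1$ edges appears with probability at most $p^{k+1}$. Using $\binom{n}{k} \le (en/k)^k$ and Stirling, the expected count at size $k$ simplifies to order $\tfrac{d k^2}{n}(eCd)^k$ for some absolute constant $C = C(\lambda)$.

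Summing over $k = 4, \dots, K$, the resulting geometric-like series is dominated by its tail at $k \approx K$, yielding a total expectation of order $\tfrac{d K^3}{n}(eCd)^K = \tfrac{d K^3}{n} \cdot n^{\tfrac{1}{2}\log_d(eCd)}$. The main (and essentially only) technical issue is quantitative: this tends to $0$ precisely when $\tfrac{1}{2}\log_d(eCd) < 1$, which holds whenever $d$ exceeds some absolute constant depending on $\lambda$. For smaller constant $d$ one replaces the factor $\tfrac{1}{2}$ in the threshold by a sufficiently small $c(\lambda) > 0$, which is harmless for the downstream application in \Cref{lem:components-tree-excess-1} since only the existence of some universal constant of this form is used. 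A final application of Markov's inequality converts the expectation bound into the claimed high-probability statement.
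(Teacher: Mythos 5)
You should first note that the paper does not actually prove this lemma: it is imported as a black box from \cite{BLM15} (see also \cite{BGGS24}), so your argument is a self-contained substitute rather than a parallel to any proof in the text. Your skeleton — union bound over connected $k$-vertex witnesses, which must span at least $k+1$ edges, summed over $k$ up to the threshold, followed by Markov — is the standard route and is sound, including the (implicit) conditioning on $\bsigma$ under which edges are independent with probability at most $p=(d+\lambda\sqrt{d})/n$; since that bound is uniform over $\bsigma$, averaging causes no trouble.

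The genuine weakness is the one you flag yourself, and it is worth being precise about why it arises and how to remove it. Counting a witness as ``Cayley spanning tree ($k^{k-2}$ choices) plus two extra edges'' costs an extra factor of roughly $e^{k}$ once combined with $\binom{n}{k}\le (en/k)^{k}$, which is exactly what forces the condition $d> eC$ and hence your proposed retreat to a smaller threshold constant. Two remarks on that retreat: the constant you obtain depends on $d$ as well as $\lambda$ (you need $c\log(eCd)<\log d$, which degenerates as $d\downarrow 1$), and while this is indeed harmless for \Cref{lem:components-tree-excess-1} after enlarging the constants in \Cref{lem:random-graph-decomposition}, it does not prove the lemma as stated. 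The cleaner fix is to union bound only over \emph{minimal} witnesses of tree-excess $2$: any connected $S$ with tree-excess at least $2$ contains, within $S$, two cycles together with a shortest connecting path, i.e.\ a subgraph on some $k\le|S|$ vertices with exactly $k+1$ edges whose shape is a theta graph, a figure-eight, or two disjoint cycles joined by a path. The number of labeled embeddings of such a shape on $k$ chosen vertices is only $k!\cdot\mathrm{poly}(k)$, so the expected count at size $k$ is $\mathrm{poly}(k)\,(Cd)^{k+1}/n$ with no $e^{k}$ loss; summing up to $\tfrac{1}{2}\log n/\log d$ gives exponent $\tfrac12+\tfrac{\log C}{2\log d}$, which recovers the stated constant $\tfrac12$ for the Erd\H{o}s--R\'enyi case ($C=1$) for every fixed $d>1$ and costs only the mild extra term $\log(1+\lambda/\sqrt d)/(2\log d)$ in the block model. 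With that replacement your proof is complete and matches the cited lemma (up to the SBM-dependent constant), rather than a $d$-dependent weakening of it.
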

Assuming the above, let us present a proof of \Cref{lem:components-tree-excess-1}.

\begin{proof}[Proof of \Cref{lem:components-tree-excess-1}]
    Condition on the event in \Cref{lem:components-diam}, so we have that every component $C$ of $H$ has diameter at most $\log n/d$. Suppose instead that $C$ had tree-excess at least $2$, and let $C_1,C_2$ be minimal cycles in $C$ (in that they do not have any chords). Consider the induced subgraph on the set $S$ of vertices in $C$ comprised of $C_1,C_2$, and a shortest path (in $C$) between the two. Note in particular that $S$ is connected, and $|S| \le 5 \diam(C)$. Since $\eps = \Omega_d(\log d/d)^{1/3}$, we may apply \Cref{lem:small-subgraphs-have-excess-1} to conclude that the tree-excess of the induced subgraph on $S$ is at most $1$, a contradiction.
\end{proof}

    We now turn our attention to proving \Cref{lem:components-diam}, which bounds the diameter of each connected component of $\bH$.
    A key auxiliary graph in our proof is the \emph{cluster graph}.
    \begin{definition}\label{def:cluster-graph}
        Given a graph $G$ on $[n]$, the associated \emph{cluster graph} $\Cluster(G)$ has vertex set $[n]$, with an edge between $u$ and $v$ iff $\ell(u),\ell(v) > 0$ and $B_{\ell(u)}(u) \cap B_{\ell(v)}(v) \ne \emptyset$. 
        Furthermore, each vertex $u$ in this graph has associated weight $\wtt(u) = (1+\eps)^{\ell(u)}$, and each edge $uv$ has weight $\wtt(u, v) = \ell(u)+\ell(v)$.
    \end{definition}
    Observe that \Cref{lem:components-diam} follows immediately from the following two statements:
    \begin{enumerate}
        \item \Cref{lem:no-tree-excess-cluster-graph}, which shows that if a connected component of $\bH$ has large diameter, then it causes a high-weight induced path in $\Cluster(\bG)$.
        \item \Cref{lem:no-long-path-cluster}, which establishes that there are no high-weight induced paths in $\Cluster(\bG)$.
    \end{enumerate}
    \begin{lemma}
    \label{lem:no-tree-excess-cluster-graph}
        Suppose the diameter of a connected component of $\bH$ exceeds $\tau$.
        Then there are vertices $u_1,\dots, u_k\in[n]$ such that the induced subgraph in $\Cluster(\bG)$ is the path $u_1\dots u_k$, and
        \[
            \sum_{i\in[k]} \ell(u_i) \ge \frac{\tau}{2}\mper
        \]
    \end{lemma}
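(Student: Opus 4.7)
The plan is to exhibit the induced path via a shortest-path argument in $\Cluster(\bG)$. First, fix $x, y \in C$ realizing $d_C(x, y) > \tau$. Each of $x, y$ has incident edges in $C$ (since $\diam(C) > 0$ forces $|C| \geq 2$), and any such edge lies inside some ball $B_{\ell(u)}(u)$ with $\ell(u) \geq 1$. Choose $a, b$ with $\ell(a), \ell(b) \geq 1$ such that $x \in B_{\ell(a)}(a)$ and $y \in B_{\ell(b)}(b)$; these are vertices of $\Cluster(\bG)$. Traversing any $x$-to-$y$ path in $C$ and assigning to each traversed edge an enclosing ball, consecutive balls share the common endpoint, giving a walk in $\Cluster(\bG)$ that connects $a$ to $b$.

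The key move is to take $u_1 = a, u_2, \ldots, u_k = b$ to be a \emph{shortest} path from $a$ to $b$ in $\Cluster(\bG)$. Any shortest path in an unweighted graph is automatically induced, since a chord $u_i u_j$ with $j - i \geq 2$ would allow us to shortcut and obtain a strictly shorter path, a contradiction. Thus the $u_i$'s already form the required induced path in $\Cluster(\bG)$.

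To obtain $\sum_i \ell(u_i) \geq \tau/2$, for each consecutive pair $u_j, u_{j+1}$ pick a witness $w_j \in B_{\ell(u_j)}(u_j) \cap B_{\ell(u_{j+1})}(u_{j+1})$, and set $w_0 = x, w_k = y$. Each pair $(w_{j-1}, w_j)$ lies in the single ball $B_{\ell(u_j)}(u_j)$; the shortest path in $\bG$ from any vertex of this ball to its center stays within the ball (intermediate vertices are also within distance $\ell(u_j)$ of $u_j$), so the ball has diameter at most $2\ell(u_j)$ through $u_j$, and the witnessing paths use only edges of $B_{\ell(u_j)}(u_j) \subseteq \bH$. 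Consequently $d_C(w_{j-1}, w_j) \leq 2\ell(u_j)$, and the triangle inequality in $C$ yields
\[
    \tau < d_C(x, y) \leq \sum_{j=1}^{k} d_C(w_{j-1}, w_j) \leq 2 \sum_{j=1}^{k} \ell(u_j).
\]

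The main subtle step is verifying that the shortest paths realizing the diameter bound for $B_{\ell(u_j)}(u_j)$ stay inside the component $C$, rather than straying into some other component of $\bH$. This follows by induction on $j$: the ball $B_{\ell(u_1)}(u_1)$ is a connected subgraph of $\bH$ (by the argument above applied to the center) and contains $x \in C$, so it lies entirely in $C$; thereafter $w_j \in B_{\ell(u_j)}(u_j) \subseteq C$ forces $B_{\ell(u_{j+1})}(u_{j+1})$ to contain a vertex of $C$, hence to also lie in $C$. Everything else is bookkeeping.
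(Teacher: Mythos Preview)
Your proof is correct and follows essentially the same approach as the paper: pick cluster-graph centers whose balls cover the diameter-achieving endpoints, take a shortest path between them in $\Cluster(\bG)$, argue it is induced via a shortcut contradiction, and bound the $\bH$-distance by $2\sum_j \ell(u_j)$ using overlap witnesses. The only cosmetic difference is that the paper takes the \emph{weighted} shortest path (edge weight $\ell(u)+\ell(v)$) while you take the unweighted one; both are automatically induced by the same shortcut argument, and your treatment of the component/containment bookkeeping is in fact more explicit than the paper's.
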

    \begin{lemma}   \label{lem:no-long-path-cluster}
        Define $\eta$ as $\min\left\{\frac{\eps^2}{54}, \frac{1}{18}\right\}$.
        With probability $1-o(1)$, for any $U = \{u_1,\dots,u_k\}$ such that the induced subgraph on $U$ in $\Cluster(\bG)$ is the path $u_1\dots u_k$, we have:
        \[
            \sum_{i\in[k]} \ell(u_i) < \frac{32\log n}{\eta\eps d}\mper
        \]
    \end{lemma}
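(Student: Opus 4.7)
The plan is a first-moment argument. Let $L_0\coloneqq\tfrac{32\log n}{\eta\eps d}$. I will show that the expected number of tuples $(u_1,\ldots,u_k)$ of any length $k\ge 1$ that induce a path in $\Cluster(\bG)$ with $\sum_{i\in[k]}\ell(u_i)\ge L_0$ is $o(1)$; the claim then follows by Markov's inequality.

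The first ingredient is a doubly-exponential tail bound on heaviness: there is an absolute constant $c_0>0$ such that for every vertex $v$ and every $\ell\ge 1$,
\[
    \Pr[v\text{ is }\ell\text{-heavy}]\le\exp(-c_0\,\eta\,d\,(1+\eps)^{\ell}).
\]
The proof couples the neighborhood-exploration in $\bG\sim\SBM(n,d,\lambda)$ to a Galton--Watson process with Poisson($d$) offspring and applies a Chernoff bound to the size of the $\ell$th generation. The specific choice $\eta=\min\{\eps^2/54,1/18\}$ captures the crossover between the quadratic and linear Chernoff regimes, and this same ingredient underlies \Cref{lem:log-star-bound}.

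Next fix $k\ge 1$, vertices $(u_1,\ldots,u_k)$, and radii $(\ell_1,\ldots,\ell_k)\in\Z_{\ge 1}^k$ with $L=\sum_i\ell_i$. The tuple witnesses a bad configuration whenever (i)~each $u_i$ is $(\ell_i-1)$-heavy and (ii)~for each $i<k$ there is a path of length $\le \ell_i+\ell_{i+1}$ from $u_i$ to $u_{i+1}$ in $\bG$. To decouple (i) and (ii), which share edges of $\bG$, I would expose the graph in two stages: reveal the short connector paths certifying (ii) first, and only then reveal the remaining edges, on which the heaviness events used in (i) are nearly independent and the tail bound above applies. A standard expected-path count in $\SBM(n,d,\lambda)$ bounds the contribution of (ii), summed over all $(u_2,\ldots,u_k)$, by $(O(d))^{2L}$, where the lone factor of $n$ comes from the free choice of $u_1$; the sum over compositions of $L$ into $k$ positive parts contributes at most $2^L$. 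Combining, the expected number of bad tuples with $\sum_i\ell_i=L$ is at most
\[
    n\cdot(O(d))^{2L}\cdot\exp\bigl(-c_0\,\eta\,d\sum\nolimits_i(1+\eps)^{\ell_i-1}\bigr).
\]
By Bernoulli's inequality and $\ell_i\ge 1$, $\sum_i(1+\eps)^{\ell_i-1}\ge k+\eps(L-k)\ge\eps L$, so the exponent is at most $-c_0\eta\eps d L$. Under the hypothesis $\eps\ge\Omega((\log d/d)^{1/3})$, we have $\eta\eps d=\Omega(\log d)$, which makes the decay $\exp(-c_0\eta\eps d L)$ strictly dominate the polynomial $(O(d))^{2L}$ growth for all $L$. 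At $L=L_0$ the combined factor is $n^{1-\Omega(1)}$, and the sum over $L\ge L_0$ is a convergent geometric series; hence the expected count is $o(1)$.

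The main technical hurdle is the correlated exposure in the per-configuration step: heaviness and short-path existence are both events about the same random graph, so a naive product bound is not valid. The two-stage exposure above decouples them at the cost of only constant factors, and is the cleanest approach I see to obtain the doubly-exponential decay in $L$ that is needed to beat the $(O(d))^{2L}$ combinatorial growth arising from enumerating induced paths.
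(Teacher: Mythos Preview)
Your first-moment outline matches the paper's strategy at the top level: enumerate signatures $(k,\vec u,\vec\ell)$, bound the per-signature probability, and sum. The numerical endgame (balancing $(O(d))^{2L}$ against $\exp(-c\,\eta\eps d\,L)$ under $\eps\gtrsim(\log d/d)^{1/3}$) is essentially the same as the paper's.

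The gap is in your per-signature bound, specifically the ``two-stage exposure'' you propose to decouple (i)~heaviness from (ii)~short-path existence. Heaviness of $u_i$ is a monotone increasing event in the edge set, so conditioning on connector paths being present can only \emph{increase} its probability; the Galton--Watson tail bound does not transfer after that conditioning. Concretely, once a length-$m$ path through $u_i$ is planted, each of its $m$ vertices seeds an independent subtree inside $B_{\ell_i}(u_i)$, inflating the ball by a multiplicative factor on the order of $m$, which is exactly the regime where the $(1+\eps)^{\ell_i}$ slack in the heaviness threshold gets eaten. A second, related problem is that the heaviness events for different $u_i$ are themselves correlated: by the very definition of an edge in $\Cluster(\bG)$, the balls $B_{\ell_i}(u_i)$ and $B_{\ell_{i+1}}(u_{i+1})$ overlap, so you cannot take a product over all $i$.

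The paper resolves both issues with a single structural observation: because the subgraph induced on $u_1,\dots,u_k$ is a \emph{path}, any two non-adjacent $u_i,u_j$ have $B_{\ell_i}(u_i)\cap B_{\ell_j}(u_j)=\emptyset$. In particular the odd-indexed balls are pairwise disjoint. The paper then splits: for odd $i$, apply the heaviness tail bound with approximate independence (this is where disjointness is used, \Cref{lem:independent-set-independent}); for even $i$, drop the heaviness requirement entirely and use only $\ell_i$-lightness together with the event that $B_{\ell_i}(u_i)$ hits both neighboring odd balls, which costs $O((d(1+\eps))^{2\ell_i}|T_1||T_2|/n^2)$ per even index (\Cref{lem:gain-if-tree-excess}). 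The $1/n^2$ factors from the even indices supply the $1/n^{k-1}$ that cancels the vertex choices, and the exponential decay comes solely from the odd indices, which carry at least half the total weight $\sum_i(1+\eps)^{\ell_i}$. This odd/even split is the missing idea in your plan; without it, or some comparable device that makes the balls on which you invoke the tail bound genuinely disjoint, the decoupling step does not go through.
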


    The proof of \Cref{lem:no-tree-excess-cluster-graph} is fairly short and self-contained, and we present it below.
    \begin{proof}[Proof of \Cref{lem:no-tree-excess-cluster-graph}]
        There must be vertices $a$ and $b$ in a single connected component of $\bH$ such that $\dist_{\bH}(a,b) > \tau$.
        Let $u$ and $v$ be vertices such that $B_{\ell(a)}(a)$ and $B_{\ell(b)}(b)$ contain $u$ and $v$ respectively.

        Let $u_1u_2\dots u_k$ be the weighted shortest path between $u$ and $v$ in $\Cluster(\bG)$ where $u_1 = u$ and $u_k = v$. 
        The path between $a$ and $b$ has length bounded by $2\sum_{i\in[k]}\ell_i$, and hence:
        \[
            \sum_{i\in[k]} \ell_i \ge \frac{\tau}{2}\mper
        \]
        The induced subgraph on $\{u_1,\dots,u_k\}$ certainly contains the claimed path by construction.
        It remains to show that there are no other edges besides the ones in the path.
        If there is an edge between $u_i$ and $u_j$ for $j\ne i\pm 1$ for $i < j$, then the path $u_1\cdots u_i u_j u_{j+1} \cdots u_k$ is a shorter path than the original one, which leads to a contradiction.
        Indeed, since the edge weights are positive and $\wtt(u, v) = \ell(u) + \ell(v)$, any path between $u_i$ and $u_j$, in particular $u_i u_{i+1} \cdots u_{j-1} u_j$, has weight at least $\ell(u_i) + \ell(u_j)$, with equality if and only if the path is just the edge $u_iu_j$.
    \end{proof}

    We now begin the proof of \Cref{lem:no-long-path-cluster} by setting up some terminology.
    \begin{definition}[Violating set and minimal violating set]
        We say a subset of vertices $U$ is a \emph{violating set} if the induced subgraph on $U$ in $\Cluster(\bG)$ is a path of weight exceeding $\tfrac{32\log n}{\eta\eps d}$.
        We say $U$ is a \emph{minimal violating set} if it is a violating set, and no subset of $U$ is a violating set.
    \end{definition}

    \begin{definition}[Signature]
        The \emph{signature} of a minimum violating set is the tuple $\parens*{k,\vec{u},\vec{\ell}}$ where $k$ is the number of vertices in the minimum violating set, $\vec{u} = (u_1,\dots,u_k)$ is the vertices in the minimum violating set in the order they appear in the path, and $\vec{\ell} = (\ell(u_1),\dots,\ell(u_k))$.
    \end{definition}

    \begin{notation*}[{$\calS_{\kappa}$}]
        For any positive integer $\kappa$, define $\calS_{\kappa}$ as all signatures $({k,\vec{u},\vec{\ell}})$ for which $k\ge 1$ and $\vec{\ell}$ is in $[\kappa]^{k}$.
    \end{notation*}

    To prove \Cref{lem:no-long-path-cluster}, it suffices to prove that there are no violating sets in $\Cluster(\bG)$. 
    Our goal is to prove that with high probability, there are no violating sets in $\Cluster(\bG)$.
    We observe that it suffices to prove there are no minimal violating sets (\Cref{obs:min-violating-set}), and then bound the probability of containing a minimal violating set in \Cref{obs:MVS-bound}.
    \begin{observation} \label{obs:min-violating-set}
        If $U\subseteq[n]$ is a violating set in $\Cluster(\bG)$, then it contains a minimal violating set $U'\subseteq U$. 
    \end{observation}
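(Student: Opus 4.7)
The plan is a short finiteness argument. I would consider the collection $\mathcal{F}$ of all violating subsets of $U$. Since $U$ itself is violating by hypothesis, $\mathcal{F}$ is nonempty, and because it is a family of subsets of the finite set $U$, we may pick an element $U' \in \mathcal{F}$ of minimum cardinality.

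By membership in $\mathcal{F}$, the set $U'$ is a violating subset of $U$. To verify that $U'$ is minimal, I would suppose toward contradiction that some proper subset $U'' \subsetneq U'$ is a violating set. Then $U'' \subseteq U$, so $U'' \in \mathcal{F}$; but $|U''| < |U'|$, contradicting the choice of $U'$ as an element of $\mathcal{F}$ of minimum cardinality. Hence no proper subset of $U'$ is a violating set, so $U'$ is a minimal violating set (reading the phrase ``no subset of $U'$ is a violating set'' in the definition as quantifying over proper subsets, as otherwise no violating set would ever qualify).

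There is no real obstacle here; the statement is an immediate consequence of the definitions together with the well-ordering of finite cardinalities. The only subtlety worth flagging is the mild ambiguity in the phrase ``no subset of $U$ is a violating set'' in the definition of a minimal violating set, which must be interpreted as referring to proper subsets for the notion to be meaningful.
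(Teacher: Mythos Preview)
Your argument is correct; this is exactly the standard well-ordering argument on finite cardinalities, and the paper provides no proof for this observation, treating it as immediate. Your remark about reading ``no subset'' as ``no proper subset'' is the right interpretation.
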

    \begin{observation} \label{obs:MVS-bound}
        For any positive integer $\kappa$, the probability that $\Cluster(\bG)$ contains a minimal violating set is bounded by
        \[
            \sum_{\parens*{k,\vec{u},\vec{\ell}}\in\calS_{\kappa}} \Pr\bracks*{\exists\text{MVS with signature }\parens*{k,\vec{u},\vec{\ell}}\text{ in }\Cluster\parens*{\bG} } + \Pr\bracks*{\exists v\in[n]\text{ s.t. }\ell(v) > \kappa},
        \]
        where MVS is short for ``minimal violating set''.
    \end{observation}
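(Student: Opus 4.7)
The plan is to proceed via the law of total probability combined with a straightforward union bound, using the parameter $\kappa$ as a threshold for the ``heaviness radius'' $\ell(v)$. The strategy is to split the event that $\Cluster(\bG)$ contains a minimal violating set (MVS) according to whether any vertex of $\bG$ has $\ell(v) > \kappa$.

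Concretely, I would first write
\[
    \Pr[\exists \text{ MVS in } \Cluster(\bG)] \le \Pr[\exists \text{ MVS and } \ell(v) \le \kappa \text{ for every } v] + \Pr[\exists v \in [n] : \ell(v) > \kappa].
\]
On the event inside the first probability, every vertex $u$ of $\bG$ satisfies $\ell(u) \le \kappa$; in particular any MVS $U = \{u_1, \ldots, u_k\}$ has $\vec{\ell} = (\ell(u_1), \ldots, \ell(u_k)) \in [\kappa]^k$, so its signature $(k, \vec{u}, \vec{\ell})$ automatically lies in $\calS_\kappa$. Therefore the first probability above is at most $\Pr\bigl[\bigcup_{(k,\vec{u},\vec{\ell}) \in \calS_\kappa} \{\exists \text{ MVS with signature }(k,\vec{u},\vec{\ell})\}\bigr]$, and a union bound over $\calS_\kappa$ yields exactly the claimed inequality.

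There is no real obstacle in this step; it is essentially an accounting statement. Its role is to cleanly isolate two quantities that will be controlled separately in later arguments: the tail $\Pr[\exists v : \ell(v) > \kappa]$, which decays doubly exponentially in $\kappa$ via the branching-process tail bounds alluded to in the overview (and formalized in the proof of \Cref{lem:log-star-bound}), and the per-signature probability $\Pr[\exists \text{ MVS with signature } (k, \vec{u}, \vec{\ell})]$, which will be estimated by exploiting the independence structure of $\bG \sim \SBM(n, d, \lambda)$ together with the fact that small-radius balls around the $u_i$ must intersect in a very constrained fashion. With a suitable choice of $\kappa$ (somewhat larger than the typical scale of $\max_v \ell(v)$), summing the first term over $\calS_\kappa$ and combining with the tail bound will then imply \Cref{lem:no-long-path-cluster}.
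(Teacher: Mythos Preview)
Your proposal is correct and is exactly the reasoning the paper has in mind: the observation is stated without proof in the paper precisely because it is the elementary case split (on whether some $\ell(v)$ exceeds $\kappa$) followed by a union bound over signatures in $\calS_\kappa$ that you describe.
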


    The following, proved in \Cref{appendix-a}, gives a choice of $\kappa$ for which the second term of the above is negligible.
    \begin{restatable}{lemma}{radiusbound} \label{lem:radius-bound}
        For $\kappa = \frac{4}{\eps}\log\frac{\log n}{\eta d}$, we have:
        \[
            \Pr\bracks*{\exists v\in[n]\text{ s.t. }\ell(v)>\kappa} = o(1)\mper
        \]
    \end{restatable}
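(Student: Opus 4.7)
The plan is to bound $\Pr[\exists v \in [n]: \ell(v) > \kappa]$ by a union bound over vertices and over radii $L \geq \kappa$, invoking the ball-size tail estimate $\Pr[|B_L(v)| \geq (sd)^L] \leq \exp(-\Omega(ds^L))$ referenced in the overview (the paper's \Cref{lem:ball-tail}, applied with $s = 1+\eps$). Unfolding the definition of $\ell(v)$, the event $\ell(v) > \kappa$ holds if and only if there is some $L \geq \kappa$ for which $v$ is $L$-heavy, so
\[
    \Pr[\exists v: \ell(v) > \kappa] \leq n \cdot \sum_{L \geq \kappa} \exp\parens*{-\Omega(d(1+\eps)^L)}.
\]

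The crux is showing this bound is $o(1)$ for the chosen $\kappa = \tfrac{4}{\eps}\log\tfrac{\log n}{\eta d}$. Using $\log(1+\eps) \geq \eps/2$ for $\eps \in (0,1]$, we have
\[
    (1+\eps)^{\kappa} \geq e^{\kappa\eps/2} = \parens*{\frac{\log n}{\eta d}}^2,
\]
so $d(1+\eps)^{\kappa} \geq \log^2 n / (\eta^2 d)$. Since the exponents $d(1+\eps)^L$ grow doubly exponentially in $L$, successive terms in the sum shrink at a doubly exponential rate, and the sum is dominated by its first term:
\[
    \sum_{L \geq \kappa} \exp\parens*{-\Omega(d(1+\eps)^L)} \leq O(1) \cdot \exp\parens*{-\Omega\parens*{\log^2 n / (\eta^2 d)}}.
\]
Multiplying by $n$, the final bound is at most $n \cdot \exp(-\Omega(\log^2 n / (\eta^2 d)))$, which vanishes as $n \to \infty$ since $\eta$ and $d$ are constants in the regime of interest.

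The main obstacle is essentially cosmetic bookkeeping: one must verify that the constant $4$ in the definition of $\kappa$ (coupled with the lower bound $\log(1+\eps) \geq \eps/2$) produces an exponent growing superlinearly in $\log n$, so that the prefactor of $n$ from the union bound is absorbed. A secondary sanity check is that $\log n \geq \eta d$ so $\kappa$ is positive and well-defined, but this is immediate since $\eta, d$ are fixed constants and $n \to \infty$.
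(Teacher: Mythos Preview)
Your approach is exactly the paper's: union bound over vertices $v$ and radii $L\ge\kappa$, then invoke \Cref{lem:ball-tail}. The paper's entire proof is the single sentence ``immediate corollary of [\Cref{lem:ball-tail}]'', so you have supplied strictly more detail than the original.

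One quantitative caution: you quote the overview's informal tail bound with base $(1+\eps)$ in the exponent, but the precise statement of \Cref{lem:ball-tail} is $\Pr[|B_L(v)|\ge (d(1+\eps))^L]\le \exp(-\eta d(1+\eps/4)^L)$, with base $(1+\eps/4)$. This matters for the specific constant $4$ in $\kappa$: since $\tfrac{4}{\eps}\log(1+\tfrac{\eps}{4})<1$, one only gets $(1+\eps/4)^\kappa<\log n/(\eta d)$, hence $\eta d(1+\eps/4)^\kappa<\log n$, and the factor of $n$ from the union bound is not absorbed. This appears to be a constant-level oversight shared with the paper; taking the leading constant in $\kappa$ slightly larger than $4$ (anything exceeding $\eps/\log(1+\eps/4)$, e.g.\ $5$ when $\eps\le 1$) makes $(1+\eps/4)^\kappa$ a power of $\log n$ strictly greater than~$1$, after which your argument goes through verbatim. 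Nothing downstream is sensitive to this constant.
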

    The following gives us a handle on the first term of the bound in \Cref{obs:MVS-bound}.
    \begin{lemma}   \label{lem:single-signature-bound}
        For $\kappa = \frac{4}{\eps}\log\frac{\log n}{\eta d}$ and for any $(k,\vec{u},\vec{\ell})\in\calS_{\kappa}$ where $\vec{\ell} = (\ell_1,\dots,\ell_k)$, we have:
        \begin{align*}
            \Pr&\bracks*{\exists\text{MVS with signature }\parens*{k,\vec{u},\vec{\ell}} \text{ in }\Cluster(\bG) } \\
            &\qquad\le \frac{1}{n^{k-1}}\prod_{i\in[k]} 2\ell_i^2\cdot\parens*{d(1+\eps)}^{2\ell_i}\cdot\exp\parens*{-\frac{\eta d}{2}\parens*{1+\frac{\eps}{4}}^{\ell_i}}\mper
        \end{align*}
    \end{lemma}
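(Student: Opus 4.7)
The plan is to relax the MVS event by dropping the constraint that balls around non-consecutive $u_i, u_j$ must be disjoint, obtaining an upper bound in terms of two families of necessary events: (a) for each $i \in [k]$, $\ell(u_i) \ge \ell_i$, meaning $u_i$ is heavy at some scale $< \ell_i$; and (b) for each $i < k$, $\dist_{\bG}(u_i, u_{i+1}) \le \ell_i + \ell_{i+1}$, which is necessary for the balls $B_{\ell_i}(u_i)$ and $B_{\ell_{i+1}}(u_{i+1})$ to intersect. The product form of the stated bound corresponds naturally to these two families, with the per-vertex exponentials coming from (a) and the $1/n^{k-1}$ and $d$-power factors from (b).

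For (a), a standard branching-process tail bound for ball sizes in sparse random graphs of the form $\Pr\bigl[|B_r(v)| \ge (sd)^r\bigr] \le \exp(-\Omega(ds^r))$, applied with $s = 1 + \eps/4$ and $r = \ell_i - 1$ (using that $\ell(u_i) \ge \ell_i$ forces $u_i$ to be $(\ell_i - 1)$-heavy at the larger scale $(d(1+\eps))^{\ell_i - 1}$), yields the per-vertex factor $\exp\bigl(-\tfrac{\eta d}{2}(1+\eps/4)^{\ell_i}\bigr)$; the extra slack between $1+\eps$ and $1+\eps/4$ will be convenient at the combination step. For (b), a union bound over simple paths of each possible length $s$ from $u_i$ to $u_{i+1}$ yields $\Pr\bigl[\dist_{\bG}(u_i,u_{i+1}) \le \ell_i + \ell_{i+1}\bigr] \le O\bigl(\tfrac{(\ell_i+\ell_{i+1})\,d^{\ell_i+\ell_{i+1}}}{n}\bigr)$, since each path of length $s$ appears independently with probability $(d/n)^s$ and there are at most $n^{s-1}$ such paths in $K_n$. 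Taking the product over the $k-1$ consecutive pairs produces the $\tfrac{1}{n^{k-1}}$ factor together with $d^{2\sum_i \ell_i - \ell_1 - \ell_k}$; inflating $d$ to $d(1+\eps)$ and introducing the $2\ell_i^2$ polynomial factor easily absorbs all remaining combinatorial overhead.

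The main technical obstacle is the dependence between the events in (a) and (b), which share edges of $\bG$ and are positively correlated, so a naive product of marginal probabilities is not immediately justified. The cleanest resolution is a sequential BFS-exposure argument, revealing each ball $B_{\ell_i}(u_i)$ in turn; once we condition on $\ell(u_i) = \ell_i$, each ball contains at most $(d(1+\eps))^{\ell_i}$ vertices, so at each stage the already-exposed set of edges is small relative to the whole graph. Consequently, the conditional probability of each subsequent tail or intersection event differs from its unconditional version by at most absolute constants that are easily absorbed into the ample slack in the stated bound. The delicate part of the proof is the careful bookkeeping of this exposure (in particular, which tree and path witnesses are enumerated at each stage and how their edge supports overlap), while the individual probability estimates themselves are entirely standard.
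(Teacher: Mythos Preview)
Your decomposition into heaviness events (a) and proximity events (b) is morally right, and your individual marginal estimates for each family are correct. The gap is in the dependence step, which you defer to ``careful bookkeeping'': the sequential BFS exposure you sketch cannot decouple the heaviness events for \emph{consecutive} vertices $u_i,u_{i+1}$, because by the very definition of adjacency in the cluster graph their balls $B_{\ell_i}(u_i)$ and $B_{\ell_{i+1}}(u_{i+1})$ overlap. Once you have exposed $B_{\ell_i}(u_i)$ and conditioned on it being large, part of $u_{i+1}$'s neighborhood is already revealed and biased upward, so the claim that ``the conditional probability of each subsequent tail event differs from its unconditional version by at most absolute constants'' is not justified. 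All of your events are monotone increasing, so FKG gives the product as a \emph{lower} bound, not the upper bound you need. (Relatedly, you oscillate between conditioning on $\ell(u_i)\ge\ell_i$, which gives the tail bound but no size control, and $\ell(u_i)=\ell_i$, which gives size control; you need both roles played simultaneously, and this is exactly where the argument breaks.)

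The paper's proof resolves this with an idea you are missing: an odd/even split along the path. Since the induced subgraph on $\{u_1,\dots,u_k\}$ in $\Cluster(\bG)$ is a \emph{path}, the balls around odd-indexed vertices are pairwise disjoint (non-adjacent vertices in the cluster graph have disjoint balls by definition). This lets one use heaviness only for the odd vertices, where disjointness makes a clean sequential exposure work and yields the product of exponentials. For the even vertices one does \emph{not} use heaviness at all; instead one uses \emph{lightness} at scale $\ell_i$ (which is part of $\ell(u_i)=\ell_i$) to bound the conditional probability that $B_{\ell_i}(u_i)$ hits both of its already-exposed odd neighbors, picking up a factor $\asymp (d(1+\eps))^{2\ell_i+\ell_{i-1}+\ell_{i+1}}/n^2$ per even vertex. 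The factor $\tfrac{1}{2}$ in the exponent of the final bound comes from the WLOG assumption that the odd side carries at least half the total weight $\sum_i(1+\eps)^{\ell_i}$, which converts the odd-only exponential product into the all-$i$ product stated in the lemma.
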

    \begin{remark}
        The way to interpret the above is to think of the probability bound as decaying at a \emph{doubly} exponential rate in each $\ell_i$, and for the events $\{u_i\text{ is }\ell_i\text{-heavy}\}$ as being approximately independent.
        See \Cref{lem:ball-tail} and its proof to understand the doubly exponential decay rate.
    \end{remark}
    We first show how to complete the proof of \Cref{lem:no-long-path-cluster}, and then later prove \Cref{lem:single-signature-bound}.

    \begin{proof}[Proof of \cref{lem:no-long-path-cluster}]
        By \Cref{obs:min-violating-set} it suffices to prove that there is no minimal violating set in $\Cluster(\bG)$.
        The definition of a signature $\parens*{k,\vec{u},\vec{\ell}}$ of a minimal violating set implies that $\sum_i\ell_i \ge \frac{32\log n}{\eta\eps d}$.
        Hence, by \Cref{obs:MVS-bound,lem:radius-bound,lem:single-signature-bound}, we get that the probability of containing a violating set is at most:
        \begin{align}\label{eq:long-path-prob}
            n \sum_{k \ge 1} \sum_{\substack{\ell_1, \ldots \ell_k \\ \sum \ell_i \ge \tfrac{32\log n}{\eta\eps d}}} \prod_{i \in [k]} 2\ell_i^2 \cdot (d(1+\eps))^{2\ell_i} \cdot \exp\left(-\frac{\eta d}{2}\left(1+\frac{\eps}{4}\right)^{\ell_i}\right).
        \end{align}
        Consider one of the terms in the product, which is
        \[
            \exp\left(\log(2\ell_i^2) + 2\ell_i \log (d(1+\eps)) - \frac{\eta d}{2}\left(1+\frac{\eps}{4}\right)^{\ell_i}\right) \le \exp\left(\ell_i (2\log (d(1+\eps))+2) - \frac{\eta d}{2}\left(1+\frac{\eps}{4}\right)^{\ell_i}\right).
        \]
        We claim that for
        \begin{equation}
            \label{eq:eps-constraint-main}
            \eps \ge 1500 \left( \frac{100+\log d}{d} \right)^{1/3},
        \end{equation}
        the above is at most $\exp \left( - \ell_i \left( 1 + \frac{\eta \eps d}{16} \right) \right)$.
        Indeed, since $\left(1+\frac{\eps}{4}\right)^{\ell_i} \ge \frac{\eps}{4} \cdot \ell_i$, the above quantity is at most
        \[
            \exp\left( - \ell_i \left( \frac{\eta \eps d}{8} - 2 - 2 \log\left( d (1+\eps) \right) \right) \right).
        \]
        Therefore, it suffices for $\eps$ to satisfy
        \[
            \frac{\eta \eps d}{16} \ge 2 \log \left( d (1+\eps) \right) + 3.
        \]
        It is straightforward to verify that $\eps$ as in~\eqref{eq:eps-constraint-main} satisfies the above.
        Given this, \eqref{eq:long-path-prob} is at most
        \begin{align*}
            n \sum_{k \ge 1} \sum_{\substack{\ell_1, \ldots \ell_k \\ \sum \ell_i \ge \tfrac{32\log n}{\eta\eps d}}} \exp\left(-\left(\frac{\eta\eps d}{16} + 1\right)\sum_{i \in [k]} \ell_i\right) & 
            \le \frac{1}{n} \sum_{k \ge 1} \sum_{\substack{\ell_1, \ldots \ell_k \\ \sum \ell_i \ge \tfrac{32\log n}{\eta\eps d}}} \exp\left(-\sum_{i \in [k]} \ell_i\right) \\
            &\le \frac{1}{n} \sum_{k \ge 1} \sum_{\ell_1, \ldots \ell_k \ge 1} \exp\left(-\sum_{i \in [k]} \ell_i\right) \\
            &\le \frac{1}{n} \sum_{k \ge 1} \exp(-\Omega(k)) = o(1),
        \end{align*}
        as desired.
    \end{proof}

    We now turn our attention to proving \Cref{lem:single-signature-bound}.
    The following two statements (\Cref{lem:gain-if-tree-excess,lem:independent-set-independent}) will be used in the proof.
    We defer their proofs to \Cref{appendix-a}.
    \begin{restatable}{lemma}{lineindependent}
    \label{lem:independent-set-independent}
        Let $u_1,u_2,\ldots,u_r \in [n]$ and $\ell_1,\ell_2,\ldots,\ell_r > 0$. Then, 
        \[ \Pr\left[ \ell(u_i) = \ell_i \,\,\forall i \in [r]  \text{ and }  B_{\ell_i}(u_i) \cap B_{\ell_j}(u_j) = \emptyset \,\,\forall i\neq j \in [r]\right] \le \prod_{i \in [r]} \exp\left(-\eta d \left(1+\frac{\eps}{4}\right)^{\ell_i}\right). \]
    \end{restatable}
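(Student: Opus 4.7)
The plan is to reduce the joint probability to a product of single-vertex tail bounds by exploiting the disjointness of the balls to decouple the ``heaviness'' events. First observe that $\ell(u_i) = \ell_i$ with $\ell_i > 0$ forces $u_i$ to be $(\ell_i - 1)$-heavy, i.e., $\abs{B_{\ell_i - 1}(u_i)} > (d(1+\eps))^{\ell_i - 1}$; indeed, by minimality of $\ell(u_i)$, there must exist some $L \ge \ell_i - 1$ on which $u_i$ fails to be $L$-light, and since $u_i$ is $L$-light for all $L \ge \ell_i$, the only option is $L = \ell_i - 1$. Thus it suffices to bound the probability that all the $u_i$ are $(\ell_i - 1)$-heavy \emph{and} their enclosing balls $B_{\ell_i}(u_i)$ are pairwise disjoint.

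The first step is a single-vertex doubly-exponential tail bound: for any $u \in [n]$ and $r \ge 0$,
\[
    \Pr\bracks*{u \text{ is } r\text{-heavy}} \le \exp\parens*{-c \cdot d(1+\eps)^r}
\]
for an absolute constant $c > 0$. This is essentially the content of \Cref{lem:ball-tail} and follows from comparing the BFS exploration of $B_r(u)$ in $\SBM(n,d,\lambda)$ to a Galton--Watson process with mean-$d$ offspring, together with a Chernoff-style large-deviation bound; the doubly-exponential decay in $r$ reflects that the expected size of $B_r(u)$ is only $O(d^r)$, so a $(1+\eps)^r$-fold overshoot is super-exponentially rare. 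The second step couples the events via edge-disjoint witnesses: on the event in question, the heaviness of $u_i$ is witnessed by any BFS spanning subtree $T_i \subseteq B_{\ell_i - 1}(u_i) \subseteq B_{\ell_i}(u_i)$ of size exceeding $(d(1+\eps))^{\ell_i - 1}$, and since the $B_{\ell_i}(u_i)$ are pairwise disjoint, so are the $T_i$, and therefore they involve pairwise edge-disjoint subsets of potential edges in $\bG$. Applying the van den Berg--Kesten inequality to the increasing events $A_i \coloneqq \braces*{u_i \text{ is } (\ell_i - 1)\text{-heavy}}$, which occur disjointly in this sense,
\[
    \Pr\bracks*{\ell(u_i) = \ell_i \,\forall i, \text{ balls pairwise disjoint}} \le \Pr[A_1 \circ \cdots \circ A_r] \le \prod_{i=1}^{r} \Pr[A_i].
\]
Alternatively, one can avoid invoking BK by enumerating candidate spanning tree witnesses $T_i$ directly, noting that $\sum_{(T_i)} \prod_i \Pr[T_i \subseteq \bG]$ over vertex-disjoint tuples can only grow upon relaxing the disjointness constraint, at which point the sum factorizes by edge-independence on disjoint vertex sets, and each factor is bounded by the first-moment estimate underlying Step~1.

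Combining the two steps yields $\Pr[\text{event}] \le \prod_i \exp(-c \cdot d(1+\eps)^{\ell_i - 1})$. Since $(1+\eps)^{\ell_i - 1} \ge \tfrac{1}{1+\eps} (1+\eps)^{\ell_i} \ge \tfrac{1}{1+\eps}(1 + \eps/4)^{\ell_i}$, we can absorb the index shift and base change into the constant $\eta$ defined in \Cref{lem:no-long-path-cluster} to recover the stated bound. The main technical obstacle is executing the single-vertex tail bound cleanly for $\SBM(n, d, \lambda)$ rather than $\ER(n, d/n)$, since the BFS exploration has $\bsigma$-conditional edge probabilities rather than i.i.d.\ ones; however, these probabilities lie within a $(1 + o_d(1))$-multiplicative factor of $d/n$, so a standard dominating coupling against independent Poisson offspring carries the ER argument through with an $o_d(1)$ loss in the constant $c$, which is ultimately absorbed into $\eta$.
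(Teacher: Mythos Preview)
Your approach is correct and genuinely different from the paper's. The paper proceeds by induction on $r$: after revealing the first $r-1$ balls and conditioning on their pairwise disjointness, it invokes a stochastic-domination observation (conditioning the $r$th ball to avoid a small revealed set $S$ can only shrink it) to reduce $\Pr[\ell(u_r) = \ell_r \mid \cdots]$ to the unconditional single-vertex tail of \Cref{lem:ball-tail}. You instead exploit the disjointness all at once: the $(\ell_i-1)$-heaviness events are increasing, the disjoint balls furnish edge-disjoint witnesses, and van den Berg--Kesten (valid for the product measure on edges in $\SBM$ conditioned on $\bsigma$) factorizes the probability in one shot. This is cleaner and sidesteps the conditional coupling argument entirely.

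Two small caveats. First, your Step~1 asserts base $(1+\eps)$ whereas \Cref{lem:ball-tail} only delivers $(1+\eps/4)$, so after the index shift you land on $\frac{\eta}{1+\eps/4}\, d\,(1+\eps/4)^{\ell_i}$ rather than $\eta\, d\,(1+\eps/4)^{\ell_i}$; this constant-factor loss in the exponent is harmless for every downstream use, but since $\eta$ is an \emph{explicit} constant in the paper you cannot simply ``absorb'' it without a remark (the paper's own proof is equally casual about this). Second, your fallback tree-counting alternative does not quite close: \Cref{lem:ball-tail} is proved via an MGF calculation on the branching process, not a first-moment union bound over witness trees, so the sum $\sum_{T_i} \Pr[T_i \subseteq \bG]$ is not controlled by that lemma. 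Stick with BK.
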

    \begin{remark}
        In the above bound, the reader should treat the events $\{\ell_i = \ell(u_i)\}$ as roughly independent.
        Informally, the approximate independence can be justified by the fact that the union of the balls covers only a small fraction of the graph, and hence for a vertex $u_r$ disjoint from $B_{\ell(u_1)}(u_1)\cup\dots\cup B_{\ell(u_{r-1})}(u_{r-1})$, the probability that $B_{\ell(u_r)}(u_r)$ intersects the rest is very small --- it is roughly the probability that two small subsets of $[n]$ intersects.
        Furthermore, when the ball around $u_r$ is disjoint from the remaining balls, its size is not influenced by the other balls, resulting in approximate independence.
    \end{remark}
    
    \begin{restatable}{lemma}{lineedgesgain}
        \label{lem:gain-if-tree-excess}
        Let $S,T_1,T$ be disjoint subsets of vertices, and say $T = T_2 \sqcup T'$. Let $\mathcal{E}$ be an (arbitrary) conditioning of the edges adjacent to vertices in $S,T_1,T$ such that for any $A = S,T_1,T$, we can write $A = A_{\inn} \sqcup A_{\outt}$ such that the following holds.
        \begin{itemize}
            \item $|A| = o(n)$.
            \item \emph{Every} edge incident on vertices in $A_{\inn}$ is fixed in a way that there are no edges between $A_{\outt}$ and $A^c$.
            \item Further, there is no conditioning done on the edges between $A_{\outt}$ and $(S_{\inn} \cup (T_1)_{\inn} \cup T_{\inn})^c$.
        \end{itemize}
        Let $u \in [n]$ and $\ell > 0$. Then, for $n$ sufficiently large,
        \[ \Pr\left[ \text{$u$ is $\ell$-light and } B_{\ell}(u) \cap T_i \ne \emptyset \text{ for } i \in \{1, 2\} \mid \mathcal{E} \right] \le \frac{2\ell^2 \cdot (d(1+\eps))^{2\ell} \cdot |T_1| \cdot |T_2|}{n^2}. \]
    \end{restatable}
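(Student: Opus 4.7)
The plan is to upper bound the target probability via a union bound over pairs of short paths from $u$ to $T_1$ and $u$ to $T_2$ in $\bG$. The event that $u$ is $\ell$-light together with $B_\ell(u)\cap T_i\ne\emptyset$ for $i=1,2$ implies the existence of vertices $v_1\in T_1,v_2\in T_2$ and paths $P_1:u\to v_1,\,P_2:u\to v_2$ of lengths $\ell_1,\ell_2\le\ell$. The union $P_1\cup P_2$ has a \emph{tripod} structure: a shared initial subpath of length $j\in\{0,1,\ldots,\min(\ell_1,\ell_2)\}$ from $u$ to a branching vertex $w$, followed by two disjoint branches of lengths $\ell_1-j$ and $\ell_2-j$ terminating at $v_1$ and $v_2$ respectively, consisting of a total of $\ell_1+\ell_2-j$ edges.

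For fixed $(v_1,v_2,\ell_1,\ell_2,j)$, the number of such tripods is at most $n^{\ell_1+\ell_2-j-2}$, corresponding to the choice of the $\ell_1+\ell_2-j-2$ free intermediate vertices (not including the fixed $u,v_1,v_2$). The conditional probability under $\mathcal{E}$ that all $\ell_1+\ell_2-j$ edges of a specific tripod are present is at most $(d(1+\eps)/n)^{\ell_1+\ell_2-j}$: each edge has marginal probability at most $(d+|\lambda|\sqrt{d})/n$, which is bounded by $d(1+\eps)/n$ for sufficiently large $d$ using the hypothesis $\eps\geq C_1((C_2+\log d)/d)^{1/3}$. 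Multiplying and summing over the parameters $(v_1,v_2,\ell_1,\ell_2,j)$ yields
\[
\Pr[\text{event} \mid \mathcal{E}] \le |T_1||T_2|\sum_{\ell_1,\ell_2=1}^{\ell}\sum_{j=0}^{\min(\ell_1,\ell_2)}\frac{(d(1+\eps))^{\ell_1+\ell_2-j}}{n^2} \le \frac{2\ell^2(d(1+\eps))^{2\ell}|T_1||T_2|}{n^2},
\]
where the inner geometric sum over $j$ contributes a bounded constant factor (for $d$ large) and the outer sums contribute a factor polynomial in $\ell$ dominated by the term at $\ell_1=\ell_2=\ell,j=0$.

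The main obstacle is rigorously establishing the per-edge conditional probability bound in the presence of the conditioning $\mathcal{E}$. A priori, $\mathcal{E}$ fixes edges adjacent to vertices in $S\cup T_1\cup T$ to specific values, which could in principle inflate an edge's conditional probability to $1$ and break the per-edge bound. The key property guaranteed by the hypotheses is that no conditioning is done on edges between $A_{\outt}$ (for $A\in\{S,T_1,T\}$) and $(S_{\inn}\cup(T_1)_{\inn}\cup T_{\inn})^c$; these edges retain their unconditioned marginal distribution. Combined with the property that edges between $A_{\outt}$ and $A^c$ are absent, one can restrict attention to tripods whose intermediate vertices avoid the inner sets (with the excluded tripods contributing only a lower-order correction since $|A|=o(n)$), and for these the edges of the tripod have conditional marginals matching the unconditional bound. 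Case-splitting on whether each $v_i$ lies in $(T_i)_{\inn}$ or $(T_i)_{\outt}$ will handle the final edges into $v_1,v_2$, with the cases where $v_i \in (T_i)_{\inn}$ requiring the edge $(w, v_i)$ to be fixed to exist by $\mathcal E$ — which will limit the number of valid tripods due to the sparsity of the conditioning.
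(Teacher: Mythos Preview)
Your route (first-moment union bound over tripods) is genuinely different from the paper's. The paper does not count paths at all; it grows $B_r(u)$ one layer at a time and invokes an exchangeability observation (\Cref{obs:uniformity-nbds}): conditional on everything revealed so far, the new layer $N_r(u)$ restricted to the unconditioned portion of the vertex set is a uniformly random subset of the available vertices of the appropriate size. The $\ell$-lightness hypothesis is then what caps that size by $(d(1+\eps))^\ell$, giving a per-layer hitting probability $\lesssim (d(1+\eps))^\ell |T_i|/n$; summing over the $\le \ell^2$ pairs of first-hitting radii $(r_1,r_2)$ yields the bound. Notice that your argument never uses $\ell$-lightness at all (the existence of paths of length $\le\ell$ follows from $B_\ell(u)\cap T_i\ne\emptyset$ alone); this is a warning sign that a mechanism is missing.

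There are two gaps. The minor one: two shortest paths from $u$ need not form a tripod, since they can diverge and re-merge. This is repairable by taking the paths to $v_1,v_2$ inside a BFS tree rooted at $u$, where the tripod structure is automatic.

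The serious gap is your handling of the conditioning. The per-edge bound $d(1+\eps)/n$ is valid only for edges untouched by $\mathcal{E}$; an edge incident to some $A_{\inn}$ that $\mathcal{E}$ fixes to ``present'' has conditional probability $1$, and nothing in the hypotheses bounds how many such edges there are. Your proposed fix---restrict to tripods whose intermediate vertices avoid the inner sets and case-split at the terminal edges by appealing to ``sparsity of the conditioning''---is not a proof. Even in the intended application, once $B_r(u)$ touches a conditioned ball it can continue inside that ball along fixed-present edges, so the BFS tripod may well traverse $A_{\inn}$; you would need to re-enter a free region to reach the other $T_i$, and accounting for all such excursions essentially reinvents the layer-by-layer exposure the paper uses. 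The paper's approach is cleaner precisely because it never needs to track which individual edges are fixed: it only needs the unconditioned portion of each new layer to be a uniform random subset, and lightness to bound its cardinality.
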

    \begin{remark}
    Put more intuitively, \cref{lem:gain-if-tree-excess} is a generalization of the following simple observation: for any small set of vertices of size $C$, then the probability that a random vertex is contained in said set scales like $C/n$. 
    The technical assumptions on $S, T_1,$ and $T$ are present to ensure that the lemma applies throughout the proof of \cref{lem:no-long-path-cluster}; see \Cref{fig:overlapping-balls} for a depiction of these sets.
    \end{remark}

    \begin{figure}[ht]
        \centering
        \begin{tikzpicture}[
            ball/.style={circle, draw=black, fill=white, minimum size=20mm, align=center},
            special ball/.style={circle, draw=blue, dashed, fill=blue!5, opacity=0.5, pattern=north east lines, pattern color=blue!40, minimum size=20mm},
            t2/.style={circle, draw=green, fill=green!20,minimum size=20mm},
            clip ball/.style={circle, draw=black, fill=white, minimum size=20mm, postaction={fill=blue!20, even odd rule, clip}},
            t1/.style={circle, draw=purple, fill=purple!20,minimum size=20mm},
            next ball/.style={circle, draw=red, fill=red!10, minimum size=20mm}]
    
        \pgfdeclarelayer{background}
        \pgfdeclarelayer{foreground}
        \pgfsetlayers{background,main,foreground}
        
        \def\circleT2{(3.5,0) circle (10mm)}
        \def\circleTprime{(1.8,0.5) circle (10mm)}
    
        
        \begin{scope}[shift={(-2.25, 0)}]
            
            \begin{pgfonlayer}{background}
                \node[t2] (C) at (3.5,0) {$B_{\ell_3}(u_3)$};
    
                \node[t1] (E) at (6.5,0) {$B_{\ell_5}(u_5)$};
    
                \node[special ball] (D) at (5,0.5) {};
                \node at (5,0.5) {$B_{\ell_4}(u_4)$};
            
            \end{pgfonlayer}
    
            \begin{pgfonlayer}{foreground}
                \node[ball] (A) at (0,0) {$B_{\ell_1}(u_1)$};
                \node[ball] (B) at (1.8,0.5) {$B_{\ell_2}(u_2)$};
        
                \begin{scope}
                    \clip \circleT2;
                    \draw[green,thick,dashed] \circleTprime;
                \end{scope}
                
                \node[next ball] (F) at (8.75,0) {$B_{\ell_7}(u_7)$};
                \node[next ball] (G) at (10.9,0.5) {$B_{\ell_9}(u_9)$};
            \end{pgfonlayer}
            \draw[thick, rounded corners=10mm](-1.5,-3) -- (-1.5,2) -- (12.25, 2) -- (12.25,-3) -- cycle;
        
            \node at (-0.75,-2) {\huge$G$};
        \end{scope}

        \begin{scope}[shift={(10.5,-1.25)}]
            \draw[thick] (0,-0.75) rectangle (2.1,2.6);
            
            \node[anchor=west] at (0,2.25) {\textbf{Legend}};
    
            \node[anchor=west] at (0.25,1.75) {\tikz\fill[ball] (0,0) rectangle (0.5,0.3); {$T'$}};
            \node[anchor=west] at (0.25,1.25) {\tikz\fill[t2] (0,0) rectangle (0.5,0.3); $T_2$};
            \node[anchor=west] at (0.25,0.75) {\tikz\fill[special ball] (0,0) rectangle (0.5,0.3); $B_{\ell}(u)$};
            \node[anchor=west] at (0.25,0.25) {\tikz\fill[t1] (0,0) rectangle (0.5,0.3); $T_1$};
            \node[anchor=west] at (0.25,-0.25) {\tikz\fill[next ball] (0,0) rectangle (0.5,0.3); $S$};
            
        \end{scope}
        \end{tikzpicture}
        \caption{A depiction of the sets $T_1, T_2, T', S$, and $B_{\ell}(u)$. 
        The event of interest in \Cref{lem:gain-if-tree-excess} is when the random set of interest $B_{\ell}(u)$ (shaded blue) intersects the two target sets $T_1$ (solid purple) and $T_2$ (solid green). 
        The other two sets of vertices $S$ and $T'$ cover the additional edges that need to be conditioned on to apply the lemma inductively to bound the length of the path $u_1\ldots u_k$ in the cluster graph. 
        In our setting, all the sets involved are unions of balls centered at the vertices $u_i$.
        }

        \label{fig:overlapping-balls}
    \end{figure}
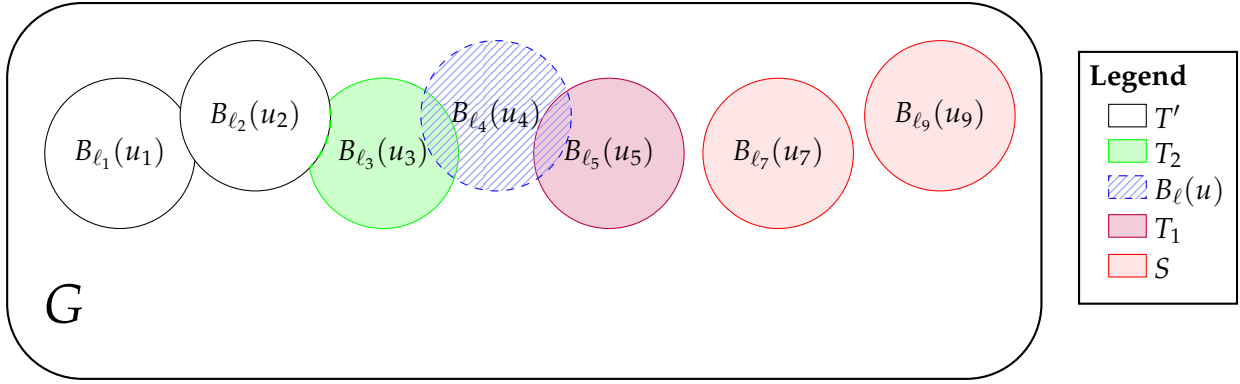

    We now prove \Cref{lem:single-signature-bound}.
    \begin{proof}[Proof of \Cref{lem:single-signature-bound}]
        Assume, without loss of generality, that
        $$\sum_{i\text{ odd}}(1+\eps)^{\ell_i} \ge \sum_{i\text{ even}} (1+\eps)^{\ell_i}\mper$$
        Thus, for the independent set $\{u_i:i\text{ odd}\}$, we have 
        $$\sum_{i\text{ odd}} (1+\eps)^{\ell_i} \ge \frac{1}{2} \sum_{i} (1+\eps)^{\ell_i}\mper$$ 

        Let $\mathcal{E}_{\text{independent}}$ be the event that $\ell(u_i) = \ell_i$ for all odd $i$ and $B_{\ell_i}(u_i) \cap B_{\ell_j}(u_j) = \emptyset$ for distinct odd $i, j$. 
        Let $\mathcal{E}_{\text{cross}}$ be the event that for all even $i$ and $j = i\pm 1$, $B_{\ell_i}(u_i) \cap B_{\ell_{j}}(u_{j}) \ne \emptyset$. Then, 
        \begin{align}
            &\Pr\left[ \exists\text{MVS with signature }\parens*{k,\vec{u},\vec{\ell}} \text{ in }\Cluster(\bG) \right] \nonumber \\
            &\quad\quad \le \Pr\bracks*{\calE_{\text{independent}},\,\calE_{\text{cross}}} \label{eq:cardinality-dropper} \\
            &\quad\quad = \Pr[\mathcal{E}_{\text{independent}}] \cdot \Pr[\mathcal{E}_{\text{cross}} \mid \mathcal{E}_{\text{independent}}] \nonumber \\
            &\quad\quad \le \prod_{i \text{ odd}} \exp(-\eta d (1+\eps)^{\ell_i}) \cdot \prod_{i \text{ even}} 2\ell_i^2 \cdot \frac{(d(1+\eps))^{2\ell_i + \ell_{i-1} + \ell_{i+1}}}{n^2} \label{eq:even-vertices}\\
            &\quad\quad \le \frac{1}{n^{k-1}} \prod_{i \in [k]} 2\ell_i^2 \cdot (d(1+\eps))^{2\ell_i} \cdot \exp\left(-\frac{\eta d}{2} \left(1+\frac{\eps}{4}\right)^{\ell_i}\right) \label{eq:odd-to-all}.
        \end{align}
        To explain the above chain of inequalities, in~\eqref{eq:cardinality-dropper} we dropped the condition that $\ell(u_i) = \ell_i$ for even $i$.
        In~\eqref{eq:even-vertices}, we applied \cref{lem:independent-set-independent} and \cref{lem:gain-if-tree-excess} inductively with the setting
        \[
            T_1 = B_{\ell_{i+1}}(u_{i+1}),\, S = \bigcup_{\substack{i\text{ odd} \\ j > i+1}} B_{\ell_j}(u_j), \, T'=\bigcup_{j < i-1} B_{\ell_j}(u_j), \, T_2 = B_{\ell_{i-1}}(u_{i-1}) \setminus T'\mper
        \]
        When applying \Cref{lem:gain-if-tree-excess}, we use the bound of $\kappa$ on $\ell_i$ to conclude that the size of the union of all these balls is $o(n)$.
        See \Cref{fig:overlapping-balls} for an illustration. In~\eqref{eq:odd-to-all}, we used the fact that the odd vertices have at least half of the total vertex weight.
    \end{proof}

\subsection{Bounding the sizes of components}\label{sec:components-bound}
In this section, we prove high probability bounds on the sizes of connected components in $\bH$, which are of interest when studying the Ising model on  centered interaction matrices.

\Cref{lem:single-component-bound}, which states that the maximum size of a connected component is $n^{o_d(1)}$, is a consequence of \Cref{lem:components-diam}. 
\singlecomp*
We believe that our analysis here is loose, and that the size of the largest connected component is in fact $\poly\log(n)$ with high probability.
\begin{proof}
    By \Cref{lem:components-diam}, we know that for any connected component $C$, there is an absolute positive constant $c$ such that $\diam(C) \le \frac{c\log n}{\eps^3 d}$. 
    Note that any vertex $u \in C$ is $\diam(C)$-light. 
    Furthermore, we have $B_{\diam(C)}(u) \supseteq C$, so 
    \[
        \abs{C} \le (d(1+\eps))^{\diam(C)} \le \exp\left(\log(d(1+\eps)) \cdot \frac{c\log n}{\eps^3 d}\right) = n^{\tfrac{c\log(d(1+\eps))}{\eps^3 d}},
    \]
    as desired.
\end{proof}

Using similar ingredients, we are also able to establish the following quantitative bound on the sum of squared component sizes, from which \Cref{lem:component-size} follows.
\begin{lemma}\label{lem:component-size-precise}
    Write $H = \sqcup_{C \in \cc(H)} C$, where $\cc(H)$ denotes the connected components of $H$. 
    There are absolute constants $c_1, c_2 > 0$ such that for $\eps \ge c_1(\log d/d)^{1/3}$, with probability $1-o(1)$, 
    \[
        \sum_{\substack{C \in \cc(H) \\ \abs{C} > 1}} \abs{C}^2 \le \exp(-c_2 \eps^3 d)n.
    \]
\end{lemma}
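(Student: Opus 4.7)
The plan is to reduce $\sum_{C:\abs{C}>1}\abs{C}^{2}$ to a local statistic of the random graph whose expectation I can control, and then upgrade to a high-probability bound via concentration. The key starting identity is
\[
    \sum_{C\in\cc(H):\,\abs{C}>1}\abs{C}^{2} \;=\; \sum_{v\in V'(H)}\abs{C(v)},
\]
where $V'(H)$ denotes the non-isolated vertices of $H$ and $C(v)$ the component of $v$. Since every nontrivial component of $H$ contains at least one heavy vertex (a vertex $v$ with $\ell(v)>0$) and $C(v)\subseteq\bigcup_{u\in\tilde{C}(v)}B_{\ell(u)}(u)$ where $\tilde{C}(v)$ is the cluster-graph component of $v$ (\Cref{def:cluster-graph}), both $\abs{V'(H)}$ and $\abs{C(v)}$ can be controlled by ball sizes and cluster-graph component sizes.

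The main input is a doubly-exponential tail on $\ell(v)$, which follows from a Chernoff-type bound on ball sizes along the lines of \Cref{lem:independent-set-independent} and is already implicit in the proof of \Cref{lem:single-signature-bound}:
\[
    \Pr[\ell(v)\ge\ell]\le\exp\bigl(-\Omega(\eta d(1+\eps/4)^{\ell-1})\bigr)
\]
with $\eta=\Theta(\eps^{2})$. Using the $\ell(v)$-lightness of $v$ to bound $\abs{B_{\ell(v)}(v)}\le(d(1+\eps))^{\ell(v)}$, summing this tail against $(d(1+\eps))^{\ell}$ yields
\[
    \E\bigl[(d(1+\eps))^{\ell(v)}\mathbf{1}[\ell(v)>0]\bigr]\le\exp\bigl(-\Omega(\eps^{3} d)\bigr),
\]
where the $\ell=1$ term dominates in the regime $\eps\ge c_{1}(\log d/d)^{1/3}$ for $c_{1}$ large enough (ensuring $\eps^{2}d$ dominates $\log d$). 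Summing over $v$ gives $\E\abs{V'(H)}\le n\exp(-\Omega(\eps^{3}d))$. A parallel analysis of $\tilde{C}(v)$, exploiting that the cluster graph is highly subcritical in this regime (compare with the probability estimates in the proof of \Cref{lem:no-long-path-cluster}), shows $\E\abs{\tilde{C}(v)}=O(1)$ with exponential tails, and combining these estimates yields $\E\sum\abs{C}^{2}\le n\exp(-\Omega(\eps^{3}d))$.

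The main obstacle is upgrading the expectation bound to one holding with probability $1-o(1)$: a direct Markov bound only yields a constant probability, so some form of concentration is required. To handle this I would condition on the high-probability events from \Cref{lem:random-graph-decomposition}, \Cref{lem:radius-bound} (which bounds $\ell(v)\le\kappa=O(\log\log n/\eps)$ uniformly), and \Cref{lem:log-star-bound}. Under this conditioning, the statistic $\sum_{v}(d(1+\eps))^{\ell(v)}\mathbf{1}[\ell(v)>0]$ is a sum of $n$ locally-determined and polynomially-bounded random variables, each depending only on $B_{2\kappa}(v)$ in $\bG$. A concentration inequality for local random graph statistics with typically-bounded differences, such as Warnke's inequality, then delivers a high-probability bound after a small adjustment to the constant $c_{2}$ relative to the expectation bound, completing the argument.
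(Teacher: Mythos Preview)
Your first-moment computation is essentially the same as the paper's: both express $\sum_{|C|>1}|C|^{2}$ as a double sum over pairs of vertices in the same nontrivial component, reduce this to the existence of a path in the cluster graph, and then reuse the estimate from the proof of \Cref{lem:no-long-path-cluster} (which in turn rests on \Cref{lem:single-signature-bound}) to sum the doubly-exponential tail against $(d(1+\eps))^{\ell}$. The paper carries this out directly for $\E\sum_{u\neq v}X_{uv}$ and obtains $\E[X]\le 2\exp(-\eta\eps d/16)\,n$, which matches your target.

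The concentration step is where your proposal has a real gap. You only argue concentration for the proxy $\sum_{v}(d(1+\eps))^{\ell(v)}\mathbf{1}[\ell(v)>0]$, appealing to its locality (each summand depends only on $B_{2\kappa}(v)$ after conditioning on \Cref{lem:radius-bound}). But the actual target $\sum_{v\in V'(H)}|C(v)|$ is \emph{not} local in this sense: $|C(v)|$ depends on the entire cluster-graph component of $v$, which can have diameter $\Theta(\log n/(\eps^{3}d))$ in $\bG$. There is no deterministic inequality relating the proxy to the target, so concentrating the proxy does not finish the argument, and a Warnke-type bounded-differences inequality for the target would need a Lipschitz bound you have not supplied.

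The paper sidesteps this cleanly via Efron--Stein on the edge variables: rerandomizing a single edge of $\bG$ changes $X$ by at most $2\max_{C}|C|^{2}$, and \Cref{lem:single-component-bound} (already proved from \Cref{lem:components-diam}) gives $\max_{C}|C|\le n^{c\log d/(\eps^{3}d)}$ with high probability. This yields $\Var(X)\le O(d/n)\cdot n^{2}\cdot n^{O(\log d/(\eps^{3}d))}=o((\E X)^{2})$ for $\eps$ above the threshold, and Chebyshev finishes. If you want to salvage your route, the simplest fix is to adopt exactly this Efron--Stein step rather than Warnke's inequality.
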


We now turn our attention to proving \Cref{lem:component-size-precise}, which gives a high probability bounds on the sizes of component sizes in $H$. 
The proof essentially amounts to bounding the first and second moments. 
For the first moment bound, many of the key ingredients are already contained in the proof of \Cref{lem:components-diam}; for the variance bound, we use the Efron-Stein inequality. 

\begin{proof}[Proof of \Cref{lem:component-size-precise}]
    For succinctness, define the random variable 
    \begin{align*}
        X \coloneqq \sum_{C \in \cc(H)} \abs{C}^2 \cdot \Ind[\abs{C} > 1].
    \end{align*}
    For distinct $u, v \in [n]$, define the random variable $X_{uv} = \Ind[u, v \text{ are in the same component}]$. 
    Notice that 
    \begin{align*}
        X = \sum_{u \neq v} X_{uv}\mper
    \end{align*}
    Hence for the first moment it suffices to upper bound $\E[X_{uv}] = \Pr[u, v \text{ are in the same component}]$. 
    In fact, bounding $\E[X_{uv}]$ uses tools similar to what we used in the proof of \Cref{lem:components-diam}.

    Observe that for distinct $u$ and $v$, they are in the same connected component only if there are $u'$ and $v'$ (which need not be distinct) which satisfy the following:
    \begin{itemize}
        \item $\ell_{u'}, \ell_{v'} > 0$, so that $u'$ and $v'$ belong to nontrivial connected components.
        \item $u \in B_{\ell_{u'}}(u')$ and $v \in B_{\ell_{v'}}(v')$.
        \item There is a path between $u'$ and $v'$ in $\Cluster(\bG)$. 
    \end{itemize}
    Note that if $u' = v'$, we allow for self loops, as long as $\ell_{u'} > 0$; this is consistent with \Cref{def:cluster-graph}.
    Hence, we have 
    \begin{align*}
        \sum_{u \neq v} X_{uv} &\le \sum_{u', v' \in [n]} \abs{B_{\ell_{u'}}(u')} \abs{B_{\ell_{v'}}(v')} \Ind[u', v' \text{ connected in } \Cluster(\bG)].
    \end{align*}
    Fix a path length $k$, positive integers $\ell_i$ for $i \in [k]$, and a path $w_1\cdots w_k$ with $w_1 = u'$ and $w_k = v'$.
    Consider the event $\calE$ whose probability we bounded in \Cref{lem:no-long-path-cluster}:
    \[
        \calE \coloneqq \{\ell_i = \ell(w_i) \text{ for $i \in [k]$ and } B_{\ell_i}(w_i) \cap B_{\ell_{j}}(w_{j}) \ne \emptyset \text{ iff $j = i \pm 1$ for $i \in [k]$}\}
    \]
    For the term at $u', v'$ in the sum to be nonzero, the above event $\calE$ must occur for some choice of $k$, positive $\ell_i$, and path from $u'$ to $v'$. 
    On $\calE$, we can upper bound 
    \[
        \abs{B_{\ell_{u'}}(u')} \cdot \abs{B_{\ell_{v'}}(v')} \cdot \Ind[u', v' \text{ connected in } \Cluster(\bG)] \le (d(1+\eps))^{\ell_1 + \ell_k},
    \]
    and a careful inspection of the proof of \Cref{lem:no-long-path-cluster} establishes that
    \begin{align*}
        (d(1+\eps))^{\ell_1 + \ell_k}\Pr[\calE] \le \prod_{i \in [k]} 2\ell_i^2 \cdot (d(1+\eps))^{2\ell_i} \cdot \exp\left(-\tfrac{\eta d}{2}(1+\tfrac{\eps}{4})^{\ell_i}\right).
    \end{align*}
    Indeed, in \eqref{eq:even-vertices}, only one factor of $(d(1+\eps))^{\ell_i}$ is picked up for $i \in \{1, k\}$, as they have only one neighbor in the chain. 

    Now choose $\eps$ such that 
    \[
        \frac{\eta \eps d}{16} \ge 2 \log \left( d (1+\eps) \right) + 2, 
    \]
    which is satisfied for $\eps \ge 1500 \left( \frac{100+\log d}{d} \right)^{1/3}$. 
    Summing up now over all $k$, $\ell_i$, and paths from $u'$ to $v'$, we obtain 
    \begin{align*}
        &\E\left[\sum_{u', v' \in [n]} \abs{B_{\ell_{u'}}(u')} \cdot \abs{B_{\ell_{v'}}(v')} \cdot \Ind[u', v' \text{ connected in } \Cluster(\bG)] \right] \\
        &\quad\quad \le n \sum_{k \ge 1} \sum_{\ell_1, \ldots \ell_k \ge 1}  \prod_{i \in [k]} 2\ell_i^2 \cdot (d(1+\eps))^{2\ell_i} \cdot \exp\left(-\tfrac{\eta d}{2}(1+\tfrac{\eps}{4})^{\ell_i}\right) \\
        &\quad\quad \le n \sum_{k \ge 1} \sum_{\ell_1, \ldots \ell_k \ge 1}  \exp\left(-\frac{\eta \eps d}{16} \sum_{i \in [k]} \ell_i\right) \\
        &\quad\quad \le n \sum_{k \ge 1} \exp\left(-\frac{\eta \eps kd}{16}\right) \prod_{i \in [k]} \frac{\exp\left(-\frac{\eta \eps d}{16}\right)}{1 - \exp\left(-\frac{\eta \eps d}{16}\right)} \\
        &\quad\quad \le n \sum_{k \ge 1} \exp\left(-\frac{\eta \eps kd}{16}\right) \\
        &\quad\quad \le 2\exp\left(-\frac{\eta \eps d}{16}\right)n,
    \end{align*}
    where in the second-to-last line we have used the fact that $\frac{\eta \eps d}{16} \ge 2$ and $\tfrac{\exp(-2)}{1-\exp(-2)} \le 1$, and in the last line we have used $\tfrac{1}{1-\exp(-2)} \le 2$.
    Hence, $\E[X] \le 2\exp(-\tfrac{\eta \eps d}{16})n$.

    We now show that $\Var(X) = o(\E[X]^2)$, which implies the desired result by applying Chebyshev. 
    To establish this, we appeal to the Efron-Stein inequality.
    Consider rerandomizing an edge $e$ in $\bG$. 
    With probability at most $1 - 2d/n$, the rerandomized edge $e$ agrees with the original edge, so the value of $X$ does not change. 
    If it disagrees, then $X$ changes only if $e$ connects two connected components or if $e$ breaks a connected component into two smaller connected components.
    Either way, we can upper bound the magnitude of the difference by $\max_{C \in \cc(H)} 2\abs{C}^2$. 
    
    By \Cref{lem:single-component-bound}, $\max_{C \in \cc(H)} \abs{C}^2 \le n^{\tfrac{c\log d}{\eps^3 d}}$ for some absolute constant $c > 0$, so 
    \begin{align*}
        \Var(X) &\le \frac{4d}{n} \cdot n^{\tfrac{c\log d}{\eps^3 d}} \cdot n^2 \\
        &\le 4\exp\left(\log d + \frac{c \log d}{\eps^3 d} \log n\right) n.
    \end{align*}
    To conclude, it suffices to verify that for our choice of parameters 
    \[
        \log d + \frac{c \log d}{\eps^3 d} \log n < -\eps^3 d + \frac{1}{2} \log n.
    \]

    Indeed, for $\eps \ge \left(\frac{K \log d}{d}\right)^{1/3}$ and any $d = O(\log n)$, the left-hand-side of the above inequality is $O(\log\log n) + \frac{c}{K} \log n$, whereas the right-hand-side is $\frac{1}{2}\log n - O(\log \log n)$. 
    Hence, the above inequality is satisfied for $K$ a sufficiently large constant, which finishes the proof.  
\end{proof}

\section*{Acknowledgments}
S.M.\ would like to thank Frederic Koehler for helpful conversations about Ising models.

\clearpage

\addcontentsline{toc}{section}{References}
\bibliographystyle{alpha}
\bibliography{refs}


\appendix

\section{Proofs omitted from the main text}
\label{appendix-a}

\subsection{Generalized MLSI for Ising models on pseudorandom near-forests}

We begin with a proof of \Cref{lem:pseudorandom-ferro-mixing}. While the proof we state in the main text which assumes that all nonzero interactions are equal suffices for our purposes, a more general result holds.

\pseudorandomferromixing*

\begin{proof}
    To prove this statement, as in the simpler proof given in the main text, we will use \Cref{lem:entropic-stability,thm:anneal}, albeit with the stochastic localization using a time-varying control matrix. Again, set $s_0 = \frac{\gamma}{\sqrt{D}}$. Consider the graph $H_t$ on $V(H)$ with edge set
    \[ E(H_t) = \left\{ ij \in E(H) : J_{ij} > t \cdot  \frac{\gamma}{\sqrt{D}} \right\}. \]
    In particular, $H_1$ is empty.
    The control matrix at time $t$ is defined by $C_t = \left((1-s_0^2) \cdot \BH_{H_t} \left( - \frac{\gamma}{\sqrt{D}} \right)\right)^{1/2}$. As in the simpler proof, this is a non-negative matrix that is well-defined due to \Cref{cor:bethe-tree-psd}. At time $t$, the interaction matrix $J_t$ is, on the off-diagonal elements, supported on the graph $H_t$, with the interactions being at most $\frac{\gamma}{\sqrt{D}}$. In particular, $J_1$ is a diagonal matrix so $\mu_{1}$ is a product measure. Now, for an arbitrary tilt $h$,
    \[ \Cov\left( \mu_{J_t,h} \right) \le \sum_{\ell \ge 0} \left( \frac{\gamma}{\sqrt{D}} \right)^{\ell} A^{(\ell)}_{H_t} \le \sum_{\ell \ge 0} \left( \frac{\gamma}{\sqrt{D}} \right)^{\ell} A^{(\ell)}_{H} = \BH_{H}(s_0)^{-1}, \]
    where the inequalities mean that the entries of the (non-negative) matrix on the left are at most that of the corresponding entries on the right. Similarly, because $H_t$ is a subgraph of $H$, $C_t^2 \le (1-s_0^2) \BH_H(-s_0)$. Using \Cref{cor:cov-ferro-external-psd,lem:tree-cov}, we have that
    \begin{align*}
        \norm*{ C_t \cdot \Cov\left( \mu_{J_t,h} \right) \cdot C_t } &\le \norm*{ \Cov\left( \mu_{J_t,h} \right) \cdot C_t^2 } \\
            &\le (1-s_0^2) \norm*{ \BH_{H}(s_0)^{-1} \cdot \BH_{H}(-s_0) } \\
            &= (1-s_0^2) \norm*{ \BH_{H}(s_0)^{-1} \cdot \left(\BH_{H}(s_0) + \frac{2s_0}{1-s_0^2} \cdot A_{H} \right) } \\
            &\le (1-s_0^2) + \frac{2s_0}{1-s_0^2} \norm*{ \BH_{H}(s_0)^{-1} \cdot A_{H} } \\
            &\le 1 + \frac{8}{(1-\gamma)^2} \cdot \frac{\Delta}{D}.
    \end{align*}
    The measure at time $1$ is almost surely a product measure, therefore
    \begin{align*}
        \MLSI(\mu_{J,h}) &\ge \frac{1}{n} \cdot \exp \left( - \int_0^1 \sup_{h \in \R^n} \left\| C_t \cdot \Cov\left( \mu_{J_t,h} \right) \cdot C_t \right\| \right) \\
            &\ge \frac{1}{n} \cdot \exp\left( - \left( 1 + \frac{8}{(1-\gamma)^2} \cdot \frac{\Delta}{D} \right) \right) = \frac{1}{ne^{O(\Delta)}}. \qedhere
    \end{align*}
\end{proof}

\subsection{Bounds on various probabilities in sparse random graphs}

Next, we prove numerous lemmas from \Cref{sec:extinction}, including \Cref{lem:gain-if-tree-excess,lem:radius-bound,lem:log-star-bound,lem:independent-set-independent}.
In the sequel, we study random graphs $\bG$ sampled from $\SBM(n,d,\lambda)|\bsigma$ where $\bsigma$ is a ``balanced'' community assignment.
In particular, we assume that $\frac{1}{n}|\angles*{\bone,\bsigma}|<O\parens*{\sqrt{\frac{\log n}{n}}}$.
This holds with probability $1$ if $\bG$ is an \erdos--\renyi graph, and more generally with probability $1 - o_n(1)$ for 2-community SBMs with nontrivial priors.

\begin{restatable}{lemma}{lballtail}
    \label{lem:ball-tail}
    For any vertex $v$ in $V(\bG)$ and $r \ge 1$, for $\eta = \min\left\{ \frac{\eps^2}{54} , \frac{1}{18} \right\}$,
    \[
        \Pr[ |B_r(v)| \ge (d(1+\eps))^r] \le \exp\left( - \eta d \left( 1 + \frac{\eps}{4} \right)^{r} \right).
    \]
\end{restatable}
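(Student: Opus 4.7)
The strategy is the classical coupling of ball growth to a branching process, followed by a moment generating function (MGF) analysis tailored to extract the doubly exponential tail.

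\emph{Step 1: Branching process domination.} Using the standard breadth-first exploration of $\bG$, each query of a candidate edge to a previously unexplored vertex succeeds independently with probability at most $\frac{d+\abs{\lambda}\sqrt{d}}{n}$ (accounting for the $\SBM$ community structure). Consequently, $|B_r(v)|$ is stochastically dominated by $T_r$, the total size up to depth $r$ of a Galton--Watson tree with offspring distribution $\mathrm{Poisson}(d')$, where $d' = d(1+o_n(1))$. Since the final bound depends only on $d$ through the $\eta d$ factor and the $o_n(1)$ slack can be absorbed, I treat $d' = d$ throughout.

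\emph{Step 2: MGF recursion and choice of $\theta$.} The recursive identity $T_r \stackrel{d}{=} 1 + \sum_{i=1}^{D}T_{r-1}^{(i)}$ with $D \sim \mathrm{Poisson}(d)$ and i.i.d.\ copies $T_{r-1}^{(i)}$ yields, for $\psi_r(\theta) \defeq \log \E[e^{\theta T_r}]$, the recursion
\[
    \psi_r(\theta) = \theta + d\bigl(e^{\psi_{r-1}(\theta)} - 1\bigr), \qquad \psi_0(\theta) = \theta.
\]
I take $\theta_r \defeq \eta/d^{r-1}$. Using $e^x - 1 \le x(1+x)$ for $x \le 1$, one shows by induction on $s \in \{0,\dots,r-1\}$ that $\psi_s(\theta_r) \le \frac{d}{d-1}\,\eta\, d^{s-r+1}$; in particular $\psi_{r-1}(\theta_r) \le \tfrac{d}{d-1}\eta \le 2\eta$ for $d \ge 2$. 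One more application of the recursion gives $\psi_r(\theta_r) \le \theta_r + d(e^{2\eta} - 1) \le (1 + \tfrac{\eps}{4})\, d\eta$ for $\eta \le \min\{\eps^2/54,\,1/18\}$.

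\emph{Step 3: Chernoff.} Markov's inequality with $T = (d(1+\eps))^r$ and $\theta = \theta_r$ then gives
\[
    \Pr\bigl[T_r \ge (d(1+\eps))^r\bigr] \le \exp\bigl(\psi_r(\theta_r) - \theta_r T\bigr) \le \exp\!\left(\bigl(1 + \tfrac{\eps}{4}\bigr) d\eta - \eta\, d\, (1+\eps)^r\right).
\]
A direct comparison confirms $(1+\eps)^r - (1+\tfrac{\eps}{4}) \ge (1+\tfrac{\eps}{4})^r$ whenever $r$ is above a small threshold depending only on $\eps$. This delivers the claimed $\exp(-\eta d(1+\eps/4)^r)$ bound in the ``large $r$'' regime.

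\emph{Step 4: Small $r$.} For the finitely many remaining small $r$ (in particular $r=1$, where the MGF estimate degenerates), I instead do a direct Chernoff bound on the Poisson step: $\Pr[\mathrm{Poisson}(d) \ge d(1+\eps) - 1] \le \exp(-d\, h(\eps - 1/d))$ with $h(\delta) \ge \delta^2/3$ for $\delta \in [0,1]$, which easily dominates $\exp(-\eta d (1+\eps/4))$ for $\eta \le \eps^2/54$. For the handful of intermediate $r$ values, an identical conditional Chernoff argument on the single step $|B_{s-1}| \to |B_s|$, using that $|B_{s}| - |B_{s-1}|$ is dominated by $\mathrm{Poisson}(d|B_{s-1}|)$, closes the gap.

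\emph{Main obstacle.} The delicate point is the calibration of $\theta_r$: it must be small enough that $\psi_{r-1}(\theta_r)$ remains in the radius of convergence of the Taylor expansion (so that the quadratic correction $d \psi_{r-1}^2$ does not blow up the iteration by a factor of $2$ per step), yet large enough that $\theta_r (d(1+\eps))^r$ scales like $\eta d (1+\eps)^r$. The choice $\theta_r \sim 1/d^{r-1}$ sits exactly at this boundary and forces the constant $\eta$ to scale quadratically in $\eps$, explaining the $\eps^2/54$ term; the universal $1/18$ takes over when $\eps$ is bounded away from $0$ and the Poisson Chernoff function $h(\delta)$ exits the quadratic regime.
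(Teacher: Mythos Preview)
Your overall architecture matches the paper's: dominate $|B_r(v)|$ by the total population $T_r$ of a $\mathrm{Poisson}(d)$ Galton--Watson tree, control the MGF through the recursion $\psi_r(\theta)=\theta+d(e^{\psi_{r-1}(\theta)}-1)$, and apply Chernoff. The gap is in Step~2.

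The bound $\psi_r(\theta_r)\le(1+\tfrac{\eps}{4})\,d\eta$ cannot hold. Since $e^{2\eta}-1\ge 2\eta$ always, your own intermediate expression $\theta_r+d(e^{2\eta}-1)$ is already at least $2d\eta$, which exceeds $(1+\tfrac{\eps}{4})\,d\eta$ whenever $\eps<4$. With the honest bound $\psi_r(\theta_r)\le C\,d\eta$ for some $C\approx 2$, the Chernoff exponent is only $-\eta d\bigl[(1+\eps)^r-C\bigr]$, and matching the target $-\eta d(1+\tfrac{\eps}{4})^r$ requires $(1+\eps)^r-(1+\tfrac{\eps}{4})^r\ge C$, which for small $\eps$ first holds at $r\asymp 1/\eps$, not at an absolute constant. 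Your Step~4 layer-by-layer conditional Chernoff gives at best $\Pr\le r\cdot\exp(-\Omega(\eps^2 d))$ over this range; the prefactor $r\le O(1/\eps)$ can indeed be absorbed in the regime $\eps^3 d\gtrsim\log d$, but that needs to be argued and is not the ``handful of intermediate values'' you describe.

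The paper avoids the case split by a different Chernoff parameter: it takes $s=\delta/(d+2\delta)^{r-1}$ with $\delta\sim\eps$, rather than $\eta/d^{r-1}$ with $\eta\sim\eps^2$. The extra $2\delta$ in the base absorbs both the additive constant and the quadratic correction at each step of the recursion, so the MGF stays $e^{O(d\delta)}$; meanwhile $s\cdot(d(1+\eps))^r\ge\delta d\,(1+\tfrac{\eps}{2})^r$ for every $r\ge 1$. With $\eta=\tfrac{\delta\eps}{4+\eps}$, the slack between the $\delta\sim\eps$ in the Chernoff gain and the $\eta\sim\eps^2$ in the target is exactly what pays for the $e^{O(d\delta)}$ MGF cost, uniformly in $r$. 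If you replace your $\theta_r$ by $\delta/(d(1+c\eps))^{r-1}$ for a small constant $c$ and rerun your induction, you recover the paper's argument and the case analysis disappears.
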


\begin{proof}
    Consider a Galton-Watson tree $T$ with branching random variable $\Poi(d(1+o_n(1))$. It is well-known (see e.g. \cite[Example 4.2.8]{Roch24}) that $\Poi(d)$ stochastically dominates $\Bin(n-1, \frac{d}{n})$, so one can construct a monotone coupling between the local neighborhoods of $G$ and the Galton-Watson tree $T$ in the following way.
    Suppose $\alpha$ is the fraction of vertices $v$ such that $\bsigma(v) = +1$.
    Then, the number of neighbors of a given vertex $v$ is distributed as $\Bin\left(\alpha n - 1, \frac{d+\lambda\sqrt{n}}{n}\right) + \Bin\left((1-\alpha)n, \frac{d-\lambda\sqrt{n}}{n} \right)$, which is dominated by $\Poi\parens*{\alpha\cdot\frac{d+\lambda \sqrt{d}}{2} + (1-\alpha)\cdot\frac{d-\lambda\sqrt{d}}{2}}$.
    Since $\alpha = \frac{1}{2}\pm O\parens*{\sqrt{\frac{\log n}{n}}}$, the number of neighbors is stochastically dominated by $\Poi\parens*{d\parens*{1 + o_n(1)}}$. 
    For simplicity of notation, for the rest of the proof we replace $d(1+o_n(1))$ with $d$. 
    Since $\eps = \Omega(\log d/d)^{1/3}$, we can increase the implicit constant to ensure that the original tail bound as stated holds.

    Let $u$ be the root of $T$. Set $N^{\GW}_{\ell}(u)$ to be the set of vertices at depth exactly $\ell$ in the Galton-Watson tree, and $B^{\GW}_{\ell}(u)$ to be the set of vertices at depth at most $\ell$.
    By the above monotone coupling,  $|B_L(v)|$ is stochastically dominated by $|B^\GW_L(u)|$, so it suffices to show the relevant tail bound for $|B_\ell^{\GW}(u)|$.\\
    For ease of notation, set $R_t = |N^{\GW}_{t}(u)|$ and $B_t = |B^{\GW}_{t}(u)|$. Also let $B_t = 0$ for $t < 0$. Markov's inequality implies that for any $s > 0$,
    \[ \Pr\left[ |B_r| \ge (d(1+\eps))^r \right] \le e^{-s(d(1+\eps))^r} \E\left[ e^{sB_r} \right]. \]
    Now, for $t \ge 2$, using the recursion $B_t = B_{t-1} + R_t$, we have 
    \begin{align*}
        \E\left[ e^{sB_t} \right] &= \E\left[ e^{sB_{t-1}} \cdot e^{sR_t} \right] \\
            &= \E\left[ e^{sB_{t-1}} \cdot e^{d(e^s-1)R_{t-1}} \right] \tag{Poisson MGF} \\
            &= \E\left[ e^{sB_{t-2}} \cdot e^{(d(e^s-1) + s)R_{t-1}} \right]. 
    \end{align*}
    Define $g : \R_{\ge 0}^2 \to \R_{\ge 0}$ by
    \[ g(x,s) = d(e^x - 1) + s, \]
    so for any $r \ge 1$, $\E[e^{sB_r}] \le e^{g^{\circ r}(s,s)}$, where $g^{\circ 1}(x,y) = g(x,y)$ and $g^{\circ (t+1)}(x,y) = g(g^{\circ t}(x,y),y)$. Now, define $s_k = \delta (d+2\delta)^{-k}$ for some $\delta \ge 1/d$ that we shall set later. Then, for any $1 \le t \le r$,
    \begin{align*}
        g(s_t,s_r) &= d(e^{s_t} - 1) + s_r \\
            &\le d s_t e^{s_t} + s_r \\
            &\le s_{t-1} \cdot \left( \frac{de^{s_t}}{d+2\delta} + \frac{s_r}{s_{t-1}} \right) \\
            &\le s_{t-1} \cdot \left(\frac{de^{s_t}}{d+2\delta} + \frac{1}{d+2\delta}\right) \\
            &\le s_{t-1} \cdot \left(\frac{d\exp\left(\frac{\delta}{d+2\delta}\right)}{d+2\delta} + \frac{d}{d+2\delta}\right) \tag{$t \ge 1$} \\
            &\le s_{t-1}.
    \end{align*}
    Here, the final inequality follows from the fact that
    \[ \frac{\exp\left( \frac{x}{1+2x} \right)}{1+2x} + \frac{1}{1+2x} \le 1 \]
    for $x \ge 1$, which we have applied with $x = \frac{\delta}{d}$.
    We also have
    \[ g(s_0,s_r) = d(e^{s_0} - 1) + s_r \le d\delta e^{\delta} + s_r \le d\delta e^{\delta} + \delta. \]
    Consequently,
    \[ \E\left[ e^{s_{r-1} B_r} \right] \le e^{d\delta e^{\delta} + \delta}. \]
    Therefore,
    \begin{align*}
        \Pr\left[ |B_r| \ge (d(1+\eps))^r \right] &\le e^{d\delta e^{\delta} + \delta} \cdot e^{-s_{r-1}(d(1+\eps))^{r}} \\
            &= \exp\left(d\delta e^{\delta} + \delta\right) \cdot \exp\left( - \delta \cdot \frac{(d(1+\eps))^r}{(d+2\delta)^{r-1}} \right) \\
            &\le \exp\left(d\delta e^{\delta} + \delta\right) \cdot \exp\left( - \delta d \cdot \left( \frac{1+\eps}{1+\frac{2\delta}{d}} \right)^r \right).
    \end{align*}
    Now, choose $\delta = \min\left\{\frac{\eps}{4+2\eps} , \log\left( 1 + \frac{\eps}{8} \right)\right\}$. Recall that $\eps = \Omega\left( \frac{\log d}{d} \right)^{1/3}$, so this is indeed at least $1/d$. This implies that $\frac{1+\eps}{1+\frac{2\delta}{d}} \ge 1 + \frac{\eps}{2}$. Furthermore,
    \begin{align*}
        \frac{\exp\left( d \delta e^{\delta} + \delta \right) \cdot \exp\left( - \delta d \left( 1 + \frac{\eps}{2} \right)^r \right)}{ \exp\left( - \frac{\delta\eps d}{4+\eps} \left( 1 + \frac{\eps}{4} \right)^r \right) } &= \exp\left( d \delta e^{\delta} + \delta - d \delta \left(\left( 1 + \frac{\eps}{2} \right)^r - \frac{\eps}{4+\eps} \left( 1 + \frac{\eps}{4} \right)^r \right) \right) \\
            &\le \exp\left( d \delta e^{\delta} + \delta - d \delta \left( 1 + \frac{\eps}{2} - \frac{\eps}{4+\eps} \left( 1 + \frac{\eps}{4} \right) \right) \right) \\
            &= \exp\left( d \delta e^{\delta} + \delta - d \delta \left( 1 + \frac{\eps}{4} \right) \right) \\
            &= \exp\left( d \delta \left( e^{\delta} + \frac{1}{d} - \left( 1 + \frac{\eps}{4} \right) \right) \right) \\
            &\le \exp\left( d \delta \left( e^{\delta} - \left( 1 + \frac{\eps}{8} \right) \right) \right) \le 1,
    \end{align*}
    where in the last line we used $\eps \ge \frac{8}{d}$.
    To conclude, we set
    \begin{align*}
        \eta \coloneqq \frac{\delta \epsilon}{4+\eps} &= \frac{\eps}{4+\eps} \cdot \min \left\{ \frac{\eps}{4+2\eps} , \log \left( 1 + \frac{\eps}{8} \right) \right\} \\
        &\ge \min \left\{ \frac{\eps}{6} , \frac{1}{3} \right\} \cdot \min \left\{ \frac{\eps}{9} , \frac{1}{6} \right\} \\
            &\ge \min\left\{ \frac{\eps^2}{54} , \frac{1}{18} \right\}. \qedhere
    \end{align*}
\end{proof}

First, observe that \Cref{lem:radius-bound}, restated below, is an immediate corollary of the above.
\radiusbound*

Now, we proceed to fill in other missing proofs.
\logloglog*

\begin{proof}
    As in the proof of \Cref{lem:ball-tail}, set $R_\ell$ to be the number of vertices at depth $\ell$ in a Galton-Watson tree with branching random variable $\Poi(d)$ rooted at $v$. We shall show that
    \[ \Pr\left[ R_\ell \ge (d(1+\eps))^{\ell} \cdot \Delta \right] \le \frac{1}{n^3} \]
    for some $\Delta = o(\log n)$ ($\Delta$ is $o(\log n)$ uniformly for all $\ell$). Given this, we are done by taking a union bound over $0 \le \ell \le n$ and then all $n$ vertices $v$. Again, define
    \[ g(x) = d(e^x - 1), \]
    so for any $s > 0$,
    \[ \Pr \left[ R_\ell \ge (d(1+\eps))^{\ell} \cdot \Delta \right] \le e^{-s \Delta (d(1+\eps))^{\ell}} \cdot e^{g^{\circ \ell}(s)}. \]
    Alternately, set
    \[ h(x) = g^{-1}(x) = \log \left( \frac{x}{d} + 1 \right), \]
    so we get that for any $q > 0$,
    \[ \Pr \left[ R_\ell \ge (d(1+\eps))^{\ell} \cdot \Delta \right] \le e^{-h^{\circ \ell}(q) \Delta (d(1+\eps))^{\ell}} \cdot e^{q}. \]
    Now, set $q = \log n$ and
    \[ \Delta = \frac{4 \log n}{(d(1+\eps))^{\ell} \cdot h^{\circ \ell}(\log n)}. \]
    Clearly, this yields the $1/n^3$ bound claimed earlier. Further, we have
    \[ \min_{\ell \ge 0} \left\{(d(1+\eps))^{\ell} \cdot h^{\circ \ell}(\log n)\right\} = \omega(1), \]
    so $\Delta = o(\log n)$.
\end{proof}

The key idea in the proof of \Cref{lem:gain-if-tree-excess} is the following observation.

\begin{observation}
    \label{obs:uniformity-nbds}
    Let $S \subseteq V$ be any set of vertices in $G$ and $\ell$ be a positive integer. For any vertex $v \in V$ and conditioning on the induced subgraph on $S$, we have that $N_{\ell}(v) \setminus S$ is a uniformly random subset of $V \setminus S$ of size $\abs{N_{\ell}(v) \setminus S}$. 
\end{observation}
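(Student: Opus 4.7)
The plan is to reduce to a one-line exchangeability argument. The key point is that in the Erd\H{o}s--R\'enyi model $\bG \sim \ER(n, d/n)$, and more generally whenever the edge distribution is invariant under relabelings of $V \setminus S$, the random statuses of edges with at least one endpoint outside $S$ are jointly independent of $\bG[S]$ (since they involve no edges wholly contained in $S$) and their joint distribution is invariant under any permutation of the labels of $V \setminus S$.

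Concretely, I would fix $v \in V$ and let $\pi$ be an arbitrary permutation of $V$ that fixes every vertex of $S$ pointwise and, if $v \notin S$, also fixes $v$. The conditional distribution of $\bG$ given $\bG[S]$ is then invariant under relabeling by $\pi$: edges within $S$ are determined by the conditioning and unaffected, while edges touching $V \setminus S$ are i.i.d.\ Bernoulli regardless of $\bG[S]$. Because $\pi$ fixes $v$, the invariance carries over to the graph-distance function $\mathrm{dist}_{\bG}(v, \cdot)$, and hence to the random set $N_{\ell}(v)$. Restricting to $V \setminus S$, which is stable under $\pi$, we conclude that $N_{\ell}(v) \setminus S$ is invariant in distribution under $\pi$. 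Since $\pi$ can be chosen to induce any permutation of $V \setminus (S \cup \{v\})$, the random set $N_{\ell}(v) \setminus S$ is exchangeable as a subset of $V \setminus (S \cup \{v\})$; and since $\ell \geq 1$ implies $v \notin N_{\ell}(v)$, this is the same as being exchangeable as a subset of $V \setminus S$.

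The last step is to invoke the standard fact that an exchangeable random subset of a finite set is, conditional on its cardinality, uniformly distributed among subsets of that cardinality. Applying this to $N_{\ell}(v) \setminus S$ yields the claim. I do not anticipate any real obstacle: the entire argument is an unpacking of vertex-permutation symmetry. The only minor subtlety is properly handling the case $v \notin S$ by insisting that $\pi$ fix $v$, but this is harmless since $v$ never appears in $N_{\ell}(v)$ for $\ell \geq 1$.
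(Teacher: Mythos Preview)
The paper states this as an observation without proof; your exchangeability argument via vertex-permutation symmetry is exactly the intended justification and is correct. Your caveat that the argument needs the edge law to be invariant under relabelings of $V\setminus S$ is well placed: for $\SBM(n,d,\lambda)$ conditioned on $\bsigma$ with $\lambda\neq 0$, one must restrict to permutations that also preserve the community partition, which yields uniformity of $N_\ell(v)\setminus S$ within each community's portion of $V\setminus S$ rather than over all of it---the paper's phrasing is a mild abuse that does not affect the downstream $O(|T|/n)$ hitting bounds it is used for.
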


\lineedgesgain*

\begin{proof}[Proof of \cref{lem:gain-if-tree-excess}] 
    The idea is that due to \Cref{obs:uniformity-nbds} --- since the sets $S, T_1, T$ are all $o(n)$ in size, we are essentially picking uniformly random small subsets of vertices on $V$ and checking whether they intersect.

    To formalize this, consider growing out the ball around $u$ incrementally. 
    Let us first introduce some additional notation. 
    Set $\calL$ to be the event that $u$ is $\ell$-light, and let $\calF$ be the event in the lemma statement whose conditional probability we wish to bound: 
    \[
        \calF \coloneqq \calL \cap \{B_{\ell}(u) \cap T_i \ne \emptyset \text{ for } i \in \{1, 2\}\}
    \]For $r \ge 1$ and $i \in [2]$, let $\calE_{r, i}$ denote the event that $B_r(u) \cap T_i \ne \emptyset$ but $B_{r-1}(u) \cap T_i = \emptyset$, that is, $r$ is the smallest radius at which the ball intersects $T_i$. Noting that $\mathcal{E}_{r,i}$ are disjoint events for fixed $i$, we have
    \begin{equation}
        \Pr\left[ \calF \mid \calE \right] = \sum_{r_1,r_2=1}^{\ell} \Pr\left[ \mathcal{L} \cap \mathcal{E}_{r_1,1} \cap \mathcal{E}_{r_2,2} \mid \calE \right]. \label{eq:4.6-main}
    \end{equation}
    In words, we are splitting this into cases depending on when $T_1$ and $T_2$ are hit.
    To complete the proof, we shall bound each of the terms in this summation by
    \[ \frac{2(d(1+\eps))^{2\ell} \cdot |T_1| \cdot |T_2|}{n^2}. \]
    Fix $r_1,r_2$ with $r_1 \le r_2$. Let $\mathcal{L}_{r}$ be the event that $|B_{r}(u)| \le (d(1+\eps))^{\ell}$. We have
    \begin{align*}
        \Pr\left[ \mathcal{E}_{r_1,1} \cap \mathcal{E}_{r_2,2} \cap \mathcal{L} \mid \calE \right] &\le \Pr\left[ \mathcal{E}_{r_1,1} \cap \mathcal{E}_{r_2,2} \cap \mathcal{L}_{r_2} \mid \calE \right] \\
            &\le \Pr\left[ \mathcal{E}_{r_1,1} \cap \mathcal{L}_{r_2} \mid \calE \right] \cdot \Pr\left[ \mathcal{E}_{r_2,2} \mid \mathcal{L}_{r_2} \cap \mathcal{E}_{r_1,1} \cap \calE \right] \\
            &\le \Pr\left[ \mathcal{E}_{r_1,1} \mid \mathcal{L}_{r_1} \cap \calE \right] \cdot \Pr\left[ \mathcal{E}_{r_2,2} \mid \mathcal{L}_{r_2} \cap \mathcal{E}_{r_1,1} \cap \calE\right].
    \end{align*}
    Let us start by bounding the first of these two probabilities; refer to \Cref{fig:overlapping-balls} for a depiction of the sets involved. For convenience, we will denote $V' = V \setminus (T_2 \cup S)$. 
    Suppose that $B_{r_1-1}(u)$, $|N_{r_1}(u)|$ and $|N_{r_1}(u) \cap V'|$ were fixed in a way that $B_{r_1-1}(u)$ does not intersect $T_1$ or $T'$, and $\mathcal{L}_{r_1}$ is satisfied. Then, $N_{r_1}(u) \cap V'$ is in fact a uniformly random subset of $V' \setminus B_{r_1-1}(u)$ of the appropriate size. Since $T_1$ is assumed to be contained in $V'$, it follows that
    \[ \Pr\left[ \mathcal{E}_{r_1,1} \mid \mathcal{L}_{r_1} \cap \calE  \right] \le \frac{\E\left[|N_{r_1}(u)| \Big\vert \mathcal{L}_{r_1} \cap \calE\right] \cdot |T_1|}{n - o(n)} \le \frac{2(d(1+\eps))^{\ell} \cdot |T_1|}{n} . \]
    Similarly, we also get that
    \[ \Pr\left[ \mathcal{E}_{r_2,2} \mid \mathcal{L}_{r_2} \cap \mathcal{E}_{r_1,1} \cap \calE \right] \le \frac{2(d(1+\eps))^{\ell} \cdot |T_2|}{n} . \]
    Plugging these two bounds into \eqref{eq:4.6-main}, we get that
    \[ \Pr\left[ \calF \mid \calE \right] \le \frac{2\ell^2 \cdot (d(1+\eps))^{2\ell} \cdot |T_1| \cdot |T_2|}{n^2}, \]
    as claimed.
\end{proof}

\lineindependent*

The key observation for this proof is the following.

\begin{observation}
    \label{obs:tail-bounds-better-after-conditioning}
    Let $S \subseteq V$ be a (small) subset of vertices in $G$. Consider a partial conditioning of the edges for which
    \begin{itemize}
        \item the edges within $S$ are arbitrarily conditioned on.
        \item for any potential edge $e$ from $S$ to $S^c$, either $e$ is not conditioned on at all, or is conditioned to be absent.
    \end{itemize}
    Now, fix some vertex $u$, and suppose we grow out a radius $r$ ball from $v$, conditioned on it not intersecting $S$. Then, this is equivalent to growing out a radius $r$ ball from $v$ within a smaller completely random graph on $V \setminus S$, and then adding back the edges corresponding to $S$, with a further conditioning that there are no edges between $B_{r-1}(u)$, the interior vertices of the ball, and $V \setminus B_{r}(u)$ (which, in particular, includes $S$).
    
    Consequently, any monotonically nondecreasing function of the sizes of neighborhoods in the conditioned graph are stochastically dominated by their versions without conditioning. In particular, the random variable $\ell(v)$ in the conditioned graph is stochastically dominated by $\ell(v)$ in $G(n,d/n)$.
\end{observation}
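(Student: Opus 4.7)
The plan is to establish the two claims of the observation in sequence: first the distributional equivalence of the two exploration processes, and then the stochastic domination of monotone neighborhood statistics, from which the statement about $\ell(v)$ follows as a special case.

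For the equivalence, I would apply the principle of deferred decisions to the BFS exploration of $B_r(u)$. Running BFS from $u$ in the product-measure graph amounts to querying, in rounds, the status of edges between the current frontier and unrevealed vertices. Under the additional conditioning $B_r(u) \cap S = \emptyset$, every queried edge between $v \in B_{r-1}(u)$ and $s \in S$ must be absent, since otherwise $s$ would lie in $B_r(u) \cap S$. Hence the exploration can be equivalently split into two stages: first, run BFS restricted to the induced subgraph on $V \setminus S$ (where edges retain their independent $d/n$-marginal, since the initial setup does not condition on any edge inside $V \setminus S$), producing a random set $B_r^{V \setminus S}(u)$; then reinstate the edges incident to $S$, subject to the constraint that no edge emanates from $B_{r-1}^{V \setminus S}(u)$ to $S$. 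Because $B_r^{V \setminus S}(u)$ by construction already contains every $V \setminus S$-neighbor of $B_{r-1}^{V \setminus S}(u)$, this constraint is logically equivalent to the stated one: no edges between $B_{r-1}(u)$ and $V \setminus B_r(u)$.

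For the stochastic domination, I would recast the full conditioning as an intersection of monotone decreasing events on the edge variables of $G(n,d/n)$ and invoke the Harris-FKG inequality for product measures. The conditioning has three pieces: (i) the arbitrary conditioning on edges inside $S$, which is \emph{irrelevant} because under $\{B_r(u) \cap S = \emptyset\}$ the ball $B_r(u)$ depends only on edges incident to $V \setminus S$, and these are independent of edges inside $S$ under the product measure; (ii) forcing some edges between $S$ and $V \setminus S$ to be absent, which is manifestly a decreasing event in the edge set; and (iii) the event $\{B_r(u) \cap S = \emptyset\}$ itself, which is also decreasing, since inserting any edge into $G$ can only shrink graph distances and hence only enlarge $B_r(u) \cap S$. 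Harris's inequality then gives $\E[f \mathbf{1}_A] \le \E[f]\,\P[A]$ for every monotone non-decreasing function $f$ of the edges and decreasing event $A$, i.e.\ $\E[f \mid A] \le \E[f]$, which upgrades to stochastic domination of the distribution of $f$ by applying the inequality to each superlevel-set indicator $\mathbf{1}_{\{f \ge t\}}$.

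Applying this to $|B_L(u)|$ for each $L$, and noting that $\ell(u) = \min\{\ell \ge 0 : |B_L(u)| \le (d(1+\eps))^L \text{ for all } L \ge \ell\}$ is itself monotone non-decreasing in the edge set (enlarging the graph can only make more $L$'s heavy and hence only push $\ell(u)$ upward), yields the claimed stochastic domination of $\ell(u)$ by its $G(n,d/n)$ counterpart. The main subtlety to justify carefully is the reduction in (i): checking that $\{B_r(u) \cap S = \emptyset\}$ truly renders $B_r(u)$ a function of the edges outside $S$ alone, so that the possibly non-monotone conditioning inside $S$ can be discarded without altering the marginal law of the statistics we care about. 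Once this reduction is made, the Harris-FKG step is routine.
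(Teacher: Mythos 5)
The paper never writes out a separate proof of this observation: the deferred-decisions/BFS equivalence in your first paragraph is precisely the justification embedded in the statement itself, and the intended ``consequently'' is the elementary monotone coupling that ball growth inside $V\setminus S$ (a completely random graph on fewer vertices, with an extra \emph{decreasing} constraint on crossing edges) is dominated by ball growth in $G(n,d/n)$; no FKG is invoked. Your second paragraph substitutes Harris--FKG for that coupling: the disjointness event $\{B_r(u)\cap S=\emptyset\}$ and the forced-absent crossing edges are decreasing, and the arbitrary conditioning inside $S$ is discarded because, on the disjointness event, the radius-$\le r$ balls around $u$ coincide with the balls of the induced subgraph on $V\setminus S$ and hence are measurable with respect to edges disjoint from (and independent of) the $S$-internal edges. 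Formally one replaces $f$ by the same monotone functional evaluated on the balls of $G[V\setminus S]$, which agrees with $f$ on the conditioning event and is pointwise no larger; then Harris gives the comparison with the unconditioned law. This is a correct alternative route and buys a slightly cleaner bookkeeping: you never need to check that the second-stage re-insertion of edges incident to $S$ leaves the radius-$\le r$ balls untouched, which the paper's phrasing implicitly requires.

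One caveat on your final step: you apply the argument directly to $\ell(u)$ on the grounds that it is increasing in the edge set. But $\ell(u)$ depends on $|B_L(u)|$ for \emph{all} $L$, including $L>r$, and for those radii the ball may re-enter $S$; there your reduction (i) --- the only mechanism that lets you discard the arbitrary (possibly increasing) $S$-internal conditioning --- breaks down, and neither Harris nor the paper's coupling covers it. This is really a looseness of the observation as stated rather than of your argument: in its only use, the proof of \Cref{lem:independent-set-independent}, one only needs to dominate the probability of an increasing event about balls of radius strictly less than the radius appearing in the disjointness conditioning (heaviness at radius $\ell_i-1$), which both your FKG argument and the paper's coupling fully handle. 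If you want the literal claim about $\ell(u)$, restrict it to radii at most $r$, or impose more structure on the conditioning inside $S$. (A minor further gloss, shared with the paper: in the SBM setting the unconditioned comparison point is the SBM ball, and the passage to $G(n,d/n)$-type tails is handled separately via the Poisson domination in \Cref{lem:ball-tail}.)
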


\begin{proof}[Proof of \cref{lem:independent-set-independent}]
    We will upper bound the probability inductively; the base case immediately follows from \Cref{lem:ball-tail}.

    Let $\calA_r$ denote the event that $\ell(u_i) = \ell_i$ for every $i \in [r]$, i.e. the first $r$ balls are heavy. 
    Further, let $\calB_r$ denote the event that $B_{\ell_i}(u_i) \cap B_{\ell_j}(u_j) = \emptyset$ for all distinct $i, j \in [r]$, i.e. the first $r$ balls are pairwise disjoint.
    Noting that $\calB_r = \calB_{r-1} \cap \{B_{\ell_r}(u_r) \cap B_{\ell_i}(u_i) = \emptyset \,\, \forall i \in [r-1]\}$, we have
    \begin{align*}
        \Pr\left[ \calA_r \cap \calB_r \right] &= \Pr[\calA_{r-1} \cap \calB_{r-1}] \cdot \Pr\left[B_{\ell_r}(u_r) \cap B_{\ell_i}(u_i) = \emptyset \,\, \forall i \in [r-1] \Big\vert \calA_{r-1} \cap \calB_{r-1}\right] \\
        &\quad\quad \cdot \Pr\left[\ell(u_r) = \ell_r \Big\vert \mathcal{A}_{r-1} \cap \calB_{r}\right] \\
        &\le \prod_{i \in [r-1]} \exp(-\eta d (1+\eps)^{\ell_i}) \cdot \Pr\left[\ell(u_r) = \ell_r \Big\vert \mathcal{A}_{r-1} \cap \calB_{r}\right] \\
        &\le \prod_{i \in [r]} \exp(-\eta d (1+\eps)^{\ell_i}),
    \end{align*}
    where in the last line we have used \cref{obs:tail-bounds-better-after-conditioning} and \Cref{lem:ball-tail} with $S = \bigcup_{i =1}^{r-1} B_{\ell_i}(u_i)$.
\end{proof}

\end{document}